\numberwithin{equation}{section}
\newcommand{\st}{\h:\h}
\newcommand{\h}{\hspace{1mm}}
\newcommand{\hh}{\hspace{5mm}}
\theoremstyle{plain}
\newtheorem{lemma}{Lemma}[section]
\newtheorem{corollary}[lemma]{Corollary}
\newtheorem{proposition}[lemma]{Proposition}
\newtheorem{theorem}[lemma]{Theorem}
\newtheorem{remark}[lemma]{Remark}
\def\be{\begin{eqnarray}}
\def\ee{\end{eqnarray}}
\def\beal{\begin{aligned}}
\def\enal{\end{aligned}}
\newcommand{\norm}[1]{\left\lVert#1\right\rVert}
\newcommand{\vabs}[1]{\left| #1 \right|}
\newcommand{\vabsSmall}[1]{| #1 |}
\newcommand{\paren}[1]{\left(#1\right)}
\newcommand{\claus}[1]{\left\{#1\right\}}
\newcommand{\boxClaus}[1]{\left[#1\right]}
\newcommand{\ol}[1]{\overline{#1}}
\newcommand{\conj}[1]{\overline{#1}}
\newcommand{\wt}{\widetilde}
\newcommand{\wh}{\widehat}
\renewcommand{\Re}{\mathrm{Re\, }}
\renewcommand{\Im}{\mathrm{Im\,}}
\renewcommand{\arg}{\mathrm{arg\,}}
\newcommand{\reals}{\mathbb{R}}
\newcommand{\naturals}{\mathbb{N}}
\newcommand{\complexs}{\mathbb{C}}
\newcommand{\torus}{\mathbb{T}}
\newcommand{\al}{\alpha}
\newcommand{\de}{\delta}
\newcommand{\D}{\Delta}
\newcommand{\la}{\lambda}
\newcommand{\La}{\Lambda}
\newcommand{\g}{\gamma}
\newcommand{\G}{\Gamma}
\newcommand{\s}{\sigma}
\newcommand{\tht}{\theta}
\newcommand{\Tht}{\Theta}
\newcommand{\Ups}{\Upsilon}
\newcommand{\bb}{B}
\newcommand{\lad}{\dot{\lambda}}
\newcommand{\Lad}{\dot{\Lambda}} 
\newcommand{\AAA}{\mathcal{A}}
\newcommand{\BB}{\mathcal{B}}
\newcommand{\EE}{\mathcal{E}}
\newcommand{\FF}{\mathcal{F}}
\newcommand{\GG}{\mathcal{G}}
\newcommand{\GGh}{\widehat{\mathcal{G}}}
\newcommand{\HH}{\mathcal{H}}
\newcommand{\II}{\mathcal{I}}
\newcommand{\JJ}{\mathcal{J}}
\newcommand{\KK}{\mathcal{K}}
\newcommand{\LL}{\mathcal{L}}
\newcommand{\MM}{\mathcal{M}}
\newcommand{\NNN}{\mathcal{N}}
\newcommand{\OO}{\mathcal{O}}
\newcommand{\PP}{\mathcal{P}}
\newcommand{\RRR}{\mathcal{R}}
\newcommand{\SSS}{\mathcal{S}}
\newcommand{\TTT}{\mathcal{T}}
\newcommand{\WW}{\mathcal{W}}
\definecolor{myGreen}{RGB}{0, 200, 0}
\definecolor{myOrange}{RGB}{255, 100, 0}
\definecolor{myYellow}{RGB}{255, 200, 0}
\definecolor{myBlue}{RGB}{0, 200, 255}
\definecolor{myPurple}{RGB}{200, 0, 200}
\newcommand{\Id}{\mathrm{Id}}
\newcommand\that[1]{%
\setbox0=\hbox{$#1$}%
\ensurestackMath{%
	\stackon[2pt]{\copy0}{\,\rotatebox{90}{\stretchto{\triangleright}{\dimexpr\wd0-3pt}}}%
}%
}
\newcommand{\unstable}{{\mathrm{u}}}
\newcommand{\stable}{{\mathrm{s}}}
\newcommand{\phiA}{\varphi}
\newcommand{\phiB}{\tilde{\varphi}}
\newcommand{\wtA}{\widetilde{A}}
\newcommand{\wtB}{\widetilde{B}}
\newcommand{\HInicial}{h}
\newcommand{\Poi}{\mathrm{Poi}}
\newcommand{\sca}{\mathrm{sc}}
\newcommand{\equi}{\mathrm{eq}}
\newcommand{\osc}{\mathrm{osc}}
\newcommand{\pend}{\mathrm{p}}
\newcommand{\flow}{\mathrm{fl}}
\newcommand{\lah}{\hat{\lambda}}
\newcommand{\Lh}{\hat{L}}
\newcommand{\Lah}{\hat{\Lambda}}
\newcommand{\xh}{\hat{x}}
\newcommand{\yh}{\hat{y}}
\newcommand{\Ltres}{\mathfrak{L}}
\newcommand{\LtresLa}{\mathfrak{L}_{\Lambda}}
\newcommand{\Ltresx}{\mathfrak{L}_x}
\newcommand{\Ltresy}{\mathfrak{L}_y}
\newcommand{\vap}{\nu}
\newcommand{\inn}{\mathrm{in}}
\newcommand{\Inn}{^{\mathrm{in}}}
\newcommand{\Inner}{{\mathrm{in}}}
\newcommand{\rhoInn}{\kappa}
\newcommand{\kappaInner}{\kappa_5}
\newcommand{\CInn}{\Tht}
\newcommand{\DuInn}{\mathcal{D}^{\mathrm{u,in}}_{\rhoInn}}
\newcommand{\DsInn}{\mathcal{D}^{\mathrm{s,in}}_{\rhoInn}}
\newcommand{\DdInn}{\mathcal{D}^{\diamond,\mathrm{in}}_{\rhoInn}}
\newcommand{\EInn}{\mathcal{E}^{\mathrm{in}}_{\rhoInn}}
\newcommand{\ZuInn}{Z^{\mathrm{u}}_0}
\newcommand{\ZuInnN}{\widehat{Z}^{\mathrm{u}}_0}
\newcommand{\ZsInn}{Z^{\mathrm{s}}_0}
\newcommand{\ZsInnN}{\widehat{Z}^{\mathrm{s}}_0}
\newcommand{\ZusInn}{Z^{\mathrm{u,s}}_0}
\newcommand{\ZdInn}{Z^{\diamond}_0}
\newcommand{\ZdInnN}{\widehat{Z}^{\diamond}_0}
\newcommand{\WuInn}{W^{\mathrm{u}}_0}
\newcommand{\WdInn}{W^{\diamond}_0}
\newcommand{\WdInnN}{\widehat{W}^{\diamond}_0}
\newcommand{\XdInn}{X^{\diamond}_0}
\newcommand{\XdInnN}{\widehat{X}^{\diamond}_0}
\newcommand{\YusInn}{Y^{\mathrm{u,s}}_0}
\newcommand{\YdInn}{Y^{\diamond}_0}
\newcommand{\YdInnN}{\widehat{Y}^{\diamond}_0}
\newcommand{\DZInn}{\Delta Z_0}
\newcommand{\DYInn}{\Delta Y_0}
\newcommand{\DZInnN}{\Delta \widehat{Z}_0}
\newcommand{\out}{\mathrm{sep}}
\newcommand{\zOut}{z}
\newcommand{\zuOut}{z^{\mathrm{u}}}
\newcommand{\zsOut}{z^{\mathrm{s}}}
\newcommand{\zusOut}{z^{\mathrm{u,s}}}
\newcommand{\zdOut}{z^{\diamond}}
\newcommand{\wOut}{w}
\newcommand{\wuOut}{w^{\mathrm{u}}}
\newcommand{\wsOut}{w^{\mathrm{s}}}
\newcommand{\wusOut}{w^{\mathrm{u,s}}}
\newcommand{\wdOut}{w^{\diamond}}
\newcommand{\xOut}{x}
\newcommand{\xuOut}{x^{\mathrm{u}}}
\newcommand{\xsOut}{x^{\mathrm{s}}}
\newcommand{\xdOut}{x^{\diamond}}
\newcommand{\yOut}{y}
\newcommand{\yuOut}{y^{\mathrm{u}}}
\newcommand{\ysOut}{y^{\mathrm{s}}}
\newcommand{\ydOut}{y^{\diamond}}
\newcommand{\dzOut}{\Delta z}
\newcommand{\dwOut}{\Delta w}
\newcommand{\dxOut}{\Delta x}
\newcommand{\dyOut}{\Delta y}
\newcommand{\DuInfty}{{D}^{\mathrm{u},\infty}_{\rho_1}}
\newcommand{\DsInfty}{{D}^{\mathrm{s},\infty}_{\rho_1}}
\newcommand{\DdInfty}{{D}^{\diamond,\infty}_{\rho_1}}
\newcommand{\DuInftyRho}{{D}^{\mathrm{u},\infty}_{\rho_1}}
\newcommand{\DsInftyRho}{{D}^{\mathrm{s},\infty}_{\rho_1}}
\newcommand{\ballInftyDef}{B(\varrho)}
\newcommand{\ballInfty}{B(\varrho \de^3)}
\newcommand{\rhoInfty}{\rho_1}
\newcommand{\XcalInfty}{\mathcal{X}^{\infty}}
\newcommand{\XcalInftyTotal}{\mathcal{X}^{\infty}_{\times}}
\newcommand{\normInfty}[1]{\left\lVert#1\right\rVert^{\infty}}
\newcommand{\normInftySmall}[1]{\lVert#1\rVert^{\infty}}
\newcommand{\normInftyTotal}[1]{\left\lVert#1\right\rVert^{\infty}_{\times}}
\newcommand{\ballOuterDef}{B(\varrho)}
\newcommand{\ballOuter}{B(\varrho \de^3)}
\newcommand{\DuOut}{{D}_{\kappa,d_1,\rho_2}^{\mathrm{u, out}}}
\newcommand{\DuOutKappa}{{D}_{\kappa_1,d_1,\rho_2}^{\mathrm{u, out}}}
\newcommand{\DsOut}{{D}_{\kappa,d_1,\rho_2}^{\mathrm{s, out}}}
\newcommand{\DsOutKappa}{{D}_{\kappa_1,d_1,\rho_2}^{\mathrm{s, out}}}
\newcommand{\DdOut}{{D}_{\kappa,d_1,\rho_2}^{\diamond, \mathrm{out}}}
\newcommand{\rhoOuter}{\rho_2}
\newcommand{\dOuter}{d_1}
\newcommand{\kappaOuter}{\kappa_1}
\newcommand{\XcalOut}{\mathcal{X}^{\mathrm{out}}}
\newcommand{\XcalOutTotal}{\mathcal{X}^{\mathrm{out}}_{\times}}
\newcommand{\normOut}[1]{\left\lVert#1\right\rVert^{\mathrm{out}}}
\newcommand{\normOutSmall}[1]{\lVert#1\rVert^{\mathrm{out}}}
\newcommand{\normOutTotal}[1]{\left\lVert#1\right\rVert^{\mathrm{out}}_{\times}}
\newcommand{\normOutTotalSmall}[1]{\lVert#1\rVert^{\mathrm{out}}_{\times}}
\newcommand{\normSup}[1]{\lVert#1\rVert_{\mathrm{sup}}}
\newcommand{\normSupBig}[1]{\left\lVert#1\right\rVert_{\mathrm{sup}}}
\newcommand{\DBoomerang}{{D}_{\kappa, d}}
\newcommand{\DBoomerangKappa}{{D}_{\kappa_0, d}}
\newcommand{\dBoomerang}{d}
\newcommand{\kappaBoomerang}{\kappa_0}
\newcommand{\DOuterTilde}{\widetilde{D}_{\kappa_2,d_2,d_3}^{\mathrm{u, out}}}
\newcommand{\dOuterA}{d_2}
\newcommand{\dOuterB}{d_3}
\newcommand{\kappaOuterTilde}{\kappa_2}
\newcommand{\rhoOuterTilde}{\rho_3}
\newcommand{\YcalOuter}{\mathcal{Y}^{\mathrm{out}}}
\newcommand{\DFlow}{{D}_{\kappa_3, d_4}^{\mathrm{fl}}}
\newcommand{\dFlow}{d_4}
\newcommand{\kappaFlow}{\kappa_3}
\newcommand{\DBoomerangTilde}{\widetilde{D}_{\kappa, d}}
\newcommand{\DBoomerangTildeProp}{\widetilde{D}_{\kappa_4, d_5}}
\newcommand{\dBoomerangTilde}{d_5}
\newcommand{\kappaBoomerangTilde}{\kappa_4}
\newcommand{\YcalBoom}{\widetilde{\mathcal{Y}}}
\newcommand{\uOut}{\mathcal{U}}
\newcommand{\vOut}{\mathcal{V}}
\newcommand{\Gh}{\widehat{\Gamma}}
\newcommand{\Gu}{\Gamma^{\mathrm{u}}}
\newcommand{\normFlow}[1]{\lVert#1\rVert^{\mathrm{fl}}}
\newcommand{\normFlowTotal}[1]{\lVert#1\rVert^{\mathrm{fl}}_{\times}}
\newcommand{\XcalFlow}{\mathcal{X}^{\mathrm{fl}}}
\newcommand{\XcalFlowTotal}{\mathcal{X}^{\mathrm{fl}}_{\times}}
\newcommand{\mch}{\mathrm{mch}}
\newcommand{\kappaMch}{\kappa_6}
\newcommand{\DuMchOut}{{D}_{\kappa}^{\mathrm{mch,u}}}
\newcommand{\DsMchOut}{{D}_{\kappa}^{\mathrm{mch,s}}}
\newcommand{\DdMchOut}{{D}_{\kappa}^{\mathrm{mch},\diamond}}
\newcommand{\DuMchInn}{\mathcal{D}_{\kappa}^{\mathrm{mch,u}}}
\newcommand{\DsMchInn}{\mathcal{D}_{\kappa}^{\mathrm{mch,s}}}
\newcommand{\DdMchInn}{\mathcal{D}_{\kappa}^{\mathrm{mch},\diamond}}
\newcommand{\DusMchInn}{\mathcal{D}_{\kappa}^{\mathrm{mch,u,s}}}
\newcommand{\ZuMchO}{Z^{\mathrm{u}}}
\newcommand{\ZusMchO}{Z^{\mathrm{u,s}}}
\newcommand{\ZdMchO}{Z^{\diamond}}
\newcommand{\WuMchO}{W^{\mathrm{u}}}
\newcommand{\WdMchO}{W^{\diamond}}
\newcommand{\XuMchO}{X^{\mathrm{u}}}
\newcommand{\XdMchO}{X^{\diamond}}
\newcommand{\YuMchO}{Y^{\mathrm{u}}}
\newcommand{\YsMchO}{Y^{\mathrm{s}}}
\newcommand{\YusMchO}{Y^{\mathrm{u,s}}}
\newcommand{\YdMchO}{Y^{\diamond}}
\newcommand{\ZMchU}{Z_1}
\newcommand{\ZuMchU}{Z_1^{\mathrm{u}}}
\newcommand{\ZsMchU}{Z_1^{\mathrm{s}}}
\newcommand{\ZusMchU}{Z_1^{\mathrm{u,s}}}
\newcommand{\ZdMchU}{Z_1^{\diamond}}
\newcommand{\WMchU}{W_1}
\newcommand{\WuMchU}{W_1^{\mathrm{u}}}
\newcommand{\WdMchU}{W_1^{\diamond}}
\newcommand{\XMchU}{X_1}
\newcommand{\XuMchU}{X_1^{\mathrm{u}}}
\newcommand{\XdMchU}{X_1^{\diamond}}
\newcommand{\YMchU}{Y_1}
\newcommand{\YuMchU}{Y_1^{\mathrm{u}}}
\newcommand{\YsMchU}{Y_1^{\mathrm{s}}}
\newcommand{\YusMchU}{Y_1^{\mathrm{u,s}}}
\newcommand{\YdMchU}{Y_1^{\diamond}}
\newcommand{\XcalMch}{\mathcal{X}^{\mathrm{mch}}}
\newcommand{\XcalMchTotal}{\mathcal{X}^{\mathrm{mch}}_{\times}}
\newcommand{\XcalMchUTotal}{\mathcal{X}^{\mathrm{mch,u}}_{\times}}
\newcommand{\normMch}[1]{\left\lVert#1\right\rVert^{\mathrm{mch}}}
\newcommand{\normMchSmall}[1]{\lVert#1\rVert^{\mathrm{mch}}}
\newcommand{\normMchTotal}[1]{\left\lVert#1\right\rVert^{\mathrm{mch}}_{\times}}
\newcommand{\normMchTotalSmall}[1]{\lVert#1\rVert^{\mathrm{mch}}_{\times}}
\newcommand{\spl}{\mathrm{spl}}
\newcommand{\dzHat}{\Delta \Phi}
\newcommand{\dzHatO}{\Delta \Phi_0}
\newcommand{\dzHatU}{\Delta \Phi_1}
\newcommand{\dxOutO}{\Delta x_0}
\newcommand{\dyOutO}{\Delta y_0}
\newcommand{\DYInnC}{\Delta Y}
\newcommand{\XSpl}{\mathcal{X}^{\mathrm{spl}}}
\newcommand{\XSplTotal}{\mathcal{X}^{\mathrm{spl}}_{\times}}
\newcommand{\normSpl}[1]{\left\lVert#1\right\rVert^{\mathrm{spl}}}
\newcommand{\normSplTotal}[1]{\left\lVert#1\right\rVert^{\mathrm{spl}}_{\times}}
\newcommand{\cttTheorem}{b_0}
\newcommand{\cttOuterA}{b_1}
\newcommand{\cttOuterB}{b_2}
\newcommand{\cttOuterC}{b_3}
\newcommand{\cttOuterD}{b_4}
\newcommand{\cttOuterF}{b_5}
\newcommand{\cttOuterG}{b_6}
\newcommand{\cttOuterH}{b_7}
\newcommand{\cttInnDerA}{c_1}
\newcommand{\cttInnDerAA}{\gamma_1}
\newcommand{\cttInnDerB}{c_2}
\newcommand{\cttInnDerBB}{\gamma_2}
\newcommand{\cttInnDerC}{b_8}
\newcommand{\cttInnExist}{b_{9}}
\newcommand{\cttInnDiff}{b_{10}}
\newcommand{\cttMch}{b_{11}}
\newcommand{\cttDiffA}{b_{12}}
\newcommand{\cttDiffB}{b_{13}}
\newcommand{\cttOutInftyA}{b_{14}}
\newcommand{\cttOutOutA}{b_{15}}
\newcommand{\cttOutOutB}{b_{16}}
\newcommand{\cttOutOutC}{b_{17}}
\newcommand{\cttMchA}{b_{18}}
\newcommand{\betaBow}{\beta_0}
\newcommand{\betaOutA}{\beta_0}
\newcommand{\betaOutB}{\beta_1}
\newcommand{\betaInner}{\beta_0}
\newcommand{\betaMchA}{\beta_2}
\newcommand{\betaMchB}{\beta_3}
\title{Breakdown of homoclinic orbits to $L_3$ in the RPC$3$BP (II). An asymptotic formula} 
\author[1,2]{Inmaculada Baldom\'a}
\author[1]{Mar Giralt\thanks{Corresponding author.\\
		\emph{E-mail adresses:} \href{mailto:immaculada.baldoma@upc.edu}{immaculada.baldoma@upc.edu} (I. Baldom\'a), 
		\href{mailto:mar.giralt@upc.edu}{mar.giralt@upc.edu} (M. Giralt),
		\href{mailto:marcel.guardia@upc.edu}{marcel.guardia@upc.edu} (M. Guardia).}}
\author[1,2]{Marcel Guardia}
\affil[1]{Departament de Matem\`atiques \& IMTECH, Universitat Polit\`ecnica de Catalunya, Diagonal 647, 08028 Barcelona, Spain}
\affil[2]{Centre de Recerca Matem\`atiques, Campus de Bellaterra, Edifici C, 08193 Barcelona, Spain}
\date{July 21, 2021}
\begin{document}

\maketitle 

%% ABSTRACT
% \begin{center} ABSTRACT  \end{center}
\begin{abstract}
 The Restricted  $3$-Body Problem models the motion of a body of negligible mass under the gravitational influence of two massive bodies called the primaries. If one assumes that the primaries perform circular motions and that all three bodies are coplanar, one has the Restricted Planar Circular  $3$-Body Problem (RPC$3$BP).
%
% such that the three bodies move on the same plane and the primaries perform circular motions.
%
In rotating coordinates, it can be modeled by  a two degrees of freedom Hamiltonian, which has  five critical points called the Lagrange points $L_1, \ldots, L_5$. 

The Lagrange point $L_3$ is a  saddle-center critical point  which is collinear with the primaries and beyond the largest of the two.
In this paper, we obtain an asymptotic formula for the distance between the stable and unstable manifolds of $L_3$ for small values of the mass ratio $0<\mu\ll1$. In particular we show that $L_3$ cannot have (one round) homoclinic orbits.

If the ratio between the masses of the primaries $\mu$ is small, the hyperbolic eigenvalues of $L_3$ are weaker, by a factor of order $\sqrt\mu$, than the elliptic ones. 
% This implies that the distance between the one dimensional stable and unstable manifolds of $L_3$ are exponentially close as $\mu\to 0$.
% $L_3$.
%
This rapidly rotating dynamics makes the distance between manifolds  exponentially small with respect to $\sqrt\mu$.
Thus, classical perturbative methods (i.e the Melnikov-Poincar\'e method) can not be applied.

The obtention of this asymptotic formula relies on the results obtained in the prequel paper \cite{articleInner} on the complex singularities of the homoclinic of a certain averaged equation and on the associated inner equation.
% which provide a first orders of the analytic continuation of the parameterizations of the perturbed invariant manifolds in suitable complex domains. These first order are solutions of the so-called inner equation.
% we analyze the complex singularities of the time parametrization of the homoclinic of a certain averaged model and we derive and study a certain inner equation of the analytic continuation of the 

% In this paper, reliying on the analysis of certain complex singularities and of an associated inner equation done in \cite\cite{articleInner} 

In this second paper, we relate the solutions of the inner equation to the analytic continuation of the parameterizations of the invariant manifolds of $L_3$ via complex matching techniques. We complete the proof of the asymptotic formula for their distance showing that its dominant term is the one given by the analysis of the inner equation.
% 
% 
% prove the existence and necessary properties of the stable and unstable manifolds of $L_3$ in suitable complex domains and, by using matching techniques, we compare them with the solutions of the inner equation studied in the prequel paper \cite{articleInner}.
% %
% Then, to obtain the previously mentioned asymptotic formula,
% we prove that its dominant term comes from the difference between the solutions of the inner equation.
\end{abstract}

\tableofcontents

%% ESQUEMA 

\section{Introduction} 
\label{section:introduction}
% 
% The present work is the natural continuation of \cite{articleInner}.
% %
% However, all the results presented in this article are contextualized in such a manner that it can be read independently of its first part.

The Restricted Circular $3$-Body Problem 
models the motion of a body of negligible mass under the gravitational influence of two massive bodies, called the primaries, which perform a circular motion.
If one also assumes that the massless body moves on the same plane as the primaries one has the Restricted Planar Circular $3$-Body Problem (RPC$3$BP).

Let us name the two primaries $S$ (star) and $P$ (planet) and
normalize their masses so that $m_S=1-\mu$ and $m_P=\mu$, with $\mu \in \left( 0, \frac{1}{2} \right]$. 
Choosing a suitable rotating coordinate system, the positions of the primaries can be fixed at $q_S=(\mu,0)$ and  $q_P=(\mu-1, 0)$.
Then, the position and momenta of the third body, $(q,p) \in \reals^2 \times \reals^2$, are governed by the Hamiltonian system associated to the Hamiltonian
\begin{equation}\label{def:hamiltonianInitialNotSplit} 
	\begin{split}
		\HInicial(q,p;\mu) &= \frac{||p||^2}{2} 
		- q^t \left( \begin{matrix} 0 & 1 \\ -1 & 0 \end{matrix} \right) p 
		-\frac{(1-\mu)}{||q-(\mu,0)||} 
		- \frac{\mu}{||q-(\mu-1,0)||}.
	\end{split}
\end{equation}
% This Hamiltonian is that the system is singular when the third body collides with one of the primaries at $q=q_S$ or $q=q_J$.
%
Note that this Hamiltonian is autonomous. The conservation of $h$ corresponds to the preservation of the classical Jacobi constant.

For $\mu>0$, it is a well known fact that \eqref{def:hamiltonianInitialNotSplit}  has five critical points, usually called Lagrange points
(see Figure~\ref{fig:L3Outer}). 
On an inertial (non-rotating) system of coordinates, the Lagrange points correspond to periodic dynamics with the same period as the two primaries, i.e on a 1:1 mean motion resonance.
The three collinear Lagrange points, $L_1$, $L_2$ and $L_3$, are of center-saddle type whereas, for small $\mu$, the triangular ones, $L_4$ and $L_5$, are of center-center type  (see, for instance, \cite{Szebehely}).

Due to its interest in astrodynamics, a lot of attention has been paid to the study of the invariant manifolds associated to  the points $L_1$ and $L_2$ (see \cite{KLMR00, GLMS01v1, CGMM04}).
The dynamics around the points $L_4$ and $L_5$ has also been heavily studied since, due to its stability, it is common to find objects orbiting around these points (for instance the Trojan and Greek Asteroids associated to the pair Sun-Jupiter, see \cite{GDFGS89, CelGio90, RobGab06}).
Since the point $L_3$ is located ``at the other side'' of the massive primary, it has received somewhat less attention. However, the associated invariant manifolds (more precisely its center-stable and center-unstable invariant manifolds) play an important role in the dynamics of the RPC3BP  since they act as boundaries of \emph{effective stability} of the stability domains around $L_4$ and $L_5$ 
(see \cite{GJMS01v4, SSST13}). 
The invariant manifolds of $L_3$ play also a fundamental role in creating transfer orbits from the small primary to $L_3$ in the RPC$3$BP (see \cite{HTL07, TFRPGM10})  or between primaries in the Bicircular 4-Body Problem (see \cite{JorNic20, JorNic21}).

% \textcolor{red}{(Posar alguna cita?).}

Moreover, being far from collision, the dynamics close to the Lagrange point $L_3$ and its invariant manifolds for small $\mu$ are rather similar to that of other mean motion resonances which play an important role in creating instabilities in the Solar system, see \cite{FGKR16}. On the contrary, since the points $L_1$ and $L_2$ are close to collision for small $\mu$, the analysis of the associated dynamics is quite different.
%\begin{figure}
%	\subfloat[$\mu>0$]{%
%		\begin{overpic}[scale=0.7]{lagrangePointsPerturbedEsquema.png}
%			\put(43,38){{\color{blue} $q_S$ }}
%			\put(14,38){{\color{blue} $q_P$ }}
%			\put(23,45){{\color{red} $L_1$ }}
%			\put(10,45){{\color{red} $L_2$ }}
%			\put(63.5,45){{\color{red} $L_3$ }}
%			\put(27,69){{\color{red} $L_5$ }}	
%			\put(27,14){{\color{red} $L_4$ }}
%			\put(89,42){$q_2$}
%			\put(37.5,82.5){$q_1$}
%		\end{overpic}
%		\vspace{1cm}	
%	}\hfill
%	\subfloat[$\mu=0.003$]{%
%		\begin{overpic}[width=7.5cm]{L3manifoldsm003.png}
%			\put(50,1){$q_1$}
%			\put(12,38){$q_2$}
%			\put(71.5,37.5){{\color{red} $ L_3 \h \bullet$ }}
%			\put(37,58){{\color{red} $\bullet$ }}
%			\put(28,61){{\color{red} $L_5$ }}
%			\put(37,17){{\color{red} $\bullet$ }}	
%			\put(28,12){{\color{red} $L_4$ }}
%			\put(50.5,37.5){{\color{blue} $\bullet$ }}
%			\put(50.5,34){{\color{blue} $q_S$ }}
%			\put(23,37.5){{\color{blue} $\bullet$ }}
%			\put(23,34){{\color{blue} $q_P$ }}
%		\end{overpic}
%	}
%	\caption{(a) Projection onto the $q$-plane of the equilibrium points for the RPC$3$BP on rotating coordinates. (b) Plot of the stable (green) and unstable (blue) manifolds of $L_3$ projected onto the $q$-plane for $\mu=0.003$. }
%	\label{fig:L3perturbed}
%\end{figure}

\begin{figure}
\centering
\begin{overpic}[scale=0.3]{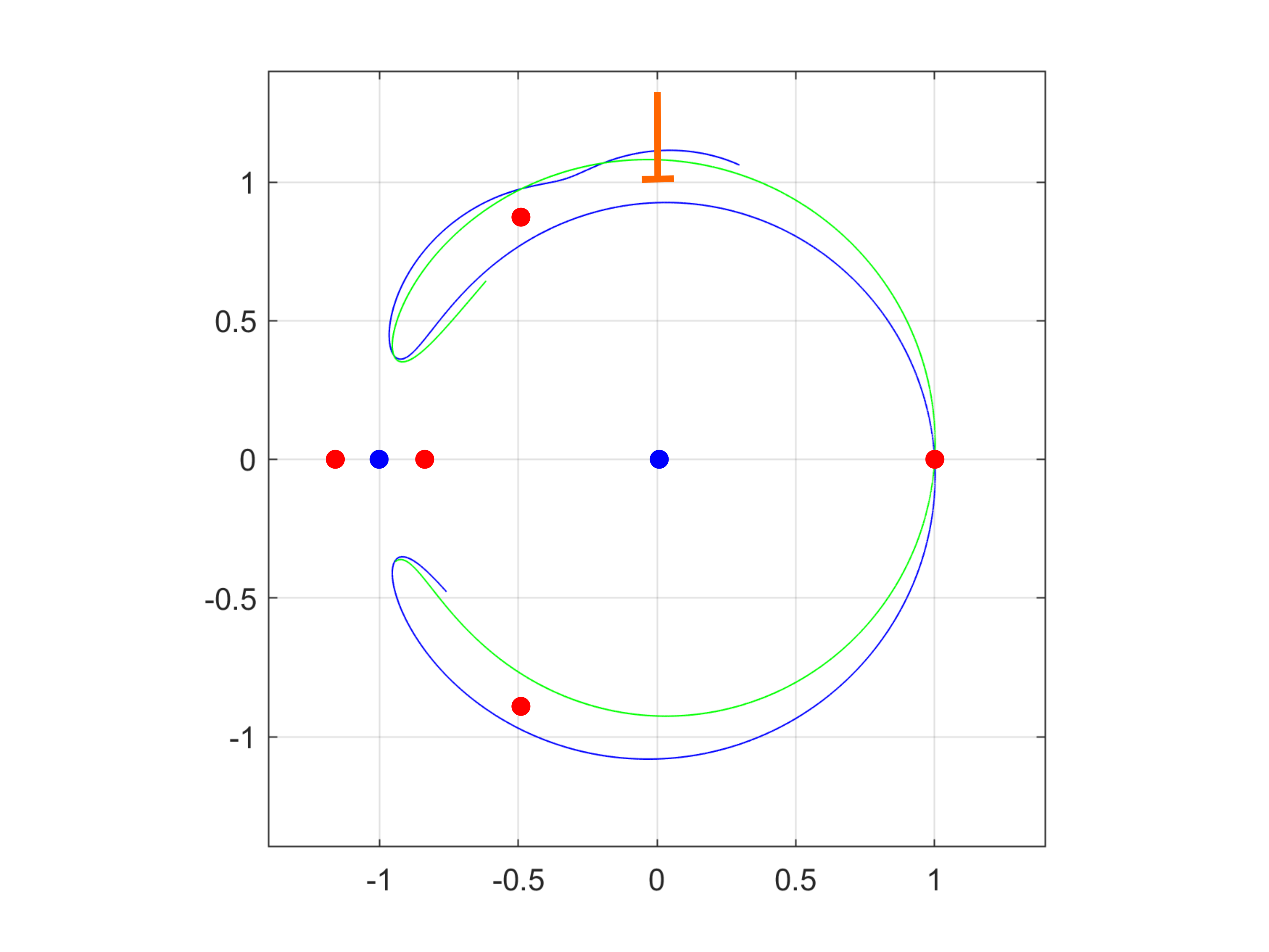}
	\put(50,35){\footnotesize{\color{blue} $q_S$ }}
	\put(29,35){\footnotesize{\color{blue} $q_P$ }}
	\put(32,41){{\color{red} $L_1$ }}
	\put(24,41){{\color{red} $L_2$ }}
	\put(75,38){{\color{red} $L_3$ }}
	\put(38,62){{\color{red} $L_5$ }}	
	\put(38,13){{\color{red} $L_4$ }}
	\put(53,64.5){{\color{orange} $\Sigma$ }}
\end{overpic}
	\caption{Projection onto the $q$-plane of the Lagrange points (red) and the unstable (blue) and stable (green) manifolds of $L_3$, for $\mu=0.0028$. }
	\label{fig:L3Outer}
\end{figure}

Over the past years, one of the main focus of study of the  dynamics ``close''  to  $L_3$ and its invariant manifolds has been  the so called ``horseshoe-shaped orbits'', first considered in~\cite{Brown1911},
which are quasi-periodic orbits that encompass the critical points $L_4$, $L_3$ and $L_5$.
The interest on these types of orbits arise when modeling the motion of co-orbital satellites, the most famous being Saturn's satellites Janus and Epimetheus, and near Earth asteroids.
Recently, in~\cite{NPR20}, the authors have proved the existence of $2$-dimensional elliptic invariant tori on which the trajectories mimic the motions followed by Janus and Epimetheus
(see also \cite{DerMur81a, DerMur81b, LlibreOlle01, CorsHall03, BarrabesMikkola05, BarrabesOlle2006, BFPC13, CPY19}).
%
%%%%%%%%%%
%

%%%%

Rather than looking at stable motions ``close to'' $L_3$ as \cite{NPR20}, the goal of this paper (and its prequel \cite{articleInner}) is rather different: its objective is to prove the breakdown of homoclinic connections to $L_3$. Indeed, since  $L_3$ is a center-saddle critical point, it possesses $1$-dimensional unstable and stable manifolds, which we denote by $W^{\unstable}(\mu)$ and $W^{\stable}(\mu)$, respectively, and a $2$-dimensional center manifold.
Theorem \ref{theorem:mainTheorem} below gives an asymptotic formula for the distance between the stable and unstable invariant manifolds (at a suitable transverse section) for mass ratio $\mu>0$ small enough.

% An asymptotic formula for t
\subsection{The distance between the invariant manifolds of \texorpdfstring{$L_3$}{L3}}

The one dimensional unstable and stable invariant manifolds of $L_3$ have two branches each (see Figure~\ref{fig:L3Outer}).
One pair circumvents $L_5$, which we denote by $W^{\unstable,+}(\mu)$ and $W^{\stable,+}(\mu)$, and the other,  $W^{\unstable,-}(\mu)$ and $W^{\stable,-}(\mu)$, circumvents $L_4$.
Since the Hamiltonian system associated to the Hamiltonian $\HInicial$ is reversible with respect to the involution
\begin{equation*}
%\label{def:involutionCartesians}
	\Phi(q,p;t)=(q_1,-q_2,-p_1,p_2;-t),
\end{equation*}
the $+$ branches of the invariant manifolds are symmetric with respect to the $-$ branches. Thus, we restrict our analysis  to the positive branches.
% $W^{\unstable/\stable,+}(\mu)$.
% since the negative branches are symmetric.

% In this paper, we finish the study (which we started in \cite{articleInner}) of the $1$-dimensional invariant manifolds of $L_3$ to obtain an asymptotic formula for the distance between $W^{\unstable,+}(\mu)$ and $W^{\stable,+}(\mu)$, for $0<\mu\ll 1$.

To measure the distance between $W^{\unstable/\stable,+}(\mu)$, we consider the symplectic polar change of coordinates
\begin{align}\label{def:changePolars}
	q= 
	%	\begin{pmatrix}
	%		q_1 \\ 
	%		q_2
	%	\end{pmatrix} =
	r \begin{pmatrix}
		\cos \tht \\ 
		\sin \tht
	\end{pmatrix},
	\qquad
	p = 
	%	\begin{pmatrix}
	%		p_1 \\ 
	%		p_2
	%	\end{pmatrix} =
	R
	\begin{pmatrix}
		\cos \tht \\ 
		\sin \tht
	\end{pmatrix} 
	- \frac{G}{r} \begin{pmatrix}
		\sin \tht \\ 
		-\cos \tht
	\end{pmatrix},
\end{align} 
where 
% $r$ is the radius, $\tht$ the argument of $q$, 
$R$ is the radial linear momentum  and $G$ is the angular momentum.

We consider the $3$-dimensional section 
\[
\Sigma = \claus{(r,\tht,R,G) \in \reals \times \torus \times \reals^2 
\st r>1, \, \tht=\frac{\pi}2 \,}
\]
and denote by $(r^{\unstable}_*,\frac{\pi}2, R^{\unstable}_*,G^{\unstable}_*)$ and $(r^{\stable}_*,\frac{\pi}2,R^{\stable}_*,G^{\stable}_*)$ the first crossing of the invariant manifolds with this section. 

% 
% 	= \max_{t \in \reals} \boxClaus{ W^{\stable,+}(\mu) \cap \Sigma(\tht_*)}.
% \end{align*}	
% 
% \begin{align*}%\label{def:firstCrossingPolars}
% 	(r^{\unstable},\tht_*,R^{\unstable},G^{\unstable})
% 	= \min_{t \in \reals} \boxClaus{ W^{\unstable,+}(\mu) \cap \Sigma(\tht_*)}, 
% 	\quad
% 	(r^{\stable},\tht_*,R^{\stable},G^{\stable}) 
% 	= \max_{t \in \reals} \boxClaus{ W^{\stable,+}(\mu) \cap \Sigma(\tht_*)}.
% \end{align*}	

% Now, we introduce the main result of this paper and \cite{articleInner}.
The next theorem measures the distance between these points for $0< \mu\ll 1$.

\begin{theorem}\label{theorem:mainTheorem}
There exists $\mu_0>0$ such that, for $\mu \in (0,\mu_0)$,
\[
\norm{(r^{\unstable}_*,R^{\unstable}_*,G^{\unstable}_*)-(r^{\stable}_*,R^{\stable}_*,G^{\stable}_*)}
=
\sqrt[3]{4} \,
\mu^{\frac13} e^{-\frac{A}{\sqrt{\mu}}} 
\boxClaus{\vabs{\CInn}+\OO\paren{\frac1{\vabs{\log \mu}}}},
\]
where:
\begin{itemize}
	\item The constant $A>0$ is the real-valued integral
	\begin{equation}\label{def:integralA}
		A= \int_0^{\frac{\sqrt{2}-1}{2}} \frac{2}{1-x}\sqrt\frac{x}{3(x+1)(1-4x-4x^2)}  dx\approx 0.177744.
	\end{equation}
	\item The constant $\CInn \in \complexs$ is the Stokes constant associated to the inner equation analyzed in \cite{articleInner} and in Theorem \ref{theorem:innerComputations} below.
% 		by the result of the inner equation in Theorem 2.7 in \cite{articleInner} (see Section \ref{subsection:innerHeuristics} for an overview of the result).	
	\end{itemize}
\end{theorem}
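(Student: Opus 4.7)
The eigenvalues at $L_3$ are of order $\sqrt\mu$, so the natural singular time rescaling $\tau=\sqrt\mu\,t$ puts the system in slow--fast form. The averaged (slow) Hamiltonian has an unperturbed homoclinic whose complex singularities are exactly at $\tau=\pm iA$, with $A$ the integral in~\eqref{def:integralA}, and this is what produces the exponential factor $e^{-A/\sqrt\mu}$. Since the splitting is exponentially small, the Melnikov--Poincar\'e method is useless and one must proceed by \emph{complex matching}. The plan is to extend the parameterizations of $W^{\unstable,+}(\mu)$ and $W^{\stable,+}(\mu)$ analytically to complex ``boomerang'' domains that come within distance $O(\sqrt\mu)$ of $\pm iA$, match them there with the unstable and stable solutions of the inner equation analyzed in~\cite{articleInner}, transport the resulting exponentially small difference back to the section $\Sigma$, and identify the Stokes constant $\CInn$ as the leading coefficient.

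\textbf{Construction of the outer parameterizations.} I would first put the Hamiltonian near $L_3$ into a normal form that isolates, after averaging the fast angle, a two-dimensional slow Hamiltonian carrying the separatrix (with double poles at $\pm iA$) from the rapidly rotating elliptic directions. Each branch $W^{\unstable,+}(\mu)$ and $W^{\stable,+}(\mu)$ is then represented as a graph over this unperturbed separatrix and obtained as the fixed point of the standard integral invariance operator on a complex domain $\DuOut$, respectively $\DsOut$, that covers the real trajectory and reaches up to a distance $\kappaOuter\sqrt\mu$ of each singularity. The key analytic inputs are (i)~sharp weighted estimates for the inverse of the linearised hyperbolic operator with prescribed decay rates at $\pm iA$, and (ii)~quantitative control of the oscillatory averaging remainder, which is exponentially small in the relevant Fourier modes.

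\textbf{Inner matching.} Near $+iA$, the rescaling $\tau=iA+\sqrt\mu\,\sigma$ turns the invariance equation into the inner equation studied in~\cite{articleInner}, whose unstable and stable solutions $\ZuInn$, $\ZsInn$ satisfy the asymptotics $\ZuInn(\sigma)-\ZsInn(\sigma)=\CInn\,e^{-i\sigma}(\xi_0+o(1))$ as $\Im\sigma\to-\infty$, for a fixed eigenvector $\xi_0$. The core of the paper is to prove that on the overlap $\DuOut\cap\DsOut\cap\{|\tau-iA|\lesssim 1\}$ the rescaled outer parameterizations agree with $\ZuInn,\ZsInn$ up to an error that is $o(1)$ relative to $|\CInn|$. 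I expect this matching step to be the main technical obstacle: it is carried out by writing the difference outer-minus-inner as a fixed point of an operator whose linear part is the linearised inner operator, and using the gain that its inversion provides in weighted norms to propagate the bound up to the boundary of the overlap. All quantitative input on the inner solutions and on $\CInn$ is supplied by~\cite{articleInner}.

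\textbf{Conclusion on $\Sigma$.} Once the matching is proved, the difference $\Delta W(\tau;\mu)=W^{\unstable,+}(\tau;\mu)-W^{\stable,+}(\tau;\mu)$ inherits from the inner problem the leading-order behaviour
\[
	\Delta W(\tau;\mu) = \CInn\,e^{-A/\sqrt\mu}\,e^{-i\tau/\sqrt\mu}\,\xi_0\bigl[1+\OO(1/\vabs{\log \mu})\bigr] + \text{conjugate contribution from } -iA,
\]
the second singularity contributing by the reversibility $\Phi$, and all farther singularities entering at order $e^{-cA/\sqrt\mu}$ with $c>1$. Finally, I would flow $\Delta W$ by the Hamiltonian vector field to the section $\Sigma=\{\theta=\pi/2\}$ and undo the successive changes of variables back to $(r,R,G)$. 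The Jacobians of these transformations, combined with the intrinsic size of $\xi_0$ and with the time derivative of the unperturbed separatrix at its first real crossing of $\Sigma$, produce the explicit prefactor $\sqrt[3]{4}\,\mu^{1/3}$; the matching error, the subdominant harmonics, and the contribution of the further singularities are all absorbed into the $\OO(1/\vabs{\log \mu})$ term in the final asymptotic formula.
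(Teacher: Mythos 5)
Your outline matches the paper's overall route: the singular rescaling that exposes the slow--fast structure, the identification of the separatrix singularities at $\pm iA$, the analytic continuation of the parameterizations of $W^{\unstable/\stable,+}$ to complex domains approaching those singularities, the matching with the inner solutions from the prequel, and the return to $(r,R,G)$ via the inverse changes of variables. One step, however, would fail as you describe it. You propose to obtain $W^{\unstable,+}$ as a graph $z=z^{\unstable}(u)$ over the unperturbed separatrix by a fixed-point argument on a complex domain covering the relevant piece of the real line near the singularities. But the graph change of variables $\Lambda = \Lambda_h(u)-w/(3\Lambda_h(u))$ degenerates at $u=0$, where $\Lambda_h$ vanishes, and the section $\theta=\pi/2$ --- equivalently $\lambda=\lambda_*$ with $\Lambda>0$ --- is crossed at $u_*>0$, while the asymptotic condition characterizing $z^{\unstable}$ lives at $\Re u\to-\infty$. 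Thus the unstable graph must be analytically continued from $\Re u<0$ past $u=0$ to $u_*$, and the graph fixed-point operator cannot be propagated across that singularity of $\phi_{\out}$. The paper circumvents this by temporarily switching to a time-parametrization of the unstable branch, extending it around $u=0$ on a ``flow domain'', and converting back to a graph on the far side; without this detour the continuation to the boomerang domain is blocked.

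Two smaller points. The prefactor $\sqrt[3]{4}\,\mu^{1/3}$ is produced entirely by the inner scaling constant $\sqrt{2}\,\alpha_{\pm}$ (with $\alpha_{\pm}^3=\tfrac12$, so $|\sqrt2\,\alpha_{\pm}|=2^{1/6}$), the scaling $\eta=\delta x$ with $\delta=\mu^{1/4}$, and the Jacobian $\sqrt2$ of the Poincar\'e-to-polar change; the separatrix speed and the $\lambda\to\theta$ section adjustment that you invoke contribute only factors of modulus $1+\OO(1/|\log\mu|)$, which are absorbed into the error. And the paper passes from the $\lambda$-section to the $\theta$-section by an implicit-function argument rather than by flowing.
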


\begin{remark}\label{remark:sectiontheta}
We can prove the same result for any section 
\[
\Sigma(\tht_*) = \claus{(r,\tht,R,G) 
\in \reals \times \torus \times \reals^2 
\st r>1, \, \tht=\tht_* \,},
\]
with $\tht_* \in
% [\tht_1,\tht_2] \subset 
(0,\tht_0)$ and $\tht_0=\arccos\paren{\frac12-\sqrt2}$ (the value of $\mu_0$ depends on how close to the endpoints of the interval $\tht_*$ is).
The section $\tht=\tht_0$ is close to the ``turning point'' of the invariant manifolds (see Figure \ref{fig:L3Outer}).
\end{remark}

The constant $A$ in \eqref{def:integralA} is derived from the values of the complex singularities of the separatrix of certain integrable averaged system, which is studied in the prequel paper \cite{articleInner}. 
The results obtained in  \cite{articleInner} about this separatrix are summarized in Theorem \ref{theorem:singularities} below.
% the whole result is layed out and the proof is given in the prequel \cite{articleInner} of the present paper. 

The origin of the constant $\Theta$ appearing in Theorem \ref{theorem:mainTheorem} is explained in Theorem \ref{theorem:innerComputations}, which analyzes the so-called inner equation. This theorem  is also proven in  \cite{articleInner}. 
Moreover, in that paper it is seen, by a numerical computation,  that $\vabs{\CInn}\approx 1.63$. We expect that  one should be able to prove that $\vabs{\CInn}\neq 0$ by  means of  rigorous computer computations (see \cite{BCGS21}). Note that $\vabs{\CInn}\neq 0$ implies that there are not primary (i.e. one round) homoclinic orbits to $L_3$.

% The study of the inner equation  Stokes constant $\Theta$ by the result of the inner equation in Theorem 2.7 in \cite{articleInner} (see Section \ref{subsection:innerHeuristics} for an overview of the result).	

% That is, the difference between manifolds cannot be detected in the truncation of their normal form at any finite order. 
%

%%%%%%%
A fundamental problem in dynamical systems is to prove whether a given model has chaotic dynamics (for instance a Smale horseshoe). 
For many physically relevant models this is usually   remarkably difficult. This is the case of many Celestial Mechanics models, where most of the known chaotic motions have been found in nearly integrable regimes where there is an unperturbed problem which already presents some form of ``hyperbolicity''. This is the case  in the vicinity of collision orbits (see for example \cite{Moe89, BolMac06, Bol06, Moe07}) or close to parabolic orbits (which allows to construct chaotic/oscillatory motions), see~\cite{Sitnikov1960, Alekseev1976, LlibSim80, Moser2001, GMS16, GMSS17, GPSV21}. 
There are also several results in regimes far from integrable which rely on computer assisted proofs \cite{Ari02, WilcZgli03, Cap12, GierZgli19}. The problem tackled in this paper and \cite{articleInner} is radically different. Indeed, if one takes the  limit $\mu\to 0$ in \eqref{def:hamiltonianInitialNotSplit} one obtains the classical integrable Kepler problem in the elliptic regime, where  no hyperbolicity is present. Instead, the (weak) hyperbolicity is created by the $\mathcal{O}(\mu)$ perturbation, which can be captured considering an integrable averaged Hamiltonian along the $1:1$ mean motion resonance\footnote{The $1:1$ averaged Hamiltonian has been also studied to obtain ``good'' approximations for the global dynamics in the $1:1$ resonant zone, see for example \cite{RNP16, AlePou21} and the references therein.}.
% , as it was seen in \cite{articleInner}, for $\mu=0$, the unstable and stable manifolds of $L_3$ collapse into a circle of equilibrium points.

One of the classical methods  to construct chaotic dynamics is the Smale-Birkhoff homoclinic theorem   by proving the existence of  transverse homoclinic orbits to invariant objects, most commonly,  periodic orbits.
%
% Then,  implies the existence of chaotic motions in a neighborhood of the corresponding hyperbolic point.
%%%%%
%
Certainly the breakdown of homoclinic orbits to the critical point $L_3$ given by Theorem~\ref{theorem:mainTheorem} does not lead to the existence of chaotic orbits. However, one should expect that Theorem~\ref{theorem:mainTheorem} implies that there exist Lyapunov periodic orbits exponentially close to $L_3$ whose stable and unstable invariant manifolds intersect transversally. This would create chaotic motions ``exponentially close'' to $L_3$ and its invariant manifolds (see \cite{articleChaos}).
As already mentioned, Theorem \ref{theorem:mainTheorem} rules out the existence of primary homoclinic connections to $L_3$ in the RPC$3$BP for $0< \mu\ll 1$. However, it does not prevent the existence of multiround homoclinic orbits, that is homoclinic orbits which pass close to $L_3$ multiple times.
% 
% Another important object related to the dynamics of the invariant manifolds of $L_3$
% are symmetric homoclinic connections, which are connections between the opposed branches $W^{\unstable,+}(\mu)$ and $W^{\stable,-}(\mu)$ (accordingly $W^{\unstable,-}(\mu)$ with $W^{\stable,+}(\mu)$).
%
It has been conjectured (see for instance~\cite{BMO09}, where the authors analyze this problem numerically) that  multi-round homoclinic connections to $L_3$ should exist for a sequence of values $\claus{\mu_k}_{k \in \naturals}$ satisfying $\mu_k\to 0$ as $k \to \infty$. 

% \textcolor{blue}{JO HO TREURIA One of the features of our system is the degeneracy of the unperturbed system. 
% %
% Indeed, notice that for $\mu=0$, the stable and unstable manifolds of $L_3$ become a circle of critical points (see \eqref{def:hamiltonianInitialNotSplit}). 
% %
% Thus, it is necessary to reformulate the system in order to obtain a new first order Hamiltonian with saddle and a separatrix (see Section \ref{section:introductionPoincare}).}
%

%In the following, we denote by $d(\mu)$ the distance between the ``upper branches'' of the one-dimensional invariant manifolds, computed on a section of the phase space.
%
%\textcolor{red}{S'ha d'escriure b\'e aquest teorema.}
%
%In this work and~\cite{?} we 
%analyze the distance betweeen the $1$-dimensional invariant manifolds of $L_3$ and give an asymptotic formula for its difference.
%
%Indeed, in~\cite{?} we give asymptotic formulas for the distance between the stable and unstable manifolds of $L_3$.
%

%%%%%%%%%
\paragraph{A first step towards proving Arnold diffusion along the $1:1$ mean motion resonance in the $3$-Body Problem?}
Consider the $3$-Body Problem in the planetary regime, that is one massive body (the Sun) and two small bodies (the planets) performing approximate ellipses (including the ``Restricted limit'' when one of planets has mass zero).  A fundamental problem is to assert whether such configuration is stable (i.e. is the Solar system stable?). Thanks to Arnold-Herman-F\'ejoz KAM Theorem, many of such configurations are stable, see  \cite{Arnold63,Fejoz04}. However, it is widely expected that there should be strong instabilities created by Arnold diffusion mechanisms (as conjectured by Arnold in \cite{Arnold64}). In particular, it is widely believed that one of the main sources of such instabilities dynamics are the mean motion resonances, where the period of the two planets is resonant (i.e. rationally dependent) \cite{FGKR16}.

The RPC$3$BP has too low dimension (2 degrees of freedom) to possess Arnold diffusion. However, since it can be seen as a first order for higher dimensional models, the analysis performed in this paper can be seen as a humble first step towards constructing Arnold diffusion in the $1:1$ mean motion resonance. In this resonance,  the RPC$3$BP  has a normally hyperbolic invariant manifold given by the center manifold of the Lagrange point $L_3$. This normally hyperbolic invariant manifold is foliated by the classical Lyapunov periodic orbits. One should expect that the techniques developed in the present paper would allow to prove that the invariant manifolds of these periodic orbits intersect transversally within the corresponding energy level of \eqref{def:hamiltonianInitialNotSplit}. Still, this is a much harder problem than the one considered in this paper and the technicalities involved would be considerable. 

This transversality would not lead to Arnold diffusion due to the low dimension of the RPC3BP. However, if one considers either the Restricted Spatial Circular $3$-Body Problem with small $\mu>0$ which has three degrees of freedom, the Restricted Planar Elliptic $3$-Body Problem with small $\mu>0$ and eccentricity of the primaries $e_0>0$, which has two and a half degrees of freedom, or the ``full'' planar $3$-Body Problem (i.e. all three masses positive, two small) which has three degrees of freedom (after the symplectic reduction by the classical first integrals) one should be able to construct orbits with a drastic change in angular momentum (or inclination in the spatial setting). 

In the Restricted Planar Elliptic $3$-Body Problem the change of angular momentum would imply   the transition of the zero mass body orbit from a close to circular ellipse to a more eccentric one. In the full 3BP, due to total angular momentum conservation, the angular momentum would be transferred from one body to the other changing both osculating ellipses.
% \textcolor{red}{Citar Simo-SousaSilva-Terra. Evidences of diffusion in the SRP3BP \cite{SSST14}.}
This behavior would be analogous to that of \cite{FGKR16} for the $3:1$ and $1:7$ resonances. In that paper, the transversality between the invariant manifolds of the normally hyperbolic invariant manifold was checked numerically for the realistic Sun-Jupiter mass ratio $\mu=10^{-3}$. 
% As far as the authors know, no analytic prove for small $\mu$
Arnold diffusion instabilities have been analyzed numerically for the  Restricted Spatial Circular $3$-Body Problem  in \cite{SSST14}.

% 
% 
% From this perspective, the analysis performed in this paper (and \cite{innerArticle}) can be seen as a (very humble) first step towards proving Arnold diffusion along the $1:1$ mean motion resonance.
% 
% 
% , which leads 
% One of the fundamental problems is understand the unstable motions that by  
% 
% 
% One of the fundamental problems in Celestial Mechanics is to understand the unstable motions that the $3$ body problem may possess, specially in the planetary regime (one massive bodies and $N-1$ small bodies) which models the Solar system. In particular, one wants to analyze the global instabilities usually called Arnold diffusion. 
% 
% It is well known that one of the main sources of instabilities are the mean motion resonances where the periods of the small bodies are resonant. In particular the $1:1$ mean motion resonance analyzed in the present paper.
% Still the
% 
% 
% The simplest model that may have \emph{global instabilities} (drift 
% 
% Theorem \ref{theorem:mainTheorem} can be seen as a very humble step towards proving Arnold Diffusion for  Restricted Planar Elliptic 3 Body Problem with small $\mu>0$ and $e_0$

\subsection{The strategy to prove Theorem \ref{theorem:mainTheorem}}

The main difficulty in proving Theorem \ref{theorem:mainTheorem} is that the 
distance between the stable and unstable manifolds of $L_3$ is exponentially small with respect to $\sqrt\mu$ (this is also  usually known as a \emph{beyond all orders} phenomenon). This implies that the classical Melnikov Method \cite{GuckenheimerHolmes} to detect the breakdown of homoclinics cannot be applied.

To prove Theorem \ref{theorem:mainTheorem}, we follow the strategy of exponentially small splitting of separatrices (already outlined in \cite{articleInner}) which goes back to the seminal work by Lazutkin \cite{Laz84, Laz05}. See \cite{articleInner} for a list of references on the recent developments in the field of exponentially small splitting of separatrices. In particular, we follow similar strategies of those in  \cite{BFGS12,BCS13}.
% %
% For the analysis of other exponentially small phenomena, see \cite{HMS88, FS90a, FS90b, DelSea92, DelSea97, Gel97a, Tre97, DelRRR98, Gel99, GGM99, Gel00, Lom00, Sau01, GS01, OSS03, BF04, BF05,  Bal06, BalSea08,GOS10, GaiGel10, MSS11, BM12, BCS18a, BCS18b, GGS20} and, in particular, \cite{BFGS12,BCS13} for similar strategies (see  \cite{articleInner} for more details).

%In addition, the separatrix of the system does not have a known explicit parametrization.
%%
%Therefore, in order to obtain its complex singularities, we need to perform a detailed analysis of the first order of the Hamiltonian (see Section \ref{section:singularitiesOuter}).

In the present work the first order of the difference between manifolds is not given by the Melnikov function.
Instead, we must  derive and analyze an inner equation which provides the dominant term of this distance. As a consequence, we need to ``match'' (i.e. compare) certain  solutions of the inner equation  with the parameterizations of the perturbed invariant manifolds.
% those of the original system.
%
% For some results on inner equations see \cite{Gel97a, GS01, OSS03, Bal06, BalSea08,BM12}.

The first part of the proof, that was completed in the prequel \cite{articleInner}, dealt with the following steps:
\begin{enumerate}[label*=\Alph*.]
	\item
% 	Due to the degeneracy of the unperturbed problem, w
	We perform a change of coordinates to capture the slow-fast dynamics of the system. 
	The first order of the  new Hamiltonian has a saddle point with an homoclinic connection (also known as separatrix) and  a fast harmonic oscillator.
	%
	%See Section~\ref{section:introductionPoincare}.
	%
	\item We study the analytical continuation of the time-parametrization of the	separatrix of this first order. 
	In particular, we obtain its maximal strip of analyticity and the singularities at the boundary of this strip.
	%
	%See Section \ref{section:singularitiesOuter}.
	%
	\item We derive the inner equation.
	\item We study  two special solutions which will be ``good approximation'' of the perturbed invariant manifolds near the singularities of the unperturbed separatrix (see Step F below).
	%
	%See Section \ref{subsection:innerHeuristics}.
\end{enumerate}

The remaining steps necessary to complete the proof of Theorem~\ref{theorem:mainTheorem} are the following:
\begin{enumerate}[label*=\Alph*.]
	\item[E] We prove the existence of the analytic continuation of the parametrizations of the invariant manifolds of $L_3$, $W^{\unstable,+}(\de)$ and $W^{\stable,+}(\de)$, in an appropriate complex domain called boomerang domain.
	This domain contains a segment of the real line and intersects a sufficiently small neighborhood of the singularities of the unperturbed separatrix.
	%
	%See Section \ref{section:outer}.
	%
	\item[F.] By using complex matching techniques, we show that, close to the singularities of the unperturbed separatrix, the solutions of the inner equation obtained in Step D are ``good approximations'' of the parameterizations of the perturbed invariant manifolds obtained in Step E.
% 	We check that they are ``close enough'' approximations 
	%
	%See Section \ref{subsection:matching}.
	%
	\item[G.] We obtain an asymptotic formula for the difference between the perturbed invariant manifolds by proving that the dominant term comes from the difference between the solutions of the inner equation.
	%
	%See Section \ref{section:difference}.
	%
\end{enumerate}

The structure of this paper goes as follows. 
In Section \ref{section:introductionPoincare} we perform the change of coordinates introduced in Step A and state Theorem \ref{theorem:mainTheoremPoincare}, which is a reformulation of Theorem \ref{theorem:mainTheorem} in this new set of variables.
Then, in Section \ref{section:resultsOuter}, we state the results concerning Steps B, C and D above  (which are proven in \cite{articleInner}) and we carry out Steps E, F and G. These steps lead to the proof of  Theorem \ref{theorem:mainTheoremPoincare}.
% and finish the proof.
%
Sections \ref{section:proofH-existence} and \ref{section:proofG-matching} are devoted to proving the results in Section \ref{section:resultsOuter} which concern Steps E and F.

\section{A singular formulation of the problem}
\label{section:introductionPoincare}

The Lagrange point $L_3$ is a centre-saddle equilibrium point of the Hamiltonian $h$ in \eqref{def:hamiltonianInitialNotSplit} whose eigenvalues,  as $\mu \to 0$, satisfy 
\begin{equation*}
	%\label{eq:specRot}
	\mathrm{Spec}  = \claus{\pm \sqrt{\mu} \, \rho(\mu), \pm i  \, \omega(\mu) },
	\quad
	\text{with} \quad \left\{
	\begin{array}{l}
		\rho(\mu)=\sqrt{\frac{21}{8}}  + \OO(\mu),\\[0.4em]
		\omega(\mu)=1 + \frac{7}{8}\mu + \OO(\mu^2).
	\end{array}\right.
\end{equation*}
% , its associated eigenvalues are

The center and saddle eigenvalues are found at different time-scales. Moreover, when $\mu=0$, the unstable and stable manifolds of $L_3$ ``collapse'' to a circle of critical points.
%
%The singular character of the problem obstructs the application of methods of splitting of separatrices (see \cite{BFGS12} for example).
%, since it does not exist an homoclinic connection for the unperturbed case.
%
Applying a suitable singular change of coordinates, the Hamiltonian $h$ can be written as a perturbation of a pendulum-like Hamiltonian weakly coupled with a fast oscillator.
The construction of this change of variables is presented in detail in Section 2.1 in \cite{articleInner}. 
In this section, we summarize the most important properties of this set of coordinates.
% and we rewrite  Theorem \ref{theorem:mainTheorem}.

%When studying close to integrable system around resonances it is usual to ``blow-up'' the singularity to capture the slow-fast time scales.
%%
%Indeed, applying an adequate singular change of coordinates, the Hamiltonian $h$ in \eqref{eq:RShamiltonianInitial} becomes a pendulum-like Hamiltonian weakly coupled with a fast oscillator.

%The first step is to consider the Poincar\'e change of coordinates,
%$
%\phi_{\Poi}: (\la,L,\eta,\xi) \to (q,p).
%$
%
The Hamiltonian $h$ expressed in the classical (rotating) Poincar\'e coordinates,
$
\phi_{\Poi}: (\la,L,\eta,\xi) \to (q,p),
$ 
defines a Hamiltonian system with respect to the symplectic form $d\la \wedge dL + i \h d\eta \wedge d\xi$ and the Hamiltonian
\begin{equation} \label{def:hamiltonianPoincare}
H^{\Poi} = H_0^{\Poi}	+ \mu H_1^{\Poi},
\end{equation}
with
\begin{equation}\label{def:hamiltonianPoincare01}
\begin{split}
H_0^{\Poi}(L,\eta,\xi) &= -\frac{1}{2L^2} - L +  \eta \xi 
\qquad \text{and} \qquad
H_1^{\Poi} = h_1 \circ \phi_{\Poi}.
\end{split}
\end{equation}
Moreover, the critical point $L_3$ satisfies
\begin{equation*}
\la=0, \qquad \quad(L,\eta,\xi) = (1,0,0) + \OO(\mu)
\end{equation*}
and the linearization of the vector field at this point has, at first order, an uncoupled nilpotent and center blocks,
\begin{equation*}
\begin{pmatrix}
0 & -3 & 0 & 0 \\
0 & 0 & 0 & 0 \\
0 & 0 & i & 0 \\
0 & 0 & 0 & -i 
\end{pmatrix}
+ \OO(\mu).
\end{equation*}
Since $\phi_{\Poi}$ is an implicit change of coordinates, there is no explicit expression for $H_1^{\Poi}$.
However, it is possible to obtain series expansion in powers of $(L-1,\eta,\xi)$,
(see Lemma 4.1 in \cite{articleInner} and also Appendix \ref{appendix:proofH-technical}).

To capture the slow-fast dynamics of the system,
renaming
\[
\de= \mu^{\frac{1}{4}},
\]
we perform the singular symplectic scaling 
\begin{equation}\label{def:changeScaling}
	\phi_{\sca}:
	(\la, \La, x,  y) 
	\mapsto 
	(\la,L,\eta,\xi),
	\qquad
	L = 1 + \de^2 \La , \quad
	\eta = \de x , \quad
	\xi = \de y 
\end{equation}
and the time {reparametrization $t = \de^{-2} \tau$}. 
Defining the potential
\begin{equation}\label{def:potentialV}
\begin{split}
V(\la) 
&=  H_1^{\Poi}(\la,1,0,0;0)
= 1 - \cos \la - \frac{1}{\sqrt{2+2\cos \la}},
\end{split}
\end{equation}
the Hamiltonian system associated to $H^{\Poi}$, expressed in scaled coordinates,
defines a Hamiltonian system with respect to the symplectic form $d\la \wedge d\La + i dx \wedge dy$ 
and the Hamiltonian
\begin{equation} \label{def:hamiltonianScaling}
	\begin{split}
		{H} = {H}_{\pend} + {H}_{\osc} + H_1,
	\end{split}
\end{equation}
where
\begin{align} 
	{H}_{\pend}(\la,\La) &= -\frac{3}{2} \La^2  + V(\la), \qquad
	{H}_{\osc}(x,y; \de) = \frac{x y}{\de^2},
	\label{def:HpendHosc} \\
	H_1(\la,\La,x,y;\de) &=
	H_1^{\Poi}(\la,1+\de^2\La,\de x,\de y;\de^4)  - V(\la) +
	\frac{1}{\de^4} F_{\pend}(\de^2\La)
	\label{def:hamiltonianScalingH1}
\end{align}
and
\begin{equation}\label{def:Fpend}
	F_{\pend}(z) = \paren{-\frac{1}{2(1+z)^2}-(1+z)}+\frac{3}{2} + \frac{3}{2}z^2 = \OO(z^3).
\end{equation}
Therefore, we can define the ``new'' first order
\begin{equation}\label{def:hamiltonianScalingH0}
	H_0 = H_{\pend} + H_{\osc}.
\end{equation}
From now on, we refer to $H_0$ as the unperturbed Hamiltonian and we identify $H_1$ as the perturbation.

The next proposition, proven in \cite[Theorem 2.1]{articleInner}, gives some properties of the Hamiltonian $H$.

\begin{proposition}\label{proposition:HamiltonianScaling}
The Hamiltonian $H$,
away from collision with the primaries,
 is real-analytic  in the sense of $\conj{H(\la,\La,x,y;\de)}= H(\conj{\la},\conj{\La},y,x;\conj{\de}).$ 

Moreover, for $\de > 0$ small enough, 
\begin{itemize}
\item The critical point $L_3$ expressed in  coordinates $(\la,\La,x,y)$ is given by
\begin{equation}\label{def:pointL3sca}
	\Ltres(\de)=\paren{0,
		\de^2 \LtresLa(\de),
		\de^3 \Ltresx(\de),
		\de^3 \Ltresy(\de)},
\end{equation}
with $\vabs{\LtresLa(\de)}$, $\vabs{\Ltresx(\de)}$, $\vabs{\Ltresy(\de)} \leq C$, for some constant $C>0$ independent of $\de$.
\item The point $\Ltres(\de)$ is a saddle-center equilibrium point and its linearization is
\begin{equation*}
	\begin{pmatrix}
		0 & -3 & 0 & 0 \\
		-\frac{7}{8} & 0 & 0 & 0 \\
		0 & 0 & \frac{i}{\de^2} & 0 \\
		0 & 0 & 0 & -\frac{i}{\de^2}
	\end{pmatrix} + \OO(\de).
\end{equation*}
Therefore, it possesses a one-dimensional unstable and stable manifolds, $\WW^{\unstable}(\de)$ and  $\WW^{\stable}(\de)$. 
%
%given the at energy level 
%\[
%H^*_{\Ltres}(\de) = H(\Ltres(\de)) 
%\]
\end{itemize}
\end{proposition}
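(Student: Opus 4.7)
The strategy is to carry the two coordinate changes $\phi_{\Poi}$ and $\phi_{\sca}$ through one at a time, invoking the implicit function theorem to locate $\Ltres(\de)$ and the stable manifold theorem for the one-dimensional invariant manifolds.

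The real-analyticity and reality statement is inherited stepwise. The Hamiltonian $\HInicial$ in \eqref{def:hamiltonianInitialNotSplit} is real-analytic in $(q,p)$ away from collisions; the complex Poincar\'e transformation $\phi_{\Poi}$ is a real-analytic symplectomorphism satisfying $\overline{\phi_{\Poi}(\la,L,\eta,\xi)}=\phi_{\Poi}(\overline{\la},\overline{L},\overline{\xi},\overline{\eta})$ (because on real motions $\overline{\eta}=\xi$ in the classical complex Poincar\'e variables); and the scaling $\phi_{\sca}$ is a monomial real rescaling that preserves this swap. Composing the three maps yields the claimed identity $\overline{H(\la,\La,x,y;\de)}=H(\overline{\la},\overline{\La},y,x;\overline{\de})$.

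To locate $\Ltres(\de)$, I would first work in Poincar\'e coordinates. When $\mu=0$, the unperturbed Hamiltonian $H_0^{\Poi}=-(2L^2)^{-1}-L+\eta\xi$ has a degenerate circle of critical points $\{L=1,\eta=\xi=0\}$ parametrized by $\la\in\torus$, but its Hessian in $(L,\eta,\xi)$ at $(1,0,0)$ is nondegenerate. The implicit function theorem then produces, for small $\mu$ and each $\la$, a unique smooth critical branch $(L_*(\la;\mu),\eta_*(\la;\mu),\xi_*(\la;\mu))=(1,0,0)+\OO(\mu)$. Substituting into the residual equation $\partial_\la H^{\Poi}=0$ recovers the classical averaged equation for the five Lagrange configurations, and $L_3$ corresponds to $\la=0$ (exact by the reversibility $\Phi$ from Section~1.1). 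Translating through \eqref{def:changeScaling} with $\mu=\de^4$ gives $\La=(L-1)/\de^2=\OO(\de^2)$ and $x=\eta/\de=\OO(\de^3)$, and likewise for $y$, which is precisely \eqref{def:pointL3sca}.

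For the linearization, I would expand $H$ to second order around $\Ltres(\de)$. The $(\la,\La)$-block comes from $H_{\pend}$: direct differentiation of \eqref{def:potentialV} gives $V'(0)=0$ and $V''(0)=7/8$, producing the $2\times2$ saddle block $\begin{pmatrix}0&-3\\-7/8&0\end{pmatrix}$ with eigenvalues $\pm\sqrt{21/8}$. The $(x,y)$-block comes from $H_{\osc}$ and yields the center block $\mathrm{diag}(i/\de^2,-i/\de^2)$. It then remains to verify that $D^2H_1(\Ltres(\de))=\OO(\de)$; once this is in hand, the stable manifold theorem applied to the hyperbolic block produces the one-dimensional $\WW^{\unstable}(\de)$ and $\WW^{\stable}(\de)$.

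The main obstacle is this last verification, because the factor $\de^{-4}$ in \eqref{def:hamiltonianScalingH1} is naively dangerous. The bound $D^2H_1=\OO(\de)$ rests on two observations: (i) since $H_1^{\Poi}(\la,1,0,0;0)=V(\la)$, the subtraction $-V(\la)$ in \eqref{def:hamiltonianScalingH1} cancels the ``zeroth-order in $(L-1,\eta,\xi)$'' part of $H_1^{\Poi}$ exactly, and the cancellation propagates to second derivatives through $V''(0)$; and (ii) every remaining monomial $(L-1)^a\eta^b\xi^c$ of $H_1^{\Poi}$, together with the term $F_{\pend}(\de^2\La)/\de^4=\OO(\de^2\La^3)$, picks up enough positive powers of $\de$ under the scaling $(L-1,\eta,\xi)=(\de^2\La,\de x,\de y)$ to leave an $\OO(\de)$ Hessian at $\Ltres(\de)$. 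The monomial-by-monomial bookkeeping this requires is essentially the content of Lemma~4.1 of \cite{articleInner}, which provides the explicit series expansion of $H_1^{\Poi}$ needed to make the estimate effective.
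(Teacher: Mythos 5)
Your plan is the natural one, but note that this paper provides no internal proof to compare against: Proposition~\ref{proposition:HamiltonianScaling} is quoted directly from the prequel (\cite[Theorem 2.1]{articleInner}). That said, the three-step structure you propose — inherit the real structure through $\phi_{\Poi}$ and $\phi_{\sca}$; locate $\Ltres(\de)$ by a Lyapunov--Schmidt reduction in $(L,\eta,\xi)$ plus the reversibility $\Phi$; linearize $H_0$ explicitly and bound $D^2H_1(\Ltres(\de))=\OO(\de)$ by chain-rule bookkeeping through $\mu=\de^4$, $L-1=\de^2\La$, $\eta=\de x$, $\xi=\de y$ — is almost certainly what the prequel does, and you correctly identify Lemma~4.1 of \cite{articleInner} as the required technical input.

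Three spots need tightening before the sketch becomes a proof. (1) The $-V(\la)$ subtraction in~\eqref{def:hamiltonianScalingH1} cancels only the $\mu=0$ slice $H_1^{\Poi}(\la,1,0,0;0)$, not the full $(L-1,\eta,\xi)$-constant part $H_1^{\Poi}(\la,1,0,0;\mu)$; the leftover is $\OO(\mu)=\OO(\de^4)$, which is harmless but should be stated rather than described as an exact cancellation, and the phrase ``propagates to second derivatives through $V''(0)$'' conflates the role of $V$ in $H_{\pend}$ (where $V''(0)=7/8$ produces the saddle block) with its role as the subtracted constant in $H_1$. (2) The exact equality $\la_{L_3}=0$ needs the explicit form of the anti-symplectic involution that $\Phi$ induces on $(\la,L,\eta,\xi)$ and the observation that, by uniqueness of the IFT branch, the $L_3$ critical point lies on its fixed set; as written it is a plausibility claim. (3) ``Saddle--center'' requires noting why the $\OO(\de)$ perturbation cannot move the hyperbolic pair off the real axis nor give the elliptic pair a real part: this follows from the Hamiltonian eigenvalue quadruplet symmetry together with simplicity and the $\OO(1)$ versus $\OO(\de^{-2})$ separation of the two unperturbed pairs, which rules out any merging. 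None of these is a fatal gap; they are the details a full write-up would have to supply.
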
	

%The Hamiltonian $H$ is reversible with respect to the involution 
%\begin{equation*}%\label{def:involutionScaling}
%	\Phi(\la,\La,x,y) = (-\la,\La,y,x).
%\end{equation*}
%
%We define the first order
%\begin{equation}\label{def:hamiltonianScalingH0}
%	{H}_0(\la,\La,\xh,\yh;\de) = {H}_{\pend}(\la,\La) + {H}_{\osc}(\xh,\yh; \de),
%\end{equation}
%which corresponds to a uncoupled pendulum-like ${H}_{\pend}$ and an oscillator ${H}_{\osc}$ Hamiltonian.
%
%We define the stable and unstable manifolds of  in scaling coordinates as
%\begin{align*}
%	\WW^{\unstable} = \phi_{\sca} \circ \phi_{\Poi} (W^{\unstable})
%	\WW^{\stable}
%\end{align*}
%
%From now on, we refer to $\WW^{\unstable}(\de)$ and $\WW^{\stable}(\de)$ as the one-dimensional unstable and stable manifolds of $\Ltres(\de)$, respectively.

%%%%%%%%%%%%%%%%%%%%%%%%%%%%%%%%% 

The unperturbed system given by $H_0$ in \eqref{def:hamiltonianScalingH0} has two homoclinic connections in the $(\la,\La)$-plane associated to the saddle point $(0,0)$ and described by the energy level $H_{\pend}(\la,\La)=-\frac12$ (see Figure \ref{fig:separatrix}).
\begin{figure}[t] 
	\centering
	\begin{overpic}[scale=0.7]{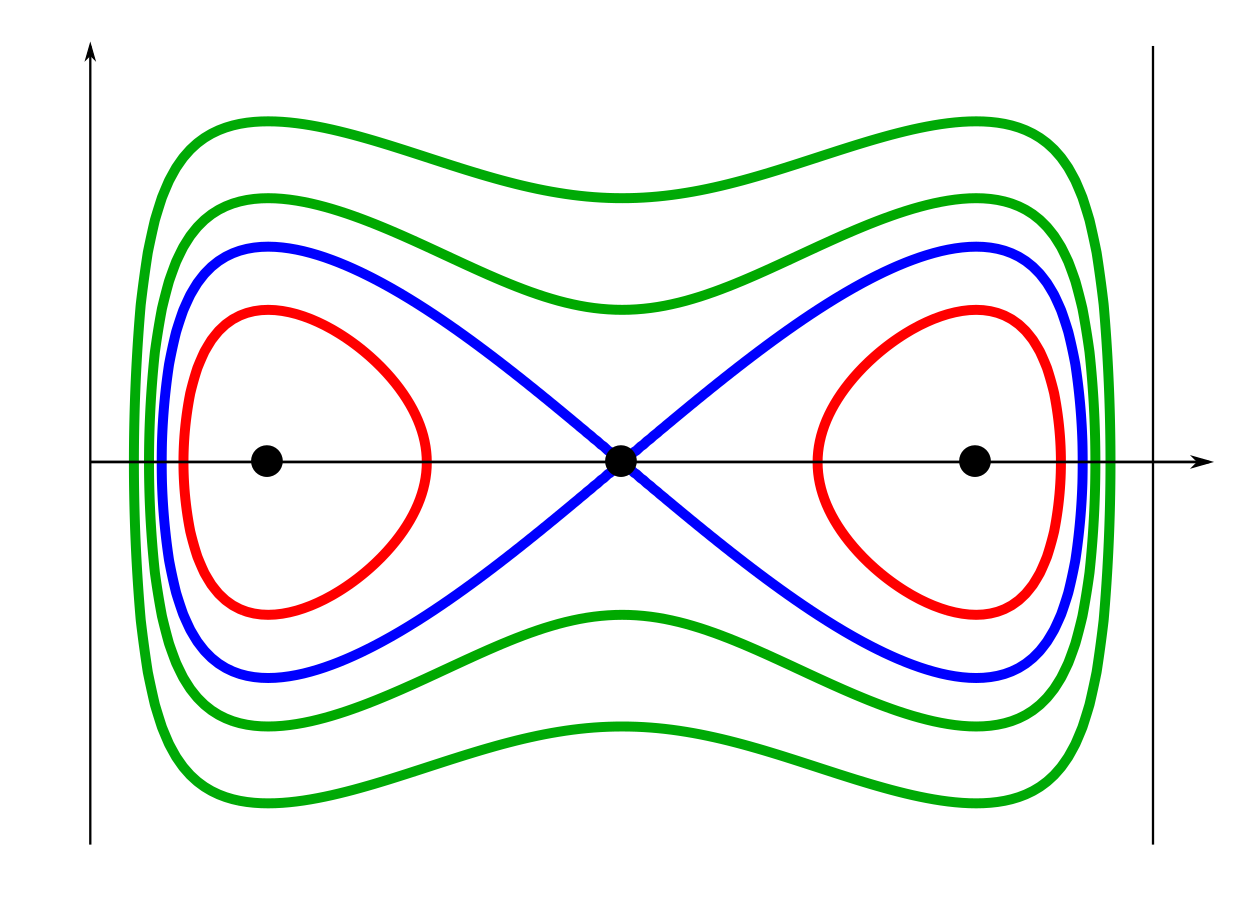}
		\put(93,31){{$\pi$}}
		\put(77,28.5){{$\frac{2}{3}\pi$}}
		\put(49,29){{$0$}}
		\put(17,28.5){{$-\frac{2}{3}\pi$}}
		\put(-2,31){{ $-\pi$}}
		\put(98,34){{$\la$}}
		\put(5,70){{$\La$}}
	\end{overpic}
	\caption{Phase portrait of the system given by Hamiltonian ${H}_{\pend}(\la,\La)$ on~\eqref{def:HpendHosc}. On blue the two separatrices.}
	\label{fig:separatrix}
\end{figure}
We define 
\begin{equation}\label{def:lambda0}
\la_0 = \arccos\paren{\frac12-\sqrt2},
\end{equation}
which satisfies $H_{\pend}(\la_0,0)=-\frac12$ so that, for the unperturbed system, $\la_0$ is the ``turning point'' in the $(\la,\La)$ variables. 
We will see that, in our regime, $\tht \approx \la$ and thus the value of $\tht_0$ introduced in Remark \ref{remark:sectiontheta} is indeed close to the ``turning point'' of the invariant manifolds (see Figure \ref{fig:L3Outer}).

%%%%%%%%%%%%%%%%%%%%%%%%%%%%%

We rewrite Theorem \ref{theorem:mainTheorem}, in fact the more general result in Remark \ref{remark:sectiontheta}, in the  set of coordinates $(\la,\La,x,y)$.
%
% The invariant manifolds $\WW^{\unstable}(\de)$ and $\WW^{\stable}(\de)$ are found at the same energy level
% \[
% H^*_{\de} = 
% H(\Ltres(\de);\de).
% %H(\WW^{\unstable}(\de)) = 
% %H(\WW^{\stable}(\de)).
% \]
% %
% This allows us to restrict to a $3$-dimensional problem.
%
% We define the energy level as
%
For $\la_* \in (0,\la_0)$, we consider the $3$-dimensional section
\begin{align*} %\label{def:sectionScaling}
\SSS(\la_*) = \claus{(\la,\La,x,y) \in \reals^2 \times \complexs^2 \st \la = 
\la_*,\, \La>0, \, {x}=\conj{y}
% , \,
% H(\la,\La,x,y;\de) = H^*_{\de}
},
\end{align*}
which is transverse to the flow of $H$, and we define the first crossings of the invariant manifolds $\WW^{\unstable,\stable}(\de)$ with this section as
$(\la_*, \La^{\unstable}_{*},x^{\unstable}_{*},y^{\unstable}_{*})$
and 
$(\la_*, \La^{\stable}_{*},x^{\stable}_{*},y^{\stable}_{*})$.
%
%The distance in the $\La$-direction is recovered by the energy conservation.

\begin{theorem}\label{theorem:mainTheoremPoincare}
Fix an interval $[\la_1,\la_2] \subset (0,\la_0)$ with $\la_0$ as given in \eqref{def:lambda0}.
Then, there exists $\de_0>0$  and $\cttTheorem>0$ such that, 
for $\de \in (0,\de_0)$ 
and $\la_* \in [\la_1,\la_2]$,  the first crossings are analytic with 
respect to $\lambda^*$ and 
\begin{equation}\label{def:estimatesInvMan}
|\La^{\diamond}_{*}|\leq \cttTheorem,
\qquad |x^{\diamond}_{*}|,|y^{\diamond}_{*}|\leq \cttTheorem \delta^3,\qquad 
\diamond=\unstable, \stable.
\end{equation}
Moreover,
\begin{align*}
\vabs{x^{\unstable}_{*}-x^{\stable}_{*}} = 
\vabs{y^{\unstable}_{*}-y^{\stable}_{*}} 
&= 
\sqrt[6]{2} \,
\de^{\frac{1}{3}}  e^{-\frac{A}{\de^2}} 
	\boxClaus{ \vabs{\CInn}
	+ \OO\paren{\frac{1}{\vabs{\log \de}}}},\\
\vabs{\La^{\unstable}_*-\La^{\stable}_{*}} &= \OO(\de^{\frac43}e^{-\frac{A}{\de^2}}),
\end{align*}
where  $A$ and  $\CInn$ are the constants introduced in Theorem \ref{theorem:mainTheorem}.

% Moreover, $\vabs{\La^{\unstable}_*-\La^{\stable}_{*}} = \OO(\de^{\frac43}e^{-\frac{A}{\de^2}})$.
\end{theorem}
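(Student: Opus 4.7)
The plan is to prove Theorem~\ref{theorem:mainTheoremPoincare} by carrying out the three remaining Steps E, F and G in turn. Since the splitting is exponentially small in $\de$, the classical Melnikov method is not available and the leading term must instead be extracted by comparing the analytic continuations of the perturbed parametrizations to the inner-equation solutions supplied by Theorem~\ref{theorem:innerComputations}. My first move would be to parametrize $\WW^{\unstable,+}(\de)$ and $\WW^{\stable,+}(\de)$ by the scaled time $\tau$ so that both branches satisfy a common invariance equation and are distinguished only by their asymptotic condition at $\tau\to-\infty$ or $\tau\to+\infty$. Writing each parametrization as the unperturbed separatrix of $H_0$ plus a correction $(\la_1,\La_1,x_1,y_1)$, the invariance equation takes the form of a system of integral equations whose $(x_1,y_1)$ components carry the integrating factors $e^{\pm i\tau/\de^2}$ that encode the slow-fast coupling.

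For Step E, I would fix a Banach space of functions analytic on a boomerang-shaped complex domain, containing the real segment over which $\la$ sweeps through $\la_*$, stretching along the $H_{\pend}$ homoclinic, and reaching within distance of order $\de^3$ of the complex singularity $\tau=iA$ supplied by Theorem~\ref{theorem:singularities}. A contraction argument should close there: the $(\la_1,\La_1)$ components are controlled by integrating the smooth part of $H_1$ along the separatrix, while for $(x_1,y_1)$ successive integrations by parts against the fast phase $e^{\pm i\tau/\de^2}$ trade powers of $\de$ for constants growing like $(\tau-iA)^{-1}$. The boomerang geometry is to be calibrated so that the fixed-point estimates close uniformly in $\de$, and so that reality imposed at the conjugate singularity $\tau=-iA$ is automatically compatible. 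The output is two analytic parametrizations whose first real crossings with $\SSS(\la_*)$ satisfy~\eqref{def:estimatesInvMan}.

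For Step F, near $\tau=iA$ I would introduce the inner variable $u=(\tau-iA)/\de^2$ and rescale the correction so that, to leading order, the transformed invariance equation becomes exactly the inner equation of~\cite{articleInner}. The inner solutions $\ZuInn$ and $\ZsInn$ from Theorem~\ref{theorem:innerComputations} live on complex sectors which, after rescaling, overlap the outer boomerang domain in a matching annulus of radial width $\OO(1/\vabs{\log\de})$. Writing in this annulus the outer parametrization as the inner solution plus a matching error would yield an operator equation for the error whose contraction constant is driven by the gap between the inner equation and the full system; iterating should give $\OO(1/\vabs{\log\de})$ bounds, and the conjugate singularity $\tau=-iA$ is handled by complex conjugation.

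For Step G, the difference $(\Delta\la,\Delta\La,\Delta x,\Delta y)$ between the two parametrizations is analytic on the intersection of the two boomerang domains and satisfies an almost-linear homogeneous equation, so away from the singularities it is determined by its values on the matching region. By Step F, the $(x,y)$ components inherit their leading term from $\ZuInn-\ZsInn$, which by the Stokes analysis of~\cite{articleInner} equals $\CInn e^{\mp iu}$ up to relative error $\OO(1/\vabs{\log\de})$. Undoing the rescaling produces the exponentially small factor $e^{-A/\de^2}$ together with the prefactor $\de^{1/3}$ and the combinatorial constant $\sqrt[6]{2}$ coming from the rescaling Jacobian and the joint contribution of both conjugate singularities, while $\Delta\La$ acquires an extra $\de$ from the symplectic pairing. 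The section $\SSS(\la_*)$ is then reached by flowing for a bounded real time under a uniformly bounded diffeomorphism, so all asymptotics transfer verbatim to the first crossings. The main obstacle will be the matching in Step F: one must simultaneously control the outer parametrization up to $\OO(\de^3)$ of the singularity and the inner solution out to radius $\OO(1/\de)$, with errors strictly below the exponentially small splitting, and since the inner equation captures only the leading part of the full Hamiltonian, the matching gap has to be quantified with enough precision that the $\CInn$ contribution is not drowned out.
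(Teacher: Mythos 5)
Your proposal has the correct high-level architecture (boomerang domains, inner-equation matching near the singularities, Stokes constant extraction from the linear difference equation), which matches the paper's Steps E, F and G. However, several of the crucial scalings are wrong in ways that would make the argument fail, and one key device is missing.

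First, the boomerang domain must reach within distance $\kappa\de^2$ of the singularities $u=\pm iA$, not $\de^3$ as you propose. This is forced by the inner change of coordinates $U=(u-iA)/\de^2$: at distance $\de^3$ one has $|U|\sim\de\to 0$, which lies \emph{inside} the forbidden region of the inner domain $\DuInn$ (which requires $|U|\ge\kappa$ for some large $\kappa$). At distance $\kappa\de^2$ one gets $|U|\sim\kappa$, exactly matching the inner domain's inner boundary. You may be confusing the size of the manifold components ($x^\diamond,y^\diamond=\OO(\de^3)$) with the distance to the singularity. Second, the overlap between inner and outer descriptions is not an annulus of radial width $\OO(1/|\log\de|)$; it is a triangular domain whose outer vertex lies at distance $\de^{2\g}$ from $u_+=i(A-\kappa\de^2)$, with $\g\in[\tfrac{3}{5},1)$, so that in inner coordinates $\kappa\le |U|\le C\de^{-2(1-\g)}$. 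Third, the matching error produced by comparing $\zusOut$ with $\ZusInn$ is of size $\OO(\de^{\frac{2}{3}(1-\g)})$ (Theorem~\ref{theorem:matching}), \emph{not} $\OO(1/|\log\de|)$. The final $\OO(1/|\log\de|)$ relative error in the asymptotic formula is obtained by the additional device, absent from your proposal, of choosing $\kappa=\kappa_*|\log\de|$ with $0<\kappa_*<\tfrac{2}{3}(1-\g)$, so that $e^{\kappa}\de^{\frac{2}{3}(1-\g)}=\de^{\frac{2}{3}(1-\g)-\kappa_*}\to 0$ while $1/\kappa\sim 1/|\log\de|$. Without that balance, the matching error would drown the Stokes term.

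Finally, a smaller architectural divergence: the paper parametrizes the manifolds as graphs with respect to $u$ (defined by $\la=\la_h(u)$) throughout, precisely because the distance is measured in the section $\la=\la_*$, and only temporarily switches to the time parametrization to get past $u=0$ (where the graph degenerates). If you work with a pure time parametrization, the two branches carry independent time origins and you would need an additional phase-matching step to compare them in a fixed $\la$-section — a genuine complication you do not address.
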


\subsection{Proof of Theorem~\ref{theorem:mainTheorem}}
\label{subection:undoChanges}

To prove Theorem \ref{theorem:mainTheorem} (and Remark \ref{remark:sectiontheta}) from Theorem \ref{theorem:mainTheoremPoincare} we need to ``undo'' the changes of coordinates $\phi_{\Poi}$ and $\phi_{\sca}$ and adjust the section from $\lambda=\text{constant}$ to $\theta=\text{constant}$.

First, we consider the change $\phi_{\sca}$ given by $(\la,L,\eta,\xi)=(\la,1+\de^2\La,\de x, \de y)$, (see \eqref{def:changeScaling}).
For $\la_* \in [\la_1,\la_2]$ we define
\begin{align}\label{proof:poincareEstimatesExistence}
	L^{\diamond}(\la_*;\de) = 1+\de^2 \La_*^{\diamond},
	\qquad
	\eta^{\diamond}(\la_*;\de) = \de x^{\diamond}_*, 
	\qquad
	\xi^{\diamond}(\la_*;\de) = \de y^{\diamond}_*,
	\quad
	\text{for } \diamond=\unstable,\stable.
\end{align}	
Then, by Theorem \ref{theorem:mainTheoremPoincare}, one has
\begin{equation}\label{proof:poincareEstimatesDifference}
\begin{split}	
	|\D L(\la_*;\de)| &= 
	|L^{\unstable}(\la_*;\de) - L^{\stable}(\la_*;\de) |
	= \OO \paren{\de^{\frac{10}3} e^{-\frac{A}{\de^2}}},
	\\
	|\D \eta(\la_*;\de)|
	&=
	|\eta^{\unstable}(\la_*;\de) - \eta^{\stable}(\la_*;\de)| 
	= 	\sqrt[6]{2} \,
\de^{\frac{4}{3}}  e^{-\frac{A}{\de^2}} 
	\boxClaus{ \vabs{\CInn}
	+ \OO\paren{\frac{1}{\vabs{\log \de}}}},\\
% 	\OO \paren{\de^{\frac{4}3} e^{-\frac{A}{\de^2}}},
	\qquad
	\D \xi(\la_*;\de) & = \conj{\D \eta(\la_*;\de)}.
	\end{split}	
\end{equation}

Next, we study the change $\phi_{\Poi}$. In the following result, we give a series expression of the polar coordinates with respect to the Poincar\'e elements. 
Its proof is a 
direct consequence of the definition of the Poincar\'e variables (see, for 
instance, Section 4.1 in 
\cite{articleInner}).

\begin{lemma}\label{lemma:seriesExpansionPoincare}
Fix $\varrho>0$.
Then, for $\vabs{(L-1,\eta,\xi)} \ll 1$ and $\vabs{\Im \la}\leq \varrho$, the 
polar  coordinates $(r,\tht,R,G)$ introduced  in \eqref{def:changePolars} satisfy
\begin{align*}
	r &= 1 +2(L-1)
	-\frac{e^{-i\la}}{\sqrt2}  \eta
	-\frac{e^{i\la}}{\sqrt2} \xi
	+ \OO(L-1,\eta,\xi)^2, 
	\\
	\tht &= \la + i\sqrt2 e^{-i\la}\eta -i \sqrt2 e^{i\la} \xi + \OO(L-1,\eta,\xi)^2, 
	\\
	R &= 
	{\frac{i e^{-i\la}}{\sqrt2}  \eta
	-
	\frac{i e^{i\la}}{\sqrt2} \xi}
	+
	\OO(L-1,\eta,\xi)^2,
	\qquad
	G = L - \eta \xi.
\end{align*}
\end{lemma}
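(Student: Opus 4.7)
The plan is to reduce the statement to a Taylor expansion of the explicit Kepler parametrization of the orbit composed with the definition of the Poincar\'e variables in terms of the Delaunay elements. First I would recall that the Poincar\'e variables $(\lambda,L,\eta,\xi)$ arise from Delaunay $(\ell,L,g,G)$ via $\lambda=\ell+g$ and a symplectic ``polar'' change of the $(g,G)$ pair of the form $\eta=\sqrt{L-G}\,e^{ig}$, $\xi=\sqrt{L-G}\,e^{-ig}$, so that $i\,d\eta\wedge d\xi=dg\wedge dG$ and $\bar\eta=\xi$ on the real slice. This immediately yields $\eta\xi=L-G$ as an \emph{exact} identity, hence the last formula $G=L-\eta\xi$ of the lemma with no remainder.

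For the remaining three expansions, I would write the Keplerian orbit in closed form. With $a=L^2$, eccentricity $e$ given by $e^2=1-G^2/L^2=2\eta\xi/L+O((\eta\xi)^2)$, and eccentric anomaly $E$ solving Kepler's equation $\ell=E-e\sin E$, the classical formulas give
\begin{equation*}
r=a(1-e\cos E),\qquad \theta=g+\nu(E,e),\qquad R=\frac{e\sin E}{\sqrt{a}(1-e\cos E)},
\end{equation*}
where $\nu$ is the true anomaly. Near the circular orbit $(L,\eta,\xi)=(1,0,0)$ one has $e=O(\sqrt{\eta\xi})$, while the products $e\,e^{\pm ig}$ are already \emph{linear} in $(\eta,\xi)$: to leading order $e\,e^{-ig}=\sqrt{2}\,\xi$ and $e\,e^{ig}=\sqrt{2}\,\eta$. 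Consequently every term quadratic in $e$ (in particular $e^2$, $e\sin E-e\sin\ell$, $e\cos E-e\cos\ell$) is already absorbed into the remainder $O(L-1,\eta,\xi)^2$.

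Substituting Kepler's first-order inversion $E=\ell+O(e)$ together with $\ell=\lambda-g$ and $e^{i\ell}=e^{i\lambda}e^{-ig}$, I would then obtain
\begin{equation*}
e\cos\ell=\tfrac{1}{\sqrt{2}}\bigl(\xi e^{i\lambda}+\eta e^{-i\lambda}\bigr)+O(\eta,\xi)^2,\quad e\sin\ell=\tfrac{1}{i\sqrt{2}}\bigl(\xi e^{i\lambda}-\eta e^{-i\lambda}\bigr)+O(\eta,\xi)^2.
\end{equation*}
Plugging the first identity into $r=a(1-e\cos E)$ with $a=1+2(L-1)+O((L-1)^2)$ reproduces the stated expansion of $r$; using $\nu(E,e)-\ell=2e\sin E+O(e^2)=2e\sin\ell+O(e^2)$ and $\theta=\lambda+(\nu-\ell)$ produces the $\pm i\sqrt{2}$ coefficients of the $\theta$-expansion; and a direct substitution into $R$ yields the coefficients $\pm i/\sqrt{2}$. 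Analyticity of each term on the strip $|\Im\lambda|\le\varrho$ is inherited from the standard analytic solvability of Kepler's equation for small $e$ (Laplace's limit).

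The argument is ultimately a bookkeeping exercise rather than a conceptual one: once the normalization $\eta\xi=L-G$ is fixed by the first paragraph, the factors $1/\sqrt{2}$ and $\sqrt{2}$ fall out mechanically from the Keplerian series. The only point deserving care is tracking these constants consistently through the chain of substitutions; no analytic difficulty beyond the classical Kepler expansions is involved, which explains the author's assertion that the lemma is ``a direct consequence of the definition of the Poincar\'e variables''.
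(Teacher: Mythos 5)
Your proposal is essentially the computation the paper has in mind when it says the lemma is ``a direct consequence of the definition of the Poincar\'e variables'' and defers to Section 4.1 of the prequel: expand the Keplerian formulas $r=a(1-e\cos E)$, $\theta=g+\nu$, $R=\dot r$ in powers of $e$, then substitute $e\,e^{\mp ig}=\sqrt{2}\,\xi$ (resp.\ $\sqrt2\,\eta$) so that the square-root singularity in $e$ cancels and everything becomes a genuine analytic first-order expansion in $(L-1,\eta,\xi)$; the identity $G=L-\eta\xi$ is exact by construction of $\eta,\xi$. Carrying out these substitutions with $\ell=\lambda-g$ reproduces exactly the coefficients $\mp 1/\sqrt2$, $\pm i\sqrt2$, $\pm i/\sqrt2$ stated in the lemma, so the argument is correct and follows the route the paper intends.

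One minor imprecision worth flagging: the claim $i\,d\eta\wedge d\xi=dg\wedge dG$ is not literally true, since $\eta,\xi$ also depend on $L$ through $\sqrt{L-G}$; a direct computation gives $i\,d\eta\wedge d\xi=(dL-dG)\wedge dg$, and it is only after combining with $d\lambda\wedge dL$ (with $\lambda=\ell+g$) that the full Delaunay form $d\ell\wedge dL+dg\wedge dG$ is recovered. This does not affect the conclusion you actually use, namely the exact identity $\eta\xi=L-G$, but the stated reason for it should be corrected. Apart from that, the bookkeeping of the eccentric anomaly expansion, the first-order inversion $E=\ell+O(e)$, the equation of the center $\nu-\ell=2e\sin\ell+O(e^2)$, and the observation that all $O(e^2)$ terms land in the $O(L-1,\eta,\xi)^2$ remainder, are all correct and match what the reference derivation relies on.
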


Since in Theorem \ref{theorem:mainTheoremPoincare} the distance is measured in the section $\la=\la_*$ whereas the Theorem \ref{theorem:mainTheorem}, and more generally Remark \ref{remark:sectiontheta}, measures it in the section  $\tht=\tht^*$, we must ``translate'' the estimates in 
% We need to compute the differences between invariant manifolds 
\eqref{proof:poincareEstimatesDifference} to the new section.
% , not in the section $\la=\la_*$, but in the polar coordinate section
%
By Lemma \ref{lemma:seriesExpansionPoincare}, let $g_{\tht}$ be the function such that $\tht=\la+g_{\tht}(\la,L,\eta,\xi)$.
Then, for $\diamond=\unstable,\stable$, we consider 
\[
F^{\diamond}(\la,\tht,\de) =\tht - \la + g_{\tht}\paren{\la,L^{\diamond}(\la;\de),\eta^{\diamond}(\la;\de),\xi^{\diamond}(\la;\de)}.
\]
Applying the Implicit Function Theorem, Lemma 
\ref{lemma:seriesExpansionPoincare} and that, by
\eqref{proof:poincareEstimatesExistence},
$L^{\diamond}(\la;0)=1$ and
$\eta^{\diamond}(\la;0) = 
\xi^{\diamond}(\la;0)=0$,
then there exist function 
$\wh\la^{\diamond}(\tht;\de)$ such that 
$F^{\diamond}(\wh\la^{\diamond}(\tht;\de),\tht,\de)=0$ and
% \textcolor{red}{.
\begin{equation}\label{proof:lausSerie}
\begin{split}	
\wh\la^{\diamond}(\tht;\de) =& \, \tht 
-i \sqrt2 e^{-i\tht}\wh\eta^{\diamond}(\tht;\de)	
+i \sqrt2 e^{i\tht}\wh\xi^{\diamond}(\tht;\de) \\
&+ \OO\paren{\wh L^{\diamond}(\tht;\de)-1,\wh \eta^{\diamond}(\tht;\de), 
\wh\xi^{\diamond}(\tht;\de)}^2,
\end{split}
\end{equation}
with 
$\wh\eta^{\diamond}(\tht;\de)=\eta^{\diamond}(\wh\la^{\diamond}(\tht;\de);\de)$, 
$\wh\xi^{\diamond}(\tht;\de)=\xi^{\diamond}(\wh\la^{\diamond}(\tht;\de);\de)$ 
and $\wh L^{\diamond}(\tht;\de)=L^{\diamond}(\wh\la^{\diamond}(\tht;\de);\de)$.
Notice that, by \eqref{def:estimatesInvMan} (plus Cauchy estimates for their 
derivatives) and \eqref{proof:poincareEstimatesExistence},
\begin{equation*}\label{proof:poincareEstimatesExistencelambda}
\wh\la^{\diamond}(\tht;\de) = \tht + \OO(\de^4).
\end{equation*}
Thus,  for any  $[\tht_1,\tht_2] \subset (0,\la_0)$ and $\delta$ small enough, 
there exists $[\lambda_1,\lambda_2] \subset (0,\la_0)$ such that, for $\tht \in 
[\tht_1,\tht_2]$ one has  $\wh\la^{\unstable,\stable}(\tht;\de) \in 
[\lambda_1,\lambda_2] $.
In addition,
\begin{equation}\label{proof:poincareEstimatesExistenceTheta}
\begin{aligned}	
\wh{L}^{\diamond}(\tht;\de) &= L^{\diamond}(\tht;\de) + \OO(\de^6) = 1 + \OO(\de^2),
\\ 
\wh\eta^{\diamond}(\tht;\de)
&= \eta^{\diamond}(\tht;\de) + \OO(\de^8)
= \OO(\de^4),
\\
\wh\xi^{\diamond}(\tht;\de)
&= \xi^{\diamond}(\tht;\de) + \OO(\de^8)
=\OO(\de^4).
\end{aligned}
\end{equation}
Then, since $\La^{\unstable,\stable}_*>0$, by \eqref{proof:poincareEstimatesExistence} one has that  $\wh{L}^{\unstable,\stable}(\tht;\de)>1$ for $\tht \in [\tht_1,\tht_2]$.
Moreover, by Lemma \ref{lemma:seriesExpansionPoincare} and taking $\de$ small enough, one has $r^{\unstable,\stable}(\tht)-1>0$.

The difference between the invariant manifolds in a section of fixed $\tht \in[\tht_1,\tht_2]$ is given by
\begin{align*}
	\D \wh\la(\tht;\de) =& \,
	\wh\la^{\unstable}(\tht;\de) -
	\wh\la^{\stable}(\tht;\de), 
	&
	\D \wh{L}(\tht;\de) =& \,  
	\wh{L}^{\unstable}(\tht;\de)-
	\wh{L}^{\stable}(\tht;\de), 
	\\
	\D \wh{\eta}(\tht;\de) =& \, 
	\wh\eta^{\unstable}(\tht;\de)-
	\wh\eta^{\stable}(\tht;\de), 
	&
	\D \wh{\xi}(\tht;\de) =& \, 
	\wh\xi^{\unstable}(\tht;\de)-
	\wh\xi^{\stable}(\tht;\de).
\end{align*}
%\begin{align*}
%	\D \wh\la(\tht;\de) =& \,
%	\wh\la^{\unstable}(\tht;\de) -
%	\wh\la^{\stable}(\tht;\de), 
%	&
%	\D \wh{L}(\tht;\de) =& \,  L^{\unstable}(\wh\la^{\unstable}(\tht;\de))-
%	L^{\stable}(\wh\la^{\stable}(\tht;\de)), 
%	\\
%	\D \wh{\eta}(\tht;\de) =& \, 
%\eta^{\unstable}(\wh\la^{\unstable}(\tht;\de))-
%	\eta^{\stable}(\wh\la^{\stable}(\tht;\de)), 
%	&
%	\D \wh{\xi}(\tht;\de) =& \, 
%\xi^{\unstable}(\wh\la^{\unstable}(\tht;\de))-
%	\xi^{\stable}(\wh\la^{\stable}(\tht;\de)).
%\end{align*}
% where the hats just denote that they are parameterized as graphs with respect to $\theta$ instead of $\lambda$.
Then, by \eqref{proof:lausSerie} and \eqref{proof:poincareEstimatesExistenceTheta}, one has that
\begin{align*}
\D \wh\la(\tht;\de) = 
-i\sqrt2 e^{-i\tht} \D\wh \eta(\tht;\de)
+i\sqrt2 e^{i\tht} \D\wh  \xi(\tht;\de)+
\OO\paren{\de^2 \D \wh{L}(\tht;\de),
\de^4 \D\wh \eta(\tht;\de),
\de^4\D \wh\xi(\tht;\de)}.
% +\OO(\de^{\frac{8}3} e^{-\frac{A}{\de^2}})
% =
% \OO(\de^{\frac43} e^{-\frac{A}{\de^2}}).
\end{align*} 
Moreover, by the mean value theorem,
\eqref{proof:poincareEstimatesDifference} and
\eqref{proof:poincareEstimatesExistenceTheta},
\begin{equation*}
%\label{proof:diffL}
\begin{split}	
\D \wh{L}(\tht;\de) =&\,
\D L(\wh\la^{\unstable}(\tht;\de);\de) +\wh 
L^\stable(\wh\la^{\unstable}(\tht;\de);\de)-\wh 
L^\stable(\wh\la^{\stable}(\tht;\de);\de)\\
% 
% \D \la(\tht;\de) \boxClaus{
% \int_0^1 \partial_{\la} L^{\stable}(t \la^{\unstable}(\tht;\de)+(1-t) \la^{\stable}(\tht;\de)) dt} \\
 =&\, \OO(\de^{\frac{10}3} e^{-\frac{A}{\de^2}})+\de^2\OO\left(\D \wh
\la(\tht;\de)\right).
\end{split}
\end{equation*}
Analogously,
\begin{equation*}
%\label{proof:diffetaxi}
\begin{split}	
	\D \wh{\eta}(\tht;\de) &= \D\eta(\wh\la^{\unstable}(\tht;\de);\de) + 
% \OO(\de^{\frac{7}3} e^{-\frac{A}{\de^2}}),
\de^ 4\OO\left(\D 
\wh\la(\tht;\de)\right) ,
	\\
	\D \wh{\xi}(\tht;\de) &= \conj{\D\eta(\la^{\unstable}(\tht;\de);\de)} +
	\de^ 4\OO\left(\D \wh
\la(\tht;\de)\right) .
% 	+ \OO(\de^{\frac{7}3} e^{-\frac{A}{\de^2}}).
\end{split}	
\end{equation*}
%and, applying Theorem \ref{theorem:mainTheoremPoincare},
%\begin{align*}
%\vabs{\D \wh{\eta}(\tht;\de)} = \sqrt[6]{2} \,
%\de^{\frac{4}{3}}  e^{-\frac{A}{\de^2}} 
%\boxClaus{ \vabs{\CInn}
%	+ \OO\paren{\frac{1}{\vabs{\log \de}}}}.
%\end{align*}
Therefore, using  \eqref{proof:poincareEstimatesDifference},  one can conclude 
that 
 \begin{align*}	
|\D \wh \la(\theta;\de)| &=  \OO \paren{\de^{\frac{4}3} 
e^{-\frac{A}{\de^2}}},
&
|\D\wh \eta(\theta;\de)|
&=\sqrt[6]{2} \,
\de^{\frac{4}{3}}  e^{-\frac{A}{\de^2}} 
\boxClaus{ \vabs{\CInn}
	+ \OO\paren{\frac{1}{\vabs{\log \de}}}},
\\
|\D \wh L(\theta;\de)| &=  \OO \paren{\de^{\frac{10}3} 
e^{-\frac{A}{\de^2}}},
&
\D\wh \xi(\theta;\de) & = \conj{\D \wh\eta(\theta;\de)}.
\end{align*}	
Once we have adjusted the transverse section, it only remains to apply  Lemma 
\ref{lemma:seriesExpansionPoincare}  to translate these differences 
% $\D\wh{L}(\tht,\de)$, 
% $\D\wh{\eta}(\tht,\de)$ and $\D\wh{\xi}(\tht,\de)$ 
to polar coordinates. That is,
%
% \textcolor{red}{(Repassar que estiguin b\'e les pot\`encies de $\de$.))}
% By, one has that
\begin{align*}
	r^{\unstable} - r^{\stable} 
	&= -\sqrt2 \cos \tht \, \Re \D \wh{\eta}(\tht;\de)
	-\sqrt2 \sin\tht \, \Im \D \wh{\eta}(\tht;\de)
	+ \OO(\de^{\frac{10}3} e^{-\frac{A}{\de^2}}), 
	\\
	R^{\unstable} - R^{\stable} 
	&= -\sqrt2 \cos \tht \, \Im \D \wh{\eta}(\tht;\de)
	+\sqrt2 \sin\tht \, \Re \D \wh{\eta}(\tht;\de)
	+ \OO(\de^{\frac{16}3} e^{-\frac{A}{\de^2}}),
	\\
	G^{\unstable} - G^{\stable} 
	&= \OO(\de^{\frac{10}3} e^{-\frac{A}{\de^2}}),
\end{align*}
which implies
% Then, by \eqref{proof:poincareEstimatesDifference},
\begin{align*}
\norm{(r^{\unstable},R^{\unstable},G^{\unstable})-(r^{\stable},R^{\stable},G^{\stable})}
=&\sqrt{2}
\vabs{\D \wh{\eta}(\tht;\de)}
+ \OO(\de^{\frac{10}3} e^{-\frac{A}{\de^2}}) \\
=&
\sqrt[3]{4} \, \de^{\frac43} e^{-\frac{A}{\de^2}} 
\boxClaus{\vabs{\CInn}+\OO\paren{\frac1{\vabs{\log \de}}}}.
\end{align*}
To conclude the proof of  Theorem 
\ref{theorem:mainTheorem}, it is enough to recall that $\de=\mu^{\frac14}$.

%%%%%%%%%%%%%%%%
 
%\begin{equation}\label{proof:inversetheta}
%\begin{split}	
%\tht &= g_{\tht}(\la,L,\eta,\xi)
%= \la + i\sqrt2 e^{-i\la}\eta -i \sqrt2 e^{i\la} \xi + \OO(L-1,\eta,\xi)^2,
%\\
%\la  &=  g_{\la}(\tht,L,\eta,\xi) 
%	= \tht - i \sqrt2 e^{-i\tht} \eta + 
%	i \sqrt2  e^{i\tht} \xi + \OO(L,\eta,\xi)^2.
%\end{split}
%\end{equation}
%Notice that, by \eqref{proof:poincareEstimatesExistence}, 
%$
%\vabs{(L^{\diamond}(\la_*)-1,\eta^{\diamond}(\la_*),
%	\xi^{\diamond}(\la_*))} \leq C \de \ll 1.
%$
%Therefore, we can apply the results in Lemma \ref{lemma:seriesExpansionPoincare} and \eqref{proof:inversetheta}. 
%%
%Then,
%\begin{align*}
%\la^{\diamond}(\tht_*) = 
%g_{\la}(\tht_*,L^*()) 
%\end{align*}
%

\section{Proof of Theorem~\ref{theorem:mainTheoremPoincare}}
\label{section:resultsOuter}
In this section, we present the main steps necessary to prove Theorem \ref{theorem:mainTheoremPoincare} (see the list in Section \ref{section:introduction}) and complete its proof.
In Section \ref{section:singularitiesOuter} we summarize the results concerning the analysis of the separatrix of the unperturbed Hamiltonian $H_{\pend}$ (see \eqref{def:HpendHosc}) done in \cite{articleInner} (Step B).
In Section \ref{section:outer}, we prove the existence of parametrizations of the perturbed invariant manifolds in suitable complexs domains (Step E).
In Section \ref{section:differenceInner}, we study the difference between the perturbed manifolds near the singularities of the perturbed separatrix.
In particular, in Section \ref{subsection:innerHeuristics}, we summarize the results concerning the derivation (Step C) and analysis (Step D) of the inner equation obtained in \cite{articleInner} and, in Section \ref{subsection:matching}, we
compare certain solutions of the inner equation with the parametrizations of the perturbed manifolds by means of complex matching techniques (Step F).
Finally, in Section \ref{section:difference}, we combine all the previous results to obtain the dominant term of the difference between the invariant manifolds and prove Theorem \ref{theorem:mainTheoremPoincare} (Step G).

%In the remaining sections we present the rest of necessary results.
%%
%In particular, in Section \ref{section:outer}, we prove the existence of parametrizations of the perturbed invariant manifolds in suitable complexs domains (Step D).
%%
%In Section \ref{section:matching}, we 
%compare the complex parametrizations of the invariant manifolds with the solutions of the inner equation (Step E).
%

\subsection{Analytical continuation of the unperturbed separatrix} \label{section:singularitiesOuter}

% In this section, we describe the analytical extension of the time 
% parameterization of the  separatrix of 
The unperturbed Hamiltonian 
\[
H_0(\la, \La, x,y) = 
H_{\pend}(\la, \La) + H_{\osc}(x,y)\]
(see \eqref{def:hamiltonianScalingH0}) possesses a saddle with two separatrices 
in the $(\la,\La)$-plane (see 
Figure~\ref{fig:separatrix}).
%
% In particular, since $H_0$ is decoupled, we only need to study ${H}_{\pend}$ 
% (see Proposition \ref{proposition:HamiltonianScaling}).
% The Hamiltonian ${H}_{\pend}(\la,\La)$ has $1$-degree of freedom and a saddle 
% at $(\la,\La)=(0,0)$ with two homoclinic connections or separatrices (see 
% Figure~\ref{fig:separatrix}). 
%
%From now on, we only consider the separatrix on the right; by symmetry, the results obtained below are analogous for the separatrix on the left. 
%
%We are interested in studying the analytic continuation of the time parametrization of this separatix.
%Let us only consider the separatrix on the right.
%
Let us consider the real-analytic time parametrization of the 
 separatrix with $\la\in (0,\pi)$,
\begin{equation}\label{eq:separatrixParametrization}
	\begin{split}
		\s:\reals &\to \torus \times \reals \\
		t &\mapsto \s(t)=(\la_h(t),\La_h(t)),
	\end{split}
\end{equation}
with initial condition $\s(0)=(\la_0,0)$ where $\la_0 =
\arccos\paren{\frac12-\sqrt{2}} \in \left(\frac{2}{3}\pi,\pi\right)$.
%
%\[
%\s(0)=(\la_0,0) \text{ where } \la_0 \in \left(\frac{2}{3}\pi,\pi\right).
%\]
%and satisfying the asymptotic condition
%$
%\lim_{t\to \pm \infty } \s(t) = (0,0).
%$
%
%

The following result (which encompass Theorem 2.2, Proposition 2.3 and 
Corollary~2.4 in \cite{articleInner})
gives the properties of the analytic extension of $\s(t)$ to the
domain 
\begin{equation}\label{def:dominiBow}
	\begin{split}	
		\Pi^{\mathrm{ext}}_{A, \beta} =& \claus{
			t \in \complexs \st 
			\vabs{\Im t} < \tan \beta \, \Re t + A }
		\cup \\
		&\claus{
			t \in \complexs \st 
			\vabs{\Im t} < -\tan \beta \, \Re t + A},
	\end{split}
\end{equation}
with $A$ as given in~\eqref{def:integralA} (see Figure~\ref{fig:dominiBow}).
\begin{figure} 
	\centering
	\begin{overpic}[scale=0.5]{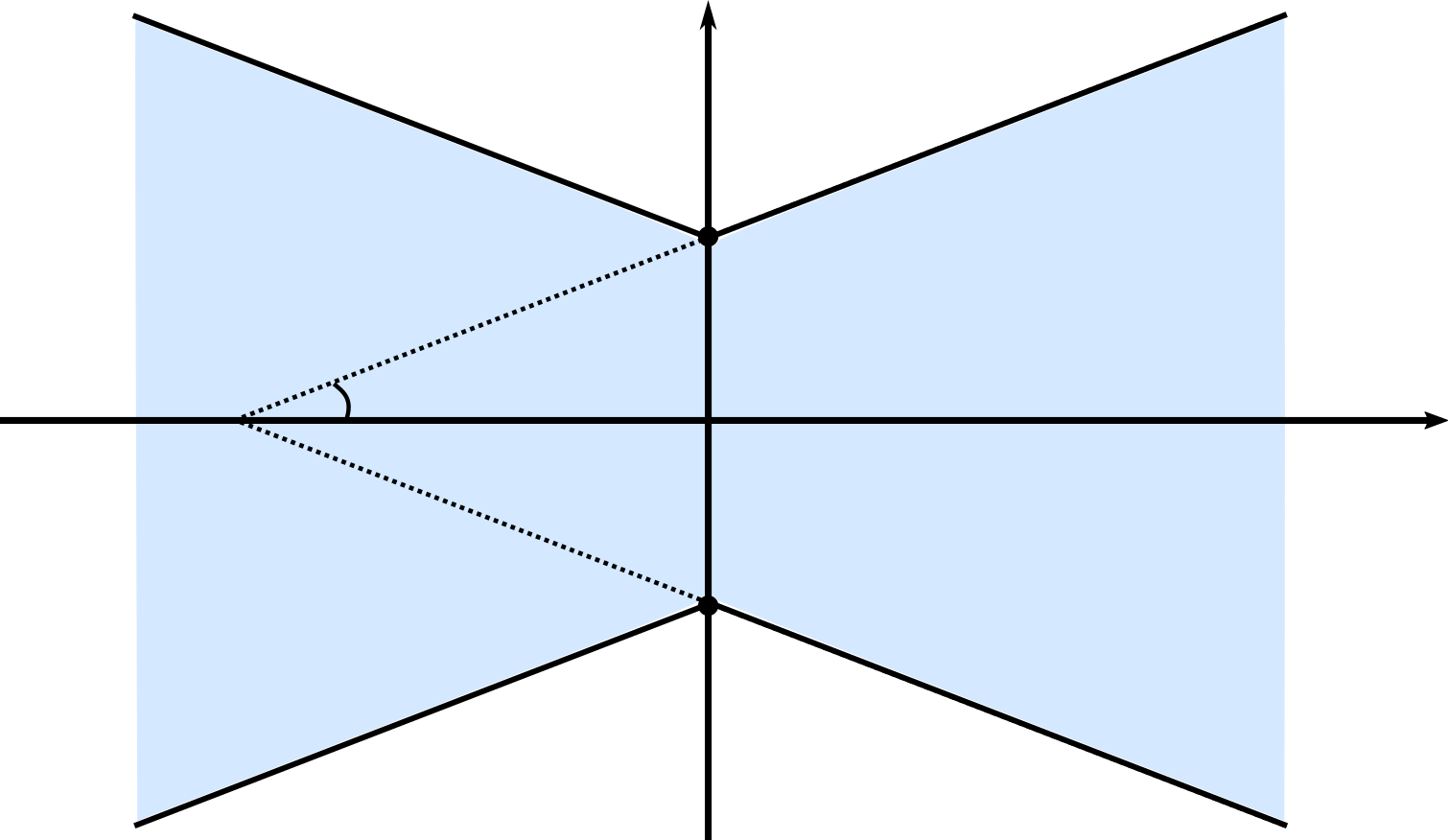}
		\put(101,27.5){\footnotesize$\Re t$}
		\put(45.5,59){\footnotesize$\Im t$}
		\put(28,30){\footnotesize $\beta$}
		\put(42,45){\footnotesize $iA$}
		%\put(40,10){{$-iA$}}
		\put(70,32){$\Pi_{A,\beta}^{\mathrm{ext}}$}
	\end{overpic}
	\bigskip
	\caption{Representation of the domain $\Pi_{A,\beta}^{\mathrm{ext}}$ in~\eqref{def:dominiBow}.}
	\label{fig:dominiBow}
\end{figure}

\begin{theorem}\label{theorem:singularities}
The real-analytic time parametrization $\s$ defined 
in~\eqref{eq:separatrixParametrization} satisfies:
\begin{itemize}
\item There exists $0<\betaBow<\frac{\pi}{2}$ such that $\s(t)$ extends 
analytically to $\Pi_{A,\betaBow}$.
% as given in \eqref{def:dominiBow}.
%
\item $\s(t)$ has only two singularities on $\partial \Pi^{\mathrm{ext}}_{A, 
\betaBow}$ at $t=\pm iA$.
\item There exists $\upsilon>0$ such that,
for $t \in \complexs$ with $\vabs{t -iA}<\upsilon$ and $\arg(t -iA) \in (-\frac{3\pi}{2},\frac{\pi}{2})$, 
% $\s(t)=(\la_h(t),\La_h(t))$ can be expressed as
\begin{equation*}
%\label{eq:homoclinicaSingularities}
	\begin{split}
		&\la_h(t) =  \pi + 3\al_{+} (t- iA)^{\frac{2}{3}} + \OO(t- iA)^{\frac{4}{3}}, \\[0.5em]
		&\La_h(t) =  -\frac{ 2 \al_{+}}{3} \frac{1}{(t- iA)^{\frac{1}{3}}} + 
		\OO(t- iA)^{\frac{1}{3}}, 
	\end{split}
\end{equation*}
with $\al_{+} \in \complexs$ such that $\al_{+}^3=\frac{1}{2}$. 

An analogous result holds for $\vabs{t+iA}<\upsilon$, $\arg(t+iA) \in(-\frac{\pi}{2},\frac{3\pi}{2})$ and $\al_- = \conj{\al_+}$.
\item  $\La_h(t)$ has only one zero in $\ol{\Pi^{\mathrm{ext}}_{A, \betaBow}}$ at $t=0$.
\end{itemize}
\end{theorem}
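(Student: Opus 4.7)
The plan is to reduce the problem to a scalar quadrature and then carry out an analytic inversion. On the separatrix, energy conservation $H_{\pend}(\la_h,\La_h)=-\tfrac{1}{2}$ combined with Hamilton's equation $\dot\la=-3\La$ yields the first-order ODE $\dot\la_h = \pm\sqrt{6(V(\la_h)+\tfrac{1}{2})}$ on the appropriate branch. Integrating from the turning point $\la_h(0)=\la_0$ gives the implicit relation
\[
t(\la) \;=\; \pm\int_{\la_0}^{\la} \frac{d\la'}{\sqrt{6\bigl(V(\la')+\tfrac{1}{2}\bigr)}},
\]
and the whole theorem boils down to a careful study of the analytic inverse $\la=\la_h(t)$, together with $\La_h(t)=-\tfrac{1}{3}\dot\la_h(t)$.

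First I would locate the singularities of the integrand in the complex $\la$-plane: the zeros of $V+\tfrac{1}{2}$ (turning points of the pendulum, whose real representatives in $(-\pi,\pi)$ are $\la=\pm\la_0$), and the branch points $\la=\pm\pi$ coming from $\sqrt{2+2\cos\la}$. On the real segment $[\la_0,\pi)$, $V+\tfrac{1}{2}$ is negative, so the integrand is purely imaginary. A change of variables chosen to rationalize $V+\tfrac{1}{2}$ and expose the factor $1-4x-4x^2$ transforms this integral into exactly \eqref{def:integralA}, proving $t(\pi)=iA$. The analytic extension of $\la_h(t)$ is then obtained via the implicit function theorem wherever $V(\la)+\tfrac{1}{2}\neq 0$; the bowtie-shaped domain $\Pi_{A,\betaBow}^{\mathrm{ext}}$ arises as the largest region around the real axis into which $\la_h$ continues without hitting other zeros or branch points of $V$ in the complex plane. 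The slope $\tan\betaBow$ of its slanted sides quantifies how close the nearest off-axis singularities of $V$ lie, and $\betaBow$ must be chosen small enough for the containment to go through.

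For the local expansion at $t=iA$, set $w=\pi-\la$ and expand $2+2\cos\la = w^2(1+O(w^2))$, hence $V(\la)+\tfrac{1}{2} = -\tfrac{1}{w}+\tfrac{5}{2}+O(w^2)$ and consequently $\bigl(6(V+\tfrac{1}{2})\bigr)^{-1/2}=-i\sqrt{w/6}\bigl(1+O(w)\bigr)$. Integrating from $\la$ to $\pi$ yields $t-iA = -\tfrac{2i}{3\sqrt{6}}(\pi-\la)^{3/2}+O((\pi-\la)^{5/2})$, and Lagrange inversion produces $\la_h(t) = \pi + 3\al_+(t-iA)^{2/3}+O((t-iA)^{4/3})$ with $\al_+$ an explicit cube root satisfying $\al_+^3=\tfrac{1}{2}$. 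Differentiating and invoking $\La_h=-\tfrac{1}{3}\dot\la_h$ gives the stated expansion for $\La_h$. The real-analyticity property $\overline{\s(t)}=\s(\bar t)$ transfers this to $t=-iA$ with $\al_-=\overline{\al_+}$. Finally, zeros of $\La_h$ in $\overline{\Pi_{A,\betaBow}^{\mathrm{ext}}}$ correspond, via $\La_h^2 = \tfrac{2}{3}(V(\la_h)+\tfrac{1}{2})$, to $t$-values whose image under $\la_h$ is a turning point of $V$; since the only turning point reachable in the image is $\la_0$, attained only at $t=0$ by the choice of initial condition, $t=0$ is the unique zero.

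The most delicate step I expect is the second one: proving that the analytic continuation of $\la_h$ truly covers the full bowtie $\Pi_{A,\betaBow}^{\mathrm{ext}}$ without encountering extra singularities from complex zeros of $V+\tfrac{1}{2}$ or additional branch points of $V$. This requires a quantitative study of the map $\la\mapsto t(\la)$ on a family of complex paths connecting $\la_0$ and $\pi$, bounding $|\Im t(\la)|$ while keeping $\la$ bounded away from every other singularity of $V$. The explicit identification of $t(\pi)$ with the integral $A$ in \eqref{def:integralA} is also a nontrivial algebraic manipulation requiring a well-chosen substitution; once carried out, the remaining pieces reduce to standard Hamiltonian ODE analysis and a local singularity expansion.
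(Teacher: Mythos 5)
This theorem is not actually proved in the present paper: the sentence preceding it states that it ``encompass[es] Theorem 2.2, Proposition 2.3 and Corollary 2.4 in \cite{articleInner}'', so there is no in-paper argument to compare against, and I can only assess your proposal on its own terms. Your reduction to quadrature is the natural (and surely the intended) route: on $H_{\pend}=-\tfrac12$ one has $\La_h^2=\tfrac23\bigl(V(\la_h)+\tfrac12\bigr)$, and with $\dot\la_h=-3\La_h$ this gives the implicit relation you wrote. The explicit checks come out right: the substitution $2x=\sqrt{2+2\cos\la}$ sends $\la=\pi\mapsto x=0$ and $\la=\la_0=\arccos(\tfrac12-\sqrt2)\mapsto x=\tfrac{\sqrt2-1}{2}$, and using $V(\la)+\tfrac12=-\tfrac{(1-x)(1-4x-4x^2)}{2x}$ together with $d\la=-\tfrac{2\,dx}{\sqrt{1-x^2}}$ does reproduce the integrand in~\eqref{def:integralA}. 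The local expansion is also essentially correct (one small slip: the remainder in $V(\la)+\tfrac12=-\tfrac1w+\tfrac52+\cdots$ is $\OO(w)$, the $-\tfrac{w}{24}$ term from $\tfrac{1}{\sqrt{2+2\cos\la}}$, not $\OO(w^2)$; this is harmless here), and the algebra giving $\al_+^3=\tfrac12$ and $\La_h=-\tfrac13\dot\la_h=-\tfrac{2\al_+}{3}(t-iA)^{-1/3}+\cdots$ checks out.

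The genuine gap is exactly where you flag it. The second bullet (extension to all of $\Pi^{\mathrm{ext}}_{A,\betaBow}$ with $t=\pm iA$ the only boundary singularities) and the fourth (uniqueness of the zero of $\La_h$ at $t=0$) are not consequences of the local analysis near $\la=\pi$; they are global statements about the conformal map $\la\mapsto t(\la)$, and your one-sentence sketch is a description of the task rather than a proof. To close it, one must show that as $t$ ranges over the bowtie, $\la_h(t)$ avoids every zero of $V+\tfrac12$ other than $\la_0$ (this includes $\la=-\la_0$, $\la=0$, and the non-real zeros --- for example $\cos\la=\tfrac{1+2\sqrt2}{2}$ is also a root of $V+\tfrac12=0$ on the relevant Riemann surface), never crosses the branch locus of $\sqrt{2+2\cos\la}$ except at $\pm iA$, and that $\betaBow$ can be chosen so the slanted boundary edges stay clear of all these obstructions. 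Likewise, ``the only turning point reachable in the image is $\la_0$, attained only at $t=0$'' is precisely what must be proved and presupposes the same quantitative control of $t(\la)$. As written, the proposal is a correct and useful road map, but the middle step is where the actual proof lives and it is not carried out.
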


\subsection{The perturbed invariant manifolds}
\label{section:outer}

In this section, following the approach described in~\cite{BFGS12, BCS13, 
GMS16}, we study the analytic continuation of the parametrizations of the 
perturbed one-dimensional stable and unstable manifolds, $\WW^{\unstable}(\de)$ 
and $\WW^{\stable}(\de)$. 
%in appropriate complex domains and provide estimates for them.
%
%(Notice that these manifolds coincide with the invariant manifolds of $L_3$ in the original coordinates).
%
%\subsubsection{Perturbed invariant manifolds as a graph}

Since we  measure the distance between the invariant manifolds in the 
section $\la=\la_*$ (see Theorem \ref{theorem:mainTheoremPoincare}), 
we parameterize them as graphs with respect to $\la$ (whenever is possible) or, 
more conveniently, 
%
% However, a priori, we do not know the domain of analyticity of the perturbed invariant manifolds (for the component $\la$).
%
% Nonetheless, for $\de=0$, we know that $\la=\la_h(t)$ and can be analytically extended for $t \in \Pi_{A,\betaBow}^{\mathrm{ext}}$ (see Theorem \ref{theorem:singularities}).
%
% In fact, it is more convenient to use parameterize them as graphs 
with respect 
to the independent variable $u$ defined by $\la=\la_h(u)$.

To define these suitable parameterizations we first translate the 
equilibrium point $\Ltres(\de)$ to $\mathbf{0}$ by the change of coordinates
\begin{equation}\label{def:changeEqui}
	\phi_{\equi}:(\la,\La,x,y) \mapsto (\la,\La,x,y) + \Ltres(\de).
\end{equation}
Second, we consider the symplectic change of coordinates 
\begin{equation}\label{def:changeOuter}
	\phi_{\out}:(u,w,x,y) \to (\la,\La,x,y),
	%\end{equation}
	%given by
	%\begin{equation}
	\quad 
	{\la}= \la_h(u), \hh  {\La}= \La_h(u) - \frac{w}{3\La_h(u)}.
\end{equation}
We refer to $(u,w,x,y)$ as the \emph{separatrix coordinates}.

Let us remark that $\phi_{\out}$ is not defined for $u=0$ since $\La_h(0)=0$ (see Theorem \ref{theorem:singularities}).
We deal with this fact later when considering the domain of definition for $u$.

% 
%We consider the section 
%\begin{align} \label{def:sectionOuter}
%	\Sigma(u_*) = \claus{(u,w,x,y) \in \reals^2 \times \complexs^2 \st u = u_*, \, {x}=\conj{y}},
%\end{align}
%where $\la_*=\la_h(u^*)$.
%
%The change of coordinates $\phi_{\out}$ has some useful characteristics:
%\begin{itemize}
%	\item Since in the unperturbed case the invariant manifolds coincide along the separatrix $(\la_h,\La_h,0,0)$ (see \eqref{eq:separatrixParametrization}), then it makes sense to expect the perturbed ones to be close to it.
%	%
%	\item Even though, a priori, we do not have control over the domain of definition of $(\la,\La)=(\la_h(u),\La_h(u))$, we do know that they are well defined for $u \in \Pi_{A,\betaBow}^{\mathrm{ext}}$.
%	%
%	Thus, considering $u$ as a new coordinates, gives us control over its domain.
%	\item Notice that, for $\de=0$, $\dot{u}=1$ and that $\la=\la_h(u)$ is well defined for $u \in \Pi_{A,\betaBow}^{\mathrm{ext}}$.
%\end{itemize}
%
%
 
After these changes of variables, we look for the perturbed invariant manifolds 
as a graph with respect to $u$.
% and measure the difference between the stable and 
% unstable manifolds at the section $u=u^*$ such that $\la_*=\la_h(u_*)$.
%
In other words, we look for functions
\[
\zdOut(u) = \left(\wdOut(u),\xdOut(u),\ydOut(u)\right)^T,
\quad \text{ for } \diamond=\unstable,\stable,
\]
such that the 
invariant manifolds given in Proposition~\ref{proposition:HamiltonianScaling} 
can be expressed as
\begin{equation}\label{eq:invariantManifoldsExpression}
\WW^{\diamond}(\de)= \left\{ \paren{\la_h(u), \La_h(u)-\frac{\wdOut(u)}{3\La_h(u)}, \xdOut(u), \ydOut(u)} + \Ltres(\de)\right\}, \quad \text{for } \diamond=\unstable,\stable,
\end{equation}
%\footnote{In these coordinates, the unperturbed separatrix is  $\{(w,x,y)=(0,0,0)\}$.}
with $u$ belonging to an appropriate domain contained in $\Pi^{\mathrm{ext}}_{A, \betaBow}$ (see \eqref{def:dominiBow}).
%
%Indeed, since by Theorem~\ref{theorem:singularities}, the parametrization of the separatrix $(\la_h,\La_h)$ is well-defined and analytic in $\Pi^{\mathrm{ext}}_{A, \betaBow}$.
%
The graphs $\zuOut$ and $\zsOut$
must satisfy the asymptotic conditions 
\begin{equation}\label{eq:asymptoticConditionsOuter}
	\begin{split}
		\lim_{\Re u \to -\infty} \left(\frac{\wuOut(u)}{\La_h(u)},\xuOut(u), \yuOut(u) \right) = %0, \hh
		\lim_{\Re u \to +\infty} \left(\frac{\wsOut(u)}{\La_h(u)},\xsOut(u), \ysOut(u) \right) = 0.
	\end{split}
\end{equation}

\begin{remark}\label{remark:realAnalytic}
Since the Hamiltonian $H$ is real-analytic in the sense of $\conj{H(\la,\La,x,y;\de)}=H(\conj{\la},\conj{\La},y,x;\conj{\de})$ (see Proposition \ref{proposition:HamiltonianScaling}), 
then we say that $\zOut(u)=(\wOut(u),\xOut(u),\yOut(u))^T$ is real-analytic if it satisfies
\begin{align*}
	\wOut(\conj{u}) = \conj{\wOut(u)}, \qquad
	\xOut(\conj{u}) = {\yOut(u)}, \qquad
	\yOut(\conj{u}) = {\xOut(u)}.
	%\quad \text{for } \diamond=\unstable,\stable.
\end{align*}
\end{remark}

The classical way to study exponentially small  splitting of separatrices, in this setting, is to look for solutions $\zuOut$ and $\zsOut$ in a certain complex common domain containing a segment of the real line and intersecting a $\OO(\de^2)$ neighborhood of the singularities $u=\pm iA$ of the separatrix.

Recall that the invariant manifolds can not be expressed as a graph in a neighborhood of $u=0$.
To overcome this technical problem, we find solutions $\zuOut$ and $\zsOut$ defined in a complex domain, which we call \emph{boomerang domain} due to its shape (see Figure~\ref{fig:dominiBoomerang}).
\begin{figure}[t] 
	\centering
	\begin{overpic}[scale=1]{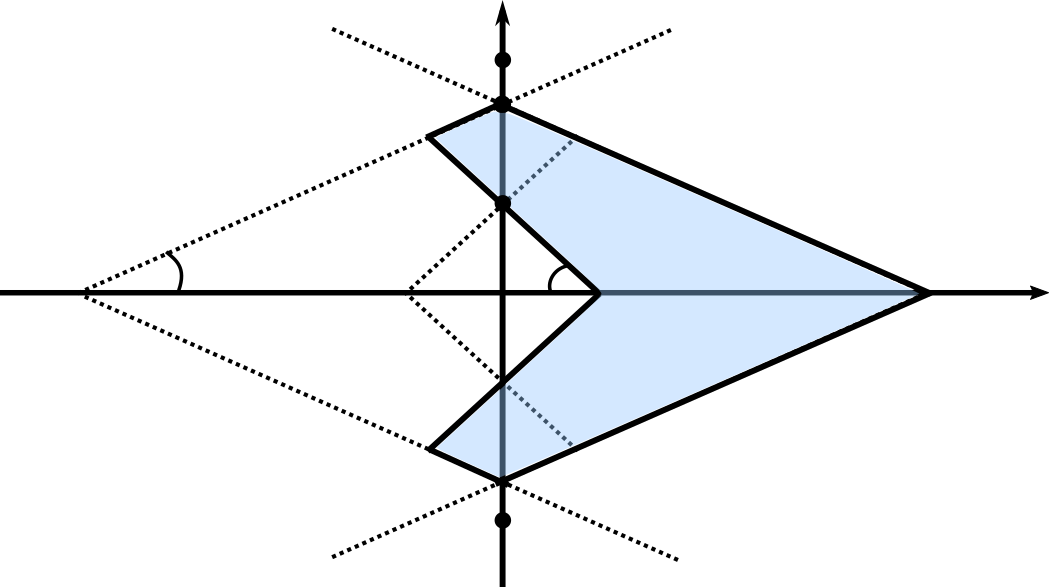}
		\put(60,31){$\DBoomerang$}
		\put(19,29.5){\footnotesize $\betaOutA$}
		\put(48.5,29.5){\footnotesize $\betaOutB$}
		\put(42,49.5){\footnotesize $iA$}
		\put(52,44.5){\footnotesize $i(A-\kappa \de^2)$}
		\put(39.5,4){\footnotesize $-iA$}
		\put(41,35){\footnotesize $\dBoomerang A$}
		\put(102,27){\footnotesize$\Re u$}
		\put(45,58){\footnotesize$\Im u$}
	\end{overpic}
	\bigskip
	\caption{The boomerang domain $\DBoomerang$ defined in~\eqref{def:dominiBoomerang}.}
	\label{fig:dominiBoomerang}
\end{figure}
Namely,
\begin{equation}\label{def:dominiBoomerang}
\begin{split}
\DBoomerang = \left\{ u \in \complexs \right. \st 
&\vabs{\Im u} < A - \kappa \de^2 + \tan \betaOutA \Re u,
% \\
% &
\vabs{\Im u} < A - \kappa \de^2 - \tan \betaOutA \Re u,
\\
&\vabs{\Im u} > \left. \dBoomerang A - \tan \betaOutB \Re u 
\right\},
\end{split}
\end{equation}
where $\kappa>0$ is such that $A-\kappa \de^2>0$,
$\betaOutA$ is the constant given in Theorem~\ref{theorem:singularities}
and $\betaOutB \in [\betaOutA,\frac{\pi}{2})$ and $\dBoomerang \in 
(\frac{1}{4},\frac{1}{2})$
are independent of $\de$.

%In the rest of the paper we assume without mentioning it that $\kappa_j$, for $j=1,..,?$, satisfy that $A-\kappa_j \de^2>0$  and might depend on $\de$.
%
%In the rest of the paper, we fix $\betaOutB \in [\betaOutA,\frac{\pi}{2})$ with $\betaOutA$ given in Theorem~\ref{theorem:singularities}.

\begin{theorem}\label{theorem:existence}
% Let $\zuOut$ and $\zsOut$ be the functions in~\eqref{eq:invariantManifoldsExpression} defining
% the parametrization as a graph of the perturbed unstable and stable manifold, $\WW^{\unstable}(\de)$ and $\WW^{\stable}(\de)$, respectively.
%
Fix a constant $\dBoomerang \in (\frac{1}{4},\frac{1}{2})$.
Then, there exists $\de_0, \kappaBoomerang>0$ such that,
for $\de \in (0,\de_0)$, 
$\kappa\geq\kappaBoomerang$,
the graph parameterizations $\zuOut$ and $\zsOut$ introduced 
in~\eqref{eq:invariantManifoldsExpression} can be extended real-analytically to 
the domain $\DBoomerang$.

Moreover, there exists a real constant $\cttOuterA>0$ independent of $\de$ and $\kappa$ such that, for $u \in \DBoomerang$ we have that
\begin{align*}%\label{eq:boundsOuter}
|\wdOut(u)| \leq \frac{\cttOuterA\de^2}
{\vabs{u^2 + A^2}}
+ \frac{\cttOuterA\de^4}{\vabs{u^2 + A^2}^{\frac{8}{3}}}, \quad
|\xdOut(u)| \leq \frac{\cttOuterA\de^3}
{\vabs{u^2 + A^2}^{\frac{4}{3}}}, \quad
|\ydOut(u)| \leq \frac{\cttOuterA\de^3}
{\vabs{u^2 + A^2}^{\frac{4}{3}}}.
\end{align*}
\end{theorem}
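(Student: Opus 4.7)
The strategy is to set up and solve a fixed point equation for $\zdOut=(\wdOut,\xdOut,\ydOut)^T$ in a suitable Banach space of real-analytic functions on $\DBoomerang$ satisfying the weighted bounds prescribed by the statement. Applying the changes of coordinates $\phi_\equi\circ\phi_\out$ from \eqref{def:changeEqui}--\eqref{def:changeOuter} to the Hamiltonian $H$ of Proposition~\ref{proposition:HamiltonianScaling} yields Hamilton's equations in the separatrix coordinates $(u,w,x,y)$ for which the unperturbed separatrix becomes the trivial solution $w=x=y=0$. Imposing the invariance condition on the graphs \eqref{eq:invariantManifoldsExpression} and using that the $u$-component of the vector field does not vanish along them, we obtain a non-autonomous system
\begin{equation*}
\zdOut\,'(u) = \mathcal{A}(u;\de)\,\zdOut(u) + \mathcal{R}\bigl(u,\zdOut(u);\de\bigr),
\end{equation*}
where $\mathcal{A}$ is the linearization along the unperturbed separatrix -- a nilpotent block from the pendulum part and a block $\mathrm{diag}(-i/\de^2,\,i/\de^2)$ from the fast oscillator -- and where $\mathcal{R}$ collects the $\OO(\de^4)$ forcing arising from $\mu H_1$ together with the nonlinear terms (quadratic in $\zdOut$).

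\textbf{Integral equation.} Let $\Phi(u,s;\de)$ denote the fundamental matrix of the linear part. Its pendulum block can be built explicitly from $\dot{\La}_h(u)$ and an adjoint solution, while its oscillator block reduces, up to bounded multiplicative corrections, to $\mathrm{diag}(e^{-i(u-s)/\de^2},\,e^{i(u-s)/\de^2})$. Variation of parameters combined with the asymptotic conditions \eqref{eq:asymptoticConditionsOuter} recasts the problem as a fixed point equation
\begin{equation*}
\zdOut(u) \,=\, \mathcal{F}^\diamond[\zdOut](u) \,:=\, \int_{u_0^\diamond}^{u} \Phi(u,s;\de)\,\mathcal{R}\bigl(s,\zdOut(s);\de\bigr)\,ds,\qquad \diamond=\unstable,\stable,
\end{equation*}
with $u_0^{\unstable}=-\infty$ and $u_0^{\stable}=+\infty$ reached along integration paths contained in $\DBoomerang$. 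For the $x,y$ components these paths must be chosen so that $e^{\pm i(u-s)/\de^2}$ stays bounded by $1$: this forces segments with monotone imaginary part and both explains the gain of a factor $\de^2$ from integrating against the $1/\de^2$ frequency and dictates the need to remain at distance $\geq \kappa\de^2$ from $\pm iA$.

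\textbf{Contraction in weighted norms.} On the space of real-analytic triples on $\DBoomerang$ (in the sense of Remark~\ref{remark:realAnalytic}) equipped with a weighted norm adapted to the target estimates -- schematically, bounding $\tfrac{\vabs{u^2+A^2}}{\de^2}\vabs{\wOut}$ for the pendulum component and $\tfrac{\vabs{u^2+A^2}^{4/3}}{\de^3}\max\claus{\vabs{\xOut},\vabs{\yOut}}$ for the oscillator ones -- we verify that (i) $\mathcal{F}^{\diamond}[0]$ sits in an $\OO(1)$-ball, since $\mathcal{R}(\cdot,0;\de)$ inherits only mild singularities from Theorem~\ref{theorem:singularities} and each oscillator integral contributes the promised $\de^2$, and (ii) $\mathcal{F}^{\diamond}$ is Lipschitz with constant strictly less than $1$ on this ball, using standard estimates on $\Phi$ together with the quadratic character of the nonlinear part of $\mathcal{R}$ in $\zdOut$. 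The Banach fixed point theorem then produces the unique real-analytic solution $\zdOut$ and its stated bounds; the second term $\de^4/\vabs{u^2+A^2}^{8/3}$ in the estimate for $\wdOut$ arises from feeding back the $(x,y)$-block of $\mathcal{R}$ -- of order $\de^6/\vabs{u^2+A^2}^{8/3}$ at leading order -- into the pendulum Green's function.

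\textbf{Main obstacle.} The principal difficulty lies in uniformly controlling $\mathcal{F}^\diamond$ on a domain that simultaneously reaches within $\kappa\de^2$ of the singularities $u=\pm iA$, where the weights and the separatrix factor $\La_h(u)\sim(u\mp iA)^{-1/3}$ both blow up, and excludes a macroscopic neighborhood of $u=0$, where the change of variables $\phi_\out$ itself is singular because $\La_h(0)=0$. Closing the estimates therefore requires a careful choice of broken integration paths adapted to the boomerang geometry -- in particular segments that bridge the two ``prongs'' around $u=0$ without leaving $\DBoomerang$ -- together with a precise tracking of the branch-point profiles of $\s$ provided by Theorem~\ref{theorem:singularities}.
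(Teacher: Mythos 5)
Your outline captures the broad strategy (invariance equation in separatrix coordinates, variation of parameters, contraction in weighted norms tuned to the stated bounds), and the heuristic for where the factors $\de^2$, $\de^3$, and $\de^4/|u^2+A^2|^{8/3}$ come from is sound. But there is a concrete gap that makes the proposed single fixed-point argument on $\DBoomerang$ unworkable, and it is precisely the gap the paper spends Section~\ref{subsection:outerExtension} closing.

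You write that the fixed point operator integrates from $u_0^\unstable=-\infty$ and $u_0^\stable=+\infty$ ``along integration paths contained in $\DBoomerang$.'' This cannot be: $\DBoomerang$ is a \emph{bounded} domain (it sits inside the diamond $|\Im u|<A-\kappa\de^2\pm\tan\betaOutA\,\Re u$), so no path in it reaches $\pm\infty$, and the asymptotic conditions \eqref{eq:asymptoticConditionsOuter} cannot be imposed there. The paper therefore first solves the fixed point problem in the half-planes $\DdInfty$ where the asymptotic conditions make sense (Proposition~\ref{proposition:existenceInfty}), then extends to the ``outer'' domains $\DdOut$ by re-anchoring the oscillator integrals at the vertices $u_1,\conj{u_1}$ (Proposition~\ref{proposition:existenceComecocos}). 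For $\zsOut$ this already finishes the job because $\DBoomerang\subset\DsOut$. For $\zuOut$ it does not, and here is where your ``broken integration paths adapted to the boomerang geometry'' buries the real obstruction: any path from $\DuOut$ to the right-hand part of $\DBoomerang$ must pass close to $u=0$, where the change of variables $\phi_\out$ is singular (since $\La_h(0)=0$), so the graph parameterization cannot be continued through that region, and the contour-integral fixed point argument, which needs a simply connected domain over which $\phi_\out$ is invertible, breaks. The paper's resolution is to switch to the time parameterization $\Gu=\phi_\equi\circ\phi_\out(v+\uOut(v),\zuOut(v+\uOut(v)))$, which \emph{is} regular at $u=0$, flow it through $\DOuterTilde\to\DFlow$ (Propositions~\ref{proposition:changeuOut}, \ref{proposition:existenceFlow}), and then invert back to a graph on $\DBoomerangTildeProp$ (Proposition~\ref{proposition:changevOut}, Corollary~\ref{corollary:changevOut}). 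This graph$\to$flow$\to$graph transition is the essential idea you are missing; choosing clever contours within $\DBoomerang$ alone cannot substitute for it, because the obstruction is not the oscillatory exponential but the non-invertibility of $\phi_\out$ at $u=0$ together with the boundedness of $\DBoomerang$.

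A secondary, smaller remark: you treat $\zuOut$ and $\zsOut$ symmetrically, but they are not; only the unstable branch requires the extra extension machinery, a distinction worth making explicit since it determines where the bulk of the work lies.
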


%\begin{remark}
%On Theorem \ref{theorem:existence} we use a compounded weight to estimate $\wdOut(u)$, for $\diamond=\unstable,\stable$. 
%%
%This weight is necessary, since functions $\wdOut(u)$ have different estimates depending on the region where $u$ is found. 
%%
%Indeed, for $u$ in a neighborhood of the real line we have
%\[
%\vabs{\wdOut(u)} \leq \frac{\cttOuterA}{\vabs{u^2+A^2}\efr{4}{3}},
%\qquad
%\text{for }
%\vabs{\Im u} \ll 1.
%\]
%However, in a neighborhood of the singularities $\pm iA$,
%\[
%\vabs{\wdOut(u)} \leq \frac{\cttOuterA \de^2}{\vabs{u^2+A^2}\efr{8}{3}},
%\qquad
%\text{for }
%\vabs{u-iA}, \vabs{u+iA} \ll 1.
%\]
%\end{remark}

Notice that the asymptotic conditions \eqref{eq:asymptoticConditionsOuter} do 
not have any meaning in the domain $\DBoomerang$ since it is bounded.
Therefore, to prove the existence of $\zuOut$ and $\zsOut$ in $\DBoomerang$ one has to start with different domains where these asymptotic conditions make sense and then find a way to extend  them real-analytically to $\DBoomerang$.
We describe the details of these process in the  following Sections \ref{subsection:outerBasic} and \ref{subsection:outerExtension}.

\subsubsection{Analytic extension of the stable and unstable manifolds}
\label{subsection:outerBasic}

The Hamiltonian $H$ written in separatrix coordinates 
(see~\eqref{def:changeEqui} and \eqref{def:changeOuter}) becomes
%with respect to the symplectic form ${d u \wedge d w} + i \h {d\xp \wedge d\yp}$ with Hamiltonian
\begin{equation}\label{def:hamiltonianOuter}
	H^{\out} =  H^{\out}_0 +  H^{\out}_1,
\end{equation}
with 
\begin{equation}\label{def:hamiltonianOuterSplit}
	\begin{split}
		H^{\out}_0 = 	w  + \frac{xy}{\de^2}, \qquad
		H^{\out}_1 =
		{H} \circ \paren{\phi_{\equi} \circ \phi_{\out}} -H^{\out}_0.
	\end{split}
\end{equation}
%After all these consideration it seems a good strategy to 
%
%To obtain $\zuOut$ and $\zsOut$ in \eqref{eq:invariantManifoldsExpression} we consider the equation given by Hamiltonian $H^{\out}$ on~\eqref{def:hamiltonianOuter}.
%
%
Introducing the notation $z=(w,x,y)^T$ and defining
\begin{equation}\label{def:matrixAAAOuter}
	\AAA^{\out}= \frac{i}{\de^2} \begin{pmatrix}
		0 & 0 & 0 \\
		0 & 1 & 0 \\
		0 & 0 & -1
	\end{pmatrix},
\end{equation}
the equations associated to the Hamiltonian $H^{\out}$ can be written as
\begin{align}\label{eq:systemEDOsOuter}
	\left\{ \begin{array}{l}
		\dot{u} = 1 + g^{\out}(u,z),\\
		\dot{z} = \AAA^{\out} z + f^{\out}(u,z),
	\end{array} \right.
\end{align}
where $g^{\out} = \partial_w H_1^{\out}$  and $f^{\out}=\paren{-\partial_u 
H_1^{\out}, i\partial_y H_1^{\out}, -i\partial_x H_1^{\out} }^T$. Consequently, 
the parameterizations~$\zuOut(u)$ and $\zsOut(u)$ given in 
\eqref{eq:invariantManifoldsExpression} satisfy the invariance equation
\begin{equation}\label{eq:invariantEquationOuter}
	\partial_u \zdOut = 
	\AAA^{\out} \zdOut + \RRR^{\out}[\zdOut], \hh
	\text{ for } \diamond=\unstable,\stable,
\end{equation}
with
\begin{equation}\label{def:operatorRRROuter}
	\RRR^{\out}[\varphi](u)= 
	\frac{f^{\out}(u,\varphi)- g^{\out}(u,\varphi) \AAA^{\out} \varphi }{1+g^{\out}(u,\varphi)}.
\end{equation}

%Since $\zdOut(u)$ parameterize the stable and unstable manifolds in \eqref{eq:invariantManifoldsExpression}, we look for solutions satisfying  the asymptotic conditions 
%\begin{equation}\label{eq:asymptoticConditionsOuter}
%	\begin{split}
%		\lim_{\Re u \to -\infty} \left(\frac{\wuOut(u)}{\La_h(u)},\xuOut(u), \yuOut(u) \right) = %0, \hh
%		\lim_{\Re u \to +\infty} \left(\frac{\wsOut(u)}{\La_h(u)},\xsOut(u), \ysOut(u) \right) = 0.
%	\end{split}
%\end{equation}
%However, these asymptotic conditions do not have any meaning in the domain $\DBoomerang$, since it is bounded.
%
%To prove the existence of $\zuOut$ and $\zsOut$ in $\DBoomerang$ one has to start with different domains where these asymptotic conditions make sense and then find a way to extend  them analytically to $\DBoomerang$.
\begin{remark}\label{remark:derivadaH1w}
Note that one can use this invariance equation whenever 
\[
 1+g^{\out}(u,\varphi)=1+ \partial_w H_1^{\out}(u,\varphi)\neq 0
\]
This condition is satisfied in the different domains that are considered in this section and in the forthcoming ones and it is checked in Appendix \ref{appendix:proofH-technical}  (see \eqref{proof:boundsInftyg} and  \eqref{def:Fitag}). 
% As a consequence of the previous corollary, there exists a constant $C$ independent of $\de$, such that $\zuOut, \zsOut$ (and actually any $z$ with the estimates in Corollary \ref{corollary:changevOut}) satisfy
% \begin{equation}
% 	\vabs{\partial_w H_1^{\out}(u,\zdOut(u))}
% 	\leq C \de^2,
% 	\qquad \text{for } 
% 	u \in \DBoomerangTilde,
% 	\quad
% 	\diamond=\unstable,\stable.
% \end{equation}
This fact is also used later in Section \ref{section:differenceInner}.
\end{remark}

The first step is to look for solutions of this 
equation in the  domains
\begin{equation}\label{def:dominisInfty}
\begin{split}
\DuInfty = \claus{u \in \complexs \st \Re u < -\rhoInfty}, \qquad
\DsInfty = \claus{u \in \complexs \st \Re u > \rhoInfty},
\end{split}
\end{equation}
for some $\rhoInfty>0$, which allows us to take into account the asymptotic 
conditions \eqref{eq:asymptoticConditionsOuter}. 
%
%
% The following proposition proves the existence of $\zuOut$ and $\zsOut$ in these domains.
%

\begin{proposition}\label{proposition:existenceInfty}
Fix $\rhoInfty>0$.
Then, there exists $\de_0>0$ such that, for $\de \in (0,\de_0)$,
the equation~\eqref{eq:invariantEquationOuter} has a unique real-analytic 
solution $\zdOut =(\wdOut,\xdOut,\ydOut)^T$ in  $\DdInfty$ 
(for $\diamond=\unstable, \stable$)
satisfying the corresponding asymptotic condition \eqref{eq:asymptoticConditionsOuter}.

Moreover, there exists  $\cttOuterB>0$ independent of $\de$ such that, for $u 
\in \DdInfty$, 
\begin{align*}%\label{eq:boundsInfinite}
	|\wdOut(u) e^{-2\vap u}| \leq \cttOuterB\de^2, 
	\qquad
	|\xdOut(u) e^{-\vap u}| \leq \cttOuterB\de^3, 
	\qquad
	|\ydOut(u) e^{-\vap u}| \leq \cttOuterB\de^3.
%	\qquad \vap=\textstyle \sqrt{\frac{21}{8}}.
\end{align*}
with $\nu=\sqrt{\frac{21}{8}}$ for $\diamond=\unstable$ and
$\nu=-\sqrt{\frac{21}{8}}$ for $\diamond=\stable$.
\end{proposition}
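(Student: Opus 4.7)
The plan is to rewrite the invariance equation \eqref{eq:invariantEquationOuter} as a fixed-point equation on an appropriately weighted Banach space and apply the Banach contraction theorem. Since the linear part $\AAA^{\out}$ has spectrum $\{0, i/\de^2, -i/\de^2\}$, I would invert $\partial_u - \AAA^{\out}$ componentwise using variation of parameters with the integration limits dictated by the asymptotic conditions \eqref{eq:asymptoticConditionsOuter}. Concretely, writing $\RRR^{\out}[\varphi] = (\RRR^{\out}_w, \RRR^{\out}_x, \RRR^{\out}_y)^T$, the fixed-point operator is
\begin{align*}
\wdOut(u) &= \int_{\mp \infty}^{u} \RRR^{\out}_w[\zdOut](s)\, ds, \\
\xdOut(u) &= \int_{\mp \infty}^{u} e^{i(u-s)/\de^2}\, \RRR^{\out}_x[\zdOut](s)\, ds, \\
\ydOut(u) &= \int_{\mp \infty}^{u} e^{-i(u-s)/\de^2}\, \RRR^{\out}_y[\zdOut](s)\, ds,
\end{align*}
where the lower limit is $-\infty$ for $\diamond = \unstable$ and $+\infty$ for $\diamond = \stable$, with horizontal integration contours inside $\DdInfty$ (on which the oscillatory kernel has modulus one).

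Next I would introduce a Banach space $\XcalInfty$ of real-analytic triples $(w, x, y)$ on $\DdInfty$ (in the sense of Remark~\ref{remark:realAnalytic}), equipped with a norm that encodes the predicted decay rates:
\begin{equation*}
\normInfty{(w,x,y)} = \max\!\left\{ \sup_{u \in \DdInfty} |w(u) e^{-2\nu u}|,\; \sup_{u \in \DdInfty} |x(u) e^{-\nu u}|,\; \sup_{u \in \DdInfty} |y(u) e^{-\nu u}| \right\},
\end{equation*}
with $\nu = \pm\sqrt{21/8}$ as in the statement. The essential input is the estimate on the source $\RRR^{\out}[0]$: using \eqref{def:hamiltonianScalingH1}, the normal form $H_1^{\Poi}(\la,1,0,0;0) = V(\la)$, and the fact that, along the separatrix, $\La_h(u)$ and $V'(\la_h(u))$ decay like $e^{\nu u}$ (by Theorem~\ref{theorem:singularities} and the saddle eigenvalues $\pm\sqrt{21/8}$ at $\la = 0$), one obtains $|\RRR^{\out}_w[0](u)| \lesssim \de^2 e^{2\nu u}$ and $|\RRR^{\out}_{x,y}[0](u)| \lesssim \de^3 e^{\nu u}$. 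Integrating and accounting for the weight then gives $\normInfty{\GG^{\out} \circ \RRR^{\out}[0]} \lesssim \de^2$ on the $w$-component and $\lesssim \de^3$ on the $x,y$-components (so a ball of radius $C\de^{\min}$ in the scaled norm contains the image).

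The Lipschitz estimate for $\RRR^{\out}$ is handled by expanding $\RRR^{\out}$ in powers of $z = (w,x,y)$ around $0$; each monomial $w^a x^b y^c$ carries a weight $e^{(2a+b+c)\nu u}$ which absorbs part of the exponential decay, so the resulting Lipschitz constant on a small ball is $\OO(\de)$ (or better). This is enough to make $\GG^{\out} \circ \RRR^{\out}$ a contraction on a ball of the appropriate size, and the unique fixed point $\zdOut$ is the solution; real-analyticity is automatic from the construction since the operator preserves the real-analytic class of Remark~\ref{remark:realAnalytic}. Uniqueness within the class of functions satisfying \eqref{eq:asymptoticConditionsOuter} follows from the Banach theorem applied on a slightly larger ball.

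The main technical obstacle is not the Banach machinery itself but the bookkeeping of the source estimate and the Lipschitz constant for the non-polynomial operator $\RRR^{\out}$ defined in \eqref{def:operatorRRROuter}, which involves the denominator $1 + g^{\out}$ (cf.\ Remark~\ref{remark:derivadaH1w}); one must verify that this denominator stays bounded away from zero on the ball (a consequence of $H_1^{\out}$ being small) and then propagate the different exponential weights of $w$ vs.\ $x, y$ consistently through the composition. The fact that the $w$-weight is $e^{-2\nu u}$ (not $e^{-\nu u}$) is what makes all terms close in a self-consistent way, since quadratic contributions from $x y$ and from $\La_h^2$-type terms naturally carry the $e^{2\nu u}$ factor.
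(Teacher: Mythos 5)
Your overall strategy — rewrite \eqref{eq:invariantEquationOuter} as a fixed-point equation $\zdOut = \GG \circ \RRR^{\out}[\zdOut]$, with the integration limits chosen by \eqref{eq:asymptoticConditionsOuter} and a Banach space of real-analytic triples weighted by the exponential decay rates $e^{2\nu u}$, $e^{\nu u}$, $e^{\nu u}$ — is exactly the paper's approach. However, the two quantitative claims that are supposed to make the contraction close are both wrong, and the error points at the one ingredient you left out.

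\textbf{The source estimate is off.} You claim $|\RRR^{\out}_{x,y}[0](u)| \lesssim \de^3 e^{\nu u}$, but the correct size is $\de$, not $\de^3$. Indeed $f^{\out}_{2,3} = \pm i\,\partial_{y,x} H_1^{\out}$, and from \eqref{def:hamiltonianScalingH1} and \eqref{def:changeScaling} one has $\partial_{x,y} H_1 = \de\,\partial_{\eta,\xi} H_1^{\Poi}(\la,1+\de^2\La,\de x,\de y;\de^4)$. The inner factor is $\OO(1)$ (by the series expansion of the Poincar\'e potential, cf.\ Lemma~\ref{corollary:seriesH1Poi}), so $\RRR^{\out}_{2,3}[0] = \OO(\de\, e^{\nu u})$, which is what the paper's Lemma~\ref{lemma:computationsRRRInf} records. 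The two missing powers of $\de$ in the $x,y$-estimate come from integrating against the oscillatory kernel $e^{\mp i(s-u)/\de^2}$: a non-stationary-phase / integration-by-parts argument turns $\int_{-\infty}^u e^{-i(s-u)/\de^2}\phiA(s)\,ds$ with $\phiA = \OO(e^{\nu s})$ into $\OO(\de^2 e^{\nu u})$. This gain is Lemma~\ref{lemma:GlipschitzInfinity} in the paper, and without it you cannot reach the $\de^3$ bound.

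\textbf{The Lipschitz claim is also wrong.} You assert that the Lipschitz constant of $\RRR^{\out}$ on a small ball is $\OO(\de)$ ``or better'', but $\partial_x \RRR^{\out}_{2}$ and $\partial_y \RRR^{\out}_{3}$ are $\OO(1)$ in the relevant weighted norms. Look at \eqref{def:operatorRRROuter}: the numerator contains the term $-g^{\out}\,\AAA^{\out}\varphi$, whose $x$-component is $-\frac{i}{\de^2}\,g^{\out}\,\varphi_2$. Since $g^{\out} = \partial_w H_1^{\out} = \OO(\de^2)$, the quotient $g^{\out}/\de^2$ is $\OO(1)$ and survives differentiation in $x$. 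So the contraction does not come from $\RRR^{\out}$ being Lipschitz-small; it comes from the $\de^2$ gain of the oscillatory inverse operator $\GG$ on the $x,y$-components. One further bookkeeping subtlety: the paper's total norm carries a factor $\de$ on the $w$-component (namely $\de\normInfty{\varphi_1}_{2\nu}+\normInfty{\varphi_2}_\nu+\normInfty{\varphi_3}_\nu$) precisely to decouple the scales $\de^2$ for $w$ and $\de^3$ for $x,y$; with your flat max norm the scheme still contracts but you would only get $|x|,|y|=\OO(\de^2)$, not the sharp $\de^3$. In short: the fixed-point scaffolding is fine, but the estimates you assign to the source and to the Lipschitz constant are each off by two powers of $\de$, and the compensating $\de^2$ gain from the non-stationary oscillatory integral — the actual engine of the proof — is never mentioned.
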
	

This proposition is proved in Section \ref{subsection:proofH-existenceInfinite}.

To extend analytically the invariant manifolds to reach the boomerang domain $\DBoomerang$ we have to face the problem that these parameterizations become undefined at $u=0$. 
To overcome it, first we extend the solutions $\zuOut$ and $\zsOut$ of 
Proposition~\ref{proposition:existenceInfty} to the \emph{outer domains} (see 
Figure~\ref{fig:dominisComecocos})
\begin{equation}\label{def:dominisComecocos}
\begin{split}
\DuOut = \left\{ u \in \complexs \right. \st  
&\vabs{\Im u} < A - \kappa \de^2 - \tan \betaOutA \Re u,
\\
&\vabs{\Im u} > \dOuter A + \tan \betaOutB \Re u  , \left. \,
{\Re u} > - \rhoOuter \right\}, \\
\DsOut = \big\{u \in \complexs \st & %\right. & \left. 
-u \in \DuOut\big\},
\end{split}
\end{equation}
where
$d_1 \in (\frac{1}{4},\frac{1}{2})$ and $\rhoOuter>\rhoInfty$
are fixed independent of $\de$,
and  $\kappa>0$ is such that $A-\kappa \de^2>0$.
%
%Moreover, we define $u_1, \conj{u_1}$, the vertices of the domain $\DuOut$ (see Figure~\ref{fig:dominisComecocos}).
%%\begin{equation}\label{def:vertexsDomainComecocos}
%%	u_1 = -\rhoB + i(A-\kappa \de^2) -i\rhoB\tan \beta_0.
%%\end{equation}

\begin{figure}
\centering
\begin{overpic}[scale=0.7]{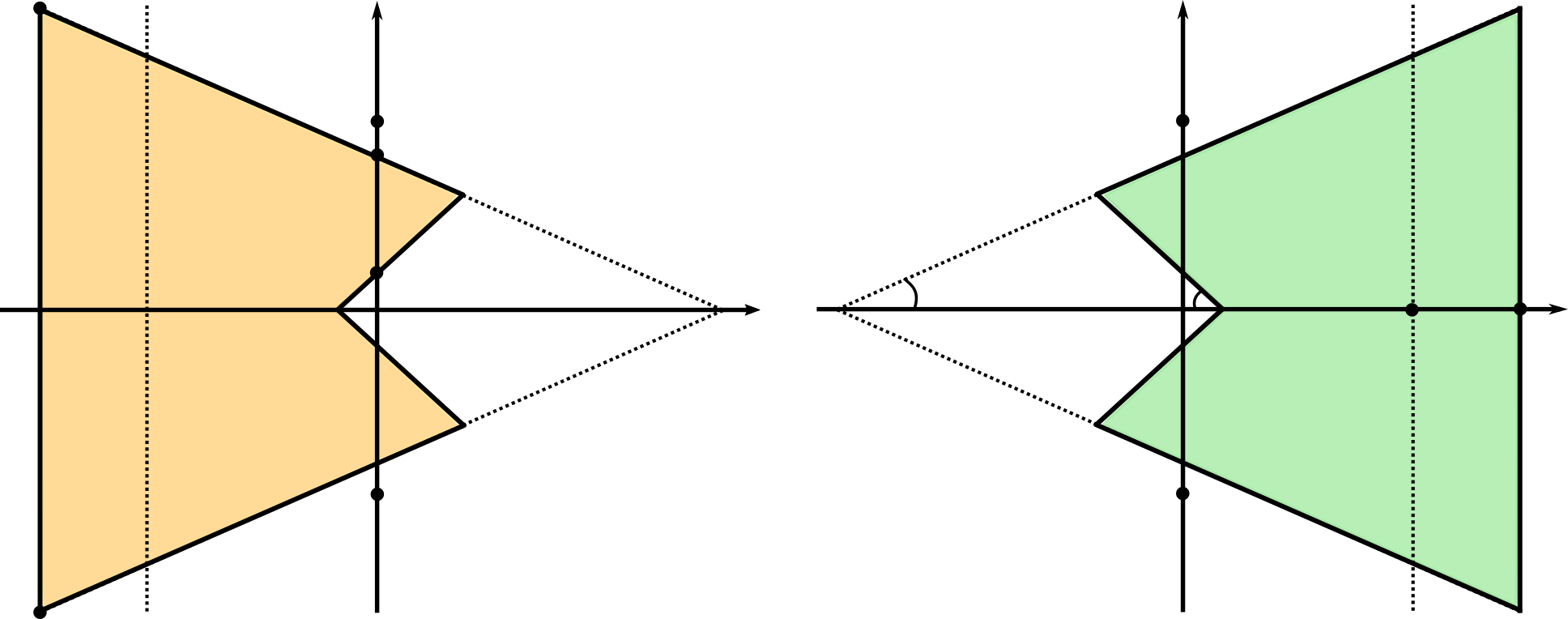}
	\put(10,24){$\DuOut$}
	\put(77,24){$\DsOut$}
	\put(3,-2){\footnotesize $\DuInftyRho$}
	\put(90,-2){\footnotesize $\DsInftyRho$}
	\put(87,18){\footnotesize $\rhoInfty$}
	\put(94,18){\footnotesize $\rhoOuter$}
	\put(59,20.3){\footnotesize $\betaOutA$}
	\put(72.5,20.3){\footnotesize $\betaOutB$}
	\put(72,31){\footnotesize $iA$}
	\put(25,29.2){\footnotesize $iA - \kappa \de^2$}
	\put(25,21){\footnotesize $d_1 A$}
	\put(69,7){\footnotesize $-iA$}
	\put(-1.5,0){\footnotesize $\conj{u_1}$}
	\put(-1.5,39){\footnotesize $u_1$}
	\put(100,19){\footnotesize $\Re u$}
	\put(21,40){\footnotesize $\Im u$}
	\put(73,40){\footnotesize $\Im u$}
\end{overpic}
\bigskip
\caption{The outer domains $\DuOut$ and $\DsOut$ defined in~\eqref{def:dominisComecocos}} 
%and $\DuInftyR{0}$ and $\DsInftyR{0}$, given in~\eqref{def:dominisInfty}.}
\label{fig:dominisComecocos}
\end{figure}

\begin{proposition}\label{proposition:existenceComecocos}
Consider the functions $\zuOut$, $\zsOut$ and the constant $\rhoInfty>0$ 
obtained in Proposition~\ref{proposition:existenceInfty}.
Fix constants $\rhoOuter>\rhoInfty$ and $\dOuter \in (\frac{1}{4},\frac{1}{2})$.
%
%$\kappaOuter>0$ big enough and $\de_0>0$ small enough. 
%
Then, there exist $\de_0,\kappaOuter>0$
such that, for $\de \in (0,\de_0)$,
$\kappa\geq\kappaOuter$,
the functions $\zdOut=(\wdOut,\xdOut,\ydOut)^T$, 
$\diamond=\unstable, \stable$, can be extended analytically to 
the domain $\DdOut$

Moreover, there exists  $\cttOuterC>0$ independent of $\de$ and $\kappa$ such 
that, for $u \in \DdOut$,
\begin{align*}%\label{eq:boundsOuter}
|\wdOut(u)| \leq \frac{\cttOuterC\de^2}
{\vabs{u^2 + A^2}}
+ \frac{\cttOuterC\de^4}{\vabs{u^2 + A^2}^{\frac{8}{3}}}, \quad
|\xdOut(u)| \leq \frac{\cttOuterC\de^3}
{\vabs{u^2 + A^2}^{\frac{4}{3}}}, \quad
|\ydOut(u)| \leq \frac{\cttOuterC\de^3}
{\vabs{u^2 + A^2}^{\frac{4}{3}}}.
\end{align*}
\end{proposition}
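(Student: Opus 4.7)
The approach is to extend the parametrizations $\zuOut, \zsOut$ of Proposition~\ref{proposition:existenceInfty} from the half-plane domains $\DuInfty, \DsInfty$ to the larger Pac-Man-shaped outer domains $\DuOut, \DsOut$ by means of a Banach fixed point argument in a space of analytic functions with singular weights. We focus on the unstable case; the stable case follows by the $u\mapsto -u$ symmetry of the problem. Using variation of parameters for the diagonal linear operator $\AAA^{\out}$ defined in \eqref{def:matrixAAAOuter}, the invariance equation \eqref{eq:invariantEquationOuter} is rewritten component-wise as
\begin{align*}
\wuOut(u) &= \wuOut(u_0) + \int_{u_0}^{u} \paren{\RRR^{\out}[\zuOut]}_w(s)\,ds,\\
\xuOut(u) &= e^{i(u-u_0)/\de^2}\xuOut(u_0) + \int_{u_0}^{u} e^{i(u-s)/\de^2}\paren{\RRR^{\out}[\zuOut]}_x(s)\,ds,\\
\yuOut(u) &= e^{-i(u-u_0)/\de^2}\yuOut(u_0) + \int_{u_0}^{u} e^{-i(u-s)/\de^2}\paren{\RRR^{\out}[\zuOut]}_y(s)\,ds.
\end{align*}
For $u \in \DuOut$, the base point $u_0$ is taken in $\DuInfty$ with $\Im u_0 = \Im u$, and the integration is along the horizontal segment $[u_0,u]$. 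Along such a path the oscillatory factors $e^{\pm i(u-s)/\de^2}$ have modulus one, which is essential to avoid uncontrolled exponentials. A direct inspection of the geometry of the Pac-Man region (after possibly enlarging $\rhoOuter$) shows that such a horizontal path stays inside $\DuOut\cup\DuInfty$ for every $u\in\DuOut$. The boundary values $\zuOut(u_0)$ provided by Proposition~\ref{proposition:existenceInfty} contribute a ``source" term of size $O(\de^{2})$ for $\wuOut$ and $O(\de^3)$ for $\xuOut,\yuOut$.

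Next, I would introduce the Banach space $\XcalOut$ of analytic functions $\varphi=(\varphi_w,\varphi_x,\varphi_y)$ on $\DuOut$ with weighted sup norm
\[
\normOut{\varphi} = \sup_{u\in\DuOut}\claus{\frac{|u^2+A^2|}{\de^2}|\varphi_w(u)|,\; \frac{|u^2+A^2|^{4/3}}{\de^3}|\varphi_x(u)|,\; \frac{|u^2+A^2|^{4/3}}{\de^3}|\varphi_y(u)|},
\]
augmented with a second component for $\varphi_w$ of the form $\sup |u^2+A^2|^{8/3}|\varphi_w(u)|/\de^4$, to capture the subleading contribution appearing in the target bound. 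Since $\DuOut$ keeps $|u^2+A^2|\geq c\kappa\de^2$ for some $c>0$, the norm is well defined. The integral operator described above is then shown to map a closed ball of radius of order one in $\XcalOut$ to itself and to be a contraction, provided $\kappa\geq\kappaOuter$ is chosen large. The linear source terms yield the required size of the first iterate; the quadratic and higher-order corrections coming from the Taylor expansion of $H_1^{\out}$ about the unperturbed separatrix (detailed in Appendix~\ref{appendix:proofH-technical}, cf.\ Remark~\ref{remark:derivadaH1w}) contribute bounds that acquire an extra factor of $O(1/\kappa)$, arising from the minimum value of $|u^2+A^2|$ on $\DuOut$, which is what allows to close the estimate. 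Uniqueness of the fixed point together with analytic continuation ensures that the extended $\zuOut$ agrees with the one of Proposition~\ref{proposition:existenceInfty} on the overlap $\DuOut\cap\DuInfty$.

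The main obstacle will lie in handling the $\wuOut$-component, where two competing sizes coexist. Its linear bound $\de^2/|u^2+A^2|$ becomes $O(1/\kappa)$ near the tips of $\DuOut$, while the quadratic contribution from the term $xy/\de^2$ in $H^{\out}_0$ evaluated on functions of size $\de^3/|u^2+A^2|^{4/3}$ produces exactly the subleading $\de^4/|u^2+A^2|^{8/3}$ term appearing in the statement. The two-component weighted norm for $\varphi_w$ is designed to track both contributions simultaneously, and one must verify that each is preserved under the fixed point iteration, with the $\de^4$-term remaining subdominant except in the tip region. A related subtlety is that integration of $|s^2+A^2|^{-\alpha}$ along horizontal segments must reproduce the singular weight $|u^2+A^2|^{-\alpha}$ at the endpoint, which relies on the fact that for $u$ close to the singularity $iA$ only a short piece of the path contributes non-trivially; this is a standard computation but requires care, particularly to keep the constants independent of $\de$ and $\kappa$.
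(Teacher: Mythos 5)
Your overall strategy — a fixed-point argument in a weighted Banach space of analytic functions on $\DuOut$, designed around the oscillatory factors $e^{\pm i(u-s)/\de^2}$ — matches the paper, and your two-component weight for $\varphi_w$ is morally the same as the function $g_\de(u)=|u^2+A^2|^{-1}+\de^2|u^2+A^2|^{-8/3}$ used to define $\XcalOut_{1,0}$. However, two steps as you describe them would not close.

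First, taking a $u$-dependent base point $u_0$ with $\Im u_0=\Im u$ destroys holomorphy of the fixed-point operator. Writing $u_0(u)=-\rhoOuter + i\Im u$, one computes $\partial_{\bar u}u_0=-\tfrac12$, so for the integral term along the horizontal segment $\partial_{\bar u}\!\int_{u_0(u)}^u\psi(s)\,ds=\tfrac12\psi(u_0)$, while the source term contributes $\partial_{\bar u}\bigl[\wuOut(u_0(u))\bigr]=-\tfrac12(\wuOut)'(u_0)$. These cancel only if $\psi(u_0)=(\wuOut)'(u_0)$, i.e.\ for the \emph{exact} solution, not for an arbitrary element of the iteration ball. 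Thus the iterates leave the space of analytic functions. The paper avoids this by anchoring all integrals at fixed vertices ($-\rhoOuter$, $u_1$, $\conj{u}_1$) so that $F^0$ and $\GG$ are manifestly holomorphic; boundedness of the oscillatory factors then comes from the slope constraints built into $\DuOut$, not from horizontality.

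Second, your claim that the nonlinear corrections acquire a uniform $O(1/\kappa)$ factor and directly yield a contraction is false for the $w$-component. The estimates in Lemma~\ref{lemma:computationsRRROut} give, for instance, $\normOut{\partial_x\RRR_1^{\out}[\phiA]}_{1,-1/3}\leq C/\de$, and after applying $\GG_1$ and accounting for the $\de$-weight in $\normOutTotal{\cdot}$, the Lipschitz constant of $\FF_1$ in the $(\phiA_2,\phiA_3)$ directions is $O(1)$, not small (this is exactly the content of Lemma~\ref{lemma:FpropertiesComecococs}). Only $\FF_2,\FF_3$ have Lipschitz constant $O(1/\kappa^2)$. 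The paper therefore does not run the fixed point directly; it introduces a Gauss--Seidel operator $\wt\FF[\zOut]=\bigl(\FF_1[\wOut,\FF_2[\zOut],\FF_3[\zOut]],\FF_2[\zOut],\FF_3[\zOut]\bigr)^T$, which has the same fixed points but is contractive. This substitution is essential and is absent from your proposal.
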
	

This proposition is proved in Section \ref{subsection:proofH-existenceBounded}.

Notice that taking $\rhoOuter$ big enough, $\dOuter\leq \dBoomerang$ and $\kappaOuter\leq\kappaBoomerang$ we have $\DBoomerangKappa \subset \DsOutKappa$.
Therefore, for the stable manifold $\zsOut$,  Proposition~\ref{proposition:existenceComecocos} implies Theorem~\ref{theorem:existence}.
However, we still need to extend further $\zuOut$ in order to reach $\DBoomerangKappa$.

%
%Therefore, we look for solutions satisfying  the asymptotic conditions 
%\begin{equation}\label{eq:asymptoticConditionsOuter}
%\begin{split}
%\lim_{\Re(u) \to -\infty} \left(\frac{\wuOut(u)}{\La_h(u)},\xuOut(u), \yuOut(u) \right) = %0, \hh
%\lim_{\Re(u) \to +\infty} \left(\frac{\wsOut(u)}{\La_h(u)},\xsOut(u), \ysOut(u) \right) = 0.
%\end{split}
%\end{equation}
%
%Notice that, by Proposition~\ref{proposition:domainSeparatrix}, the invariant manifolds can not be written as a graph globally, as the change of variables $\phi_{\out}$ is not defined for  $u= 0$.

%The difference $\dzOut=\zuOut-\zsOut$ is exponentially small in $\de$. 
%
%To obtain an asymptotic formula for $\dzOut=\zuOut-\zsOut$, we need to analyze $\zuOut$ and $\zsOut$ in a suitable domain of the complex plane. 
%
%These domains must contain a segment of the real line and must intersect a neighborhood sufficiently close to the singularities $\pm iA$ of $(\la_h(u),\La_h(u))$, see~\cite{BFGS12}. 

\subsubsection{Further analytic extension of the unstable manifold}
\label{subsection:outerExtension}

Since by Proposition~\ref{proposition:existenceComecocos} the unstable solution $\zuOut$ is defined in $\DuOutKappa$,
To prove Theorem \ref{theorem:existence}
it only remains to extend it to the points in the boomerang domain $\DBoomerangKappa$ which do not belong to the outer unstable domain. 
Namely, we extend $\zuOut$ to
\begin{equation}\label{def:domainBoomerangTilde}
\begin{split}
\DBoomerangTilde = \left\{ u \in \complexs \right. \st 
&\vabs{\Im u} < A - \kappa \de^2 - \tan \betaOutA \Re u,
\\
&\vabs{\Im u} < \dBoomerang A + \tan \betaOutB \Re u, \, \,
\vabs{\Im u} > \left. \dBoomerang A - \tan \betaOutB \Re u 
\right\},
\end{split}
\end{equation}
for suitable $\kappa$ and $d$ (see Figure~\ref{fig:dominiBoomerangTilde}).
Notice that $\DBoomerangTilde \subset \DBoomerang$ and that $\DBoomerangTilde$ only contains points at distance of $u=\pm iA$ of order $1$ with respect to $\de$.
\begin{figure} 
\centering
\begin{overpic}[scale=1]{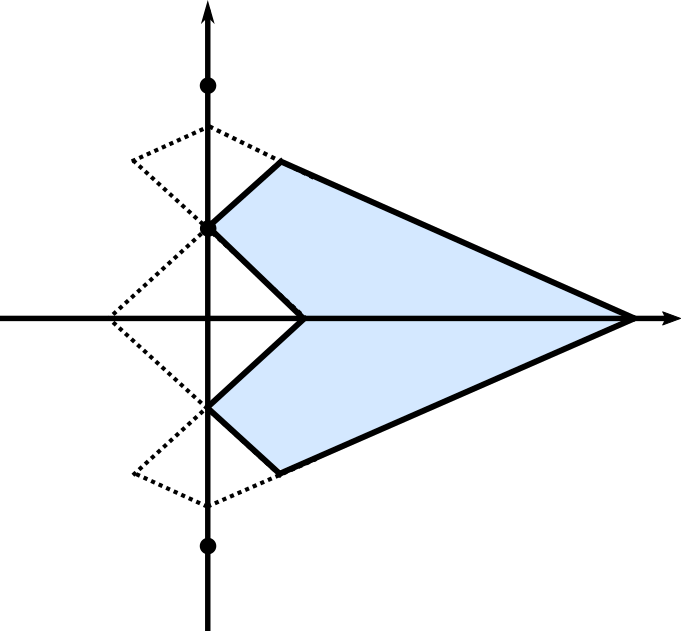}
	\put(50,50){$\DBoomerangTilde$}
	%\put(-5,50){\footnotesize $\DuOut$}
	\put(12,30){$\DBoomerang$}
	\put(33,79){\footnotesize $iA$}
	\put(33,10){\footnotesize $-iA$}
	\put(19,57.5){\footnotesize $\dBoomerang A$}
	\put(101,44.5){\footnotesize$\Re u$}
	\put(27,93.5){\footnotesize$\Im u$}
\end{overpic}
\bigskip
\caption{The domain $\DBoomerangTilde$ defined in~\eqref{def:domainBoomerangTilde}.}
\label{fig:dominiBoomerangTilde}
\end{figure}

As we have mentioned, to measure  the difference between the invariant manifolds $\WW^{\unstable}(\de)$ and $\WW^{\stable}(\de)$ it is  convenient to parameterize them as graphs (see \eqref{eq:invariantManifoldsExpression}). 
However, these graph parametrizations are not defined at $u=0$.
Moreover, since all the fixed point arguments that we apply to obtain the graph parameterizations rely on complex path integration, we are not able to extend them to domains which are not simply connected.
Therefore, to reach $\DBoomerangTilde$ from $\DuOut$, we need to switch to a different parametrization that is well defined at $u=0$.

The auxiliary parametrization we consider is the classical time-parametrization which is associated to the Hamiltonian $H$ in \eqref{def:hamiltonianScaling}.
(Recall that the graph parametrization $\zuOut$ was associated to the Hamiltonian $H^{\out} = H \circ \phi_{\equi} \circ \phi_{\out}$).

%This auxiliary parametrization is the classical (complex) time-parametrization. 
%%
%Indeed, we consider  \begin{equation*}%\label{def:flowParametrization}
%\Gu = \phi_{\equi} \circ \phi_{\out}(\cdot,\zuOut(\cdot)).
%\end{equation*}
%%
%Then, by \eqref{eq:invariantManifoldsExpression}, it is clear that $\WW^{\unstable}(\de) = \{\Gu(u)\}$ and $\Gu(u)$ is a solution of the Hamiltonian system defined by $H$ (see \eqref{def:hamiltonianScaling}).
%%
%Therefore, to describe $\WW^{\unstable}(\de)$ we switch from parametrization $\zuOut$ to $\Gu$, whenever we need.
%%

This analytic extension procedure 
%through the time-parametrization 
has three steps:
\begin{enumerate}
\item We consider the \emph{outer transition domain} (see Figure \ref{fig:dominisTransition})
\begin{equation}\label{def:DOuterTilde}
\begin{split}	
	\DOuterTilde = \left\{ v \in \complexs \right. \st  
	&\vabs{\Im v} < A - \kappaOuterTilde \de^2 - \tan \betaOutA \Re v,
	\\
	&\vabs{\Im v} > \dOuterA A + \tan \betaOutB \Re v, 
	\\ &\left.
	\vabs{\Im v} < \dOuterB A + \tan \betaOutB \Re v
	\right\},
\end{split}
\end{equation}
where
$\dOuter < \dOuterA < \dOuterB < \frac12$
are independent of $\de$
and  $\kappaOuterTilde>\kappaOuter$ is such that $A-\kappaOuterTilde \de^2>0$.
Notice that $\DOuterTilde \subset \DuOutKappa$.

Since $\dot{u}=1+o(1)$ (see \eqref{eq:systemEDOsOuter}),
we look for a real-analytic and close to the identity change of coordinates $u=v + \uOut(v)$ defined in $\DOuterTilde$ such that the time-parametrization
\begin{equation}\label{eq:equationuOutA}
\Gu(v) = \phi_{\equi} \circ \phi_{\out}(v+\uOut(v),\zuOut(v+\uOut(v)))
\end{equation}	
is a solution of  the Hamiltonian $H$ in \eqref{def:hamiltonianScaling}.
That is, $\dot{v}=1$ and $\Gu(v)\in\WW^{\unstable}(\de)$ for $v\in \DOuterTilde$.
See the details in Proposition \ref{proposition:changeuOut} and Corollary \ref{corollary:changeuOut} below.
%
%\item First, we prove the existence of a suitable change of variables
%to obtain $\Gh(v)$ in \eqref{def:flowParametrization} from the parametrizations  $\zuOut(u)$ given in Proposition~\ref{proposition:existenceComecocos}.
%
\item We  extend analytically the time-parametrization $\Gu(v)$ to reach the domain $\DBoomerangTilde$.
In particular, we extend $\Gu$ to the \emph{flow domain}
\begin{equation}\label{def:dominiFlow}
\begin{split}
	\DFlow = \left\{ v \in \complexs \st
	\right. &
	\vabs{\Im v} < A - \kappa \de^2 - \tan \betaOutA \Re v, 
	\\ & \left.
	\vabs{\Im v} < \dFlow A + \tan \beta_1 \Re v
	\right\},
\end{split}
\end{equation}
where
$\dFlow \in (\dOuterA,\dOuterB)$
is independent of $\de$
and  $\kappaFlow>\kappaOuterTilde$ is such that $A-\kappaFlow \de^2>0$.
Notice that, 
\[
\DOuterTilde \cap \DFlow\neq\emptyset,
\qquad \text{and} \qquad
\DBoomerangTildeProp \subset \DFlow, 
\]
for $\dBoomerangTilde\in(\dOuter,\dFlow)$ and 
$\kappaBoomerangTilde>\kappaFlow$.
See the details in Proposition \ref{proposition:existenceFlow}.

\item We prove that there exists a real-analytic close to the identity change of variables of the form $v=u+\vOut(u)$,  $u\in\DBoomerangTildeProp$, such that the function $\zuOut(u)$ defined by
\begin{equation}\label{eq:equationvOut}
	(u,\zuOut(u)) =
	(\phi_{\equi} \circ \phi_{\out})^{-1} \Big( \Gu(u + \vOut(u))\Big) 	
\end{equation}
gives an invariant graph of $H^{\out}$ in \eqref{def:hamiltonianOuter}.
See the details in Proposition \ref{proposition:changevOut} and Corollary \ref{corollary:changevOut} below.
\end{enumerate}

As a consequence, we have extended analytically $\zuOut$ to $\DBoomerangTildeProp$.

\begin{figure}[t] 
	\centering
	\begin{overpic}[scale=1]{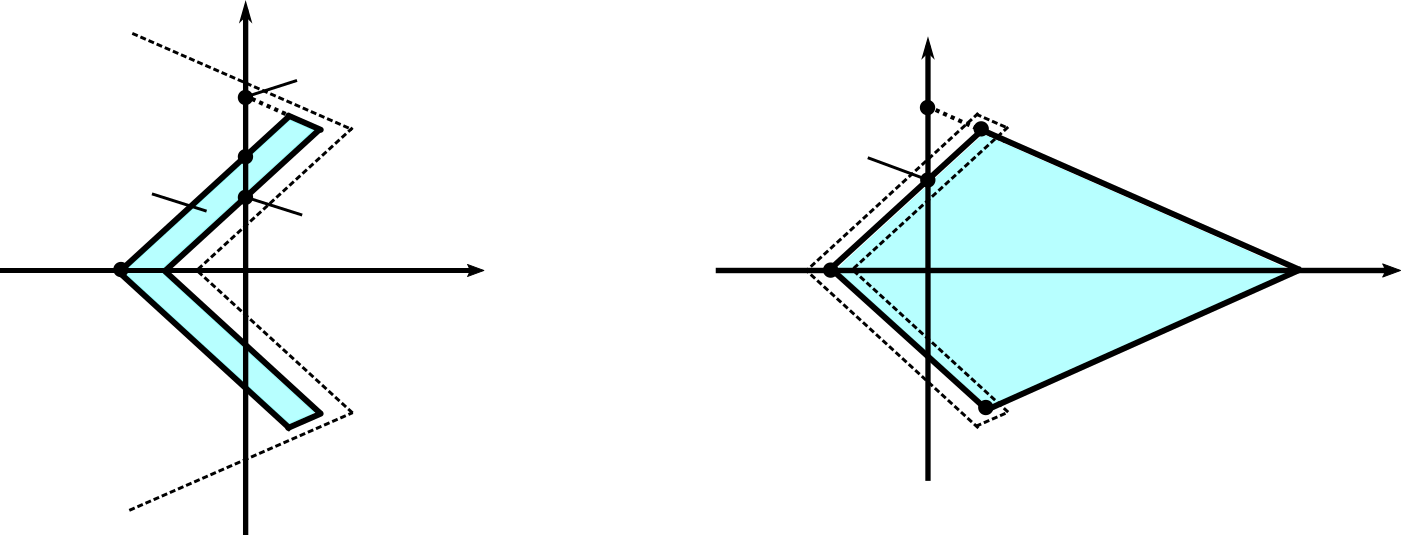}
		\put(-1,25){$\DOuterTilde$}
		\put(5,6){\footnotesize $\DuOut$}
		\put(71,21){$\DFlow$}
		\put(55,9){\footnotesize $\DOuterTilde$}
		\put(22,22){\footnotesize $\dOuterA A$}
		\put(12,27){\footnotesize $\dOuterB A$}
		\put(21.5,32){\footnotesize $iA-\kappaOuterTilde\de^2$}
		\put(6,17){\footnotesize $\rhoOuterTilde$}
		\put(57,26){\footnotesize $\dFlow A$}
		\put(54,30){\footnotesize $iA-\kappaFlow\de^2$}
		\put(71,30){\footnotesize $v_1$}
		\put(71,6){\footnotesize $\conj{v_1}$}
		\put(56,17){\footnotesize $v_0$}
		\put(35,18){\footnotesize $\Re u$}
		\put(101,18){\footnotesize $\Re v$}
		\put(15,39){\footnotesize $\Im u$}
		\put(64,37){\footnotesize $\Im v$}
	\end{overpic}
	\bigskip
	\caption{The domain $\DOuterTilde$ given in \eqref{def:DOuterTilde} (left) and $\DFlow$ in \eqref{def:dominiFlow} (right).}
	\label{fig:dominisTransition}
\end{figure}

%For the first step, we consider the transition domain:
%\begin{equation}\label{def:DOuterTilde}
%\begin{split}	
%\DOuterTilde = \left\{ u \in \complexs \right. \st  
%&\vabs{\Im u} < A - \kappaOuterTilde \de^2 - \tan \betaOutA \Re u,
%\\ &\left.
%\vabs{\Im u} > \dOuterA A + \tan \betaOutB \Re u, 
%\, \,
%\vabs{\Im u} < \dOuterB A + \tan \betaOutB \Re u
%\right\},
%\end{split}
%\end{equation}
%where
%$\dOuterA, \dOuterB \in (\dOuter,\frac{1}{2})$
%are fixed independently of $\de$
%%
%and  $\kappaOuterTilde>\kappaOuter$ are such that $A-\kappaOuterTilde \de^2>0$.
%%
%Moreover, notice that $\DOuterTilde \subset \DuOutKappa$.
%
%Moreover, we consider $\rhoOuterTilde$ as the the smallest real part of the domain.
%(see Figure~\ref{fig:dominisTransition}).
%
%It is natural to look for parametrizations of $W^{\unstable}$ of the form
%\begin{equation}\label{eq:uOutParametrization}
%\Gh(v) =\phi_{\equi} \circ \phi_{\out}
%	\paren{v+\uOut(v), \zuOut(v+\uOut(v))},
%\qquad v \in \DOuterTilde,
%\end{equation}
%
%In this domain, we look for a change $u=v+\uOut(v)$ such that it allow us to obtain parametrization $\Gu(v)$ from the graph solution $\zuOut(u)$ given in Proposition \ref{proposition:existenceComecocos}.

For the first step, we look for a function $\uOut$ such that $\paren{v+\uOut(v), \zuOut(v+\uOut(v))}$ is a solution of the differential equations given by the Hamiltonian $H^{\out}$ in \eqref{def:hamiltonianOuter}.
Therefore, $\uOut$ satisfies 
\begin{equation}\label{eq:equationuOutB}
	\partial_v \, {\uOut}(v) = \partial_w H_1^{\out}
	\paren{v + \uOut(v), \zuOut(v+\uOut(v))}.
\end{equation}
%with
%\begin{equation}\label{def:operatorRRRtransition}
%	R[\varphi](v) = \partial_w H_1^{\out}
%	\paren{v + \varphi(v), \zuOut(v+\varphi(v))}.
%\end{equation}
The next proposition ensures that $\uOut$ exists and it is well defined for $v\in\DOuterTilde$.

\begin{proposition}\label{proposition:changeuOut}
Let the function $\zuOut$ and the constants $\rhoOuter$, $\dOuter$ and $\kappaOuter$ be as obtained in Proposition~\ref{proposition:existenceComecocos}
and consider constants $\dOuterA,\dOuterB \in(\dOuter,\frac{1}{2})$ such that $\dOuterA<\dOuterB$ and 
$\kappaOuterTilde>\kappaOuter$. 
Then, there exists $\de_0$ such that, for $\de \in (0,\de_0)$,
the equation \eqref{eq:equationuOutB} has  a real-analytic solution $\uOut: \DOuterTilde \to  \complexs$.

Moreover, for some  constant $\cttOuterD>0$ independent of $\de$ and  for $v \in \DOuterTilde$, $\uOut$ satisfies
\[
\vabs{\uOut(v)} \leq \cttOuterD \de^{2}
\qquad \text{and} \qquad
v + \uOut(v) \in \DuOutKappa.
\]
\end{proposition}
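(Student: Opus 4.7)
The idea is to reformulate the differential equation \eqref{eq:equationuOutB} as a fixed-point problem and apply Banach's contraction principle. Since $\zuOut(u)$ decays exponentially as $\Re u \to -\infty$ (Proposition~\ref{proposition:existenceInfty}) and in that regime the reparametrization should asymptote to the identity, integrating \eqref{eq:equationuOutB} from $-\infty$ yields the fixed-point equation
\begin{equation*}
\uOut(v) = \TTT[\uOut](v) := \int_{-\infty}^{v} \partial_w H_1^{\out}\bigl(s + \uOut(s),\, \zuOut(s + \uOut(s))\bigr)\, ds,
\end{equation*}
where the path of integration is taken inside $\DuOutKappa$ (for instance, a horizontal ray from $-\infty$ followed by a short segment up to $v \in \DOuterTilde$).

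The core estimate is that $\partial_w H_1^{\out}$, evaluated on the graph $\zuOut$, is $\OO(\de^2)$ with enough decay in $s$ to make the integral converge. This follows from the structure of $H_1^{\out}$ (see \eqref{def:hamiltonianScalingH1} and \eqref{def:hamiltonianOuterSplit}), which collects the $\de$-dependent corrections to $H_0^{\out}$ and vanishes to appropriate order at $z=0$, combined with the bounds on $(\wuOut, \xuOut, \yuOut)$ in Proposition~\ref{proposition:existenceComecocos} (cf.\ Remark~\ref{remark:derivadaH1w} and Appendix~\ref{appendix:proofH-technical}). One obtains a pointwise bound of the form $|\partial_w H_1^{\out}(s+\uOut(s), \zuOut(s+\uOut(s)))| \leq C \de^2/(1+|s|^2)$ along the path, hence $\|\TTT[\uOut]\|_{\infty} \leq \cttOuterD \de^2$ in the Banach space of bounded analytic functions on $\DOuterTilde$ equipped with the sup norm. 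Cauchy estimates applied to $\zuOut$ on a slightly enlarged subdomain of $\DuOutKappa$ then provide a Lipschitz constant of order $\de^2$ for $\TTT$, so that $\TTT$ is a contraction on the closed ball $\{\|\uOut\|_{\infty} \leq \cttOuterD \de^2\}$ for $\de$ small enough.

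Real-analyticity of the fixed point, in the sense $\uOut(\conj{v})=\conj{\uOut(v)}$, is preserved by $\TTT$ as a consequence of the analogous symmetry of $\zuOut$ and $H_1^{\out}$ (Remark~\ref{remark:realAnalytic} and Proposition~\ref{proposition:HamiltonianScaling}). The inclusion $v + \uOut(v) \in \DuOutKappa$ follows from the a priori bound $|\uOut(v)| \leq \cttOuterD \de^2$ together with the fact that the defining inequalities of $\DOuterTilde$ and $\DuOutKappa$ differ by a shift of order $(\kappaOuterTilde-\kappaOuter)\de^2$ in the distance-to-$\pm iA$ constraint; choosing $\kappaOuterTilde > \kappaOuter + \cttOuterD$ absorbs the perturbation, and the same argument allows the integration path to be taken inside $\DuOutKappa$.

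The main obstacle is the size estimate for $\partial_w H_1^{\out}$: one must track carefully how the powers of $\de$ emerge from the Taylor structure of $H_1^{\out}$ in $(w,x,y)$ evaluated on the manifold and combine them with the $|s^2+A^2|$ denominators from Proposition~\ref{proposition:existenceComecocos}, verifying that the resulting integrand is genuinely integrable on a path from $-\infty$ to any $v \in \DOuterTilde$ (which is bounded and remains at distance at least of order $\de^2$ from $\pm iA$).
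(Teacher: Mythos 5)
There is a genuine gap. Your fixed-point equation integrates from $-\infty$, relying on the claim that the integrand $\partial_w H_1^{\out}\paren{s+\uOut(s),\zuOut(s+\uOut(s))}$ decays like $C\de^2/(1+|s|^2)$ along the path. This decay is not available. Looking at the structure of $g^{\out}=\partial_w H_1^{\out}$ in the infinity domain (cf.\ \eqref{proof:boundsInftyg} in Appendix~\ref{subsubsection:ProofComputationsRRRInf}), one only gets the uniform bound $\normInfty{g^{\out}(\cdot,\varphi)}_0\leq C\de^2$, with no extra decay as $\Re s\to-\infty$. The reason is structural: $\partial_w$ in the separatrix coordinates \eqref{def:changeOuter} carries a factor $1/\La_h(u)$, and $\La_h(u)=\OO(e^{\nu u})$ decays at the same rate that $\partial_\La M$ does near the saddle, so the two exactly cancel and $g^{\out}$ stays of size $\de^2$ all the way to $-\infty$. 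Consequently $\int_{-\infty}^v g^{\out}\,ds$ diverges, and the intuition that ``the reparametrization asymptotes to the identity at $-\infty$'' is false; the time reparametrization $u=v+\uOut(v)$ drifts linearly at rate $\OO(\de^2)$ as $\Re v\to-\infty$.

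A second, related, issue is the domain mismatch. You claim your operator $\TTT$ is a contraction on bounded analytic functions on $\DOuterTilde$, but $\DOuterTilde$ is a \emph{bounded} lens-shaped region near $iA$ (see Figure~\ref{fig:dominisTransition}); a path from $-\infty$ to $v\in\DOuterTilde$ leaves $\DOuterTilde$, so $\TTT[\uOut]$ requires evaluating $\uOut(s)$ where it is not defined. The paper's proof avoids both problems at once by prescribing the initial condition at a finite anchor $\rhoOuterTilde$ (the rightmost vertex of $\DOuterTilde$), defining $G[\varphi](v)=\int_{\rhoOuterTilde}^v\varphi(s)\,ds$ over a path that stays inside $\DOuterTilde$. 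Since $\DOuterTilde$ has diameter $\OO(1)$ and the integrand is $\OO(\de^2)$ there (Lemma~\ref{lemma:computationsRRRtransitionOuter}), the integral is automatically $\OO(\de^2)$, no integrability at infinity is needed, and the contraction argument goes through on $\YcalOuter$. There is also no need to tie $\uOut$ to an asymptotic boundary condition, since any solution of \eqref{eq:equationuOutB} produces a valid time-parametrization of $\WW^{\unstable}(\de)$; the specific normalization $\uOut(\rhoOuterTilde)=0$ is just a convenient choice.
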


This proposition is proved in Section \ref{subsection:proofH-changeuOut}.
%\item If $v \in \DOuterTilde$, then $v + \uOut(v) \in \DuOutKappa$.
%
%\item Let $\Gu(v)$ be defined as in \eqref{def:flowParametrization}.
%%
%Then, there exists a constant $\cttOuterE>0$ independent of $\de$ such that, for $v \in \DOuterTilde$,
%\begin{align*}
%\vabsSmall{\lah(v)} \leq \cttOuterE \de^2, \qquad
%\vabsSmall{\Lah(v)} \leq \cttOuterE \de^2, \qquad
%\vabsSmall{\xh(v)} \leq \cttOuterE \de^3, \qquad
%\vabsSmall{\yh(v)} \leq \cttOuterE \de^3.
%\end{align*}
%
%For the second step we consider the \textcolor{red}{flow domain (Pensar millor nom)}. Namely,
%\begin{equation}\label{def:dominiFlow}
%\begin{split}
%\DFlow = \left\{ u \in \complexs \st
%\right. &
%\vabs{\Im u} < A - \kappa \de^2 - \tan \betaOutA \Re u, 
%\\ & \left.
%\vabs{\Im u} < \dFlow A + \tan \beta_1 \Re u
%\right\},
%\end{split}
%\end{equation}
%where
%$\dFlow \in (\dOuterA,\dOuterB)$
%is fixed independently of $\de$
%%
%and  $\kappaFlow>\kappaOuterTilde$ is such that $A-\kappaFlow \de^2>0$.
%
%Moreover, we define $v_0$, $v_1$ and $\conj{v_1}$, the vertices of the domain $\DFlow$(see Figure \ref{fig:dominisTransition}).
%
Together with Proposition \ref{proposition:changeuOut} implies the following corollary.
% that the parametrization $\Gu(v)$, introduced in \eqref{eq:equationuOutA}, is well defined for $v \in \DOuterTilde$.
% %
%See next corollary.

\begin{corollary}\label{corollary:changeuOut}
Under the hypothesis of Proposition \ref{proposition:changeuOut}, 
% Let the function $\uOut$ and the constants
% $\dOuterA, \dOuterB$ and $\kappaOuterTilde$ be as obtained in Proposition~\ref{proposition:changeuOut}.
%
% Then 
there exists $\de_0>0$ such that, for $\de \in (0,\de_0)$,  
the function $\Gu$ in \eqref{eq:equationuOutA} is well defined and real-analytic in $\DOuterTilde$.	
%Moreover, there exists constant $\cttOuterE>0$ independent of $\de$ such that, for $v \in \DOuterTilde$,
%\begin{align*}
%	\vabsSmall{\lah(v)} \leq \cttOuterE \de^2, \qquad
%	\vabsSmall{\Lah(v)} \leq \cttOuterE \de^2, \qquad
%	\vabsSmall{\xh(v)} \leq \cttOuterE \de^3, \qquad
%	\vabsSmall{\yh(v)} \leq \cttOuterE \de^3.
%\end{align*}
\end{corollary}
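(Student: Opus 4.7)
The plan is to assemble the corollary directly from the previous results by checking that each composition defining $\Gu$ in \eqref{eq:equationuOutA} is well posed and real-analytic on $\DOuterTilde$.

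First I would invoke Proposition~\ref{proposition:changeuOut} to obtain a real-analytic function $\uOut:\DOuterTilde\to\complexs$ with $\vabs{\uOut(v)}\leq \cttOuterD\de^2$ and, crucially, $v+\uOut(v)\in\DuOutKappa$ for every $v\in\DOuterTilde$. Since Proposition~\ref{proposition:existenceComecocos} yields a real-analytic $\zuOut$ on $\DuOutKappa$, the composition $v\mapsto \bigl(v+\uOut(v),\zuOut(v+\uOut(v))\bigr)$ is real-analytic on $\DOuterTilde$; real-analyticity in the sense of Remark~\ref{remark:realAnalytic} is preserved because both $\uOut$ and $\zuOut$ are real-analytic and $\DOuterTilde$ is symmetric with respect to the real axis.

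Next I would check that $\phi_{\out}$ in \eqref{def:changeOuter} can be applied to the resulting argument. This is the only non-trivial point, since $\phi_{\out}$ is singular at the zeros of $\La_h$. By Theorem~\ref{theorem:singularities}, $\La_h$ has exactly one zero in $\overline{\Pi^{\mathrm{ext}}_{A,\betaBow}}$, located at $u=0$. By the definition of $\DuOutKappa$ in \eqref{def:dominisComecocos}, every $u\in\DuOutKappa$ satisfies $\vabs{\Im u}\geq \dOuter A>0$, so $u=0\notin\DuOutKappa$ and in fact $\DuOutKappa$ is bounded away from $0$ by a $\de$-independent constant. Consequently $\La_h(v+\uOut(v))\neq 0$ for $v\in\DOuterTilde$ and $\phi_{\out}$ is holomorphic on the relevant domain. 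Finally, $\phi_{\equi}$ in \eqref{def:changeEqui} is merely a (real-analytic in $\de$) translation by $\Ltres(\de)$, so composing with it preserves real-analyticity.

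Combining these three steps, $\Gu(v)=\phi_{\equi}\circ\phi_{\out}\bigl(v+\uOut(v),\zuOut(v+\uOut(v))\bigr)$ is well defined and real-analytic on $\DOuterTilde$, which is the content of the corollary. The one step that requires a genuine verification rather than bookkeeping is ensuring that $v+\uOut(v)$ stays in $\DuOutKappa$ and hence away from the singularity $u=0$ of $\phi_{\out}$; but this is already built into the statement of Proposition~\ref{proposition:changeuOut} together with the geometry of the outer domains, so no further work is needed beyond recording the observation.
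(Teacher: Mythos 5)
Your assembly matches what the paper does implicitly: the corollary is presented as an immediate consequence of Propositions~\ref{proposition:existenceComecocos} and~\ref{proposition:changeuOut} together with the fact, recorded in Theorem~\ref{theorem:singularities}, that $\La_h$ vanishes only at $u=0\notin\DuOutKappa$, so $\phi_{\out}$ is holomorphic where needed. One small imprecision worth fixing: the inequality $\vabs{\Im u}\geq \dOuter A$ is not literally implied by \eqref{def:dominisComecocos} (for $\Re u<0$ the lower bound $\dOuter A+\tan\betaOutB\,\Re u$ falls below $\dOuter A$, and $\DuOutKappa$ does reach the real axis), but the relevant conclusion --- that $\DuOutKappa$ keeps a $\de$-independent positive distance from $u=0$, namely at least $\dOuter A\cos\betaOutB$ --- is still correct, so the argument is sound.
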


On the following, we use without mention that $\Gu(v)$ can be split as
\begin{align}\label{eq:splitParametrizationGu}
	\Gu(v) = \G_h(v) + \Gh(v),
	\quad \text{with} \quad
	\left\{
	\begin{aligned}
		\G_h &  = (\la_h,\La_h,0,0)^T, \\
		\Gh  & = (\lah, \Lah, \xh,\yh)^T.
	\end{aligned} 
	\right.
\end{align}
The next proposition extends the parametrization $\Gu$ to the domain $\DFlow$ (see \eqref{def:dominiFlow}).

\begin{proposition}\label{proposition:existenceFlow}
Let the function $\Gu$ and the constants
$\dOuterA, \dOuterB$ and $\kappaOuterTilde$ be as obtained in Corollary~\ref{corollary:changeuOut} and Proposition \ref{proposition:changeuOut}
and fix
$\dFlow \in (\dOuterA,\dOuterB)$ and $\kappaFlow>\kappaOuterTilde$.
Then, there exists $\de_0>0$ such that,
for $\de \in (0,\de_0)$, $\Gu$ 
can be real-analytically extended to $\DFlow$. 

Moreover, there exists a constant $\cttOuterF>0$ independent of $\de$ such that, for $v \in \DFlow$,
\begin{align*}
	\vabsSmall{\lah(v)} \leq \cttOuterF \de^2, \qquad
	\vabsSmall{\Lah(v)} \leq \cttOuterF \de^2, \qquad
	\vabsSmall{\xh(v)} \leq \cttOuterF \de^3, \qquad
	\vabsSmall{\yh(v)} \leq \cttOuterF \de^3.
\end{align*}	
\end{proposition}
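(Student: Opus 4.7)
The plan is to obtain $\Gh = \Gu - \G_h$ on $\DFlow$ via a fixed-point argument, using as initial data at a point $v_0 \in \DOuterTilde \cap \DFlow$ the values provided by Corollary~\ref{corollary:changeuOut} and Proposition~\ref{proposition:changeuOut}, and integrating the Hamiltonian equations for $H$ in~\eqref{def:hamiltonianScaling} along suitable complex paths in $\DFlow$.

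Subtracting $\partial_v \G_h = X_{H_{\pend}}(\G_h)$ from $\partial_v \Gu = X_H(\Gu)$ yields a linear-plus-perturbative ODE for $\Gh$ of the schematic form
\begin{equation*}
\partial_v \Gh(v) = M(v)\, \Gh(v) + \AAA\, \Gh(v) + \RRR[\Gh](v),
\end{equation*}
where $M(v)$ is the $(\la,\La)$-block of the variational matrix of $H_{\pend}$ along $\G_h$, $\AAA = \mathrm{diag}(0,0,i/\de^2,-i/\de^2)$ captures the fast rotation in $(\xh,\yh)$, and $\RRR[\cdot]$ collects the contribution of $H_1$ and the nonlinear remainder. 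The pendulum block admits an explicit fundamental matrix $\Phi_{\mathrm{p}}$ built from $(\la_h', \La_h')^T$ and a second solution obtained via Liouville's formula; by Theorem~\ref{theorem:singularities}, $\La_h$ has only a simple zero at $v=0$ in $\overline{\Pi^{\mathrm{ext}}_{A,\betaBow}}$, so the resolvent $\Phi_{\mathrm{p}}(v)\Phi_{\mathrm{p}}(s)^{-1}$ is meromorphic on $\DFlow$ with a controllable pole structure.

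Variation of constants and integration from $v_0$ to $v$ then write the $(\lah,\Lah)$ block as
\begin{equation*}
\begin{pmatrix}\lah(v)\\ \Lah(v)\end{pmatrix} = \Phi_{\mathrm{p}}(v)\Phi_{\mathrm{p}}(v_0)^{-1}\begin{pmatrix}\lah(v_0)\\ \Lah(v_0)\end{pmatrix} + \int_{v_0}^{v} \Phi_{\mathrm{p}}(v)\Phi_{\mathrm{p}}(s)^{-1} \RRR_{\la,\La}[\Gh](s)\, ds,
\end{equation*}
with analogous formulas for $\xh$ and $\yh$ using the propagators $e^{\pm i(v-s)/\de^2}$. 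The base point must be chosen with care: since $|e^{i(v-s)/\de^2}| = e^{(\Im s-\Im v)/\de^2}$, we take $v_0$ in the lower component of $\DOuterTilde\cap\DFlow$ and integrate along a path with $\Im s \leq \Im v$ to control $\xh$; symmetrically, for $\yh$ we use the upper component and $\Im s \geq \Im v$. The $(\lah,\Lah)$-block tolerates either choice, and consistency between the two constructions is enforced by the real-analytic symmetry of Remark~\ref{remark:realAnalytic}. We pose the resulting fixed-point problem on a Banach space of real-analytic functions on $\DFlow$ with weighted sup-norm $\de^{-2}$ on $(\lah,\Lah)$ and $\de^{-3}$ on $(\xh,\yh)$. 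Cauchy estimates applied to the bounds of Corollary~\ref{corollary:changeuOut} supply initial data of the required size, and the smallness of $H_1$ inherited from~\eqref{def:hamiltonianScalingH1}--\eqref{def:Fpend}, together with the bounded geometry of $\DFlow$, turns the integral operator into a strict contraction; its unique fixed point is the sought extension and automatically satisfies the stated estimates.

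The main obstacle is controlling the fast oscillation in the $(\xh,\yh)$ equations, whose natural scale $\de^2$ is dwarfed by the $O(1)$ imaginary extent of $\DFlow$: a naive choice of path produces propagators of size $e^{O(1)/\de^2}$. The remedy just outlined — using the two connected components of $\DOuterTilde\cap\DFlow$ for $\xh$ and $\yh$ separately and integrating along paths on which the relevant propagator stays bounded — is essential to obtain the uniform $O(\de^3)$ estimate claimed in the proposition.
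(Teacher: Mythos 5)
Your proposal follows essentially the same route as the paper's proof in Section~\ref{subsection:proofH-existenceFlow}: subtract $\G_h$, write a fixed-point problem for $\Gh$ using the fundamental matrix of $\AAA^{\flow}$, choose the base points for the $(\xh,\yh)$ integrals at the two corners $\conj{v}_1$ and $v_1$ of $\DFlow$ so that $e^{\pm i(v-s)/\de^2}$ stays bounded along the paths, and close the contraction in a $\de$-weighted sup norm — these are exactly the ingredients of the paper's $\GGh = \Phi\,\GG^{\flow}[\Phi^{-1}\cdot]$ construction and the operator $\FF = F^0 + \GGh\circ\RRR^{\flow}$. One small correction: you describe $\Phi_{\mathrm{p}}(v)\Phi_{\mathrm{p}}(s)^{-1}$ as ``meromorphic with controllable pole structure,'' but in fact it is analytic on $\DFlow$ with no poles at all — the first column $(-3\La_h,-\dot\La_h)$ is entire there, the Liouville second solution $f_h(v)=\La_h(v)\int_{v_0}^v\La_h^{-2}(s)\,ds$ is analytic at $v=0$ (the simple zero of $\La_h$ cancels the simple pole of the integral, and the same cancellation holds for $\dot f_h$), and the Wronskian is a nonzero constant; the paper makes this observation explicitly, and it is what lets the variation-of-constants formula be applied without any pole bookkeeping.
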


This proposition is proved in Section \ref{subsection:proofH-existenceFlow}.

%\begin{equation}\label{def:flowParametrizationB}
%	\Gu(v) = 
%	\paren{\la_h(v), \La_h(v),0,0} + \Gh(v),
%	\qquad
%	\Gh = (\lah,\Lah,\xh, \yh)
%\end{equation}

For the third step, we ``go back'' to the graph parametrization $\zuOut(u)$ by looking for a change $v=u+\vOut(u)$ for $u \in \DBoomerangTilde$.
Notice that, in order to satisfy equation \eqref{eq:equationvOut} and recalling \eqref{def:pointL3sca}, $\vOut$ must be a solution of
\begin{equation}\label{eq:equationsvOutA}
	\lah(u+\vOut(u)) = 
	\la_h(u)-\la_h(u+\vOut(u)).
\end{equation}
%
% ().
%
Then, one can easily recover the graph parametrization $(\wuOut(u),\xuOut(u),\yuOut(u))$ using the equations
\begin{align}
%\left\{
\begin{aligned}\label{eq:equationsvOutB}
\La_h(u) 
-\La_h(u+\vOut(u))
- \frac{\wuOut(u)}{3\La_h(u)} + \de^2 \LtresLa(\de) 
&=
\Lah(u+\vOut(u)), \\
\xuOut(u) + \de^3 \Ltresx(\de) &=\xh(u+\vOut(u)), \\
\yuOut(u) + \de^3 \Ltresy(\de)
&=	\yh(u+\vOut(u)).
\end{aligned}
%\right.
\end{align}
%where $\Gu=(\la_h,\La_h,0,0)+(\lah,\Lah,\xh,\yh)$.
%
%Therefore, we look for a change of variables $\vOut$ satisfying \eqref{eq:equationsvOutA}.
%
%Then, by \eqref{eq:equationsvOutB}, we obtain the graph parametrization $\zuOut=(\wuOut,\xuOut,\yuOut)$.

The next proposition ensures that $\vOut$ exists and it is well defined in $\DBoomerangTilde$ (see \eqref{def:domainBoomerangTilde}).

\begin{proposition}\label{proposition:changevOut}
Let the function $\Gu$ and the constants
$\dFlow$ and $\kappaFlow$ be as obtained in Proposition~\ref{proposition:existenceFlow}
and the constant $\dOuter$ as obtained in
Proposition~\ref{proposition:existenceComecocos}.
Let us consider constants
$\dBoomerangTilde\in(\dOuter,\dFlow)$ and 
$\kappaBoomerangTilde>\kappaFlow$.
Then, there exists $\de_0>0$ such that,
for $\de \in (0,\de_0)$, equation \eqref{eq:equationsvOutA} has a real-analytic solution $\vOut: \DBoomerangTildeProp \to \complexs$ 
satisfying 
\[
\vabs{\vOut(u)} \leq \cttOuterG \de^{2}
\qquad \text{and} \qquad
u + \vOut(u) \in \DFlow.
\]
for some  constant $\cttOuterG>0$ independent of $\de$  and  $u \in \DBoomerangTildeProp$.
\end{proposition}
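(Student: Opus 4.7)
The plan is to recast equation \eqref{eq:equationsvOutA} as a fixed-point equation and apply Banach's contraction principle in a ball of small bounded analytic functions on $\DBoomerangTildeProp$. Hamilton's equations for $H_{\pend}$ give $\la_h'(u) = -3\La_h(u)$, so the Mean Value Theorem yields
\[
\la_h(u)-\la_h(u+v) \;=\; 3v \int_0^1 \La_h(u+sv)\, ds,
\]
and thus \eqref{eq:equationsvOutA} is equivalent to $\vOut = \FF[\vOut]$ with
\[
\FF[\phi](u) \;=\; \frac{\lah(u+\phi(u))}{3 \int_0^1 \La_h(u+s\phi(u))\, ds}.
\]
Morally, $\vOut(u) \approx \lah(u)/(3\La_h(u)) = \OO(\de^2)$, since $|\lah|\leq \cttOuterF\de^2$ on $\DFlow$ by Proposition~\ref{proposition:existenceFlow} and $\La_h$ vanishes only at $u=0$ by Theorem~\ref{theorem:singularities}, a point uniformly separated from $\DBoomerangTildeProp$.

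First I would verify that $\FF$ sends the ball of real-analytic functions $\phi$ on $\DBoomerangTildeProp$ with $\normSup{\phi}\leq \cttOuterG\de^2$ into itself for a suitable constant $\cttOuterG$. The key geometric check is that whenever $u\in\DBoomerangTildeProp$ and $|\phi(u)|\leq \cttOuterG\de^2$, the whole segment $\{u+s\phi(u) : s\in[0,1]\}$ lies inside $\DFlow$ for $\de$ small enough. This is exactly what the hypotheses on the constants provide: $\dBoomerangTilde \in (\dOuter,\dFlow)$ gives an $\OO(1)$ margin on the boundaries away from $\pm iA$, while $\kappaBoomerangTilde>\kappaFlow$ gives an $\OO(\de^2)$ margin on the boundaries near $\pm iA$, just large enough to absorb a $\cttOuterG\de^2$ perturbation. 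With $u+s\phi(u)\in\DFlow$, Proposition~\ref{proposition:existenceFlow} bounds the numerator by $\cttOuterF\de^2$, and $\La_h$ being bounded away from zero on the thickened domain (staying far from $u=0$) bounds the denominator from below, yielding $\normSup{\FF[\phi]}\leq \cttOuterG\de^2$ for $\cttOuterG$ large enough.

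Next I would establish the contraction estimate. Writing
\[
\FF[\phi_1]-\FF[\phi_2] \;=\; \frac{\lah(u+\phi_1)-\lah(u+\phi_2)}{3\int_0^1\La_h(u+s\phi_1)\,ds} \;-\; \frac{\lah(u+\phi_2)\int_0^1[\La_h(u+s\phi_1)-\La_h(u+s\phi_2)]\,ds}{3\paren{\int_0^1\La_h(u+s\phi_1)\,ds}\paren{\int_0^1\La_h(u+s\phi_2)\,ds}},
\]
and bounding $|\lah'|$ via Cauchy estimates on $\DFlow$ together with the explicit singular expansion of $\La_h'$ near $\pm iA$ from Theorem~\ref{theorem:singularities}, one obtains an estimate of the form $\normSup{\FF[\phi_1]-\FF[\phi_2]}\leq K\de^{2/3}\normSup{\phi_1-\phi_2}$, a contraction for $\de$ small (here one exploits that $\La_h\sim |u\mp iA|^{-1/3}$ diverges near the singularities, which \emph{enlarges} the denominators and offsets the blow-up of $\La_h'\sim |u\mp iA|^{-4/3}$ in the numerator). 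Banach's theorem then produces the unique fixed point $\vOut$; real-analyticity is automatic since $\FF$ preserves real-analyticity (all of $\la_h$, $\La_h$, $\lah$ being real-analytic and $\DBoomerangTildeProp$ being symmetric under complex conjugation), so the Picard iterates starting at $\phi_0\equiv 0$ converge uniformly to a real-analytic limit.

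The main technical obstacle I anticipate is handling the geometry and the singular behavior simultaneously inside the $\OO(\de^2)$-strips near $u=\pm iA$: there both $\La_h$ and $\La_h'$ blow up, and the displacement $u\mapsto u+\phi(u)$ of size $\OO(\de^2)$ is comparable to the distance to the singularity. Carrying out the contraction bookkeeping in this regime, and verifying that the gap $(\kappaBoomerangTilde-\kappaFlow)\de^2$ really suffices to keep iterates safely inside $\DFlow$, is the crux of the argument. Once this is in place, Banach's principle delivers $\vOut$ directly with the claimed bound $|\vOut|\leq\cttOuterG\de^2$ and the inclusion $u+\vOut(u)\in\DFlow$ built into the fixed-point setup.
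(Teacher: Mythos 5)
Your reformulation of \eqref{eq:equationsvOutA} as a Banach fixed point in a ball of radius $O(\de^2)$ is the right idea and is essentially what the paper does. The operator you choose differs slightly from the paper's: you use $\la_h(u)-\la_h(u+v)=3v\int_0^1\La_h(u+sv)\,ds$ and divide through, obtaining $\FF[\phi]=\lah(u+\phi)/\bigl(3\int_0^1\La_h(u+s\phi)\,ds\bigr)$, whereas the paper keeps $1/(3\La_h(u))$ as an explicit prefactor, writing $\NNN[\phi](u)=\frac{1}{3\La_h(u)}\bigl[\lah(u+\phi)+\la_h(u+\phi)-\la_h(u)+3\La_h(u)\phi\bigr]$. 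Both are legitimate and lead to the same kind of estimate.

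However, the "main technical obstacle" you single out — the simultaneous blow-up of $\La_h\sim|u\mp iA|^{-1/3}$ and $\La_h'\sim|u\mp iA|^{-4/3}$ near $\pm iA$, and displacements of size $O(\de^2)$ comparable to the distance to the singularity — does not actually arise here. By construction $\DBoomerangTildeProp$ stays at distance $O(1)$, independent of $\de$, from both singularities $\pm iA$ \emph{and} from $u=0$ (the paper remarks on this explicitly just after \eqref{def:domainBoomerangTilde}, and one can read it off the constraint $\vabs{\Im u}>\dBoomerangTilde A-\tan\betaOutB\Re u$ together with the first inequality). Consequently, on $\DBoomerangTildeProp$ and its $O(\de^2)$-thickening inside $\DFlow$, one has uniform two-sided bounds $C^{-1}\leq|\La_h|\leq C$ and $|\La_h'|\leq C$, and $|\partial_v\lah|\leq C\de^2$ follows from Proposition~\ref{proposition:existenceFlow} by a Cauchy estimate on a disk of $O(1)$ radius. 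The resulting Lipschitz constant for the fixed-point operator is therefore $O(\de^2)$, not the $K\de^{2/3}$ you claim; your $\de^{2/3}$ comes from needlessly importing the singular exponents into a region where they are irrelevant. The statement you quote, $\dBoomerangTilde\in(\dOuter,\dFlow)$ and $\kappaBoomerangTilde>\kappaFlow$, is what guarantees $u+s\phi(u)\in\DFlow$ for $|\phi|\leq C\de^2$; the $O(\de^2)$-margin near $\pm iA$ from $\kappaBoomerangTilde>\kappaFlow$ is never stressed because $\DBoomerangTildeProp$ never gets that close to $\pm iA$ in the first place. Once you drop the spurious singularity bookkeeping, your argument collapses to the paper's short proof.
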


Proposition \ref{proposition:changevOut} is proved in Section \ref{subsection:proofH-changevOut}. Summarizing all the previous results we obtain the following result.

\begin{corollary}\label{corollary:changevOut}
Let the function $\vOut$ and the constants $\dBoomerangTilde$ and $\kappaBoomerangTilde$ be as obtained in Proposition \ref{proposition:changevOut}. 
Then, there exists $\de_0>0$ such that,
for $\de \in (0,\de_0)$, 
equation \eqref{eq:equationsvOutB} has a unique solution $\zuOut=(\wuOut,\xuOut,\yuOut)^T: \DBoomerangTildeProp \to \complexs^3$.

Moreover, there exists a constant $\cttOuterH>0$ independent of $\de$ such that, for $u \in \DBoomerangTildeProp$,
\begin{align*}
	\vabsSmall{\wuOut(u)} \leq \cttOuterH \de^2, \qquad
	\vabsSmall{\xuOut(u)} \leq \cttOuterH \de^3, \qquad
	\vabsSmall{\yuOut(u)} \leq \cttOuterH \de^3.
\end{align*}
\end{corollary}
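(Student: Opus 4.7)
\textbf{Proof plan for Corollary \ref{corollary:changevOut}.}

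The key observation is that \eqref{eq:equationsvOutB} is not a functional or differential equation but a system of \emph{explicit} algebraic relations that determines $(\wuOut,\xuOut,\yuOut)$ pointwise in terms of the already-constructed objects $\vOut$ (Proposition~\ref{proposition:changevOut}) and $\Gu = \G_h + \Gh$ (Proposition~\ref{proposition:existenceFlow} and Corollary~\ref{corollary:changeuOut}). Solving each equation we set
\begin{align*}
\xuOut(u) &= \xh(u+\vOut(u)) - \de^3 \Ltresx(\de), \\
\yuOut(u) &= \yh(u+\vOut(u)) - \de^3 \Ltresy(\de), \\
\wuOut(u) &= 3\La_h(u)\bigl[\La_h(u) - \La_h(u+\vOut(u)) + \de^2 \LtresLa(\de) - \Lah(u+\vOut(u))\bigr].
\end{align*}
Uniqueness is immediate from these explicit expressions, provided the denominator $\La_h(u)$ does not vanish on $\DBoomerangTildeProp$.

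First I would verify that $\La_h(u)$ stays bounded away from zero on $\DBoomerangTildeProp$. By Theorem~\ref{theorem:singularities}, the only zero of $\La_h$ in $\overline{\Pi^{\mathrm{ext}}_{A,\betaBow}}$ is at $u=0$; however, the third defining condition of $\DBoomerangTilde$ in \eqref{def:domainBoomerangTilde}, namely $|\Im u| > \dBoomerangTilde A - \tan\betaOutB \Re u$, excludes a fixed (i.e.\ $\de$-independent) neighborhood of the origin. Since $\DBoomerangTildeProp \subset \Pi^{\mathrm{ext}}_{A,\betaBow}$, continuity then yields a constant $c>0$ independent of $\de$ with $|\La_h(u)| \geq c$ on $\DBoomerangTildeProp$. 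The same geometric remark (that $\DBoomerangTilde$ stays at $\OO(1)$ distance from the singularities $\pm iA$, as noted in Section~\ref{subsection:outerExtension}) guarantees that $|\La_h|$ and $|\La_h'|$ are uniformly bounded \emph{above} on $\DBoomerangTildeProp$ and on the segment $[u, u+\vOut(u)]$ (which stays in a slightly enlarged region still bounded away from $\pm iA$ since $|\vOut| = \OO(\de^2)$).

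Next I would derive the bounds. For $\xuOut$ and $\yuOut$, Proposition~\ref{proposition:existenceFlow} gives $|\xh|, |\yh| \leq \cttOuterF \de^3$ on $\DFlow$, and Proposition~\ref{proposition:changevOut} ensures $u+\vOut(u) \in \DFlow$ for $u \in \DBoomerangTildeProp$; together with the estimate $\de^3|\Ltresx|, \de^3|\Ltresy| \leq C\de^3$ from Proposition~\ref{proposition:HamiltonianScaling}, this gives $|\xuOut(u)|, |\yuOut(u)| \lesssim \de^3$. For $\wuOut$ I would estimate each term in the bracket separately: by the mean value theorem and the bound on $\La_h'$,
\[
|\La_h(u) - \La_h(u+\vOut(u))| \;\leq\; |\vOut(u)| \cdot \sup |\La_h'| \;\lesssim\; \de^2,
\]
using $|\vOut| \leq \cttOuterG \de^2$; Proposition~\ref{proposition:existenceFlow} gives $|\Lah(u+\vOut(u))| \leq \cttOuterF \de^2$; and $\de^2|\LtresLa| \lesssim \de^2$ by Proposition~\ref{proposition:HamiltonianScaling}. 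Multiplying by the uniform upper bound on $|\La_h(u)|$ yields $|\wuOut(u)| \lesssim \de^2$ with a constant $\cttOuterH$ independent of $\de$.

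Finally, real-analyticity of $\zuOut$ follows at once from the real-analyticity of $\vOut$, $\La_h$, $\Lah$, $\xh$, $\yh$ on their respective domains, in the sense of Remark~\ref{remark:realAnalytic}. The whole argument is essentially unpacking the explicit formulas derived from \eqref{eq:equationsvOutB}; the only delicate point, which is entirely handled by the geometry of $\DBoomerangTildeProp$ built in the previous propositions, is to avoid the zero of $\La_h$ at the origin while still reaching the portion of $\DBoomerang$ not already covered by $\DuOutKappa$. No genuine analytic obstacle remains at this stage.
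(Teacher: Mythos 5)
Your proof is correct and follows exactly the route the paper implicitly relies on: the paper states this corollary without a displayed proof, merely remarking that it ``summarizes'' the previous results, and the intended argument is precisely what you have spelled out — solve \eqref{eq:equationsvOutB} explicitly for $(\wuOut,\xuOut,\yuOut)$, note that $\DBoomerangTildeProp$ stays at $\mathcal O(1)$ distance from both $u=0$ (the unique zero of $\La_h$, by Theorem~\ref{theorem:singularities}) and from $\pm iA$, which gives uniform two-sided bounds on $|\La_h|$ and a uniform bound on $|\La_h'|$ there, and then combine the bounds from Propositions~\ref{proposition:HamiltonianScaling}, \ref{proposition:existenceFlow}, and \ref{proposition:changevOut} via the mean value theorem. (The same two-sided bound $C^{-1}\le|\La_h|\le C$ is also used, without comment, in the paper's proof of Proposition~\ref{proposition:changevOut}, in \eqref{proof:boundsHomoclinicVchange}.) All steps check out, including the observation that the short segment $[u,u+\vOut(u)]$ remains bounded away from $\pm iA$ because $|\vOut|=\mathcal O(\de^2)$ while the distance to the singularities is $\mathcal O(1)$.
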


To finish this section, notice that, taking $\rhoOuter$ big enough,
$\dBoomerang \geq \dBoomerangTilde$ and $\kappaBoomerang\geq\kappaBoomerangTilde$
we have that
\[
\DBoomerangKappa \subset \DuOutKappa \cup \DBoomerangTildeProp,
\qquad \text{with} \qquad
\DuOutKappa \cap \DBoomerangTildeProp \neq \emptyset,
\] 
%(see \eqref{def:dominiBoomerang}, \eqref{def:dominisComecocos} and \eqref{def:domainBoomerangTilde}).
%
and then, Corollary
\ref{corollary:changevOut} and Proposition \ref{proposition:existenceComecocos} 
imply the statements of  Theorem~\ref{theorem:existence} referring to  the unstable manifold $\zuOut$.

\subsection{A first order of the invariant manifolds near the singularities}
\label{section:differenceInner}

Let us consider the difference 
\begin{equation*}
%\label{def:dzOut}
	\dzOut = (\dwOut,\dxOut,\dyOut)^T = \zuOut - \zsOut,
\end{equation*}
where $\zuOut$ and $\zsOut$ are the perturbed invariant graphs given in Theorem \ref{theorem:existence}.
%
%We analyze this difference in the boomerang domain $\DBoomerang$ (see \eqref{def:dominiBoomerang}).
%
Since $\zuOut$ and $\zsOut$ satisfy the invariance equation \eqref{eq:invariantEquationOuter}, the difference $\dzOut$ satisfies the linear equation
\begin{equation}\label{eq:invariantEquationDifference1}
	\partial_u \dzOut(u)  = 
	\AAA^{\out} \dzOut(u) + 
	\wt{\BB}^{\spl}(u) \dzOut(u),
\end{equation}
where $\AAA^{\out}$ is as given in \eqref{def:matrixAAAOuter} and
\begin{equation}\label{def:Bspl1}
	\wt{\BB}^{\spl}(u) = \int_0^1 
	D_z\RRR^{\out}[\sigma\zuOut + (1-\sigma) \zsOut](u) d \sigma.
\end{equation}
Since  $\zuOut$ and $\zsOut$ are already defined in $\DBoomerang$,  $\wt{\BB}^{\spl}(u)$ can be considered as a  ``known'' function.

In addition, since the graphs of $\zuOut$ and $\zsOut$ belong to the same energy level of $H^{\out}$ (see \eqref{def:hamiltonianOuter}), we have that
\begin{align*}
%\label{eq:energyLevelHout}
	H^{\out}(u,\zuOut(u);\de) - H^{\out}(u,\zsOut(u);\de) =0,
	\quad \text{for } u \in \DBoomerang.
\end{align*}
Therefore, we can reduce~\eqref{eq:invariantEquationDifference1} to a two dimensional equation.
Indeed, defining $\Ups = 
\paren{\Ups_1,\Ups_2,\Ups_3}$ such that
\begin{align}\label{def:operatorPPdifference}
	\Ups(u) = 
	\int_0^1 D_z H^{\out} 
	\paren{u, \sigma \zuOut(u) + (1-\sigma) \zsOut(u)} d \sigma,
\end{align}
and applying the mean value theorem we have that
\begin{align*}
	\Ups_1(u) \dwOut(u) +
	\Ups_2(u) \dxOut(u) +
	\Ups_3(u) \dyOut(u) = 0.
\end{align*}
Notice that $\Upsilon_1(u)=1+\int_0^1 \partial_w H_1^{\out} \paren{u, \sigma \zuOut(u) + (1-\sigma) \zsOut(u)} d\sigma$ and therefore $\Upsilon_1(u)\neq 0$ for $u \in \DBoomerang$ (see Remark \ref{remark:derivadaH1w}).
%
% Therefore, since $H_1^{\out}$ is analytic, by Remark \ref{remark:derivadaH1w} and taking $\de$ small enough, $\Ups_1(u) \neq 0$ for $u \in\DBoomerangTilde$.
%
Therefore, writing
\begin{equation}\label{eq:defDwOut}
	\dwOut(u) = 
	-\frac{\Ups_2(u)}{\Ups_1(u)} \dxOut(u) 
	-\frac{\Ups_3(u)}{\Ups_1(u)} \dyOut(u)
\end{equation}
% As a result, it is not necessary to study the difference in the first coordinate $\dwOut$.
% Let us consider 
and defining $\dzHat=(\dxOut,\dyOut)^T$, the last two components of \eqref{eq:invariantEquationDifference1} are equivalent to
\begin{equation}\label{eq:invariantEquationDifference2}
	\partial_u {\dzHat}(u) = 
	{\AAA}^{\spl}(u) \dzHat(u)
	+ 
	{\BB}^{\spl}(u) \dzHat(u),
\end{equation} 
where
\begin{align} \label{def:operatorsDifferenceAABB}
	&\AAA^{\spl}=
	\begin{pmatrix}
		\frac{i}{\de^2} + \wt{\BB}^{\spl}_{2,2} & 0 \\
		0 & -\frac{i}{\de^2} + \wt{\BB}^{\spl}_{3,3}
	\end{pmatrix}, 
	&{\BB}^{\spl}=
	\begin{pmatrix} 
		-\frac{	\Ups_2}{	\Ups_1} \wt{\BB}^{\spl}_{2,1} &
		\wt{\BB}^{\spl}_{2,3} 
		-\frac{	\Ups_3}{	\Ups_1} \wt{\BB}^{\spl}_{2,1} \\[0.6em]
		\wt{\BB}^{\spl}_{3,2} 
		-\frac{	\Ups_2}{	\Ups_1} \wt{\BB}^{\spl}_{3,1} &
		-\frac{	\Ups_3}{	\Ups_1} \wt{\BB}^{\spl}_{3,1}
	\end{pmatrix}. 
\end{align}

Next, we give an heuristic idea of how to obtain an exponentially small bound for $\dyOut(u)$ for $u \in \DBoomerang$.
The case for $\dxOut$ is analogous. 
If we omit the influence of $\wt{\BB}^{\spl}$,
then there exists  $c_y\in\mathbb{C}$ such that $\dyOut$ is of the form
\[
\dyOut(u) = c_y \, e^{-\frac{i}{\de^2}u}.
\]
Evaluating this function at the points
\begin{align*}
%\label{def:uMesuMenysDiff}
	u_+ = i(A-\kappa \de^2), \qquad
	u_- =-i(A-\kappa \de^2),
\end{align*}
one has $\dyOut(u_+) \sim c_y e^{\frac{A}{\de^2}-\kappa}$. 
Then, since $\dyOut(u_+) \sim 1$, it implies that
$
c_y \sim e^{-\frac{A}{\de^2}+\kappa}
$
and, as a consequence, $\dyOut$ is exponentially small for  $u \in \reals$.
However, we are not interested in an upper bound of $\dyOut$ but in an asymptotic formula. 
Thus we have to find the constant $c_y$, or more precisely a good approximation of it. 

To this end, we need to give the main terms of $\dyOut$ at $u=u_+$.
% up to distance of order $\de^2$ of $u=iA$.
%
Likewise we need to analyze $\dxOut(u)\sim c_x \, e^{\frac{i}{\de^2}u}$ at $u=u_-$.
% 
% with the singularity $u=-iA$ and, for $\dwOut$, we use the energy conservation by means of \eqref{eq:defDwOut}.
%
%To this end, we derive the inner equation of the system.
%
% In particular, first we have to provide a better first approximation of $\zusOut=(\wusOut,\xusOut,\yusOut)$ around $\pm iA$.
%
To perform this analysis we proceed as follows:

\begin{enumerate}
	\item We provide suitable solutions $\ZusInn(U)$ of the so-called inner equation.
	The inner equation, see \cite{Bal06, BalSea08}, describes the dominant behavior of the functions $\zuOut$ and $\zsOut$ close to (one of) the singularities $u=\pm iA$.
	In particular, it involves the first order of the Hamiltonian $H^{\out}$ close to a singularity and it is independent of the small parameter $\de$.
	See Section \ref{subsection:innerHeuristics}.
	\item We check how well $\zusOut(u)$ are approximated by $\ZusInn(U)$ around the singularities $u=\pm iA$ by means of a complex matching procedure.
	See Section \ref{subsection:matching}.
\end{enumerate}

%%%%%%%%%%%%%%%%%%%%%%%%%%
%
%Due to the slow-fast character of our system, the difference between the unstable and stable manifolds, $\WW^{\unstable}(\de)$ and $\WW^{\stable}(\de)$, is exponentially small with respect to $\de$.
%%
%Indeed, for the parametrizations $\zuOut$ and $\zsOut$ given in Theorem \ref{theorem:existence} and $u \in \DBoomerang$ it can be seen that
%\[
%\dzOut=(\dwOut,\dxOut,\dyOut)
%\quad \text{with} \qquad
%\dxOut(u) \sim C_x e^{\frac{i}{\de^2}u},
%\qquad
%\dyOut(u) \sim C_y e^{-\frac{i}{\de^2}u}.
%\]
%%
%Since by Theorem \ref{theorem:existence} $\dzOut$ is bounded in $\DBoomerang$, to observe the exponentially small behavior in the reals we evaluate the difference near the singularities of the unperturbed separatrix.
%%
%Namely,
%\[
%C_x \sim \dxOut(-i(A-\kappa \de^2) ) 
%e^{-\frac{A}{\de^2}+\kappa}, 
%\qquad
%C_x \sim \dyOut(i(A-\kappa \de^2) ) 
%e^{-\frac{A}{\de^2}+\kappa}. 
%\]
%in particular, up to up to distance of order $\de^2$ of $u=\pm iA$.
%

\subsubsection{The inner equation}
\label{subsection:innerHeuristics}

In this section we summarize the results on the derivation and study of the inner equation obtained in \cite{articleInner}.
We focus on the inner equation around the singularity $u=iA$, but analogous results hold near $u=-iA$. 
%

%
%Due to the slow-fast character of our system, to capture the asymptotic first order of the difference $\dzOut = \zuOut-\zsOut$, we need to give the main terms of this difference up to distance of order $\de^2$ of the singularities, see \cite{Bal06, BalSea08}. 

To derive the inner equation, we look for a new Hamiltonian which is a good approximation of $H^{\out}$, given in~\eqref{def:hamiltonianOuter}, in a suitable neighborhood of $u=iA$.
First, we scale the variables $(u,w,x,y)$ so that the graphs $\zusOut(u)$ become $\OO(1)$-functions when $u-iA=\OO(\de^2)$.
Since, by Theorem \ref{theorem:existence}, we have that
\begin{align*}
\wdOut(u) = \OO(\de^{-\frac43}), 
\qquad
\xdOut(u) = \OO(\de^{\frac13}),
\qquad
\ydOut(u) = \OO(\de^{\frac13}), 
\qquad 
\text{for } \diamond=\unstable,\stable,
\end{align*}
we consider the symplectic scaling
%
%Since the inner equation is independent of $\de$, we introduce a symplectic scaling such that the graphs $\zuOut$ and $\zsOut$ are of order $\OO(1)$ when $\de^{-2}(u-iA)=\OO(1)$.
%%
%Namely,
$
	\phi_{\Inner}:(U,W,X,Y) \to (u,w,x,y),
$
given by
\begin{equation}\label{def:changeInner}
	U=\frac{u-iA}{\de^2}, \hh
	W=\de^{\frac{4}{3}}\frac{w}{2 \al_+^2}, \hh
	X=\frac{x}{\de^{\frac{1}{3}} \sqrt{2} \al_+ } , \hh
	Y=\frac{y}{\de^{\frac{1}{3}}\sqrt{2} {\al_+}},
\end{equation}
where  $\al_+ \in \complexs$ is the constant given by in Theorem~\ref{theorem:singularities}, which is added to avoid the dependence of the inner equation on it.
Moreover,  we also perform the time scaling $\tau=\de^2 T$.
We refer to $(U,W,X,Y)$ as the \emph{inner coordinates}.

%In the next proposition, we express the Hamiltonian $H^{\out}$ in inner coordinates.

\begin{proposition}\label{proposition:innerDerivation}
The Hamiltonian system associated to~\eqref{def:hamiltonianOuter} expressed in the inner coordinates  is Hamiltonian with respect to the symplectic form $dU \wedge dW + i dX \wedge dY$ and
\begin{equation}\label{def:hamiltonianInnerComplete}
	H^{\Inner} = \HH + H_1^{\Inner},
\end{equation}
where 
\begin{equation*}
	%\label{def:hamiltonianInner}
\HH(U,W,X,Y) = 
H^{\inn}(U,W,X,Y;\de) |_{\de=0} =
W + XY + \KK(U,W,X,Y),
\end{equation*}
with
\begin{align*}
\KK(U,W,X,Y) =& \,
-\frac{3}{4}U^{\frac{2}{3}} W^2 
- \frac{1}{3 U^{\frac{2}{3}}}
\paren{\frac{1}{\sqrt{1+\JJ(U,W,X,Y)}} - 1 }, %\label{def:hamiltonianK} 
\\
%\end{align*}
%and
%\begin{align*}
\begin{split}	
\JJ(U,W,X,Y) =& \,
\frac{4 W^2}{9 U^{\frac{2}{3}} }
-\frac{16 W}{27 U^{\frac{4}{3}}} 
+\frac{16}{81 U^{2}}
+\frac{4(X+Y)}{9 U} \paren{W 
-\frac{2}{3 U^{\frac23}}} \\[0.6em]
&-\frac{4i(X-Y)}{3 U^{\frac{2}{3}}} 
-\frac{X^2 + Y^2}{3 U^{\frac{4}{3}}}
+\frac{10 X Y}{9 U^{\frac{4}{3}}} .
\end{split} \nonumber
\end{align*}
Moreover, if 
$\cttInnDerA^{-1} \leq \vabs{U} \leq \cttInnDerA$ and 
$\vabs{(W,X,Y)} \leq \cttInnDerB$ 
for some $\cttInnDerA>1$  and $0<\cttInnDerB<1$,
there exist
$\cttInnDerC,\cttInnDerAA,\cttInnDerBB>0$
independent of 
$\de, \cttInnDerA, \cttInnDerB$ such that
\begin{equation}\label{eq:boundsH1Inn}
|H_1^{\Inner}(U,W,X,Y;\de)|
\leq 
\cttInnDerC \cttInnDerA^{\cttInnDerAA}
\cttInnDerB^{\cttInnDerBB}
\de^{\frac{4}{3}}.
\end{equation} 
\end{proposition}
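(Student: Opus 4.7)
The strategy is to perform the change of variables $\phi_{\Inner}$ explicitly on $H^{\out}$, combine it with the time rescaling $\tau=\de^2 T$, and then expand everything in powers of $\de$ using the local description of the separatrix near the singularity given by Theorem~\ref{theorem:singularities}.

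\textbf{Step 1: Symplectic normalisation.} A direct computation shows that
\[
du \wedge dw + i\, dx \wedge dy
\;=\; 2\al_+^2 \de^{\frac{2}{3}} \bigl(dU \wedge dW + i\, dX \wedge dY\bigr),
\]
so $\phi_{\Inner}$ is conformally symplectic with conformal factor $2\al_+^2 \de^{2/3}$. Together with the time rescaling $\tau=\de^2 T$, this means that the Hamiltonian in the inner variables is
\[
H^{\Inner}(U,W,X,Y;\de) \;=\; \frac{\de^{\frac{4}{3}}}{2\al_+^2}\, H^{\out}\bigl(u(U),w(W),x(X),y(Y);\de\bigr).
\]
In particular, defining $\HH:=H^{\Inner}|_{\de=0}$ makes sense once one verifies that the right-hand side extends analytically to $\de=0$ in the region described by the bounds on $|U|,|W|,|X|,|Y|$.

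\textbf{Step 2: Contribution of $H_0+H_{\pend}$.} Using $\phi_{\out}$, the identity $H_{\pend}(\la_h(u),\La_h(u))\equiv-\tfrac12$ along the separatrix and the expansion $-\tfrac{3}{2}(\La_h - \tfrac{w}{3\La_h})^2 = -\tfrac{3}{2}\La_h^2 + w - \tfrac{w^2}{6\La_h^2}$, the pendulum-plus-oscillator part of $H$ becomes $-\tfrac12 + w - \tfrac{w^2}{6\La_h(u)^2} + \tfrac{xy}{\de^2}$. Scaling by $\de^{4/3}/(2\al_+^2)$ and substituting the leading term $\La_h(u)^2 = \tfrac{4\al_+^2}{9}\de^{-4/3}U^{-2/3}(1+O(\de^{4/3}))$ from Theorem~\ref{theorem:singularities} produces the $W+XY$ part together with the first summand $-\tfrac{3}{4}U^{2/3}W^2$ of $\KK$, up to an irrelevant additive constant.

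\textbf{Step 3: Contribution of $H_1$.} Write $H_1 = H_1^{\Poi}\circ \phi_{\sca} - V(\la) + \de^{-4}F_{\pend}(\de^2\La)$ as in \eqref{def:hamiltonianScalingH1}. The key identity is that $H_1^{\Poi}$ is generated by the gravitational potential of the planet, so in the Poincar\'e chart it takes the schematic form $1-\cos\la - (1+\JJ_{\rm raw})^{-1/2}$ where $\JJ_{\rm raw}$ is an explicit polynomial in $(L-1,\eta,\xi)$ with $\la$-dependent coefficients (see Lemma 4.1 of \cite{articleInner} and the appendix referenced there). Substituting $L-1=\de^2\La$, $\eta=\de x$, $\xi=\de y$, then $\phi_{\out}$, then $\phi_{\Inner}$, and finally letting $\de\to 0$, one replaces $\la$ by the asymptotic value $\pi$, uses $\cos\la = -1 + O(\de^{4/3})$, and collects all terms that survive at order $\de^{4/3}/(2\al_+^2)\cdot O(\de^{-4/3})$. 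The explicit coefficients $\tfrac{4}{9}$, $\tfrac{16}{27}$, $\tfrac{16}{81}$, $\tfrac{10}{9}$ etc. in $\JJ$ then arise as direct algebraic consequences of the cube $\al_+^3=\tfrac12$ and the factors of $3$ appearing in $\Lambda_h\sim -\tfrac{2\al_+}{3}(u-iA)^{-1/3}$ and $\lambda_h\sim \pi + 3\al_+(u-iA)^{2/3}$. The $-V(\la)$ and $\de^{-4}F_{\pend}$ subtractions precisely cancel the divergent contributions coming from $H_0$ and $H_{\pend}$, so that the limit is well defined.

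\textbf{Step 4: Error bound on $H_1^{\Inner}$.} Setting $H_1^{\Inner}:=H^{\Inner}-\HH$, the error is controlled by the next-order correction in each of the asymptotic expansions used in Steps 2--3. Theorem~\ref{theorem:singularities} gives $\la_h(u)-\pi - 3\al_+(u-iA)^{2/3}=O((u-iA)^{4/3})=O(\de^{8/3}U^{4/3})$ and a similar $O(\de^{2/3}U^{1/3})$ correction for $\La_h$. Combined with the analyticity of $H_1^{\Poi}$ on the polydisc $|L-1|,|\eta|,|\xi|\ll 1$ (which holds provided $\gamma_2$ is small enough), Cauchy estimates yield the bound $|H_1^{\Inner}|\le c_3\gamma_1^{\gamma_1'}\gamma_2^{\gamma_2'}\de^{4/3}$ on the prescribed region. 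The exponents $\gamma_1',\gamma_2'$ are determined by the worst negative power of $U$ that appears when differentiating the series for $H_1^{\Poi}\circ\phi_{\sca}\circ\phi_{\out}$ and by the degree of the dominant monomial in $(W,X,Y)$.

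The main obstacle is the algebraic bookkeeping in Step~3: one must verify that the infinitely many terms in the series of $H_1^{\Poi}$ collapse, after the singular scalings of $\phi_{\sca}$, $\phi_{\out}$ and $\phi_{\Inner}$, into the closed-form expression for $\JJ$ stated in the proposition. All other steps are direct verifications once this algebraic identification is in hand. The bound in Step~4 is then a routine Cauchy-estimate argument, modulo care in keeping the powers of $U$ uniform on the annular region $|U|\in[\gamma_1^{-1},\gamma_1]$.
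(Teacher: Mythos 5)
The paper does not prove Proposition~\ref{proposition:innerDerivation} itself; immediately after the statement it says ``This result is proven in \cite{articleInner} in Proposition 2.5.'' So there is no in-paper argument to compare against. That said, your sketch traces the standard (and essentially forced) derivation of an inner Hamiltonian, and the computations you actually carry out are correct: the change $\phi_{\Inner}$ from \eqref{def:changeInner} satisfies $du\wedge dw + i\,dx\wedge dy = 2\al_+^2\de^{2/3}\bigl(dU\wedge dW + i\,dX\wedge dY\bigr)$, so combined with $\tau=\de^2 T$ one indeed gets $H^{\Inner}=\frac{\de^{4/3}}{2\al_+^2}\,H^{\out}\circ\phi_{\Inner}$; the $W+XY$ part and the coefficient $-\tfrac34 U^{2/3}W^2$ drop out of the expansion $\La_h^2\sim\frac{4\al_+^2}{9\de^{4/3}U^{2/3}}$ from Theorem~\ref{theorem:singularities} together with $\al_+^3=\tfrac12$; and the $\de^{4/3}$ scale of the error is correctly traced to the $\OO\bigl((t-iA)^{1/3}\bigr)$ correction in $\La_h$ (and to the contributions of $\de^{-4}F_{\pend}(\de^2\La)$, which after the $\frac{\de^{4/3}}{2\al_+^2}$ rescaling are $\OO(\de^{4/3})$ since $\al_+^3\La^3=\OO(\de^{-2})$). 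What you leave unverified — and correctly flag as the real work — is the closed-form identification of $\JJ$: reducing $H_1^{\Poi}\circ\phi_{\sca}\circ\phi_{\out}\circ\phi_{\Inner}$ at $\de=0$ to $-\frac{1}{3U^{2/3}}\bigl((1+\JJ)^{-1/2}-1\bigr)$ requires carrying the expansion of $\PP[\mu-1]$ (see \eqref{def:functionD2}--\eqref{def:DB} and Lemma 4.1 of \cite{articleInner}) through all the singular scalings, and until that bookkeeping is done the sketch is not a complete proof. As a framework, however, it matches the expected argument and has no conceptual gaps.
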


This result is proven in \cite{articleInner} in Proposition 2.5.

Now, we present the study of the inner Hamiltonian $\HH$.
%
%In particular, we study the solutions of the inner equation that approximate $\zuOut$ and $\zsOut$ given in Theorem \ref{theorem:existence} near the singularities and after the inner change of variables. 
Denoting $Z=(W,X,Y)^T$, the equations associated to the Hamiltonian $\HH$, can be written as
\begin{equation*}
%\label{eq:systemEDOsInner}
	\left\{ \begin{array}{l}
		\dot{U} = 1 + g^{\Inner}(U,Z),\\
		\dot{Z} = \AAA^{\Inn} Z + f^{\Inner}(U,Z),
	\end{array} \right.
\end{equation*}
where 
\begin{equation}\label{def:matrixAAA}
	\AAA^{\Inner}= \begin{pmatrix}
		0 & 0 & 0 \\
		0 & i & 0 \\
		0 & 0 & -i
	\end{pmatrix},
\end{equation}
and
$f^{\Inn} = \paren{-\partial_U \KK, 
	i \partial_Y \KK, -i\partial_X \KK }^T$ 
and
$g^{\inn} = \partial_{W} \KK$.
%
%with $\KK$ given in~\eqref{def:hamiltonianK}.
%
We look for invariant graphs $Z=\ZuInn(U)$ and $Z=\ZsInn(U)$ of this equation, that  satisfy the invariance equation also called \emph{inner equation},
\begin{equation}\label{eq:invariantEquationInner}
	\partial_U \ZdInn(U) = 
	\AAA^{\Inner} \ZdInn + \RRR^{\Inner}[\ZdInn](U),
	\qquad \text{for } 
	\diamond=\unstable, \stable,
\end{equation}
with
\begin{equation}\label{def:operatorRRRInner}
\RRR^{\Inn}[\varphi](U)= 
	\frac{f^{\Inner}(U,\varphi)- g^{\Inner}(U,\varphi) \AAA^{\Inner} \varphi }{1+g^{\Inner}(U,\varphi)}.
\end{equation}
%
%In order to ``select'' the solutions we are  interested in we need some uniformity with respect to $\de$ and $U=\de^{-2}(u-iA)$.
%%
%Notice that $\Re U \to \pm \infty$ as $\de \to 0$, depending on the sign of $\Re U$.
%%
%Moreover,
%\begin{align*}
%\de^{\frac43} \vabs{\wdOut(\de^2 U + iA)}
%\sim
%\frac1{\vabs{U}^{\frac83}} + 
%\frac{\de^{\frac43}}{\vabs{U}}, 
%\quad
%\de^{-\frac13} \vabs{\xdOut(\de^2 U + iA)},
%\de^{-\frac13} \vabs{\ydOut(\de^2 U + iA)}
%\sim
%\frac1{\vabs{U}^{\frac43}}.
%\end{align*}
%%
%For that reason, we look for the solutions $\ZdInn$ satisfying the asymptotic conditions
%\begin{equation}\label{eq:asymptoticConditionsInner}
%\begin{split}
%\lim_{\Re U \to -\infty} \ZuInn(U) = 0, 
%\qquad
%\lim_{\Re U \to +\infty} \ZsInn(U) = 0.
%\end{split}
%\end{equation} 
%
\begin{figure}[t] 
	\centering
	\vspace{5mm}
	\begin{overpic}[scale=0.8]{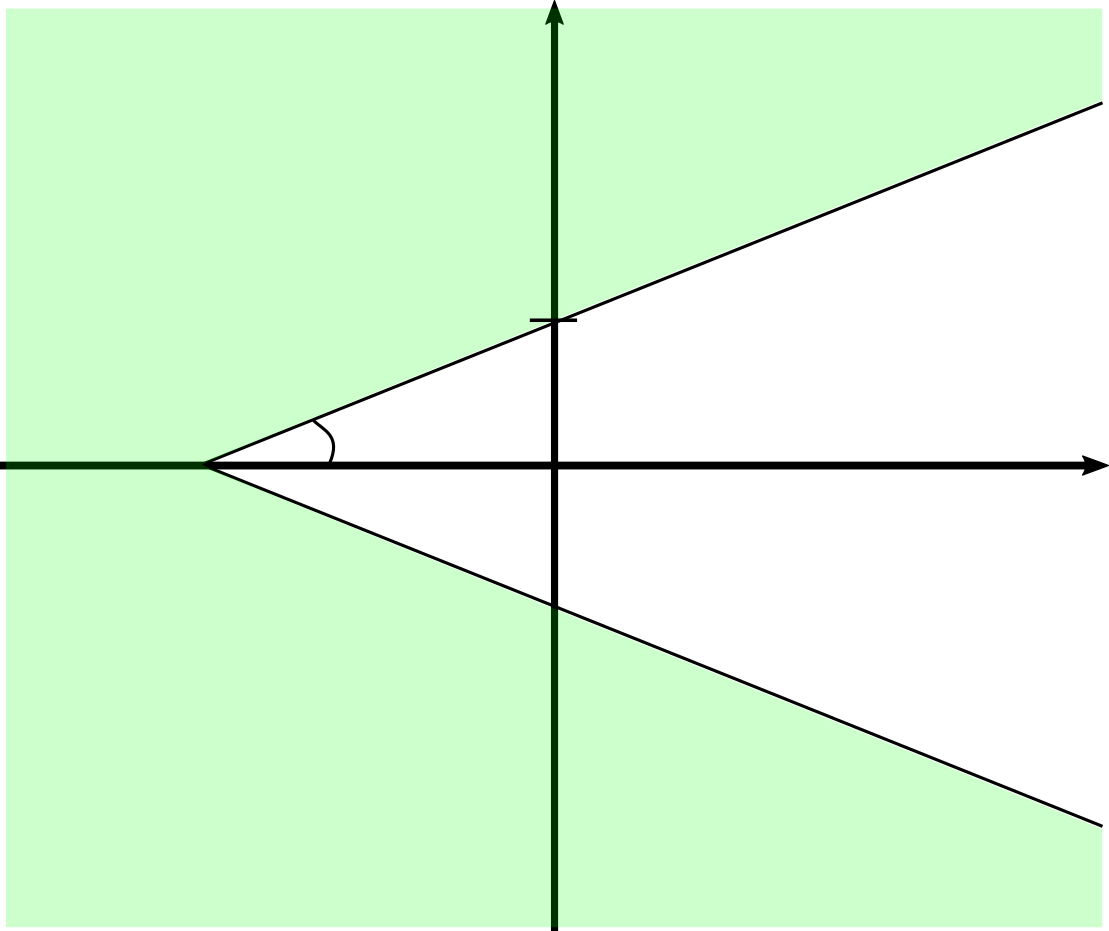}
		\put(10,60){$\DuInn$}
		\put(32,44){{$\betaInner$}}
		%\put(16,35){$\frac{\rhoInn}{\gamma}$}
		\put(44,54){{$\rhoInn$}}
		\put(102,41){$\Re U$}
		\put(46,86){$\Im U$}
	\end{overpic}
	\bigskip
	\caption{The inner domain $\DuInn$ for the unstable case.}
	\label{fig:dominiInnerUnstable}
\end{figure} 
These functions $\ZuInn$ and $\ZsInn$ will be defined 
in the domains
\begin{equation*}%\label{def:domainInnner}
\DuInn = \claus{ U \in \complexs \text{ : }
|\Im U| \geq \tan \betaInner \Re U + \rhoInn}, 
\qquad
 \DsInn = -\DuInn,
\end{equation*}
respectively, for some $\rhoInn>0$ and 
with $\betaBow$ as given in Theorem~\ref{theorem:existence} (see Figure~\ref{fig:dominiInnerUnstable}).
%
%\begin{remark}\label{remark:innerDomains}
%Let us consider the inner domain $\DuInn$ in separatrix coordinates as given in \eqref{def:changeOuter}.
%%
%That is, 
%\begin{equation*}
%\DuInnOut = 
%%\phi_{\Inner} \paren{\DuInn} =
%\claus{
%u \in \complexs \h \mathrm{ : } \h
%\vabs{ \Im(u - iA)} \geq
%\tan \betaInner \Re u + \rhoInn \de^2
%}.
%\end{equation*}
%Let us consider the outer domain $\DuOut$ as defined on~\eqref{def:domainsOuter}. Then, by~\eqref{def:beta0}, it can be checked that for $0<\beta<\frac{\pi}{2}$ one has that $\DuOut \subset \DuInnOut$.
%%
%Analogously, we also have that $\DsOut \subset \DsInnOut$, where $\DsInnOut = - \DuInnOut$.
%\end{remark}
%
Moreover, we analyze the difference $\DZInn=\ZuInn-\ZsInn$ in the overlapping domain
\begin{equation*}%\label{def:domainInnnerDifference}
\EInn = \DuInn \cap \DsInn \cap 
	\claus{ U \in \complexs \st \Im U < 0}.
\end{equation*}

%Let us consider the functions
%\begin{equation}\label{def:graphInner}
%	\ZdInn(U) = \big(\WdInn(U),\XdInn(U),\YdInn(U)\big)^T,
%	\qquad
%	\text{for } \diamond=\unstable,\stable,
%\end{equation}

\begin{theorem}\label{theorem:innerComputations}
There exist $\kappaInner, \cttInnExist>0$
such that for $\rhoInn\geq\kappaInner$,	
the equation~\eqref{eq:invariantEquationInner} has analytic solutions
$
\ZdInn(U) =(\WdInn(U),\XdInn(U),\YdInn(U))^T, 
$ for $U \in \DdInn$, $\diamond=\unstable,\stable$, satisfying
\begin{equation*}
%\label{result:innerExistence}
	| U^{\frac{8}{3}} \WdInn(U)| \leq \cttInnExist, \qquad
	| U^{\frac{4}{3}} \XdInn(U) | \leq \cttInnExist, \qquad
	| U^{\frac{4}{3}} \YdInn(U) | \leq \cttInnExist.
	%\text{for }	U \in \DdInn.
\end{equation*}
In addition, there exist $\CInn \in \complexs$, $\cttInnDiff>0$ independent of $\rhoInn$, and an analytic function $\chi=(\chi_1,\chi_2,\chi_3)^T$ 
such that, for $U \in \EInn$,
\begin{equation*}
%\label{result:innerDifference}
	\DZInn(U) = \ZuInn(U)-\ZsInn(U) =
	\CInn e^{-iU} \Big(
	(0,0,1)^T + \chi(U) \Big),
\end{equation*}
with $|(U^{\frac{7}{3}} \chi_1(U), 
U^{2} \chi_2(U),U \chi_3(U))|
\leq \cttInnDiff$.
%
%\begin{align*}
%	| U^{\frac{7}{3}} \chi_1(U)| \leq \cttInnDiff, \qquad
%	| U^{2} \chi_2(U) | \leq \cttInnDiff, \qquad
%	| U \chi_3(U) | \leq \cttInnDiff.
%	%\text{for } U \in \EInn.
%\end{align*}
\end{theorem}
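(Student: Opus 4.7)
The plan is to prove existence of the solutions $\ZuInn, \ZsInn$ by a contraction argument on the integral form of \eqref{eq:invariantEquationInner}, and then to extract the asymptotic formula for $\DZInn$ by reducing the linearised difference equation to two dimensions and identifying its dominant fundamental solution.

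For existence I would write \eqref{eq:invariantEquationInner} as a fixed point equation $\ZdInn = \GG^{\diamond}\bigl[\RRR^{\Inner}[\ZdInn]\bigr]$, where $\GG^{\diamond}$ is a right inverse of $\partial_U - \AAA^{\Inner}$ tailored to $\DdInn$. Since $\AAA^{\Inner}=\mathrm{diag}(0,i,-i)$ the inverse acts componentwise: the $W$-component is obtained by integrating from a ``vertex at infinity'' of the sectorial domain, while the $X$- and $Y$-components use the Green integrals $e^{\pm iU}\int e^{\mp is}(\cdot)\,ds$ along paths on which the oscillatory weight stays bounded inside $\DdInn$. Evaluated on the Ansatz $|W|\lesssim |U|^{-8/3}$, $|X|,|Y|\lesssim |U|^{-4/3}$, the operator $\RRR^{\Inner}$ inherits strictly better decay than its argument: inspection of $\KK$ in Proposition~\ref{proposition:innerDerivation} shows that $\partial_U\KK$ gains an extra $|U|^{-1}$, while $\partial_W\KK$, $\partial_X\KK$, $\partial_Y\KK$ are each quadratic in $(W,X,Y)$ to leading order with coefficients decaying in $|U|$. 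Banach's fixed point theorem in the weighted space
\[
\XcalInn=\bigl\{(W,X,Y):\ \normSup{|U|^{8/3}W}+\normSup{|U|^{4/3}X}+\normSup{|U|^{4/3}Y}<\infty\bigr\}
\]
on $\DdInn$ then yields existence together with the stated bounds, provided $\rhoInn$ is chosen large enough.

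For the difference formula I would use that $\ZuInn$ and $\ZsInn$ lie on the same energy level of $\HH$ to eliminate $\DWInn$ in terms of $(\DXInn,\DYInn)$, mirroring the reduction \eqref{eq:defDwOut}--\eqref{eq:invariantEquationDifference2} of the outer case. The resulting two-dimensional linear system has leading matrix $\mathrm{diag}(i,-i)$ plus polynomially decaying perturbations, and a second contraction in an analogous weighted space produces a fundamental pair $\Phi_{\pm}(U)=e^{\pm iU}\bigl(e_{\pm}+\eta_{\pm}(U)\bigr)$ with $\eta_{\pm}$ vanishing at the rates prescribed by $\chi$. In the overlap $\EInn$, which lies in the lower half-plane, $|e^{-iU}|$ is exponentially small while $|e^{+iU}|$ is exponentially large; since $\DZInn$ is polynomially bounded on $\EInn$ by the first part, the coefficient of $\Phi_+$ must vanish, leaving $\DZInn(U)=\CInn\,\Phi_-(U)$ for a unique Stokes constant $\CInn\in\complexs$. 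Restoring $\DWInn$ through the Hamiltonian relation gives the $(0,0,1)^T$ shape and the announced decay rates of $\chi_1,\chi_2,\chi_3$.

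The main technical obstacle I expect is the interplay between the zero-eigenvalue direction of $\AAA^{\Inner}$ and the branch-type singularity of $\KK$ at $U=0$: the integration recovering $W$ must follow paths that stay inside $\DdInn$ while avoiding the origin, and it is precisely this direction that forces the sharper $|U|^{-8/3}$ decay (rather than a naive $|U|^{-2}$) needed to close the fixed point. A secondary but delicate point is verifying that $\CInn$ is genuinely a constant and not a slowly varying function of $U$; this relies on the uniqueness of $\Phi_-$ as the solution of the reduced linear system that decays as $\Im U\to -\infty$ inside $\EInn$, together with the Wronskian of $(\Phi_+,\Phi_-)$ being essentially constant at infinity.
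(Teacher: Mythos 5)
The paper itself does not prove this theorem: immediately after the statement it says ``This result is Theorem 2.7 of \cite{articleInner}'', so the proof lives entirely in the companion paper and there is nothing here to compare your argument against. That said, your outline — a Banach fixed point for the invariance equation \eqref{eq:invariantEquationInner} in a sectorially weighted sup-norm space, followed by an energy-level reduction to the two elliptic directions and elimination of the exponentially growing mode $e^{iU}$ on $\EInn\subset\{\Im U<0\}$ to isolate the Stokes constant $\CInn$ — is the standard scheme for inner equations of this type (as in \cite{Bal06,BalSea08}), and is almost certainly the route the companion paper takes.

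There is one concrete misstatement in your existence step. You write that $\partial_W\KK$, $\partial_X\KK$, $\partial_Y\KK$ are ``quadratic in $(W,X,Y)$ to leading order'', so that $\RRR^{\Inner}$ ``inherits strictly better decay than its argument''. That is not correct: $\JJ$ contains terms that are constant and linear in $(W,X,Y)$ (for instance $16/(81U^2)$, $-16W/(27U^{4/3})$ and $-4i(X-Y)/(3U^{2/3})$), so the $Z$-derivatives of $\KK$ have nonzero forcing at $(W,X,Y)=(0,0,0)$. Concretely, $\partial_X\KK(U,0,0,0)\sim -\tfrac{2i}{9}\,U^{-4/3}$, $\partial_W\KK(U,0,0,0)\sim -\tfrac{8}{81}\,U^{-2}$ and $\partial_U\KK(U,0,0,0)\sim -\tfrac{64}{729}\,U^{-11/3}$. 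It is exactly these forcing terms that, after one application of the Green operator, fix the exponents $8/3$ and $4/3$ in the bounds; $\RRR^{\Inner}[\varphi]$ does not decay faster than $\varphi$ in general, since $\RRR^{\Inner}[0]\neq 0$. The contraction property is a separate estimate, coming from the smallness of $D\RRR^{\Inner}$ on the weighted ball once $\rhoInn$ is taken large. With this bookkeeping corrected, the remainder of your sketch — including the role of the $W$-direction near the origin, the energy-level reduction for the difference, and the uniqueness of the decaying fundamental solution on $\EInn$ — is sound and is the expected argument.
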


This result is  Theorem 2.7 of \cite{articleInner}.

\begin{remark}\label{remark:innerComputationConjugats}
To obtain the analogous result to Theorem \ref{theorem:innerComputations} near the singularity $u=-iA$, one must perform the change of coordinates
\begin{equation*}
{V}=\frac{u+iA}{\de^2}, \qquad
\widehat{W}=\de^{\frac{4}{3}}\frac{w}{2 \al_-^2}, \qquad
\widehat{X}=\frac{x}{\de^{\frac{1}{3}} \sqrt{2} \al_- }, 
\qquad
\widehat{Y}=\frac{y}{\de^{\frac{1}{3}}\sqrt{2} {\al_-}},
\end{equation*}
where  $\al_- \in \complexs$ is $\al_- =\conj{\al_+}$ (see Theorem \ref{theorem:singularities}). 	
Then, for $V \in \conj{\DdInn}$, one can prove the existence of the corresponding solutions 
\[
\ZdInnN(V)=(\WdInnN(V),\XdInnN(V),\YdInnN(V))^T, \qquad
\text{where } \diamond = \mathrm{u, s}.
\]
Due to the real-analyticity of the problem (see Remark \ref{remark:realAnalytic}) we have that $\widehat{X}^{\diamond}(V) = \conj{Y^{\diamond}(U)}$. 
Therefore, the difference $\DZInnN = \ZuInnN - \ZsInnN$, is given asymptotically for $U \in \conj{\EInn}$ by
\begin{equation*}
%\label{result:innerDifferenceNegative}
	\DZInnN(V) = \conj{\CInn} e^{iV} \Big(
	(0,1,0)^T + \zeta(V) \Big),
\end{equation*}
%where $\conj{\CInn}$ is the conjugate of the constant $\CInn$ given in Theorem~\ref{theorem:innerComputations} and 
where $\zeta=(\zeta_1,\zeta_2,\zeta_3)^T$ satisfies 
$|(V^{\frac{7}{3}} \zeta_1(V),
V \zeta_2(V),
V^{2} \zeta_3(V)| \leq C$, for a constant $C$ independent of $\kappa$.
\end{remark}
% 
% 
% \begin{remark}
% We can obtain a numerical approximation of the constant $\CInn$.
% %
% Taking into account expression~\eqref{result:innerDifference}, for $U \in \EInn$, we have
% \begin{align*}
% 	{\CInn} = \lim_{\Im U \to -\infty} {\DYInn(U) e^{iU}}.
% \end{align*}
% %
% In \cite{articleInner} we show that $\vabs{\CInn}\approx 1.63$.
% %
% However, a more accurate approximation for $\CInn$ is not straightforward, since $\DYInn$ is exponentially small.
% %
% We expect that a rigorous computation method can be applied, see \cite{BCGS21}.
% \end{remark}

\subsubsection{Complex matching estimates} \label{subsection:matching}

We now study how well the solutions of the inner equation approximate the solutions of the original system given by by Proposition \ref{proposition:existenceComecocos} in an appropriate domain.
%
% To make this comparison, we use the parametrizations $\zuOut$ and $\zsOut$ provided . 
%
%Let us recall that Theorem~\ref{theorem:existence} provides parameterizations of the invariant manifolds, $\zuOut(u)$ and $\zsOut(u)$.
%
%In particular, we will use the parametrizations given in the intermediate result: Proposition \ref{proposition:existenceComecocos}.
%
%Nevertheless, it does not give enough information about the behavior of these manifolds near the singularities $u=\pm iA$. 
%
%To obtain the dominant behavior near the singularity $u=iA$ we use the solutions $\ZuInn(U)$ and $\ZsInn(U)$ of the inner equation given in Theorem~\ref{theorem:innerComputations}.
%
%\textcolor{red}{Analogously, for the negative singularity $u=-iA$, we use the solutions $\ZuInnN$ and $\ZsInnN$ given by Remark~\ref{remark:innerComputationConjugats}.}  
%
As in the previous section, we focus on the singularity $u=iA$, but analogous results can be proven for $u=-iA$ (see Remark \ref{remark:innerComputationConjugats}).
Let us recall that the functions $\zusOut$ are expressed in the separatrix coordinates (see \eqref{def:changeOuter}) 
while the functions $\ZusInn$ are expressed in inner coordinates (see \eqref{def:changeInner}).
%
%Therefore, in order to prove that $\ZusInn(U)$ is a good approximation for $\zusOut(u)$, we need to introduce some suitable domains, in both separatrix and inner coordinates where to compare them. 
%
%\paragraph{Matching domains in the separatrix coordinates.}
%

We first define the matching domains in separatrix coordinates and, later, we translate them to the inner coordinates. Let us consider $\betaMchA$, $\betaMchB$,
and $\gamma$  independent of $\de$ and $\kappa$, such that
\begin{equation*}
%\label{def:beta12}
0< \betaMchA < \betaOutA < \betaMchB < \frac{\pi}{2},
\qquad \text{and} \qquad
\g \in \left[\frac35 ,1\right),
\end{equation*}
with $\betaOutA$ as given in Theorem \ref{theorem:singularities}.
Then, we define $u_j \in \complexs$ $j=2,3$ (see Figure \ref{fig:matchingDomains}), as the points satisfying:
\begin{itemize}
	\item $\Im u_j = -\tan \beta_{j} \Re u_j + A - \kappa \de^2$.
	\item $\vabs{u_j- u_+} = \de^{2\gamma}$, where $u_+=i(A-\kappa\de^2)$.
	\item $\Re u_2<0$ and $\Re u_3>0$.
\end{itemize}
\begin{figure}[t] 
	\centering
	\vspace{5mm}
	\begin{overpic}[scale=0.9]{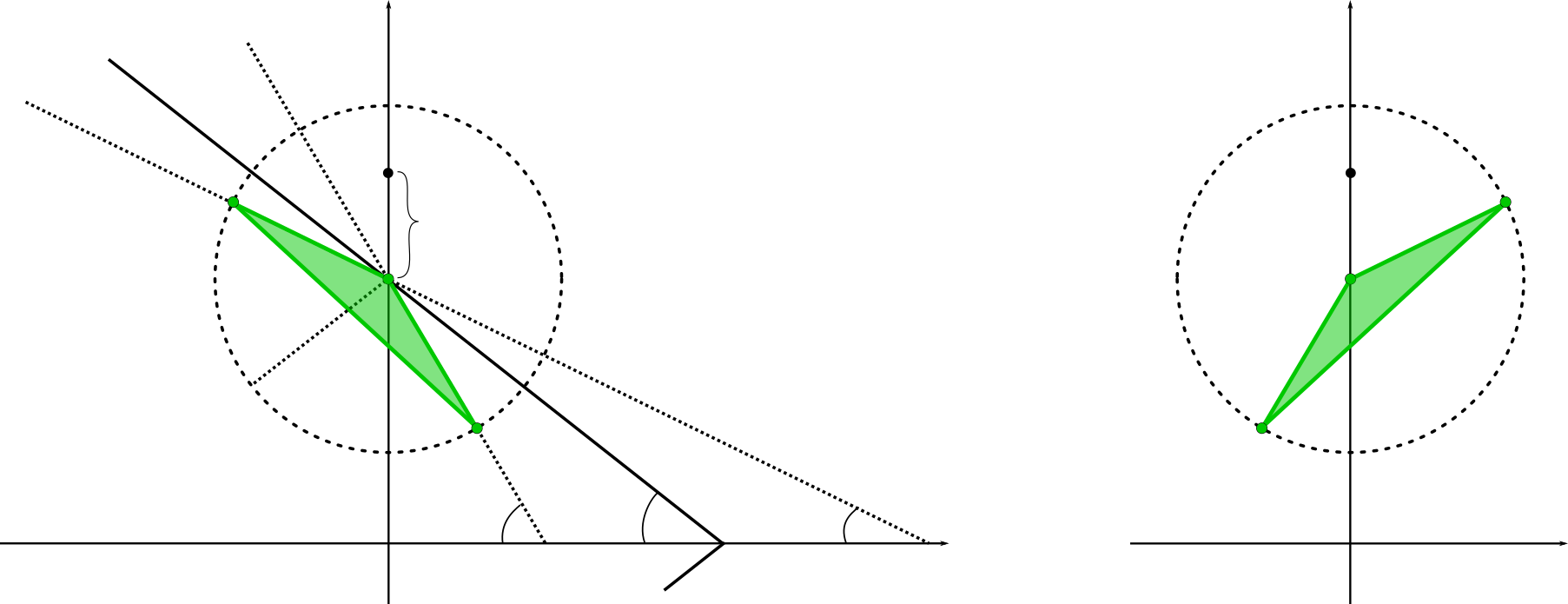}
		\put(11,8){\textcolor{myGreen}{{\footnotesize $\DuMchOut$}}}
		\put(91,7){\textcolor{myGreen}{{\footnotesize $\DsMchOut$}}}
		\put(29,4.5){{\footnotesize $\betaMchB$}}
		\put(38.5,4.5){{\footnotesize $\betaOutA$}}
		\put(50.5,4.5){{\footnotesize $\betaMchA$}}
		\put(21.5,26.5){{\footnotesize $iA$}}
		\put(83,26.5){{\footnotesize $iA$}}
		\put(27,24){{\footnotesize $\kappa \de^2$}}
		\put(16.5,16.5){{\footnotesize $\de^{2\g}$}}
		\put(11.5,24.5){\textcolor{myGreen}{{\footnotesize $u_2$}}}
		\put(31,10){\textcolor{myGreen}{{\footnotesize $u_3$}}}
		\put(26.25,20.25){\textcolor{myGreen}{{\footnotesize $u_+$}}}
		\put(22,39){{\footnotesize $\Im u$}}
		\put(84,39){{\footnotesize $\Im u$}}
		\put(61,3){{\footnotesize$\Re u$}}
		\put(101,3){{\footnotesize$\Re u$}}
		\put(97,25){\textcolor{myGreen}{{\footnotesize $-\conj{u}_2$}}}
		\put(76,10){\textcolor{myGreen}{{\footnotesize $-\conj{u}_3$}}}
		\put(83,21){\textcolor{myGreen}{{\footnotesize $u_+$}}}
	\end{overpic}
	%	\bigskip
	\caption{The matching domains $\DuMchOut$ and $\DsMchOut$ in the outer variables.}
	\label{fig:matchingDomains}
\end{figure} 
We define the matching domains in the separatrix coordinates as the triangular domains
\begin{align*}
%\label{def:matchingDomainOuter}
	\DuMchOut = 
	\that{u_+ \, u_2 \, u_3},
	\qquad
	\DsMchOut =
	\that{u_+ \, (-\conj{u_2}) \, (-\conj{u_3})}.
\end{align*}
%
%\begin{align}%\label{def:matchingDomainOuter}
%\begin{split}
%\DuMchOut = \left\{
%u \in \complexs \st \right.
%&\Im u < -\tan \betaMchA\Re u + A -\kappa \de^2, \\
%&\Im u < -\tan\betaMchB\Re u + A -\kappa \de^2, \\
%&\Im u > \Im u_1 - \tan\paren{\textstyle\frac{\betaMchA+\betaMchB}{2}} \paren{\Re u-\Re u_1} 
%\left. \right\},  \\
%\DsMchOut = \left\{
%u \in \complexs \st \right. & \left.
%- \conj{u} \in \DuMchOut \right\}.
%\end{split}
%\end{align}
%
%We note that there exists a constant $C$ independent of $\de$ such that
%\[
%C^{-1} \de^{2\g} \leq \vabs{u_j - iA} \leq C \de^{2\g},
%\hh
%\text{ for } j=1,2.
%\]
%Moreover, for all $u \in \DuMchOut \cup \DsMchOut$, we have
%\begin{equation}\label{eq:boundsDomainMchOuter}
%\kappa \de^2 \cos \betaMchA  \leq
%\vabs{u -iA} \leq
%C \de^{2\g}.
%\end{equation}
%
%
Let $\dOuter$, $\rhoOuter$ and $\kappaOuter$ be as given in Proposition \ref{proposition:existenceComecocos}. 
Then, for $\kappa\geq \kappaOuter$ and $\de>0$ small enough, the  matching domains satisfy
\begin{equation}\label{eq:domainsMatchingOuterSubset}
\DuMchOut \subset \DuOut \qquad \text{ and } \qquad
\DsMchOut \subset \DsOut,
\end{equation}
and, as a result, $\zuOut$ and $\zsOut$ are well defined in $\DuMchOut$ and $\DsMchOut$, respectively. 

%\paragraph{Matching domains in the inner coordinates.}
%
%Next, we introduce some suitable matching domains for $\ZuInn$ and $\ZsInn$.
%
%Let us recall that, by Theorem \ref{theorem:innerComputations}, the solutions $\ZuInn$ and $\ZsInn$ are well defined in $\DuInn$ and $\DsInn$ (see~\eqref{def:domainInnner}), respectively, for some  $\rhoInn\geq\kappaInner$.
%

The  matching domains in inner variables are defined by
\begin{equation}\label{def:matchingDomainInner}
\DdMchInn = \left\{
U \in \complexs \st \de^2 U + iA \in \DdMchOut
\right\}, \qquad
\text{ for } \diamond=\unstable,\stable,
\end{equation} 
with
\begin{equation}\label{def:U12}
U_j = \frac{u_j - iA}{\de^2},
\hh
\text{ for } j=2,3.
\end{equation}
%
%
%Then, taking into account~\eqref{eq:domainsMatchingOuterSubset} and Remark~\ref{remark:innerDomains} we have that
%
%Then, it is clear that there exist a constant $C$ independent of $\de$ such that
%\begin{equation}\label{eq:boundsDomainMchInner12}
%	\frac{C^{-1}}{\de^{2(1-\g)}} \leq
%	\vabs{U_j} \leq
%	\frac{C}{\de^{2(1-\g)}},
%	\hh
%	\text{ for } j=1,2,
%\end{equation}
%
Therefore, for $U \in \DdMchInn$, 
\begin{equation*}
%  \label{eq:boundsDomainMchInner}
	\kappa \cos \betaMchA  \leq 
	\vabs{U} \leq 
	\frac{C}{\de^{2(1-\g)}}.
\end{equation*}
By definition, 
\begin{equation*}
%\label{eq:domainsMatchingInnerSubset}
\DuMchInn \subset \DuInn \qquad\text{ and } \qquad
\DsMchInn \subset \DsInn,
\end{equation*}
for $\kappa \geq \kappaInner$ (see Theorem \ref{theorem:innerComputations}). Thus, $\ZusInn$ is well defined in $\DusMchInn$.

In order to compare $\zusOut(u)$ and $\ZusInn(U)$, we translate $\zusOut$ to inner coordinates
\begin{equation}\label{def:ZdMchO}
\ZdMchO(U) =
\big(\WdMchO,\XdMchO, \YdMchO\big)^T(U) =
 \paren{
	\de^{\frac43}\frac{ \wdOut}{2 \al_+^2}, \,
	\frac{\xdOut}{\de^{\frac13} \sqrt{2}\al_+}, \,
	\frac{\ydOut}{\de^{\frac13} \sqrt{2}\al_+}
}^T(\de^2 U + iA),
\end{equation}
with $\diamond=\unstable,\stable$ and $\zdOut=(\wdOut, \xdOut, \ydOut)^T$ are given in Proposition \ref{proposition:existenceComecocos}.
%
%Notice that $\ZdMchO(U)$ corresponds with $\zdOut(u)$ expressed in the inner coordinates. 
%
Therefore, by \eqref{eq:domainsMatchingOuterSubset},
$\ZdMchO$ is well defined in the  matching domain $\DdMchInn$ (which is expressed in inner variables).

Next theorem  gives estimates for $\ZusMchO-\ZusInn$.

%
%To this end, we define
%\begin{equation}\label{def:ZMchU}
%	\ZdMchU=(\WdMchU,\XdMchU,\YdMchU)
%	=\ZdMchO - \ZdInn, 
%	\qquad \text{for } \diamond=\unstable,\stable.
%\end{equation}
%such that $\ZdMchU=(\WdMchU,\XdMchU,\YdMchU)$ and with $\ZdMchO$ as defined on~\eqref{def:ZdMchO}.
%
%In the following theorem, we obtain estimates for the difference $\ZuMchU$ and $\ZsMchU$.

\begin{theorem}\label{theorem:matching}
Consider $\kappaOuter$ and $\kappaInner$ as obtained in Proposition~\ref{proposition:existenceComecocos}
and  	
Theorem \ref{theorem:innerComputations}, respectively.
Then, there exist $\g^*\in [\frac35,1)$, 
$\kappaMch \geq \max\claus{\kappaOuter,\kappaInner}$ 
and $\de_0>0$ such that, 
for  $\g \in (\g^*,1)$,
there exists $\cttMch>0$ satisfying that, for $U \in \DdMchInn$,
$\kappa\geq \kappaMch$
and 
$\de \in (0,\de_0)$,
\begin{equation*}
|U^{\frac{4}{3}}\WdMchU(U)| \leq \cttMch \de^{\frac{2}{3}(1-\g)}, \qquad
\vabs{U \h \XdMchU(U)} \leq \cttMch \de^{\frac{2}{3}(1-\g)}, \qquad
\vabs{U \h \YdMchU(U)} \leq \cttMch  \de^{\frac{2}{3}(1-\g)},
\end{equation*}
with 
$
(\WdMchU,\XdMchU,\YdMchU)^T=\ZdMchU
=\ZdMchO - \ZdInn$ and $\diamond=\unstable,\stable$.
\end{theorem}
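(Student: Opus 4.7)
The strategy is to set up a fixed-point argument for the difference $\ZdMchU = \ZdMchO - \ZdInn$ directly on the matching domain $\DdMchInn$, exploiting that the source driving this difference is the perturbation $H_1^{\Inner}$ of size $\OO(\de^{4/3})$ from Proposition~\ref{proposition:innerDerivation}. Specifically, the rescaled outer graph $\ZdMchO$ defined in \eqref{def:ZdMchO} is the transport of $\zdOut$ by $\phi_{\Inner}$ and is therefore invariant for the full inner Hamiltonian $H^{\Inner} = \HH + H_1^{\Inner}$, while $\ZdInn$ satisfies the invariance equation \eqref{eq:invariantEquationInner} for $\HH$. Subtracting, $\ZdMchU$ satisfies a linear equation of the form
\begin{equation*}
\partial_U \ZdMchU = \AAA^{\Inner} \ZdMchU + \MM(U)\ZdMchU + \NN(U),
\end{equation*}
where $\MM(U) = \int_0^1 D_Z \RRR^{\Inner}[\sigma\ZdMchO + (1-\sigma)\ZdInn](U)\, d\sigma$ is small because both $\ZdInn$ (Theorem~\ref{theorem:innerComputations}) and $\ZdMchO$ (Proposition~\ref{proposition:existenceComecocos} rescaled via \eqref{def:changeInner}) are small on $\DdMchInn$, and $\NN(U)$ collects the contribution of $H_1^{\Inner}$, which by \eqref{eq:boundsH1Inn} inherits the factor $\de^{4/3}$.

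The second step is to recast $\ZdMchU = \FF[\ZdMchU]$ by inverting $\partial_U - \AAA^{\Inner}$ with boundary conditions imposed at the far vertices $U_2, U_3$ of the matching triangle (see \eqref{def:U12}), where $|U|\sim\de^{-2(1-\g)}$. The first component $W$, associated to the zero eigenvalue of $\AAA^{\Inner}$, is integrated directly; the components $X$ and $Y$, with eigenvalues $\pm i$, are recovered via integration against the exponentials $e^{\mp iU}$. The angular condition $\betaMchA < \betaOutA < \betaMchB$ in the definition of the matching triangle is precisely what guarantees that $\Im U$ is monotone along suitable integration paths, so that these exponentials are non-increasing and the corresponding integral operators are bounded. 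The boundary values at $U_{2,3}$ are controlled by directly subtracting the bound from Proposition~\ref{proposition:existenceComecocos} rescaled via \eqref{def:ZdMchO} and the bound from Theorem~\ref{theorem:innerComputations}; at $|U|=\de^{-2(1-\g)}$ both contributions coincide in order with $\de^{2(1-\g)/3}$ in the weighted norm.

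Third, I would work in the Banach space $\XcalMch$ of analytic functions $\varphi=(\varphi_1,\varphi_2,\varphi_3)^T$ on $\DdMchInn$ equipped with
\begin{equation*}
\normMchSmall{\varphi} = \sup_{U \in \DdMchInn}\max\Bigl(|U|^{4/3}|\varphi_1(U)|,\; |U|\,|\varphi_2(U)|,\; |U|\,|\varphi_3(U)|\Bigr),
\end{equation*}
whose weights are exactly those appearing in the statement. A direct estimate of $\FF$ in this norm shows that its image of a ball of radius $C\de^{2(1-\g)/3}$ is contained in itself, provided $\kappa$ is large enough (to absorb $\MM$) and $\de$ is small enough; the two competing contributions in $\normMchSmall{\FF[0]}$ are the one coming from the boundary data at $U_{2,3}$ (order $\de^{2(1-\g)/3}$ as noted) and the one obtained by integrating the source $\NN$ along paths of length at most $\de^{-2(1-\g)}$, whose balance dictates the precise threshold $\g^* \in [3/5,1)$. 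Standard contraction then yields the unique solution of the required size.

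The main obstacle is the weighted analysis of the integration operator $(\partial_U - \AAA^{\Inner})^{-1}$ on the triangular, $\de$-dependent, non-convex matching domain. One must choose, for each $U\in\DdMchInn$, integration paths to $U_2$ or $U_3$ along which simultaneously (i) the exponentials $e^{\mp iU}$ appearing in the $X,Y$ components are non-increasing, (ii) the algebraic weights $|U|^{-4/3}$, $|U|^{-1}$ are preserved up to a uniform multiplicative constant, and (iii) the path remains inside $\DdMchInn$. Verifying these three geometric conditions simultaneously, and tracking the resulting powers of $|U|$ and of $\de$ through every term of $\RRR^{\Inner}$, is the delicate core of the proof; once these estimates are secured, the rest reduces to a routine application of the Banach fixed-point theorem analogous to those used in Propositions~\ref{proposition:existenceInfty} and~\ref{proposition:existenceComecocos}.
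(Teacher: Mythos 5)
Your proposal follows essentially the same route as the paper: it derives the same linear non-homogeneous ODE for $\ZdMchU$ with coefficient matrix $\BB(U)=\int_0^1 D_Z\RRR^\Inn[(1-s)\ZdInn+s\ZdMchO]\,ds$ and source $\RRR^{\mch}[\ZdMchO]$, inverts $\partial_U-\AAA^\Inner$ with the same choice of base points $U_2,U_3$ via the right inverse $\GG^\inn$ of Lemma~\ref{lemma:operadorEEmch}, works in the same weighted space $\XcalMchTotal$ with weights $|U|^{4/3},|U|,|U|$, absorbs $\BB$ by taking $\kappa$ large (Lemma~\ref{lemma:operatorTTmch}), and balances the boundary term $C^{\mch}e^{\AAA^\inn U}$ against the source $\GG^\inn\circ\RRR^{\mch}[\ZdMchO]$ to determine $\g^*$, exactly as in Section~\ref{section:proofG-matching}. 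The proposal is correct and agrees with the paper's argument.
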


This theorem is proven in Section \ref{section:proofG-matching}.

\subsection{The asymptotic formula for the difference}
\label{section:difference} 

We look for an asymptotic expression for the difference 
\[
\dzHat=(\dxOut,\dyOut)^T
=(\xuOut-\xsOut,\yuOut-\ysOut)^T,
\] 
where $(\xuOut,\yuOut)$ and $(\xsOut,\ysOut)$ are components of the perturbed invariant graphs given in Theorem \ref{theorem:existence}.
Recall that, by \eqref{eq:invariantEquationDifference2}, $\dzHat$ satisfies
\begin{equation}\label{eq:invariantEquationDifference3}
	\partial_u {\dzHat}(u) = 
	{\AAA}^{\spl}(u) \dzHat(u)
	+ 
	{\BB}^{\spl}(u) \dzHat(u),
\end{equation} 
with $\AAA^{\spl}$ and $\BB^{\spl}$ as given in \eqref{def:operatorsDifferenceAABB}.
%
%and, by the energy level formula in \eqref{eq:defDwOut}, the expression for $\dwOut$ is immediate from $(\dxOut,\dyOut)$.
%
%The point of writing the system for $\dzHat$ as~\eqref{eq:invariantEquationDifference2} is that, as we shall see, we split it into a dominant part, the one corresponding to the matrix $\AAA^{\spl}$, and a small perturbation,
%which corresponds to the the matrix ${\BB}^{\spl}$.
%%
%This allow us to find an asymptotic expression for $\dzHat$, with a dominant term $\dzHat_0$ given by the solution of the system
%
The equation is split as a dominant part, given by the matrix $\AAA^{\spl}$ and a small perturbation corresponding to the the matrix ${\BB}^{\spl}$.
Therefore, it makes sense to look for  $\dzHat$ as $\dzHat = \dzHatO + \textit{h.o.t}$ with a suitable dominant term  $\dzHatO=(\dxOutO,\dyOutO)^T$  satisfying
\begin{equation}\label{eq:invariantEquationDifferenceHomogeneous}
\partial_u {\dzHat_0}(u) = 
\AAA^{\spl}(u)\dzHatO(u).
\end{equation}
A fundamental matrix of \eqref{eq:invariantEquationDifferenceHomogeneous}, for $u \in \DBoomerang$, is given by
\begin{equation}\label{def:fundamentalMatrixDifference}
	\MM(u) = \begin{pmatrix}
		m_x(u) & 0 \\
		0 & m_y(u)
	\end{pmatrix},
\end{equation}
with 
\begin{equation}\label{def:mxmyalxaly}
	\begin{aligned}
		m_x(u) &= e^{\frac{i}{\de^2}u} \bb_x(u), 
% 		\hh \hh 
		&\bb_x(u)=\exp\paren{\int_{u_*}^u \wt{\BB}^{\spl}_{2,2}(s) d s}, \\
		m_y(u)& = e^{-\frac{i}{\de^2}u} \bb_y(u), 
% 		\hh \hh 
		&\bb_y(u)=\exp\paren{\int_{u_*}^u \wt{\BB}^{\spl}_{3,3}(s) d s},
	\end{aligned} 
\end{equation}
and a fixed $u_* \in \DBoomerang \cap \reals$. Then, $\Delta\Phi_0$ must be of form 
\begin{equation}\label{def:dzHatO:0}
	\dzHatO(u)= \begin{pmatrix}
		\Delta x_0(u)\\[0.5em]
		\Delta y_0(u)
	\end{pmatrix}=\begin{pmatrix}
		c_x^0 m_x(u)\\[0.5em]
		c_y^0 m_y(u)
	\end{pmatrix},
\end{equation}
for suitable constants $c_x^0,c_y^0\in\mathbb{C}$ which we now determine.

By Theorems~\ref{theorem:innerComputations} and~\ref{theorem:matching} and using the inner change of coordinates in \eqref{def:changeInner},
we have a  good approximation of $\dyOut (u)$
near the singularity $u=iA$ given by
\[
\dyOut(u) \approx 
\sqrt{2} \al_+ \de^{\frac{1}{3}} \DYInn\paren{\frac{u-iA}{\de^2}}.
\]
Then, taking  $u=u_+=i(A-\kappa\de^2)$,  we have that
\begin{equation*}
%\label{eq:heuristicsDifferenceA}
\dyOut(u_+) \approx\dyOut_0(u_+) \approx
\sqrt{2} \al_+ \de^{\frac{1}{3}} \DYInn\paren{\frac{u_+-iA}{\de^2}} =
\sqrt{2} \al_+ \de^{\frac{1}{3}} e^{-\kappa} \CInn (1+\chi_3(-i\kappa)).
\end{equation*}
% and the initial condition $c_y$ satisfies
% \begin{equation}\label{eq:heuristicsDifferenceB}
% c_y = \dyOut(u_+) e^{\frac{i}{\de^2}u_+} 
% \approx 
% \de^{\frac{1}{3}} e^{-\frac{A}{\de^2}} \CInn
% \sqrt{2} \al_+ \beta_y^{-1}(u_+) (1+\chi_3(-i\kappa)).
% \end{equation}
Then, using that $\dyOut(u_+) \approx\dyOut_0(u_+)=c_y^0 m_y(u_+)$,
% (analogously for $\Delta x$ at $u=u_-$), 
and proceeding analogously for the component $\dxOut$ at the point $u_-=-i(A-\kappa\de^2)$ (see Remark~\ref{remark:innerComputationConjugats}),
%
% Then, using that $\dyOut(u_+) \approx\dyOut_0(u_+)=c_y^0 m_y(u_+)$,
% (analogously for $\Delta x$ at $u=u_-$), 
we take 
\begin{equation}\label{def:dzHatO}
% 	\dzHatO(u)= \begin{pmatrix}
% 		c_x^0 m_x(u)\\[0.5em]
% 		c_y^0 m_y(u)
% 	\end{pmatrix},
% 	\qquad \text{where} \quad 
% 	\left\{
% 	\begin{array}{l}
		c_x^0 = \de^{\frac{1}{3}} e^{-\frac{A}{\de^2}}
		\conj{\CInn} \,  \sqrt{2} \al_-  \bb^{-1}_x(u_-)\qquad \text{and}\qquad
% 		\\[0.5em]
		c_y^0 =	\de^{\frac{1}{3}} e^{-\frac{A}{\de^2}}
		{\CInn} \sqrt{2} \al_+  \bb^{-1}_y(u_+).
% 	\end{array} \right.
\end{equation}
To prove Theorem \ref{theorem:mainTheoremPoincare}, we check that $\dzHatO(u)$ is the leading term of  $\dzHat(u)$, for $u \in \reals \cap \DBoomerang$, by estimating  the remainder $\dzHatU=\dzHat-\dzHatO$.
%
%\begin{align}\label{def:dzHatU}
%	\dzHatU = 
%	%(\dxOut_1,\dyOut_1)^T = 
%	\dzHat - \dzHatO.
%\end{align}

% % The rest of this section is devoted to prove Theorem \ref{theorem:mainTheoremPoincare}. 
% 
% Throughout the rest of the document, we denote the components of all the functions and operators by a numerical sub-index $f=(f_1,f_2,..,f_N)$, unless stated otherwise. 
%
In order to simplify the notation, throughout the rest of the document, we denote by $C$ any positive constant independent of $\de$ and $\kappa$ to state estimates.

\subsubsection{End of the proof of Theorem \ref{theorem:mainTheoremPoincare}}
We look for $\dzHatU$ as the unique solution of an integral equation.
% give an expression for the remainder $\dzHatU=\dzHat-\dzHatO$.
%In other words, we prove suitable estimates for $\dzHatU=\dzHat-\dzHatO$, with $\dzHatO$ as given in \eqref{def:dzHatO} and $\dzHat=(\dxOut, \dyOut)$ (see \eqref{def:dzOut}).
%
Since $\dzHat$ satisfies~\eqref{eq:invariantEquationDifference3}, by the variations of constants formula
\begin{align}\label{eq:dzOutDiff}
	\dzHat(u) = 
	\begin{pmatrix}
		c_x m_x(u)\\
		c_y m_y(u)
	\end{pmatrix}
	+
	\begin{pmatrix}
		\displaystyle m_x(u)
		\int_{u_-}^{u} m_x^{-1}(s) \,
		\pi_1 \paren{{\BB}^{\spl}(s) \dzHat(s)} ds \\
		\displaystyle m_y(u) 
		\int_{u_+}^{u} m_y^{-1}(s) \, 
		\pi_2 \paren{{\BB}^{\spl}(s) \dzHat(s)} ds
	\end{pmatrix},
\end{align}
where $\MM(u)$ is the fundamental matrix \eqref{def:fundamentalMatrixDifference}, $s$ belongs to some integration path in $\DBoomerang$ and $c_x$ and $c_y$ are defined as
\begin{align}\label{def:cxcyDifference}
c_x = \dxOut(u_-) m_x^{-1}(u_-), \qquad
c_y = \dyOut(u_+) m_y^{-1}(u_+).
\end{align}
%with $m_x, m_y$ given  in~\eqref{def:mxmyalxaly} and ${\BB}^{\spl}$ in~\eqref{def:operatorsDifferenceAABB}. 
For $k_1, k_2 \in \complexs$, we define 
\begin{equation}\label{def:operatorIIdiff}
\II[k_1,k_2](u) =
\big(k_1 \, m_x(u), k_2 \, m_y(u)\big)^T,
%\II[k_1,k_2](u) = \begin{pmatrix}
%	k_1 \h m_x(u)\\
%	k_2 \h m_y(u)
%\end{pmatrix} ,
\end{equation}
and the operator
\begin{align}\label{def:operatorEEdiff}
	\EE[\phiA](u) = 
%	\begin{pmatrix}
%		\displaystyle
%		\EE_1[\phiA] \\[0.5em]
%		\EE_2[\phiA]
%	\end{pmatrix}	=
	\begin{pmatrix}
		\displaystyle
		m_x(u) \int_{u_-}^{u} m_x^{-1}(s) \,
		\pi_1 \paren{{\BB}^{\spl}(s) \phiA(s)} ds \\
		\displaystyle
		m_y(u) \int_{u_+}^{u} m_y^{-1}(s) \,
		\pi_2 \paren{{\BB}^{\spl}(s) \phiA(s)} ds
	\end{pmatrix}.
\end{align}
Then, with this notation, $\dzHatO = \II[c_x^0,c_y^0]$ (see \eqref{def:dzHatO}) and equation~\eqref{eq:dzOutDiff} is equivalent to $\dzHat = \II[c_x,c_y]+\EE[\dzHat]$. 
Since $\EE$ is a linear operator, $\dzHatU = 
\dzHat-\dzHatO$ satisfies
\begin{equation}\label{eq:dzOutU}
	\dzHatU(u) = 
	\II[c_x-c_x^0, c_y-c_y^0](u) + 
	\EE[\dzHatO](u) + \EE[\dzHatU](u).
\end{equation}

To obtain estimates for $\dzHatU$, we first prove that $\Id-\EE$ is invertible in the Banach space $\XSplTotal= \XSpl \times \XSpl$, with 
%
% We consider 
\begin{align*}
	\XSpl = \left\{ \phiA: \DBoomerang \to \complexs \st \normSpl{\phiA}
	= \sup_{u\in \DBoomerang} \vabs{e^{\frac{A-\vabs{\Im u}}{\de^2}} 
	\phiA(u)}
	<+\infty \right\},
\end{align*}
% with $d \in \paren{\frac14,\frac12}$ fixed independent of $\de$,
% and the product space  
endowed with the norm
\begin{align}\label{def:normexp}
	\normSplTotal{\phiA} = 
	\normSpl{\phiA_1} + \normSpl{\phiA_2},
\end{align}
for $\phiA=(\phiA_1,\phiA_2)$.
Therefore, to prove Theorem \ref{theorem:mainTheoremPoincare} it is enough to see that $\dzHatU$  satisfies that $\normSplTotal{\dzHatU} \leq C\de^{\frac13}\vabs{\log \de}^{-1}$.
%\subsubsection{The inverse operator}

First, we state a lemma  whose proof is postponed to Appendix \ref{subappendix:proofH-technicalFirst}.

\begin{lemma}\label{lemma:boundsBspl}
Let $\kappaBoomerang, \de_0$ be the constants given in Theorem \ref{theorem:existence}.
Then, there exists a constant $C>0$ such that, for $\kappa\geq\kappaBoomerang$, $\de \in(0,\de_0)$ and  $u \in \DBoomerang$, 
the function $\Ups$ in \eqref{def:operatorPPdifference}, 
the matrix $\BB^{\spl}$ in \eqref{def:operatorsDifferenceAABB} and the functions $\bb_x$, $\bb_y$ in \eqref{def:mxmyalxaly} satisfy
for $\kappa\geq\kappaBoomerang$, $\de \in(0,\de_0)$ and $u \in \DBoomerang$,
\begin{align}%\label{proof:boundsPPdifference}
	&\vabs{\Ups_1(u)-1}\leq \frac{C}{\kappa^2}, \qquad
	\vabs{\Ups_2(u)}\leq  \frac{C\de}{\vabs{u^2+A^2}^{\frac{4}{3}}}, \qquad
	\vabs{\Ups_3(u)}\leq  \frac{C\de}{\vabs{u^2+A^2}^{\frac{4}{3}}}, \label{proof:boundsUpsilon}
	 %\qquad	\text{for } j=2,3.
	\\[0.4em]
{C}^{-1} &\leq \vabs{\bb_*(u)} \leq C, 
\quad *=x,y, 
\quad \text{and} \quad
|{\BB}^{\spl}_{i,j}(u)| \leq 
\frac{C \, \de^2}{\vabs{u^2 + A^2}^{2}}, 
\quad i,j \in \claus{1,2}. \nonumber
\end{align}
\end{lemma}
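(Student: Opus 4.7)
The plan is to analyze each quantity directly from the explicit structure of $H^{\out}$, combining the bounds on $\zuOut, \zsOut$ from Theorem~\ref{theorem:existence} with the singularity structure of $\s(u)=(\la_h(u),\La_h(u))$ at $u=\pm iA$ given by Theorem~\ref{theorem:singularities}. Using the energy conservation $V(\la_h(u)) - \tfrac{3}{2}\La_h^2(u) = -\tfrac12$ along the separatrix, a short computation yields
\[
H_{\pend} \circ \phi_{\out}(u,w) = -\tfrac12 + w - \frac{w^2}{6\La_h^2(u)},
\]
so combining this with the oscillator contribution $H_{\osc}\circ(\phi_{\equi}\circ\phi_{\out})$ and the power series expansion of $H_1$ in $(L-1,\eta,\xi)$ (see Lemma~4.1 in \cite{articleInner} and Appendix~\ref{appendix:proofH-technical}), the $z$-dependent part of $H_1^{\out}$ has leading terms $-w^2/(6\La_h^2(u))$, $\de(\Ltresx y + \Ltresy x)$, and higher order monomials weighted by powers of $\de$ and $\La_h^{-k}$.

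For $\Ups$: since $\Ups_1(u) - 1 = \int_0^1 \partial_w H_1^{\out}(u,\phi_\sigma)\,d\sigma$ along $\phi_\sigma = \sigma\zuOut + (1-\sigma)\zsOut$, the dominant contribution comes from $-w/(3\La_h^2(u))$. Using $|w^\diamond(u)| \le C\de^2/|u^2+A^2| + C\de^4/|u^2+A^2|^{8/3}$, the asymptotics $|\La_h(u)|^{-2} \le C|u\mp iA|^{2/3}$ near $\pm iA$ (and bounded elsewhere in $\DBoomerang$), and the restriction $|u^2+A^2| \ge C\kappa\de^2$ in $\DBoomerang$, the worst case occurs near $\pm iA$, where the second summand for $|w^\diamond|$ dominates and gives exactly $C/\kappa^2$. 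For $\Ups_2$ and $\Ups_3$, the leading contributions are $y/\de^2$ and $x/\de^2$, so the bound $C\de/|u^2+A^2|^{4/3}$ follows at once from the estimates in Theorem~\ref{theorem:existence}; the remaining terms of $\partial_x H_1^{\out}$ and $\partial_y H_1^{\out}$ are of the same or smaller order.

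For $\BB^{\spl}$: its entries are built in \eqref{def:operatorsDifferenceAABB} from components of $\wt{\BB}^{\spl}=\int_0^1 D_z\RRR^{\out}[\phi_\sigma]\,d\sigma$ and from the ratios $\Ups_2/\Ups_1, \Ups_3/\Ups_1$. Each $\wt{\BB}^{\spl}_{i,j}$ is, up to the bounded factor $(1+g^{\out})^{-1}$ (which is bounded away from zero by the previous step since $g^{\out}$ coincides with $\partial_w H_1^{\out}$), a mixed second derivative of $H_1^{\out}$ evaluated along $\phi_\sigma$. A componentwise calculation keeps track of the $\de$-weight introduced by each $z$-derivative (with the convention that $x,y$-derivatives acting on $H_{\osc}$ bring $\de^{-2}$, compensated in the end by the $\de^3/|u^2+A^2|^{4/3}$-size of $x^\diamond,y^\diamond$) together with the singular factors $\La_h^{-k}$ from the pendulum part, and produces the uniform bound $|\BB^{\spl}_{i,j}(u)| \le C\de^2/|u^2+A^2|^2$.

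Finally, for $\bb_x,\bb_y$, the same componentwise check gives $|\wt{\BB}^{\spl}_{2,2}|,|\wt{\BB}^{\spl}_{3,3}| \le C\de^2/|u^2+A^2|^2$; integrating along a piecewise linear path inside $\DBoomerang$ from $u_*$ to $u$ and using that $\int \de^2/|s^2+A^2|^2\,|ds| \le C/\kappa$ uniformly (the worst contribution coming from boundary arcs near $\pm iA$, where the length $\sim \kappa\de^2$ is balanced against the denominator $\sim (2A\kappa\de^2)^2$) yields $e^{-C/\kappa} \le |\bb_{*}(u)| \le e^{C/\kappa}$, hence bounded above and below uniformly in $\de$ and in $\kappa\ge\kappaBoomerang$. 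The main technical obstacle is precisely this componentwise bookkeeping: one must check, for every term arising after two $z$-derivatives of $H_1^{\out}$ (including those coming from the series expansion of $H_1^{\Poi}$ after the nontrivial substitution $\phi_{\equi}\circ\phi_{\out}$), that the powers of $\de$, the singular factors $\La_h^{-k}$, and the distance $|u^2+A^2|$ combine to give the homogeneous decay $\de^2/|u^2+A^2|^2$ uniformly throughout $\DBoomerang$, and not only on the real axis where the estimates would be immediate.
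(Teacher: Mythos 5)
Your overall strategy matches the paper's: reduce everything to bounds on derivatives of $H_1^{\out}$ along $z_\sigma = \sigma\zuOut + (1-\sigma)\zsOut$ (i.e.\ the content of Lemma~\ref{lemma:computationsRRROut}) together with the decay rates from Theorem~\ref{theorem:existence} and the singularity structure of $\La_h$ near $\pm iA$. The bounds you give for $\Ups$ and for $\BB^{\spl}$ (correctly built from the off-diagonal $\wt{\BB}^{\spl}$ entries and the $\Ups$ ratios) are right.

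However, there is a genuine error in the final paragraph: you claim that ``the same componentwise check'' yields
$|\wt{\BB}^{\spl}_{2,2}|, |\wt{\BB}^{\spl}_{3,3}| \leq C\de^2/|u^2+A^2|^2$, and you base the $C/\kappa$ estimate of $\log\bb_*$ on this. That bound is false. The diagonal entries $\wt{\BB}^{\spl}_{2,2} = \int_0^1 \partial_x \RRR^{\out}_2[z_\sigma]\,d\sigma$ and $\wt{\BB}^{\spl}_{3,3}=\int_0^1\partial_y\RRR^{\out}_3[z_\sigma]\,d\sigma$ carry the contribution of the term $-ix\,g^{\out}/\de^2$ in $\RRR^{\out}_2$ (and its mirror in $\RRR^{\out}_3$); differentiating this in $x$ produces $-i g^{\out}/\de^2$, which by the estimate $\normOut{g^{\out}}_{1,-\frac23}\le C\de^2$ from Lemma~\ref{lemma:computationsRRROut} is only $\OO(|u^2+A^2|^{-1/3})$, \emph{not} $\OO(\de^2/|u^2+A^2|^2)$. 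The correct bound is
\[
|\wt{\BB}^{\spl}_{j,j}(u)|\leq \frac{C}{|u^2+A^2|^{1/3}}+\frac{C\de^2}{|u^2+A^2|^2},\qquad j=2,3.
\]
This is precisely the structural reason the difference equation~\eqref{eq:invariantEquationDifference2} is split so that $\wt{\BB}^{\spl}_{2,2}$ and $\wt{\BB}^{\spl}_{3,3}$ are absorbed into the fundamental solution (via $\bb_x,\bb_y$) rather than being treated as part of the small perturbation $\BB^{\spl}$: they are \emph{not} small. Consequently your $O(1/\kappa)$ estimate for $\log\bb_*$ fails; what is actually true is that the singularity $|s^2+A^2|^{-1/3}$ is integrable along any path of $\OO(1)$ length in $\DBoomerang$, giving only $|\log\bb_*(u)|\le C$ uniformly in $\de$ and $\kappa$ --- which is all the lemma claims, but is a weaker statement obtained through a different mechanism than the one you invoke.
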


In the next lemma we obtain estimates for the linear operator $\EE$ (see \eqref{def:operatorEEdiff}).

\begin{lemma}\label{lemma:operatorEEdiff}
% Consider the operator $\EE$ as defined in \eqref{def:operatorEEdiff} and
%
Let $\kappaBoomerang, \de_0$ be the constants as given in Theorem \ref{theorem:existence}.
There exists $\cttDiffA>0$ such that for $\de\in(0,\de_0)$ and 
$\kappa\geq \kappaBoomerang$,
the operator $\EE: \XSplTotal \to \XSplTotal$ in \eqref{def:operatorEEdiff}  is well defined and satisfies that, for $\phiA \in \XSplTotal$,
\begin{align*}
\normSplTotal{\EE[\phiA]} \leq \frac{\cttDiffA}{\kappa} 
\normSplTotal{\phiA}.
\end{align*}
In particular, $\Id - \EE$ is invertible and 
\begin{align*}
\normSplTotal{(\Id - \EE)^{-1}[\phiA]} \leq 2\normSplTotal{\phiA}.
\end{align*}

\end{lemma}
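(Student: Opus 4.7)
The plan is to establish that $\EE$ has operator norm $O(1/\kappa)$ on $\XSplTotal$ and then obtain the invertibility of $\Id - \EE$ via a Neumann series argument. Fix $\phiA = (\phiA_1, \phiA_2) \in \XSplTotal$ and $u \in \DBoomerang$. For each such $u$, I would choose an integration path $\gamma_u$ from $u_-$ to $u$ for the first component (respectively from $u_+$ to $u$ for the second), lying inside $\DBoomerang$ and enjoying the monotonicity property that $\Im s$ is monotone along $\gamma_u$ --- increasing from $-(A-\kappa\de^2)$ up to $\Im u$ for the first integral, and decreasing from $A-\kappa\de^2$ down to $\Im u$ for the second. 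Because $\DBoomerang$ has ``wide'' arms where $\Re u$ is bounded away from $0$, such monotone paths can be assembled from three pieces: a vertical segment from $u_\mp$, a short horizontal traverse through the wide region of $\DBoomerang$, and a final vertical segment approaching $u$.

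With such a path, the factor $|m_x(u) m_x^{-1}(s)| = e^{-(\Im u - \Im s)/\de^2} |\bb_x(u) \bb_x^{-1}(s)|$ (and the analogous one for $m_y$) is bounded by an absolute constant. Combining this with the bounds from Lemma~\ref{lemma:boundsBspl} on $\BB^{\spl}$ and $\bb_{x,y}$, the pointwise estimate $|\phiA_j(s)| \leq \normSpl{\phiA_j} e^{-(A-|\Im s|)/\de^2}$, and multiplying through by the weight $e^{(A-|\Im u|)/\de^2}$, all exponential factors combine into $e^{2(\max(\Im s,\, 0) - \max(\Im u,\, 0))/\de^2}$, which is at most $1$ precisely because of the monotonicity. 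This reduces everything to the line integral
\begin{equation*}
	\int_{\gamma_u} \frac{\de^2}{|s^2+A^2|^2} \, |ds|.
\end{equation*}
Bounding this line integral uniformly in $u$ and $\de$ is the main obstacle. I expect the bulk to come from neighborhoods of the singularities $\pm iA$: parametrizing $\gamma_u$ locally by its distance $r$ to the singularity and using that $\gamma_u \subset \DBoomerang$ keeps $r \geq \kappa\de^2$, the local contribution is at most $\int_{\kappa\de^2}^{c_0} \de^2 / (4A^2 r^2) \, dr = O(1/\kappa)$ for some order-one $c_0$. Away from both singularities $|s^2+A^2|$ is bounded below, so the integrand is $O(\de^2)$ and contributes at most $O(\de^2)$. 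Summing these yields $\normSplTotal{\EE[\phiA]} \leq \cttDiffA \normSplTotal{\phiA}/\kappa$ for some $\cttDiffA>0$ independent of $\de$ and $\kappa$.

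Finally, once this bound is in place, enlarging $\kappaBoomerang$ if necessary so that $\cttDiffA / \kappaBoomerang \leq 1/2$, the Neumann series $(\Id - \EE)^{-1} = \sum_{n \geq 0} \EE^n$ converges in operator norm with bound $(1 - \cttDiffA/\kappa)^{-1} \leq 2$, giving the claimed estimate $\normSplTotal{(\Id-\EE)^{-1}[\phiA]} \leq 2 \normSplTotal{\phiA}$.
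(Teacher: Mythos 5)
Your overall strategy coincides with the paper's: choose an integration path along which $\Im s$ is monotone so that the exponential weight from $m_{x,y}^{\pm 1}$, the pointwise bound on $\phiA$, and the $e^{(A-|\Im u|)/\de^2}$ normalization collapse to a factor $\leq 1$; reduce to the unweighted line integral of $\de^2/|s^2+A^2|^2$, whose singular contribution is $O(1/\kappa)$ because $\DBoomerang$ keeps distance $\kappa\de^2$ from $\pm iA$; and finish with a Neumann series. This is exactly what the paper does, so at the level of ideas the proposal is correct.

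The specific path you propose, however, does not always stay in $\DBoomerang$. For $u$ with $|\Re u|$ small and $\Im u$ of opposite sign to the endpoint $u_{\pm}$ (so the path must cross $\Im s = 0$), the ``final vertical segment approaching $u$'' has constant real part $\Re u$ and therefore passes through the excluded wedge $\{|\Im s| < \dBoomerang A - \tan\betaOutB\Re s\}$: the point $u$ itself satisfies $|\Im u| > \dBoomerang A - \tan\betaOutB\Re u$, but the segment passes through $|\Im s|=0 < \dBoomerang A - \tan\betaOutB\Re u$ whenever $\Re u < \dBoomerang A/\tan\betaOutB$. The paper avoids this by taking straight-line segments rather than axis-parallel ones: $u_{\pm}\to u_0\to u$ for a fixed $u_0\in\reals\cap\DBoomerang$ when $\Im u$ has the opposite sign, and a single segment $u_{\pm}\to u$ otherwise. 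These are still monotone in $\Im$, and the inequality defining the wedge, being affine in the path parameter, holds along each segment once it holds at the endpoints. So your paths need to be replaced by the paper's (or similar) piecewise-linear ones; the rest goes through. One small point to make explicit: your split $O(1/\kappa) + O(\de^2)$ equals $O(1/\kappa)$ only because the domain constraint $A - \kappa\de^2 > 0$ forces $\de^2 < A/\kappa$; this deserves a sentence.
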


\begin{proof}
Let us consider $\EE=(\EE_1,\EE_2)^T$, $\phiA \in \XSplTotal$ and $u \in \DBoomerang$.
We only prove the estimate for $\EE_2[\phiA](u)$.
The corresponding one for $\EE_1[\phiA](u)$ follows analogously.

By the definition of $m_y$ in~\eqref{def:mxmyalxaly} and Lemma \ref{lemma:boundsBspl}, we have that
\begin{align*}
\vabs{\EE_2[\phiA](u)} 
%&= \vabs{m_y(u) (u-u_+) \int_{0}^{1} m_y^{-1}(\rho_t) 
%	\pi_2 \paren{{\BB}^{\spl}(\rho_t) \phiA(\rho_t)} dt} \\
%
&\leq
C \de^2 e^{\frac{\Im u}{\de^2}} 
\vabs{\int_{u_+}^{u} 
e^{-\frac{\Im s}{\de^2}} 
\frac{
\vabs{\phiA_1(s)}+\vabs{\phiA_2(s)}
}{\vabs{s^2+A^2}^2} d s } 
\\
&\leq
C \de^2 e^{\frac{\Im u - A}{\de^2}} 
\normSplTotal{\phiA}
\vabs{\int_{u_+}^{u} 
e^{\frac{\vabs{\Im s}-\Im s}{\de^2}} 
\frac{d s}{\vabs{s^2+A^2}^2}}.
\end{align*}
%Since $\phiA \in \XSplTotal$, we have that
%$
%\vabs{\phiA_1(u)} + \vabs{\phiA_2(u)} \leq
%e^{-\frac{A-\vabs{\Im u}}{\de^2}}\normSplTotal{\phiA}.
%$
%
Let us consider the case $\Im u < 0$. Then, for a fixed $u_0 \in \reals \cap \DBoomerang$, we define the integration path $\rho_t \subset \DBoomerang$ as
\begin{align*}
	\rho_t =	
	\begin{cases}
		u_+ + 2t(u_0-u_+)
		&\quad \text{for } t \in (0,\frac12), \\
		u_0 + (2t-1)(u-u_0)
		&\quad \text{for }  t \in [\frac12,1).
	\end{cases}	
\end{align*}
Then,
\begin{align*}%\label{proof:equationGGGmch}
\vabs{\EE_2[\phiA](u)} 
%&\leq
%C \de^2  e^{\frac{\Im u-A}{\de^2}} \vabs{u-u_+ }
%\normSplTotal{\phiA}
% \vabs{\int_{0}^{1}  
%e^{\frac{\vabs{\Im \rho_t}-\Im \rho_t}{\de^2}} \frac{dt}{\vabs{\rho_t^2+A^2}^2} } \\
%
&\leq 
C \de^2  e^{-\frac{\vabs{\Im u}+A}{\de^2}} 
\normSplTotal{\phiA}
\vabs{\int_{0}^{\frac12} \frac{dt}{\vabs{\rho_t-iA}^2}
+ \int_{\frac12}^{1} 
\frac{e^{\frac{2\vabs{\Im \rho_t}}{\de^2}}}{\vabs{\rho_t+iA}^2} dt}  
%
%&= C \de^2  e^{\frac{\vabs{\Im u}-A}{\de^2}}
%\normSplTotal{\phiA}
%\boxClaus{
%	\frac{1}{\vabs{u_+-iA}} - 
%	\frac{1}{\vabs{u-iA}}}
%
\leq \frac{C}{\kappa} 
e^{\frac{\vabs{\Im u}-A}{\de^2}}
\normSplTotal{\phiA}.
\end{align*}
If $\Im u \geq 0$, we consider the integration path $\rho_t = u_+ + t(u-u_+)$ for $t\in[0,1]$ and we obtain
\begin{align*}
\vabs{\EE_2[\phiA](u)} &\leq 
C \de^2  e^{\frac{\vabs{\Im u}-A}{\de^2}} 
\normSplTotal{\phiA}
\vabs{\int_{0}^{1} \frac{\vabs{u-u_+}}{\vabs{\rho_t-iA}^2} dt} 
%
%&= C \de^2  e^{\frac{\vabs{\Im u}-A}{\de^2}}
%\normSplTotal{\phiA}
%\boxClaus{
%\frac{1}{\vabs{u_+-iA}} - \frac{1}{\vabs{u-iA}}}
%%
\leq \frac{C}{\kappa} 
e^{\frac{\vabs{\Im u}-A}{\de^2}}
\normSplTotal{\phiA}.
\end{align*}
Therefore,
$
\normSpl{\EE_2[\phiA]} \leq \frac{C}{\kappa}\normSplTotal{\phiA}.
$
\end{proof}

Notice that, by  \eqref{eq:dzOutU}, $\dzHatU$ satisfies
\begin{equation}\label{eq:dzOutUInverse}
(\Id - \EE )\dzHatU(u) = \II[c_x-c_x^0, c_y-c_y^0](u) + \EE[\dzHatO](u).
\end{equation}
Since, by Lemma \ref{lemma:operatorEEdiff}, $\Id-\EE$ is invertible in $\XSplTotal$ we have an explicit formula for $\dzHatU$.
Nevertheless, we still need good estimates for  the right hand side with respect to the norm \eqref{def:normexp}.

%
%As a result, by \eqref{eq:dzOutU}, $\dzHatU$ satisfies
%\begin{equation}\label{eq:dzOutUInverse}
%\dzHatU(u) = (\Id - \EE )^{-1}\paren{\II[c_x-c_x^0, c_y-c_y^0](u) + \EE[\dzHatO](u)}.
%\end{equation}

% \subsubsection{End of the proof of Theorem \ref{theorem:mainTheoremPoincare}}
% 
% In this section we finish the prove of Theorem \ref{theorem:mainTheoremPoincare} by obtaining suitable estimates for $\dzHatU$ by means of  equation \eqref{eq:dzOutUInverse}.

%In this section we prove Theorem \ref{theorem:differenceBanach} and finish the prove of Theorem \ref{theorem:difference}.

\begin{lemma}\label{lemma:IconstantsDiff}
There exist $\kappa_*, \de_0, \cttDiffB>0$ such that, for
$\kappa=\kappa_*\vabs{\log \de}$ and $\de\in(0,\de_0)$, 
%
% there exists constant $>0$ independent of $\de$ such that
\begin{align*}
\normSplTotal{\II[c_x-c_x^0,c_y-c_y^0]}
\leq \frac{\cttDiffB \, \de^{\frac{1}{3}}}{\vabs{\log \de}}\qquad \text{and}\qquad
\normSplTotal{\EE[\dzHatO](u)}\leq \frac{\cttDiffB \, \de^{\frac{1}{3}}}{\vabs{\log \de}},
\end{align*}
%\begin{align*}
%\vabs{c_x-c_x^0}
%\leq \frac{\cttDiffB \, \de^{\frac{1}{3}}}{\vabs{\log \de}} e^{-\frac{A}{\de^2}},
%%\vabs{\beta_x^{-1}(u_-)}
%\qquad
%\vabs{c_y-c_y^0} 
%\leq \frac{\cttDiffB \, \de^{\frac{1}{3}}}{\vabs{\log \de}} 
%e^{-\frac{A}{\de^2}},
%%\vabs{\beta_y^{-1}(u_+)},
%\end{align*}
with  $\II$,  $(c_x^0,c_y^0)$, $(c_x,c_y)$, $\EE$ and $\dzHatO$ defined  in \eqref{def:operatorIIdiff}, \eqref{def:dzHatO}, \eqref{def:cxcyDifference}, \eqref{def:operatorEEdiff}
and
\eqref{def:dzHatO:0}, respectively.

\end{lemma}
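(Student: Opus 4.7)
The plan is to prove the two estimates separately. The bound on $\normSplTotal{\EE[\dzHatO]}$ is the easier one: it follows almost directly from Lemma \ref{lemma:operatorEEdiff} once $\normSplTotal{\dzHatO}$ is controlled. The bound on $\normSplTotal{\II[c_x-c_x^0,c_y-c_y^0]}$ is the genuinely new content and is obtained by combining the matching estimates of Theorem \ref{theorem:matching} with the inner asymptotics of Theorem \ref{theorem:innerComputations} (and its analogue at $-iA$, Remark \ref{remark:innerComputationConjugats}) evaluated at the matching points $u_\pm=\pm i(A-\kappa\de^2)$.

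For the $\EE[\dzHatO]$ bound, I would first observe that in $\DBoomerang$ we have $|m_x(u)|=e^{-\Im u/\de^2}|\bb_x(u)|$ and $|m_y(u)|=e^{\Im u/\de^2}|\bb_y(u)|$ with $|\bb_x|,|\bb_y|\le C$ by Lemma \ref{lemma:boundsBspl}. A short case analysis on the sign of $\Im u$ inside the weight $e^{(A-|\Im u|)/\de^2}$ of the norm \eqref{def:normexp} yields $\normSpl{m_x},\normSpl{m_y}\le C e^{A/\de^2}$. Since $|c_x^0|,|c_y^0|\le C\de^{1/3}e^{-A/\de^2}$ by the definition \eqref{def:dzHatO} (using also the lower bound $|\bb_*^{-1}(u_\pm)|\le C$ from Lemma \ref{lemma:boundsBspl}), this gives $\normSplTotal{\dzHatO}\le C\de^{1/3}$. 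Applying Lemma \ref{lemma:operatorEEdiff} yields $\normSplTotal{\EE[\dzHatO]}\le C\de^{1/3}/\kappa$, and with the choice $\kappa=\kappa_*|\log\de|$ this is bounded by $\cttDiffB\de^{1/3}/|\log\de|$.

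For the $\II$ bound I would focus on $c_y-c_y^0$; the bound for $c_x-c_x^0$ is entirely analogous using the singularity at $-iA$. Starting from $c_y=\dyOut(u_+)m_y^{-1}(u_+)$, note that $u_+$ corresponds in inner coordinates \eqref{def:changeInner} to $U_+=-i\kappa\in\DuMchInn\cap\DsMchInn$. The matching Theorem \ref{theorem:matching}, in the translated form \eqref{def:ZdMchO}, gives $\yusOut(u_+)=\sqrt{2}\al_+\de^{1/3}[\YusInn(U_+)+\OO(\de^{2(1-\g)/3}/\kappa)]$; subtracting the two branches yields $\dyOut(u_+)=\sqrt{2}\al_+\de^{1/3}[\DYInn(U_+)+\OO(\de^{2(1-\g)/3}/\kappa)]$. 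Theorem \ref{theorem:innerComputations} then gives $\DYInn(-i\kappa)=\CInn e^{-\kappa}(1+\chi_3(-i\kappa))$ with $|\chi_3(-i\kappa)|\le C/\kappa$. Combining this with $m_y^{-1}(u_+)=\bb_y^{-1}(u_+)e^{-A/\de^2}e^{\kappa}$ and comparing to \eqref{def:dzHatO}, the leading terms cancel exactly and I obtain
\[
|c_y-c_y^0|\le \frac{C\de^{1/3}e^{-A/\de^2}}{\kappa}+\frac{C\de^{1/3+2(1-\g)/3}e^{\kappa-A/\de^2}}{\kappa}.
\]
Using the analogous analysis at $-iA$ (via Remark \ref{remark:innerComputationConjugats}) gives the same bound for $|c_x-c_x^0|$. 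Multiplying by $\normSpl{m_x},\normSpl{m_y}\le Ce^{A/\de^2}$ yields $\normSplTotal{\II[c_x-c_x^0,c_y-c_y^0]}\le C\de^{1/3}/\kappa+C\de^{1/3+2(1-\g)/3}e^\kappa/\kappa$.

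The main obstacle is the competition between the exponential factor $e^{\kappa}$ coming from $m_y^{-1}(u_+)$ and the matching error $\de^{2(1-\g)/3}$ from Theorem \ref{theorem:matching}. The key observation is that with $\kappa=\kappa_*|\log\de|$ one has $e^\kappa=\de^{-\kappa_*}$, so the second term becomes $C\de^{1/3+2(1-\g)/3-\kappa_*}/(\kappa_*|\log\de|)$; one must choose $\kappa_*\in(0,2(1-\g)/3)$ so that this is dominated by $\de^{1/3}/|\log\de|$ for small $\de$. The first term is then exactly $C\de^{1/3}/(\kappa_*|\log\de|)=\OO(\de^{1/3}/|\log\de|)$, and taking $\cttDiffB$ to be the maximum of the constants arising in the two estimates completes the argument.
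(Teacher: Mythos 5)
Your proposal is correct and follows essentially the same route as the paper's proof: bound $\normSpl{m_x},\normSpl{m_y}\le Ce^{A/\de^2}$ via Lemma \ref{lemma:boundsBspl}, reduce the $\II$ estimate to $|c_y-c_y^0|$ (and symmetrically $|c_x-c_x^0|$), express $\dyOut(u_+)$ via the matching Theorem \ref{theorem:matching} and the inner asymptotics of Theorem \ref{theorem:innerComputations} at $U_+=-i\kappa$ to cancel the leading term with $\dyOutO(u_+)$, and finally absorb the residual $\de^{2(1-\g)/3}e^{\kappa}$ factor by choosing $\kappa=\kappa_*|\log\de|$ with $\kappa_*\in(0,\tfrac{2}{3}(1-\g))$. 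The bound on $\normSplTotal{\EE[\dzHatO]}$ via $\normSplTotal{\dzHatO}\le C\de^{1/3}$ and Lemma \ref{lemma:operatorEEdiff} also matches the paper exactly.
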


\begin{proof}
By the definition of the function $\II$,
\[
\normSplTotal{\II[c_x-c_x^0,c_y-c_y^0]} = \vabs{c_x-c_x^0}\normSpl{m_x} + \vabs{c_y-c_y^0}\normSpl{m_y},
\]
where $m_x$ and $m_y$ are given in \eqref{def:mxmyalxaly}.
Then, by Lemma \ref{lemma:boundsBspl},
\begin{align*}
	\normSpl{m_x} = 
	e^{\frac{A}{\de^2}} 
	\sup_{u \in \DBoomerang} 
	\boxClaus{e^{-\frac{\Im u+\vabs{\Im u}}{\de^2}}  \vabs{\bb_x(u)}}
	\leq C e^{\frac{A}{\de^2}},
	\qquad
	\normSpl{m_y} \leq 
	%
	%\sup_{u \in E_{\kappa,d}(u_0)} \vabs{e^{\frac{A-\vabs{\Im u}}{\de^2}}
	%	e^{-\frac{i}{\de^2}u} \al_y(u) } 
	%
	%e^{\frac{A}{\de^2}} 
	%\sup_{u \in \DBoomerang} 
	%\boxClaus{e^{\frac{\Im u-\vabs{\Im u}}{\de^2}}  \vabs{\beta_y(u)}}
	%
	C e^{\frac{A}{\de^2}},
\end{align*}
and, as a result,
\begin{equation}\label{proof:operatorIIdiff}
\normSplTotal{\II[c_x-c_x^0,c_y-c_y^0]} \leq
C e^{\frac{A}{\de^2}}
\paren{|c_x-c_x^0| + |c_y-c_y^0|}.
\end{equation}
We now obtain an estimate for $|c_y-c_y^0|$.
The estimate for $|c_x-c_x^0|$ follows analogously.

By the definition of $m_y$ (see \eqref{def:mxmyalxaly}), one has
\begin{equation}\label{proof:cyMenyscyO}
\begin{split}	
	\vabs{c_y - c_y^0} 
%	&= \vabs{m^{-1}_y(u_+)}
%	\vabs{\dyOut(u_+) - \dyOutO(u_+)} \\
	&= 	e^{-\frac{A}{\de^2}+\kappa} 
	\vabs{\bb_y^{-1}(u_+)}
	\vabs{\dyOut(u_+) - \dyOutO(u_+)}.
\end{split}
\end{equation}
%
%An analogous result holds for $\vabs{c_x-c_x^0}$.
%%
%Therefore, by Lemma \ref{lemma:boundsBspl},
%\begin{equation}\label{proof:eqOperatorII}
%	\normSplTotal{\II[c_x-c_x^0, c_y-c_y^0]} \leq
%	C e^{\kappa} \Big(
%	\vabs{\dxOut(u_-) - \dxOutO(u_-)} +
%	\vabs{\dyOut(u_+) - \dyOutO(u_+)}
%	\Big).
%\end{equation}
%
%We study first the component $\vabs{\dyOut(u_+) - \dyOutO(u_+)}$.
%
Let us denote $\DYInnC = \YuMchO -\YsMchO$ where $\YusMchO$ are given on~\eqref{def:ZdMchO}. 
Recall that $\YusMchO=\YusInn + \YusMchU$ where $\YusInn$ is the third component of $\ZusInn$, the solutions of the inner equation (see Theorems \ref{theorem:innerComputations} and \ref{theorem:matching}).
We write,
\begin{align*}
\dyOut(u_+) &= \sqrt{2} \al_+ \de^{\frac{1}{3}} 
\DYInnC\paren{\frac{u_+ - iA}{\de^2}} 
%= \sqrt{2} \al_+ \de^{\frac{1}{3}} 
%\DYInnC\paren{-i\kappa} 
%\\
=\sqrt{2} \al_+ \de^{\frac{1}{3}} \left[
\DYInn \paren{-i\kappa} +
\YuMchU \paren{-i\kappa} - \YsMchU \paren{-i\kappa}
\right].
\end{align*}
%
%Recall that $\YusMchO=\YusInn + \YusMchU$ (see \ref{def:ZMchU}) and
%$\DYInn = \YuInn-\YsInn$ (see \eqref{result:innerDifference}).
%%
%Then
%\begin{equation*}\label{proof:operatorIIa}
%	\dyOut(u_+) = 
%	\sqrt{2} \al_+ \de^{\frac{1}{3}} \left[
%	\DYInn \paren{-i\kappa} +
%	\YuMchU \paren{-i\kappa} - \YsMchU \paren{-i\kappa}
%	\right].
%\end{equation*}
%
By the definition of $\dyOutO$ in \eqref{def:dzHatO:0} (see also \eqref{def:dzHatO}), we have
$
	\dyOutO(u_+) = \sqrt{2}\al_+ \de^{\frac{1}{3}}
	\CInn e^{-\kappa} .
$
Then, by  \eqref{proof:cyMenyscyO} and Lemma~\ref{lemma:boundsBspl},
\begin{align*}
	\vabs{c_y-c_y^0} \leq
	C \de^{\frac{1}{3}} e^{-\frac{A}{\de^2}+\kappa} 
	\Big[
	\vabs{\DYInn \paren{-i\kappa} - \CInn e^{-\kappa}}
	+ 
	%\sqrt{2} \vabs{\al_+} \de^{\frac{1}{3}} 
	\vabs{\YuMchU \paren{-i\kappa}} + 
	\vabs{\YsMchU \paren{-i\kappa}}
	\Big],
\end{align*}
and, applying Theorems~\ref{theorem:innerComputations} and
\ref{theorem:matching}, we obtain
\begin{equation*}%\label{proof:estimatesII}
\begin{split}	
\vabs{c_y-c_y^0}
&\leq
C \de^{\frac{1}{3}} 
e^{-\frac{A}{\de^2}+\kappa} 
\boxClaus{
\vabs{\chi_3(-i\kappa)  e^{-\kappa} }
+
\frac{C}{\kappa} \de^{\frac23(1-\g)} } 
\leq 
\frac{C}{\kappa} \de^{\frac13} e^{-\frac{A}{\de^2}} 
\paren{1 + 
\de^{\frac23(1-\g)} e^{\kappa}},
\end{split}
\end{equation*}
where $\g \in (\g^*,1)$ with $\g^*\in [\frac{3}{5},1)$ given in Theorem \ref{theorem:matching}.
%
%
%Lastly, applying the estimates in \eqref{proof:estimatesII} to equation \eqref{proof:eqOperatorII} we obtain
%\[
%\normSplTotal{\II[c_x-c_x^0, c_y-c_y^0]} \leq
%\frac{C}{\kappa} \de^{\frac13} \paren{
%	1 + e^{\kappa} \de^{\frac23(1-\g)}} .
%\]
%
Taking $\kappa=\kappa_* \vabs{\log \de}$ with $0<\kappa_*<\frac{2}{3}(1-\g)$, we obtain
\begin{equation*}%\label{proof:estimatesII}
\begin{split}	
\vabs{c_y-c_y^0}
&\leq 
\frac{C\de^{\frac13} }{\vabs{\log \de}} 
e^{-\frac{A}{\de^2}} 
%\vabs{\beta_y^{-1}(u_+)}
\paren{1 +	\de^{\frac23(1-\g)-\kappa_*}}
\leq
\frac{C \de^{\frac13} }{\vabs{\log \de}}
e^{-\frac{A}{\de^2}}.
\end{split}
\end{equation*}
This bound and  \eqref{proof:operatorIIdiff} prove the first estimate of the lemma.

For the second estimate, it only remains to bound  $\dzHatO$ and apply Lemma~\ref{lemma:operatorEEdiff}.
Indeed, by the definition of $\dzHatO$  in \eqref{def:dzHatO}, Lemma \ref{lemma:boundsBspl} and \eqref{proof:operatorIIdiff}, we have that
\begin{align*}
%\label{proof:dzOutObound}
\normSplTotal{\dzHatO} 
=
\normSplTotal{\II[c_x^0,c_y^0]} 
\leq
C e^{\frac{A}{\de^2}} \paren{\vabs{c_x^0}+\vabs{c_y^0}}
\leq
C \de^{\frac13}.
%\big(\vabs{\beta_x(u_-)} + 
%\vabs{\beta_y(u_+)} \big),
\end{align*}	
%
%and, by Lemma \ref{lemma:boundsBspl}, $\normSplTotal{\dzHatO}<+\infty$.
%	
Since $\kappa=\kappa_* \vabs{\log \de}$ with  $0<\kappa_*<\frac{2}{3}(1-\g)$,
 Lemma \ref{lemma:operatorEEdiff} implies $\normSplTotal{\EE[\dzHatO]} \leq \frac{C  \de^{\frac13}}{\vabs{\log \de}}$. 
\end{proof}

With this lemma, we can give sharp estimates for $\dzHatU$ by using equation
\eqref{eq:dzOutUInverse}. 
Indeed, since the right hand side of this equation belongs to  $\XSplTotal$, by Lemma \ref{lemma:operatorEEdiff},
\[
\dzHatU(u) = (\Id - \EE )^{-1}\left( \II[c_x-c_x^0, c_y-c_y^0](u) + \EE[\dzHatO](u)\right).
\]
Then, Lemmas \ref{lemma:operatorEEdiff} and \ref{lemma:IconstantsDiff}	imply
\begin{align}\label{def:fitaDeltaphi1}
\normSplTotal{\dzHatU}
% &\leq 
% %
% \frac{\cttDiffB \, \de^{\frac{1}{3}}}{\vabs{\log \de}}
%  +
% \frac{\cttDiffA \normSplTotal{\dzHatO}}{\vabs{\log \de}}
% %
\leq \frac{C \de^{\frac{1}{3}}}{\vabs{\log \de}}.
%\big(\vabs{\beta_x(u_-)} + 
%\vabs{\beta_y(u_+)} \big).
\end{align}
% \end{proof}
%Then, using the bound of the matching error given in Theorem~\ref{theorem:matching} and that $\wt{\BB}^{\spl}$ is small, we prove that $\dzHatO$ is the dominant part of $\dzHat$.
%%
%Indeed, in the following Theorem we obtain estimates for
%\begin{align}\label{def:dzHatU}
%	\dzHatU = (\dxOut_1,\dyOut_1)^T = \dzHat - \dzHatO.
%\end{align}
%
%\begin{theorem}\label{theorem:difference}
%	Let constant $d \in (\frac14,\frac12)$ be as given in Theorem \ref{theorem:existence} and $\dzHatU$ in \eqref{def:dzHatU}.
%	%	
%	Then, there exists $\de_0>0$ and $\kappa_*>0$ such that,
%	%
%	for $\de \in (0,\de_0)$ and
%	$\kappa=\kappa_* \vabs{\log\de}$,
%	%
%	there exists a real constant $\cttDiff>0$ independent of $\de$ such that, 
%	%
%	for $u \in \DBoomerang$,
%	\begin{align*}
%		\vabs{\dxOut_1(u)} 
%		\leq  
%		\cttDiff \,
%		e^{\frac{\vabs{\Im u}-A}{\de^2}}
%		\frac{ \de^{\frac{1}{3}} }{\vabs{\log\de}}
%		,
%		\qquad
%		\vabs{\dyOut_1(u) } 
%		\leq  
%		\cttDiff \,
%		e^{\frac{\vabs{\Im u}-A}{\de^2}}
%		\frac{\de^{\frac{1}{3}}}{\vabs{\log\de}}.
%	\end{align*}
%\end{theorem}
%
%This theorem is proved in Section \ref{section:proofF-Difference}.
To prove Theorem \ref{theorem:mainTheoremPoincare}, it only remains to analyze  $\bb_x(u_-)$ and $\bb_y(u_+)$.

\begin{lemma}\label{lemma:boundsBonsBeta}
Let $\kappa_*$ be as given in Lemma \ref{lemma:IconstantsDiff}.
Then, there exists $\de_0>0$ such that, for $\de \in (0,\de_0)$ and $\kappa=\kappa_*\vabs{\log \de}$, the  functions $\bb_{x}, \bb_y$ defined in \eqref{def:mxmyalxaly} satisfy
	\begin{align*}
		\bb_x^{-1}(u_-) &= e^{-\frac{4i}9(\pi-\la_h(u_*))}
		\paren{1+\OO\paren{\frac1{\vabs{\log \de}}}},
		\\	
		\bb_y^{-1}(u_+) &= e^{\frac{4i}9(\pi-\la_h(u_*))}
		\paren{1+\OO\paren{\frac1{\vabs{\log \de}}}},
	\end{align*}
	where  $u_{\pm}=\pm i(A-\kappa\de^2)$.
\end{lemma}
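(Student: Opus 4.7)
By definition $\bb_x^{-1}(u_-) = \exp\!\paren{-\int_{u_*}^{u_-} \wt{\BB}^{\spl}_{2,2}(s)\,ds}$ and $\bb_y^{-1}(u_+) = \exp\!\paren{-\int_{u_*}^{u_+} \wt{\BB}^{\spl}_{3,3}(s)\,ds}$, integrating along convenient paths contained in $\DBoomerang$. The plan is to extract from each integrand a closed-form main term equal to $\pm (4i/9)\dot\la_h(s)$, up to contributions that, after integration and under the scaling $\kappa=\kappa_* \vabs{\log\de}$, are of order $O(1/\vabs{\log\de})$.

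The natural decomposition is
\[
\wt{\BB}^{\spl}_{j,j}(u) = \bigl(D_z \RRR^{\out}[0]\bigr)_{j,j}(u) + R_j(u),\qquad j=2,3,
\]
with $R_j(u)=\int_0^1\bigl(D_z\RRR^{\out}[\sigma \zuOut+(1-\sigma)\zsOut]-D_z\RRR^{\out}[0]\bigr)_{j,j}(u)\,d\sigma$. A Taylor expansion of the rational operator $\RRR^{\out}$ (see \eqref{def:operatorRRROuter}), combined with the outer bounds on $\zuOut,\zsOut$ of Theorem~\ref{theorem:existence}, is expected to yield the pointwise estimate $\vabs{R_j(u)} \leq C\de^2/\vabs{u^2+A^2}^2$, which is precisely the form of the bound on $\BB^{\spl}_{i,j}$ in Lemma~\ref{lemma:boundsBspl}. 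Choosing a path that runs from $u_*$ along the real axis to $\Re u_\pm$ and then vertically towards $u_\pm$, the dominant portion of the integral comes from the segment near $\mp iA$ and is
\[
\int_{\kappa\de^2}^{A}\frac{C\de^2}{A^2\,\tau^2}\,d\tau \;=\; O\!\paren{\frac{1}{\kappa}} \;=\; O\!\paren{\frac{1}{\vabs{\log\de}}},
\]
while the horizontal segment produces a subdominant $O(\de^2)$ contribution.

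For the main term $T_j:=(D_z\RRR^{\out}[0])_{j,j}$, a direct computation using $f^{\out}=(-\partial_u H_1^{\out},i\partial_y H_1^{\out},-i\partial_x H_1^{\out})^T$ and $g^{\out}=\partial_w H_1^{\out}$ yields
\[
T_2(u) = \frac{i\partial_x\partial_y H_1^{\out}(u,0) - (i/\de^2)\partial_w H_1^{\out}(u,0)}{1+\partial_w H_1^{\out}(u,0)} - \frac{i\partial_y H_1^{\out}(u,0)\,\partial_x\partial_w H_1^{\out}(u,0)}{(1+\partial_w H_1^{\out}(u,0))^2},
\]
and an analogous expression with opposite sign for $T_3$. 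Plugging in the series expansion of $H_1^{\out}$ induced by the chain $\phi_{\Poi}\circ\phi_{\sca}\circ\phi_{\equi}\circ\phi_{\out}$ (the same expansion underlying Lemma~\ref{lemma:seriesExpansionPoincare} and Appendix~\ref{appendix:proofH-technical}), and using $H_{\pend}(\la_h,\La_h)=-1/2$ on the separatrix, one shows after simplification that
\[
T_2(u) = \frac{4i}{9}\dot\la_h(u) + \widetilde T_2(u),\qquad T_3(u)=-\frac{4i}{9}\dot\la_h(u)+\widetilde T_3(u),
\]
with $\widetilde T_j$ sharing the same $\vabs{u^2+A^2}^{-2}$ singularity structure as $R_j$, and thus integrating to $O(1/\vabs{\log\de})$. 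Integrating the total-derivative piece gives $\pm(4i/9)(\la_h(u_\pm)-\la_h(u_*))$; since Theorem~\ref{theorem:singularities} provides $\la_h(u_\pm)=\pi+O((\kappa\de^2)^{2/3})=\pi+O(\vabs{\log\de}^{2/3}\de^{4/3})$, we land on $\pm(4i/9)(\pi-\la_h(u_*))$ up to corrections much smaller than $1/\vabs{\log\de}$. Exponentiating and collecting the two error sources produces the stated asymptotics.

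The main obstacle in this program is the identification of the $\pm(4i/9)\dot\la_h$ piece inside $T_j$. Step by step this is a direct algebraic expansion, but the computation must track the singular factors $1/\La_h(u)$ introduced by $\phi_{\out}$ — which behave like $(u\mp iA)^{1/3}$ near the two singularities — and must verify that the delicate cancellations producing a total derivative of $\la_h$ (rather than, say, of $\La_h$ or $\La_h\la_h$, which would spoil the $O(1/\vabs{\log\de})$ error) indeed occur. Once this structural identification is secured, the rest of the proof is a careful choice of path in $\DBoomerang$ and routine path-integral estimates of the kind already used in Lemma~\ref{lemma:operatorEEdiff}.
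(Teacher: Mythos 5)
Your decomposition $\wt{\BB}^{\spl}_{j,j}=\bigl(D_z\RRR^{\out}[0]\bigr)_{j,j}+R_j$ breaks down at the first step. The diagonal entry $\wt{\BB}^{\spl}_{3,3}$ contains, via $\partial_y\RRR^{\out}_3$, the term $\frac{i}{\de^2}g^{\out}(u,\cdot)$, and expanding $g^{\out}=\partial_wH_1^{\out}$ around $z=0$ gives
\[
\frac{i}{\de^2}\bigl(g^{\out}(u,z_\sigma)-g^{\out}(u,0)\bigr)
\;\approx\;\frac{i}{\de^2}\,\partial_w g^{\out}(u,0)\,w_\sigma
\;\approx\;-\frac{i\,w_\sigma}{3\de^2\La_h^2(u)} .
\]
Using the outer bound $|w_\sigma|\lesssim \de^2/|u^2+A^2|$ together with $\La_h(u)\sim (u-iA)^{-1/3}$, this piece of $R_3$ is of order $(u-iA)^{-1/3}$, \emph{the same order as the main term} $\La_h(u)$, and definitely not $O\paren{\de^2/|u^2+A^2|^2}$. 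So the claimed pointwise bound on $R_j$ is false. (You seem to be reading the estimate $|\BB^{\spl}_{i,j}|\leq C\de^2/|u^2+A^2|^2$ of Lemma~\ref{lemma:boundsBspl} as applying to $\wt{\BB}^{\spl}_{j,j}$; it does not. The off-diagonal matrix $\BB^{\spl}$ and the diagonal entries $\wt{\BB}^{\spl}_{2,2},\wt{\BB}^{\spl}_{3,3}$ are different objects, and the latter carry an unremovable $O(|u^2+A^2|^{-1/3})$ contribution — see \eqref{proof:boundsBsplTilde}.)

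The missing ingredient is the energy relation. The paper keeps the full term $-\frac{i(\wuOut+\wsOut)}{6\de^2\La_h^2}$ and then \emph{eliminates} $\wuOut+\wsOut$ using $H^{\out}(u,\zuOut;\de)=H^{\out}(u,\zsOut;\de)=\de^4 K(\de)$, which expresses $\wdOut$ in terms of $M_{P,S}(u,0,0,0;\de)$ up to controlled error. After this substitution, the contributions recombine: the $-2i\La_h$ coming from $\partial_w M_R(u,0)$ is corrected by a $+\frac{2i}{3}\La_h$ coming from the substituted $w$-term, producing the $-\frac{4i}{3}\La_h$ leading order, while the remaining singular combination
$\partial_w M_{P,S}(u,0)+\tfrac{M_{P,S}(u,0)}{3\La_h^2}$ is separately shown to be $O(\de^4/|u^2+A^2|^2)$ using the explicit Poincar\'e-series structure of $M_P$. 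None of this cancellation is visible in your $T_j=\bigl(D_z\RRR^{\out}[0]\bigr)_{j,j}$ alone: the coefficient $-\frac{4i}{3}$ (rather than $-2i$) is an artifact of passing the solution's $w$-component through the energy constraint, not of a pure Taylor expansion of $\RRR^{\out}$ at the origin. You flagged the identification of the $\pm\frac{4i}{9}\dot\la_h$ piece as the main obstacle, but characterized it as ``a direct algebraic expansion''; it is not — without invoking energy conservation the computation does not close.

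The surrounding elements of your plan are fine: integrating along an L-shaped path inside $\DBoomerang$ (shifted off $u=0$, which is excluded from the domain), bounding the $O(\de^2/|u^2+A^2|^2)$ piece by $O(1/\kappa)=O(1/|\log\de|)$, and replacing $\la_h(u_\pm)$ by $\pi+O(\kappa^{2/3}\de^{4/3})$ via Theorem~\ref{theorem:singularities}. But these steps only become applicable after the correct decomposition is in place.
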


This lemma is proven in Appendix \ref{subappendix:proofH-technicalSecond}.

% \begin{proof}[End of proof of Theorem \ref{theorem:mainTheoremPoincare}]

Let $u_* \in \DBoomerang \cap \reals$.
We compute the first order of $\dzHatO(u_*)=(\dxOutO(u_*),\dyOutO(u_*))^T$.
Since, by Theorem \ref{theorem:singularities}, $(\al_+)^3=(\al_-)^3=\frac12$, and applying Lemma \ref{lemma:boundsBonsBeta} and \eqref{def:dzHatO}, we obtain
\begin{align*}
\vabs{\D x_0(u_*)} =
\vabs{\D y_0(u_*)} =
\sqrt[6]{2}
\vabs{\CInn}
\de^{\frac13} e^{-\frac{A}{\de^2}} 
\paren{1+\OO\paren{\frac1{\vabs{\log \de}}}}.
\end{align*}
% where $\CInn$ is given in Theorem \ref{theorem:innerComputations}.
%
Moreover, by \eqref{def:fitaDeltaphi1}, 
\begin{align*}
\vabs{\D x(u_*) - \D x_0(u_*)},
\vabs{\D y(u_*) - \D y_0(u_*)} 
\leq 
 \frac{C \de^{\frac13} e^{-\frac{A}{\de^2}}}{\vabs{\log \de}}.
\end{align*}
Finally, notice that the section $u=u_* \in \DBoomerang \cap \reals$  translates to $\la= \la^* :=\la_h(u_*)$ (see \eqref{def:changeOuter}).
Moreover, since $\lad_h=-3\La_h$ (see \eqref{eq:separatrixParametrization}), one deduces that $\La_h(u)>0$ for $u>0$.
Therefore, by the change of coordinates \eqref{def:changeOuter}, Theorem \ref{theorem:existence} and taking $\de$ small enough,
\begin{align*}
	\La_*^{\diamond} = \La_h(u_*) - \frac{\wdOut(u_*)}{3\La_h(u_*)}
	= 
	\La_h(u_*) + \OO(\de^2) > 0,
	\qquad
	\text{with }\diamond=\unstable,\stable, 
\end{align*}
and, therefore using formula \eqref{eq:defDwOut} for $\dwOut$ and Lemma \ref{lemma:boundsBspl}, we obtain that
\[
\vabs{\La_*^{\unstable}-\La_*^{\stable}} \leq 
C \vabs{\dwOut(u_*)} 	
\leq 
C \de \vabs{\dxOut(u_*)} +
C \de \vabs{\dyOut(u_*)}
\leq 
C \de^{\frac43} e^{-\frac{A}{\de^2}}.
\]

% Notice that, $\DBoomerang \cap \reals \subset (0,+\infty)$. 
% %
% Therefore, since $\la_h(0)=\la_0$ and $\lim_{\Re u\to+\infty} \la_h(u)=0$, we see that $(\la_1,\la_2) \subset (0,\la_0)$.
% \end{proof}

%% PART TECNICA

\section{The perturbed invariant manifolds}
\label{section:proofH-existence}

In this section, we prove Theorem \ref{theorem:existence} by following the 
scheme detailed in Sections \ref{subsection:outerBasic} and 
\ref{subsection:outerExtension}.
% ,
% %
% that is by proving Propositions \ref{proposition:existenceInfty},
% \ref{proposition:existenceComecocos},
% \ref{proposition:changeuOut},
% \ref{proposition:existenceFlow},
% \ref{proposition:changevOut}.

%we develop the proof of Theorem \ref{theorem:existence}, given in Section \ref{section:outer}. 
%%
%The proof of the theorem is split between Proposition \ref{proposition:existenceInfty} to Proposition \ref{proposition:changevOut}, see Section \ref{subsection:proofH-existenceInfinite} to Section \ref{subsection:proofH-changevOut} for their proofs.
%%
%Lastly, in Section \ref{subsection:proofH-technical}, are proven the technical results that were postponed in the previous sections.

Throughout this section and the following ones, we denote the components of all 
the functions and operators by a numerical sub-index $f=(f_1,f_2,f_3)^T$, unless 
stated otherwise. 
%
%In order to simplify the notation, we denote by $C$ any positive constant  independent of $\de$ and $\kappa$.}
% to state all the bounds.

\subsection{The invariant manifolds in the infinity domain}
\label{subsection:proofH-existenceInfinite}

The first step is to prove 
Proposition~\ref{proposition:existenceInfty}, which deals with the proof of the existence of 
parameterizations $\zuOut$ and $\zsOut$ satisfying the invariance equation 
\eqref{eq:invariantEquationOuter} and the asymptotic conditions 
\eqref{eq:asymptoticConditionsOuter}.
We only consider the $-\unstable-$ case, being the $-\stable-$ case analogous.

Consider the invariance equation \eqref{eq:invariantEquationOuter},
% \begin{equation}\label{eq:invariantEquationInftyA}
$\partial_u \zuOut = 	\AAA^{\out} \zuOut + \RRR^{\out}[\zuOut]$,
% \end{equation}
with $\AAA^{\out}$ and $\RRR^{\out}$ defined in \eqref{def:matrixAAAOuter} and \eqref{def:operatorRRROuter}, respectively.
%
%Introducing the linear operator
%\begin{equation}\label{def:operatorL}
%	\LL \phiA = (\partial_u-\AAA^{\out})\phiA ,
%\end{equation}
This equation can be written as
\begin{equation}\label{eq:invariantEquationInftyB}
	\LL \zuOut = \RRR^{\out}[\zuOut],
	\qquad \text{with} \qquad
	\LL \phiA = (\partial_u-\AAA^{\out})\phiA.
\end{equation}
In order to obtain a fixed point equation from 
\eqref{eq:invariantEquationInftyB}, we look for a left 
inverse of $\LL$ in a suitable Banach space.
To this end, for a fixed $\rhoInfty>0$ and a  given $\al \in \reals$, we introduce
\[
\XcalInfty_{\al} = \bigg\{ \varphi:\DuInfty  \to \complexs \st \varphi \text{ real-analytic, } 
\normInfty{\varphi}_{\al} := \sup_{u \in \DuInfty} 
|e^{-\al u} \varphi(u)| < \infty \bigg\},
\]
%\[
%\normInfty{\varphi}_{\al} = \sup_{u \in \DuInfty} 
%|e^{-\al u} \varphi(u)|,
%\]
and the product space
$
\XcalInftyTotal= 
\XcalInfty_{2\vap} \times 
\XcalInfty_{\vap} \times 
\XcalInfty_{\vap},
$
with $\nu = \sqrt\frac{21}{8}$
endowed with the weighted product norm
\[
\normInftyTotal{\varphi}= 
\de \normInfty{\varphi_1}_{2\vap} + \normInfty{\varphi_2}_{\vap} + 
\normInfty{\varphi_3}_{\vap}.
\]

Next lemmas, proven in \cite{BFGS12}, give some properties of these Banach 
spaces and provide a left inverse operator of $\mathcal L$. 
\begin{lemma}\label{lemma:sumNormsOutInf}
Let $\al, \beta \in \reals$. Then, the following statements hold:
\begin{enumerate}
	\item If $\al>\beta\geq 0$, then $\XcalInfty_{\al} \subset \XcalInfty_{\beta}$. Moreover
	$
	\normInfty{\varphi}_{\beta} \leq \normInfty{\varphi}_{\al}.
	$
	\item If  $\varphi \in \XcalInfty_{\al}$ and $\zeta \in \XcalInfty_{\beta}$, then
	$\varphi \zeta  \in \XcalInfty_{\al+\beta}$ and
	$
	\normInfty{\varphi\zeta}_{\al+\beta} \leq \normInfty{\varphi}_{\al} \normInfty{\zeta}_{\beta}.
	$
\end{enumerate}
\end{lemma}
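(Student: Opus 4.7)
The plan is straightforward: both claims follow directly from the definition of the weighted supremum norm $\normInfty{\varphi}_{\al}=\sup_{u\in \DuInfty} |e^{-\al u}\varphi(u)|$ together with the key observation that every $u\in\DuInfty$ satisfies $\Re u < -\rhoInfty < 0$.

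For part (i), I would start from the pointwise identity
\[
|e^{-\beta u}\varphi(u)| \;=\; e^{(\al-\beta)\Re u}\,|e^{-\al u}\varphi(u)|.
\]
Since $\al-\beta>0$ and $\Re u<0$ on $\DuInfty$, the factor $e^{(\al-\beta)\Re u}$ is strictly less than $1$, so the right-hand side is bounded by $|e^{-\al u}\varphi(u)|\leq \normInfty{\varphi}_{\al}$. Taking the supremum over $u\in\DuInfty$ yields both the finiteness of $\normInfty{\varphi}_{\beta}$ (hence $\varphi\in\XcalInfty_{\beta}$, giving the inclusion) and the inequality $\normInfty{\varphi}_{\beta}\leq \normInfty{\varphi}_{\al}$.

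For part (ii), the idea is to split the weight as $e^{-(\al+\beta)u}=e^{-\al u}e^{-\beta u}$, so that
\[
|e^{-(\al+\beta)u}\varphi(u)\zeta(u)| \;=\; |e^{-\al u}\varphi(u)|\cdot|e^{-\beta u}\zeta(u)| \;\leq\; \normInfty{\varphi}_{\al}\,\normInfty{\zeta}_{\beta}
\]
for every $u\in\DuInfty$. Real-analyticity of the product is immediate from real-analyticity of the factors. Taking the supremum concludes the estimate, and in particular shows $\varphi\zeta\in \XcalInfty_{\al+\beta}$.

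There is no real obstacle here; this is an elementary lemma whose only role is to fix the algebra of weighted norms used later in the fixed-point arguments. The statement is borrowed verbatim from \cite{BFGS12}, and no extra care with the geometry of $\DuInfty$ is needed beyond the sign of $\Re u$.
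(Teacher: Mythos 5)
Your proof is correct, and it is the natural elementary argument: both parts reduce to the pointwise factorizations you write and the sign observation $\Re u < -\rhoInfty < 0$ on $\DuInfty$, which makes $e^{(\al-\beta)\Re u}\le 1$ when $\al>\beta$. Note that the paper does not actually reproduce a proof of this lemma; it simply cites \cite{BFGS12}, so there is no in-paper argument to compare against, and your reasoning is exactly what the cited reference supplies. One tiny remark: the hypothesis $\beta\ge 0$ is not actually needed in part (i) — your argument only uses $\al-\beta>0$ and $\Re u<0$ — so the lemma as stated is slightly weaker than what you prove, which is harmless.
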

%\begin{proof}
%	It follows the same lines as the proof of Lemma 6.1 in~\cite{BFGS12}.
%\end{proof}
%Let us consider the singular problem given by Hamiltonian $H_{\pend}(\la,\La)$, as defined on~\eqref{def:HpendHosc}.
%

% In the following lemma we introduce a left-inverse operator of $\LL$.
%

\begin{lemma}\label{lemma:GlipschitzInfinity}
The linear operator $\GG: \XcalInftyTotal \to \XcalInftyTotal$ given by
\[
\GG[\varphi](u) = \left(\int_{-\infty}^u \varphi_1(s) ds, 
\int_{-\infty}^u e^{-\frac{i}{\de^2} (s-u)} \varphi_2(s) ds , 
\int_{-\infty}^u e^{\frac{i}{\de^2} (s-u)} \varphi_3(s) ds \right)^T
\]
is continuous, injective and is a left inverse of the operator $\LL$. 

Moreover, there exists a constant $C$ independent of $\de$ and $\rhoInfty$ such that, for $\varphi \in \XcalInftyTotal$, 
%\begin{align*}
%\norm{\GG_1 [h]}_{2\vap} \leq 	C_{\vap} \norm{h_1}_{2\vap}, \qquad
%%
%\norm{\GG_2 [h]}_{\vap} \leq C_{\vap}
%\de^2 \norm{h_2}_{\vap}, \qquad
%%
%\norm{\GG_3 [h]}_{\vap} \leq C_{\vap}
%\de^2 \norm{h_3}_{\vap}.
%\end{align*} 
\begin{align*}
\normInftyTotal{\GG[\phiA]} \leq C
\paren{\normInfty{\varphi_1}_{2\vap} +
\de^2 \normInfty{\varphi_2}_{\vap} +
\de^2 \normInfty{\varphi_3}_{\vap}}.
\end{align*}
\end{lemma}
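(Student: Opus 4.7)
The plan is to verify the four assertions separately: well-definedness of $\GG$, the left-inverse identity, injectivity, and the norm estimate.

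First I would check that $\GG$ is well-defined on $\XcalInftyTotal$. For $\phi = (\phi_1,\phi_2,\phi_3) \in \XcalInftyTotal$ and $u \in \DuInfty$, I parameterize each integral along the horizontal ray $s = t + i\Im u$, $t \in (-\infty, \Re u]$, which stays in $\DuInfty$ since it is a half-plane. On this path the oscillating factors $e^{\pm i(s-u)/\de^2}$ have modulus $1$, while $|\phi_j(s)|$ is dominated by $\normInfty{\phi_j}_{\al_j} e^{\al_j t}$ with $\al_j = 2\vap$ or $\vap$, all strictly positive; hence the integrals converge absolutely, define analytic functions of $u$, and are real-analytic because $\phi$ is.

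Next I would verify $\LL \GG = \Id$ by direct differentiation. For the first component $\partial_u \int_{-\infty}^u \phi_1(s)\,ds = \phi_1(u)$. For the second, writing $\GG[\phi]_2(u) = e^{iu/\de^2}\int_{-\infty}^u e^{-is/\de^2}\phi_2(s)\,ds$ and differentiating in $u$ yields $(i/\de^2)\GG[\phi]_2(u) + \phi_2(u)$, whence $(\partial_u - i/\de^2)\GG[\phi]_2 = \phi_2$; the third component is symmetric. Injectivity is then automatic: if $\GG[\phi] = 0$ then $\phi = \LL \GG[\phi] = 0$.

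For the norm estimate, the first component is immediate: the bound $|\phi_1(s)| \leq \normInfty{\phi_1}_{2\vap}\, e^{2\vap t}$ gives $|\GG[\phi]_1(u)| \leq \frac{1}{2\vap}|e^{2\vap u}|\,\normInfty{\phi_1}_{2\vap}$, and multiplication by the factor $\de$ in the product norm yields the desired contribution. The delicate part is the second and third components, where a naive estimate on horizontal paths gives only $\normInfty{\GG[\phi]_2}_{\vap} \leq (1/\vap)\normInfty{\phi_2}_{\vap}$ without any $\de^2$ gain. I would overcome this by integration by parts, using that $i\de^2 e^{-i(s-u)/\de^2}$ is an antiderivative of $e^{-i(s-u)/\de^2}$:
\[
	\GG[\phi]_2(u) = i\de^2 \phi_2(u) - i\de^2 \int_{-\infty}^u e^{-i(s-u)/\de^2}\,\phi_2'(s)\,ds,
\]
the boundary term at $-\infty$ vanishing because $\phi_2$ decays exponentially. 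The boundary term at $s=u$ already carries the required $\de^2$, giving $|i\de^2 \phi_2(u)| \leq \de^2\normInfty{\phi_2}_{\vap}|e^{\vap u}|$. For the remaining integral, since $\phi_2$ is analytic on the open half-plane $\DuInfty$, a Cauchy estimate on a disk of a fixed radius (working for $u$ at distance bounded below from $\partial \DuInfty$, and by slightly enlarging the half-plane otherwise) produces $|\phi_2'(s)| \leq C\normInfty{\phi_2}_{\vap}|e^{\vap s}|$. Integrating along the horizontal contour as in Step 3 yields $\normInfty{\GG[\phi]_2}_{\vap} \leq C\de^2 \normInfty{\phi_2}_{\vap}$. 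The third component is handled identically with the conjugate oscillation.

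The main obstacle is precisely this gain of the $\de^2$ factor on the oscillating components: it is not visible from modulus bounds on horizontal contours, and contour deformation into a region where $e^{\pm i(s-u)/\de^2}$ decays produces at best a $\de^2\log(1/\de)$ bound. The sharp $\de^2$ is extracted by exploiting the fast oscillation through a single integration by parts, with the Cauchy estimate playing the auxiliary role of handing back a bound on $\phi_2'$ in the same $\XcalInfty_{\vap}$ norm (up to a constant depending on the shrinkage of the domain, which is irrelevant since the statement is uniform on $\DuInfty$ with fixed $\rhoInfty$).
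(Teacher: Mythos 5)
Your verification of $\LL\GG=\Id$, injectivity, and the bound on the first component are all fine. The paper itself does not prove this lemma but cites \cite{BFGS12}, where the $\de^2$ gain on the oscillating components is obtained by contour deformation. There are two problems with the route you chose to replace it.

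First, your integration-by-parts argument has a genuine gap in the Cauchy estimate for $\phi_2'$. The function $\phi_2$ lives only on the half-plane $\DuInfty=\{\Re u<-\rhoInfty\}$, so for $s$ at distance $r$ from the boundary the Cauchy estimate gives $|\phi_2'(s)|\leq r^{-1}\sup_{|z-s|<r}|\phi_2(z)|$, and $r\to 0$ as $\Re s\to-\rhoInfty$. This blows up, and the estimate $|\phi_2'(s)|\leq C\normInfty{\phi_2}_{\vap}|e^{\vap s}|$ with a uniform $C$ simply fails. Your suggested remedy of ``slightly enlarging the half-plane'' is not available: the functions in $\XcalInfty_{\vap}$ are not assumed analytic on any larger set, so there is nothing to shrink back from. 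You could repair this by formulating the lemma for a slightly shrunk domain, but that is a different (weaker) statement than the one claimed and would propagate complications through the fixed-point scheme in Proposition \ref{proposition:existenceInfty}.

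Second, and more importantly, you dismissed the approach that actually works. You claim contour deformation ``produces at best a $\de^2\log(1/\de)$ bound,'' but that is not so in the half-plane $\DuInfty$. Deform the path to the ray $s=u-\sigma e^{i\alpha}$, $\sigma\in(0,\infty)$, with any fixed $\alpha\in(0,\pi/2)$; this ray stays in $\DuInfty$ since $\Re s$ decreases, and on it $|e^{-i(s-u)/\de^2}|=e^{-\sigma\sin\alpha/\de^2}$. Then
\begin{equation*}
\vabs{e^{-\vap u}\GG_2[\varphi](u)}
\leq \normInfty{\varphi_2}_{\vap}\int_0^\infty e^{-\sigma\sin\alpha/\de^2}\,e^{-\vap\sigma\cos\alpha}\,d\sigma
= \frac{\normInfty{\varphi_2}_{\vap}}{\de^{-2}\sin\alpha+\vap\cos\alpha}\leq \frac{\de^2}{\sin\alpha}\,\normInfty{\varphi_2}_{\vap},
\end{equation*}
giving exactly the $\de^2$ factor with a constant independent of $\de$ and $\rhoInfty$. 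The third component is handled symmetrically with a ray into the upper-left. No logarithm appears, and no derivative of $\varphi$ is needed, so the boundary-proximity problem never arises. This is the argument implicit in the citation, and it is both simpler and correct where yours is not.
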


Notice that the eigenvalues of the saddle point $(0,0)$ of $H_{\pend}(\la,\La)$ (see \eqref{def:HpendHosc}) are 
$
\pm \sqrt{\frac{21}{8}}.
$
Then, the parametrization of the separatrix  $\sigma =(\la_h,\La_h)$ (see 
\eqref{eq:separatrixParametrization}) satisfies 
\begin{equation}\label{eq:separatrixBanachSpace}
	\la_h \in \XcalInfty_{\vap}
	\quad \text{ and } \quad
	\La_h \in \XcalInfty_{\vap}.
\end{equation}
Therefore, $\zuOut$ is a solution of \eqref{eq:invariantEquationInftyB} 
satisfying the asymptotic conditions \eqref{eq:asymptoticConditionsOuter} if 
and only if $\zuOut \in \XcalInftyTotal$ and satisfies the fixed point equation
\begin{equation*}
%\label{def:operatorFFInf}
	\phiA = \FF[\phiA] = \GG \circ \RRR^{\out}[\phiA].
\end{equation*}
%where the operator $\RRR^{\out}$ is defined in~\eqref{def:operatorRRROuter}.
%
%Then, a solution of $\zuOut = \FF[\zuOut]$ satisfies  equation \eqref{eq:invariantEquationOuter} and the asymptotic condition~\eqref{eq:asymptoticConditionsOuter}. 
%
Thus, Proposition~\ref{proposition:existenceInfty} is a 
straightforward consequence of the following proposition.

\begin{proposition}\label{proposition:existenceInftyBanach}	
There exists $\de_0>0$ such that, for $\de \in (0,\de_0)$, equation $\phiA = 
\FF[\phiA]$  has a  solution $\zuOut \in \XcalInftyTotal$. 	
Moreover, there exists a real constant $\cttOutInftyA>0$ independent of $\de$ such that
$
\normInftyTotal{\zuOut} \leq \, \cttOutInftyA \de^3.
$
\end{proposition}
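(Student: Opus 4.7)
The plan is to apply the Banach fixed point theorem to the operator $\FF = \GG \circ \RRR^{\out}$ on a closed ball
\[
\mathcal{B}(R\de^3) = \claus{\phiA \in \XcalInftyTotal \st \normInftyTotal{\phiA} \leq R\de^3}
\]
for some $R>0$ independent of $\de$. By Lemma \ref{lemma:GlipschitzInfinity}, $\GG$ is a left inverse of $\LL$, so any fixed point of $\FF$ in $\XcalInftyTotal$ will solve~\eqref{eq:invariantEquationInftyB}; moreover the asymptotic condition~\eqref{eq:asymptoticConditionsOuter} is automatically enforced by the integration from $-\infty$ built into $\GG$ and by the exponential weights defining $\normInftyTotal{\cdot}$.

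The first step would be to estimate the seed $\FF[0]=\GG[\RRR^{\out}[0]]$. From~\eqref{def:operatorRRROuter}, $\RRR^{\out}[0] = f^{\out}(u,0)/(1+g^{\out}(u,0))$, whose components are, up to a denominator bounded away from zero (see Remark~\ref{remark:derivadaH1w}), the partial derivatives of $H_1^{\out}(u,0,0,0)$. Writing $H_1^{\out}=H\circ(\phi_{\equi}\circ\phi_{\out})-H_0^{\out}$ and using the expansion of $H_1$ from~\eqref{def:hamiltonianScalingH1} (see Lemma~4.1 of \cite{articleInner} and Appendix~\ref{appendix:proofH-technical}), together with the fact that $\la_h, \La_h \in \XcalInfty_{\vap}$ from~\eqref{eq:separatrixBanachSpace}, one checks that $\partial_u H_1^{\out}(\cdot,0)\in\XcalInfty_{2\vap}$ with norm $\OO(\de^2)$, while $\partial_x H_1^{\out}(\cdot,0)$ and $\partial_y H_1^{\out}(\cdot,0)$ lie in $\XcalInfty_{\vap}$ with norm $\OO(\de^3)$. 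Substituting into Lemma~\ref{lemma:GlipschitzInfinity} yields $\normInftyTotal{\FF[0]}\leq C\de^3$.

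The second step is to close the contraction. Writing
\[
\RRR^{\out}[\phiA]-\RRR^{\out}[\psi]=\int_0^1 D_z\RRR^{\out}\boxClaus{\psi+\sigma(\phiA-\psi)}(\phiA-\psi)\,d\sigma,
\]
one bounds $D_z\RRR^{\out}$ on $\mathcal{B}(R\de^3)$ via the second derivatives of $H_1^{\out}$, again using the series expansions and Cauchy estimates, together with the smallness of $g^{\out}$ to keep $1+g^{\out}$ uniformly bounded away from zero (Remark~\ref{remark:derivadaH1w}). After composition with $\GG$, the factor $\de^2$ attached to the $x,y$-components in Lemma~\ref{lemma:GlipschitzInfinity} provides the crucial smallness which yields a Lipschitz estimate of the form $\normInftyTotal{\FF[\phiA]-\FF[\psi]}\leq C\de\,\normInftyTotal{\phiA-\psi}$ on $\mathcal{B}(R\de^3)$. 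Choosing $R$ so that $2\normInftyTotal{\FF[0]}\leq R\de^3$ and taking $\de_0$ small enough, $\FF$ maps $\mathcal{B}(R\de^3)$ into itself and is a contraction there, producing the unique fixed point $\zuOut\in\mathcal{B}(R\de^3)$ with $\normInftyTotal{\zuOut}\leq \cttOutInftyA\de^3$.

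The main obstacle is the careful bookkeeping of the powers of $\de$ across the three components of $\RRR^{\out}$ and their interplay with the non-uniform weights in $\normInftyTotal{\cdot}$. In particular, one must verify that the cancellations built into the definition of $H_1$ in~\eqref{def:hamiltonianScalingH1} (the subtraction of $V(\la)$ and of $\de^{-4}F_{\pend}(\de^2\La)$) are exactly what is needed so that $\partial_u H_1^{\out}(\cdot,0)$ gains an extra $\de^2$ factor relative to a naive bound, and $\partial_{x,y}H_1^{\out}(\cdot,0)$ gains the full $\de^3$ from the scaling $\eta=\de x$, $\xi=\de y$. This is a direct though somewhat lengthy computation which is precisely the content of Appendix~\ref{appendix:proofH-technical}.
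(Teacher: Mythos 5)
Your overall plan — a Banach fixed point argument for $\FF=\GG\circ\RRR^{\out}$ on a ball of radius $\OO(\de^3)$ in $\XcalInftyTotal$, with the seed estimate on $\FF[0]$ plus a Lipschitz bound to close the contraction — is exactly the paper's strategy, and your Lipschitz step correctly identifies the $\de^2$ oscillatory gain in $\GG_2,\GG_3$ as the crucial source of smallness.

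However, your intermediate claim for the seed is not right: you assert that $\partial_xH_1^{\out}(\cdot,0)$ and $\partial_yH_1^{\out}(\cdot,0)$ lie in $\XcalInfty_{\vap}$ with norm $\OO(\de^3)$, attributing this to the scaling $\eta=\de x$, $\xi=\de y$. That scaling contributes exactly one power of $\de$, not three; the correct estimate is $\normInftySmall{\RRR^{\out}_j[0]}_{\vap}\leq C\de$ for $j=2,3$, as Lemma~\ref{lemma:computationsRRRInf} states (and as one checks from $\partial_{x,y}M\circ\Phi = \OO\bigl(\de\,(\la_h+\La_h)\bigr)$ via Lemma~\ref{lemma:expressionHamiltonianInfty}). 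The $\de^3$ you are thinking of is the size of the \emph{output} $\GG_j\bigl[\RRR^{\out}_j[0]\bigr]$, $j=2,3$: the missing $\de^2$ comes from the oscillatory integral in $\GG_2,\GG_3$ (Lemma~\ref{lemma:GlipschitzInfinity}), not from the scaling. The conclusion $\normInftyTotal{\FF[0]}\leq C\de^3$ is still correct, since in the total norm the $w$-component contributes $\de\cdot\OO(\de^2)=\OO(\de^3)$ and the $(x,y)$-components contribute $\de^2\cdot\OO(\de)=\OO(\de^3)$, but the bookkeeping in your write-up misattributes where the powers of $\de$ originate — which matters, because with the $\OO(\de^3)$ input bound you claim, the $(x,y)$-contribution to $\normInftyTotal{\FF[0]}$ would be $\OO(\de^5)$, and the dominant term would be the $w$-component; your sentence suggests the opposite.

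One small caution on the statement of Lemma~\ref{lemma:GlipschitzInfinity} as printed: the bound $\normInftyTotal{\GG[\phiA]}\leq C(\normInfty{\phiA_1}_{2\vap}+\de^2\normInfty{\phiA_2}_{\vap}+\de^2\normInfty{\phiA_3}_{\vap})$ should carry an extra factor $\de$ in front of $\normInfty{\phiA_1}_{2\vap}$ to be consistent with the weight $\de$ on the first component in $\normInftyTotal{\cdot}$ (this is how it is actually used in~\eqref{eq:firstIterationInfty}). Without that correction, neither your claimed intermediate estimates nor the paper's would give $\de^3$ for $\FF[0]$.
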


To see that $\FF$ is a contractive operator, we have to pay attention to the 
nonlinear terms $\RRR^{\out}$.
%
% The nonlinear operator $\RRR^{\out}$ satisfies the following properties.

\begin{lemma}\label{lemma:computationsRRRInf}
Fix $\varrho>0$ and let $\RRR^{\out}$ be the operator defined in~\eqref{def:operatorRRROuter}.
Then, for $\de>0$ small enough\footnote{To simplify the exposition, in this 
lemma and in the  technical lemmas from now on,  we avoid referring 
to the existence of $\de_0$ and just mention that $\delta$ must be small 
enough. We follow the same convention for $\kappa$ whenever is needed.}
and $\normInftyTotal{\varphi}\leq \varrho 
\de^3$, there exists a constant $C>0$ such that
\begin{align*}
\normInfty{\RRR^{\out}_1[\varphi]}_{2\vap} \leq C \de^2, \qquad
\normInftySmall{\RRR^{\out}_j[\varphi]}_{\vap} \leq C \de, \qquad
j= 2,3,
%\normInfty{\RRR^{\out}_3[\varphi]}_{\vap} \leq C_{\varrho} \de.
\end{align*}
and
\begin{align*}
%\begin{array}{lll}
\normInfty{\partial_w {\RRR^{\out}_1} [\varphi]}_0 &\leq C \de^2, &
\normInfty{\partial_x {\RRR^{\out}_1} [\varphi]}_{\vap} &\leq C \de, &
\normInfty{\partial_y {\RRR^{\out}_1} [\varphi]}_{\vap} &\leq C \de, \\
\normInftySmall{\partial_w {\RRR^{\out}_j} [\varphi]}_{-\vap} &\leq C \de, &
\normInftySmall{\partial_x {\RRR^{\out}_j} [\varphi]}_0 &\leq C, &
\normInftySmall{\partial_y{\RRR^{\out}_j} [\varphi]}_0 &\leq C, \quad j=2,3.
%\end{array}
%\normInftySmall{\partial_w {\RRR^{\out}_3} [\varphi]}_{-\vap} &\leq C \de, &
%\normInftySmall{\partial_x {\RRR^{\out}_3} [\varphi]}_0 &\leq C \de, &
%\normInftySmall{\partial_y{\RRR^{\out}_3} [\varphi]}_0 &\leq C, 
\end{align*}
\end{lemma}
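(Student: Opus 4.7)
The plan is to unravel the definition of $\RRR^{\out}$ in terms of $H_1^{\out}$, read off the leading powers of $\de$ in $H_1^{\out}$ and its first-order derivatives, and then translate these into $\XcalInfty_{\al}$-norm bounds by exploiting that $\la_h,\La_h\in\XcalInfty_{\vap}$ (see \eqref{eq:separatrixBanachSpace}) together with the Banach-algebra property of Lemma \ref{lemma:sumNormsOutInf}. From \eqref{def:hamiltonianScalingH1} and \eqref{def:Fpend} one sees that $H_1(\la,\La,x,y;\de)$ factors, away from collision with the primaries, as $\de^2$ times a bounded real-analytic expression in the remaining variables; composing with $\phi_{\equi}\circ \phi_{\out}$ (and observing that $\Ltres(\de)=\OO(\de^2)$ coordinate-wise by \eqref{def:pointL3sca}) preserves this $\OO(\de^2)$ prefactor while the arguments get replaced by $\la_h(u)$, $\La_h(u)-w/(3\La_h(u))$, $x$ and $y$.

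First I would establish pointwise sizes of $H_1^{\out}$ and of its partial derivatives on the relevant domain. The assumption $\normInftyTotal{\varphi}\leq\varrho\de^3$ translates into $\varphi_1(u)=\OO(\de^2 e^{2\vap u})$ and $\varphi_{2,3}(u)=\OO(\de^3 e^{\vap u})$. For $\RRR^{\out}_1=-\partial_u H_1^{\out}/(1+g^{\out})$, every contribution to the numerator arises from $\partial_u$ hitting either $\la_h$, $\La_h$ or a coefficient of the series of $H_1^{\Poi}$ recalled in Lemma~4.1 of \cite{articleInner}; each monomial already contains at least one separatrix factor (so lies in $\XcalInfty_{\vap}$), and the $\partial_u$ brings down a second one through $\dot{\la}_h=-3\La_h$ or $\dot{\La}_h$, which together give the weight $2\vap u$ while leaving the $\de^2$ prefactor untouched. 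Division by $1+g^{\out}$ is harmless because $g^{\out}=\OO(\de^2)$ stays away from $-1$ (see Remark~\ref{remark:derivadaH1w} and \eqref{proof:boundsInftyg}). For $\RRR^{\out}_{2,3}$, both $i\partial_y H_1^{\out}$ and $-i\partial_x H_1^{\out}$ lose one power of $\de$ from the scaling $\eta=\de x$, $\xi=\de y$ in $\phi_{\sca}$ but retain only a single separatrix factor, so each lives in $\XcalInfty_{\vap}$ with size $\OO(\de)$; the remaining contribution $\pm (i/\de^2)\varphi_{2,3}\,g^{\out}$ is also of size $\de$ with weight $e^{\vap u}$, using $\varphi_{2,3}=\OO(\de^3 e^{\vap u})$ and $g^{\out}=\OO(\de^2)$.

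The derivative bounds in the second display are then obtained by differentiating the explicit quotient \eqref{def:operatorRRROuter} and using Lemma \ref{lemma:sumNormsOutInf} term by term. Differentiation with respect to $w$ does not cost a power of $\de$ but may cost one exponential factor $e^{\vap u}$ (whence the index $-\vap$ for $\partial_w \RRR^{\out}_{2,3}$), while each $\partial_x$ or $\partial_y$ removes one $\de$ and one separatrix factor from the original estimate, which is what the statement records. The main obstacle I anticipate is the careful combinatorial bookkeeping: for each term in the polynomial expansion of $H_1^{\out}$ one must simultaneously track the overall $\de$-order, the number of separatrix factors that translate into exponential weights, and the effect of each partial derivative on both counts. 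This is exactly what the authors defer to Appendix~\ref{appendix:proofH-technical}; once the table of orders is in place, every estimate in the lemma reduces to a direct application of Lemma \ref{lemma:sumNormsOutInf}.
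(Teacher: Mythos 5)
Your overall strategy---reduce everything to size estimates on $H_1^{\out}$ and its derivatives, and then push these through Lemma~\ref{lemma:sumNormsOutInf} to get the $\XcalInfty_\al$-norm bounds---is the same one the paper follows in Appendix~\ref{subsubsection:ProofComputationsRRRInf}. However, there is a genuine gap at the heart of your argument, precisely where you assert that ``each monomial already contains at least one separatrix factor, and the $\partial_u$ brings down a second one.'' That second step does not follow: if a contribution to $H_1^{\out}$ had only a single decaying factor, say of the schematic form $a(\la_h(u))\,\La_h(u)$ with $a(0)\ne 0$, then $\partial_u$ produces $a'(\la_h)\lad_h\La_h + a(\la_h)\Lad_h$, and the second term decays only like $e^{\vap u}$, not $e^{2\vap u}$. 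The $2\vap$ weight for $\RRR^{\out}_1$ therefore requires showing that every term of the relevant part of $H_1^{\out}$ is at least \emph{quadratic} in $(\la_h,\La_h-\tfrac{w}{3\La_h},x,y)$. The paper secures this in Lemma~\ref{lemma:expressionHamiltonianInfty} by introducing $M=(H\circ\phi_\equi)-H_0$ (so that $H_1^{\out}=M\circ\phi_\out-\tfrac{w^2}{6\La_h^2}$ up to a constant, see \eqref{eq:expressionHamiltonianInfty}) and observing that $0$ is a critical point of both $H\circ\phi_\equi$ and $H_0$, hence of $M$, so $DM(0)=0$. That single observation is what gives $|\partial_{\la}M|\leq C\de^2|(\la,\La)|+C\de|(x,y)|$ and its siblings, i.e.\ the vanishing that supplies the missing separatrix factor. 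Your proposal never confronts the $\Ltres(\de)$-offset built into $\phi_\equi$: if you expand $H_1^{\Poi}$ directly in powers of $(L-1,\eta,\xi)$ and then substitute, the constant pieces $\de^2\LtresLa$, $\de^3\Ltresx$, $\de^3\Ltresy$ do not decay as $\Re u\to-\infty$, and without the critical-point cancellation the required decay estimate fails.

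A secondary issue: the opening claim that $H_1(\la,\La,x,y;\de)$ ``factors as $\de^2$ times a bounded real-analytic expression in the remaining variables'' is not correct. The substitutions $\eta=\de x$, $\xi=\de y$ in \eqref{def:changeScaling} contribute terms of size $\OO(\de)$ in $x,y$, not $\OO(\de^2)$ (compare Lemma~\ref{corollary:seriesH1Poi}, or the mixed $\de^2$--$\de$ structure of the estimates in Lemma~\ref{lemma:expressionHamiltonianInfty}). The $\OO(\de^2)$ magnitude you want is a property of $H_1^{\out}(u,\varphi(u))$ \emph{after} restricting to $\normInftyTotal{\varphi}\leq\varrho\de^3$, not of $H_1$ as a function. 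This is not just cosmetic: it is exactly what forces the paper to track the $\de$-order separately for the $(\la,\La)$-derivatives and the $(x,y)$-derivatives throughout the proof, and your later step ``each $\partial_x$ or $\partial_y$ removes one $\de$'' would contradict a genuine uniform $\de^2$ factorisation.
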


The proof of this lemma is postponed to 
Appendix~\ref{subsubsection:ProofComputationsRRRInf}.

\begin{proof}[Proof of Proposition~\ref{proposition:existenceInftyBanach}]

Consider the closed ball
\begin{equation*}
%\label{def:ballInfty}
	\ballInftyDef= \claus{\varphi \in \XcalInftyTotal \st \normInftyTotal{\varphi} \leq \varrho}.
\end{equation*}
First,  we obtain an estimate for $\FF[0]$. By Lemmas 
\ref{lemma:GlipschitzInfinity} and \ref{lemma:computationsRRRInf}, if $\delta$ 
is small enough,
\begin{align}\label{eq:firstIterationInfty}
\normInftyTotal{\FF[0]} \leq
C \de   \normInfty{\RRR^{\out}_1[0]}_{2\nu} +
C \de^2 \normInfty{\RRR^{\out}_2[0]}_{\nu} +
C \de^2 \normInfty{\RRR^{\out}_3[0]}_{\nu}
\leq \frac12 \cttOutInftyA \de^3,
\end{align}
for some $\cttOutInftyA>0$.

Then, it only remains to check that the operator $\FF$ is contractive in 
$B(\cttOutInftyA\de^3)$.
Let $\phiA, \phiB \in B(\cttOutInftyA\de^3)$.  
Then, by the mean value theorem, 
\begin{align*}
\RRR_j^{\out}[\varphi]-\RRR_j^{\out}[\wt{\varphi}] 
= \boxClaus{\int_0^1 D{\RRR_j^{\out}}[s {\varphi} + (1-s) \wt{\varphi}] d s} ({\varphi} - \wt{\varphi}),
\qquad j=1,2,3.
\end{align*}
Applying Lemmas
\ref{lemma:sumNormsOutInf} and 
\ref{lemma:computationsRRRInf} and the above equality, we obtain
\begin{align*}
\begin{multlined}
\normInfty{\RRR_1^{\out}[{\varphi}]-\RRR_1^{\out}[\wt{\varphi}]}_{2\vap}
\leq
\sup_{\zeta \in B(\cttOutInftyA\de^3)} \Big[
\normInfty{{\varphi}_1-\wt{\varphi}_1}_{2\vap}
 \normInfty{\partial_w{\RRR^{\out}_1}[\zeta]}_{0} \\
+
\normInfty{{\varphi}_2-\wt{\varphi}_2}_{\vap}   
\normInfty{\partial_x{\RRR^{\out}_1}[\zeta]}_{\vap} 
+
\normInfty{{\varphi}_3-\wt{\varphi}_3}_{\vap}
 \normInfty{\partial_y{\RRR^{\out}_1}[\zeta]}_{\vap}
 \Big] 
\leq
C \de \normInftyTotal{{\varphi} -\wt{\varphi}},
\end{multlined}
\\
\begin{multlined}
% \end{multline*}
% Analogously, for $j=2,3$, we obtain
% \begin{multline*}
\normInftySmall{\RRR_j^{\out}[{\varphi}]-\RRR_j^{\out}[\wt{\varphi}]}_{\vap}
\leq
\sup_{\zeta \in B(\cttOutInftyA\de^3)} \Big[
\normInfty{{\varphi}_1-\wt{\varphi}_1}_{2\vap}
\normInftySmall{\partial_w{\RRR^{\out}_j}[\zeta]}_{-\vap} \\ 
+
\normInfty{{\varphi}_2-\wt{\varphi}_2}_{\vap} \normInftySmall{\partial_x{\RRR^{\out}_j}[\zeta]}_{0} 
+
\normInfty{{\varphi}_3-\wt{\varphi}_3}_{\vap} \normInftySmall{\partial_y{\RRR^{\out}_j}[\zeta]}_{0} 
\Big] 
\leq
C \normInftyTotal{{\varphi} -\wt{\varphi}}.
\end{multlined}
\end{align*}
for $j=2,3$. Then, by Lemma~\ref{lemma:GlipschitzInfinity}
and taking $\de$ small enough,
\begin{equation}\label{eq:FFcontractivaInfty}
\begin{split}
\normInftyTotal{\FF[{\varphi}]-\FF[\wt{\varphi}]} \leq& \,
C \de
\normInfty{\RRR_1^{\out}[{\varphi}]-\RRR_1^{\out}[\wt{\varphi}]}_{2\vap}+
C\de^2 \sum_{j=2}^3
\normInftySmall{\RRR_j^{\out}[{\varphi}]-\RRR_j^{\out}[\wt{\varphi}]}_{\vap} \\
%\\ &+
%%
%C\de^2 
%\normInfty{\RRR_3^{\out}[{\varphi}]-\RRR_3^{\out}[\wt{\varphi}]}_{\vap} \\
%
\leq& \, C\de^2 \normInftyTotal{{\varphi}-\wt{\varphi}}
\leq \frac{1}{2} \normInftyTotal{{\varphi}-\wt{\varphi}}.
\end{split}
\end{equation}
Then, by the definition of $\varrho$ in  \eqref{eq:firstIterationInfty} and 
\eqref{eq:FFcontractivaInfty},
$\FF: B(\cttOutInftyA\de^3) \to B(\cttOutInftyA\de^3)$
is well defined and contractive.
Therefore, $\FF$ has a fixed point $\zuOut\in B(\cttOutInftyA\de^3)$.
\end{proof}

\subsection{The invariant manifolds in the outer domain}
\label{subsection:proofH-existenceBounded}

To prove Proposition~\ref{proposition:existenceComecocos}, we must extend 
analytically the parameterizations  $\zuOut$ and $\zsOut$ given in Proposition 
\ref{proposition:existenceInfty} to the outer domains, $\DuOut$ and $\DsOut$, 
respectively.
Again, we only deal with the unstable -$\unstable$- case, being the -$\stable$- case analogous.
We prove the existence of $\zuOut$ by means of a fixed point argument in a suitable Banach space.

%Let us consider constant $\rhoInfty>0$, as given in Proposition~\ref{proposition:existenceInfty}, and fix constants $\rho>\rhoInfty$ and $d \in (\frac{1}{4},\frac{1}{2})$.
%

%Let us consider the invariance equation
%\begin{equation}
%\LL \zuOut = \RRR^{\out}[\zuOut],
%\end{equation}
%(see \eqref{eq:invariantEquationInftyA}). 
%%
%Then, we prove the existence of $\zuOut$ by means of a fixed point argument in a suitable Banach space.

%First, we introduce the auxiliary function $g_{\de}(u)$ as
%\begin{align}\label{def:gdelta}
%	g_{\de}(u) = \frac{1}{\vabs{u^2+A^2}} + \frac{\de^2}{\vabs{u^2+A^2}^{\frac{8}{3}}}.
%\end{align}
Given $\al, \beta \in \reals$, we consider the norm
\begin{align*}
	&\normOut{\varphi}_{\al,\beta}= 
	\sup_{u\in \DuOut} 
	\vabs{g_{\de}^{-\al}(u) \paren{u^2+A^2}^{\beta}  \varphi(u)},
	\quad
	g_{\de}(u) = \frac{1}{\vabs{u^2+A^2}} + \frac{\de^2}{\vabs{u^2+A^2}^{\frac{8}{3}}},
\end{align*}
and the  associated Banach space
\begin{align}\label{def:XcalOut}
&\XcalOut_{\al, \beta}= \left\{ \varphi: \DuOut \to \complexs  \st  \varphi \text{ real-analytic, } \normOut{\varphi}_{\al,\beta}
 < \infty \right\}. 
\end{align}

These Banach spaces have the following properties, which we use without 
mentioning along the section. Their proof follows the same lines as the proof 
of Lemma 7.1 in \cite{BFGS12}.

%In the next lemma, we state some properties of these Banach spaces.
%%
%We use the results in it without mentioning them along the section.
\begin{lemma}\label{lemma:sumNormsOuter}
% There exists a constant $C$ independent of $\de$  and $\kappa$ such that t
The following statements hold:
\begin{enumerate}
\item If $\varphi \in \XcalOut_{\al,\beta_1}$, then $\varphi \in 
\XcalOut_{\al,\beta_2}$ for any $\beta_2 \in \reals$ and
%
% Moreover,
\begin{align*}
\begin{cases}
\normOut{\varphi}_{\al,\beta_2} \leq C \normOut{\varphi}_{\al,\beta_1}, 
&\quad \text{for } \beta_2 - \beta_1 > 0, \\
\normOut{\varphi}_{\al,\beta_2} \leq C 
(\kappa \de^2)^{\beta_2-\beta_1} \normOut{\varphi}_{\al,\beta_1},
&\quad \text{for } \beta_2-\beta_1 \leq 0.
\end{cases}
\end{align*}
%%%%%%%%%
\item If $\varphi \in \XcalOut_{\al,\beta_1}$, then $\varphi \in 
\XcalOut_{\al-1,\beta_2}$ for any $\beta_2 \in \reals$ and 
%
% Moreover,
\begin{align*}
\begin{cases}
\normOut{\varphi}_{\al-1,\beta_2} \leq C \normOut{\varphi}_{\al,\beta_1}, 
&\quad \text{for } \beta_2 - \beta_1 > \frac53, \\[0.5em]
\normOut{\varphi}_{\al-1,\beta_2} \leq 
C \de^2 (\kappa \de^2)^{(\beta_2-\beta_1)-\frac83} 
 \normOut{\varphi}_{\al,\beta_1},
&\quad \text{for } \beta_2 - \beta_1 \leq \frac53.
\end{cases}
\end{align*}
\item If $\varphi \in \XcalOut_{\al_1,\beta_1}$ and 
$\zeta \in \XcalOut_{\al_2,\beta_2}$, then 
$\varphi \zeta \in \XcalOut_{\al_1+\al_2,\beta_1+\beta_2}$ and
\[
\normOut{\varphi\zeta}_{\al_1+\al_2,\beta_1+\beta_2} \leq \normOut{\varphi}_{\al_1,\beta_1} \normOut{\zeta}_{\al_2,\beta_2}.
\]
%%%%%%%%%
\item If 
$\varphi \in \XcalOut_{0,\beta+1}$ and $\zeta \in 
\XcalOut_{0,\beta+\frac{8}{3}}$, then
$\varphi + \de^2\zeta \in \XcalOut_{1,\beta}$ and 
%
% Moreover,
\begin{align*}
	\normOutSmall{\varphi + \de^2\zeta}_{1,\beta}
	\leq 
	\normOut{\varphi}_{0,\beta+1} +
	\normOut{\zeta}_{0,\beta + \frac{8}{3} }.
\end{align*}
%%%%%%%%%
%%%%%%%%%
\end{enumerate}
\end{lemma}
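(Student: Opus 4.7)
The plan is a direct computation from the definitions of the weighted norms, based on two elementary geometric facts about $\DuOut$. First, the domain $\DuOut$ is bounded (it is the intersection of two angular sectors with $\Re u > -\rhoOuter$), so there exists $C_0 > 0$ independent of $\de$ and $\kappa$ with $\vabs{u^2+A^2} \leq C_0$ on $\DuOut$. Second, by the defining inequalities of $\DuOut$, every point is at distance at least of order $\kappa\de^2$ from both singularities $\pm iA$: factoring $u^2+A^2 = (u-iA)(u+iA)$ and bounding from below the factor not involving the closer singularity by a positive constant, one obtains $\vabs{u^2+A^2} \geq C_1 \kappa\de^2$ on $\DuOut$ for some $C_1 > 0$.

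For item (1), I would factor
\[
g_\de^{-\al}(u)\vabs{u^2+A^2}^{\beta_2} \varphi(u) = \Big(g_\de^{-\al}(u)\vabs{u^2+A^2}^{\beta_1} \varphi(u)\Big) \cdot \vabs{u^2+A^2}^{\beta_2-\beta_1}
\]
and bound the last factor by $C_0^{\beta_2-\beta_1}$ when $\beta_2-\beta_1 > 0$ and by $(C_1\kappa\de^2)^{\beta_2-\beta_1}$ when $\beta_2-\beta_1 \leq 0$; this yields the two stated inequalities. Item (3) follows from the sub-multiplicativity of the supremum after splitting the weights as $g_\de^{-\al_1-\al_2}\vabs{u^2+A^2}^{\beta_1+\beta_2} = g_\de^{-\al_1}\vabs{u^2+A^2}^{\beta_1} \cdot g_\de^{-\al_2}\vabs{u^2+A^2}^{\beta_2}$. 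Item (4) is a consequence of the two pointwise bounds $g_\de^{-1}(u) \leq \vabs{u^2+A^2}$ and $g_\de^{-1}(u) \leq \de^{-2}\vabs{u^2+A^2}^{\frac{8}{3}}$, both immediate from $g_\de \geq \vabs{u^2+A^2}^{-1}$ and $g_\de \geq \de^2\vabs{u^2+A^2}^{-\frac{8}{3}}$: applying the first bound to $\varphi$ and the second to $\de^2\zeta$ gives exactly $\normOut{\varphi}_{0,\beta+1} + \normOut{\zeta}_{0,\beta+\frac{8}{3}}$.

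The one step that requires a bit of care is item (2). Setting $\gamma := \beta_2 - \beta_1$, I write
\[
g_\de^{-(\al-1)}(u)\vabs{u^2+A^2}^{\beta_2} \varphi = \Big(g_\de^{-\al}(u)\vabs{u^2+A^2}^{\beta_1}\varphi\Big) \cdot g_\de(u)\vabs{u^2+A^2}^{\gamma}, \qquad g_\de(u)\vabs{u^2+A^2}^{\gamma} = \vabs{u^2+A^2}^{\gamma-1} + \de^2 \vabs{u^2+A^2}^{\gamma - \frac{8}{3}}.
\]
If $\gamma > \frac{5}{3}$, then $\gamma - 1 > \frac{2}{3}$ and the first summand is bounded by $C_0^{\gamma-1}$; for the second, either $\gamma \geq \frac{8}{3}$ (bounded directly) or $\frac{5}{3} < \gamma < \frac{8}{3}$, in which case the lower bound $\vabs{u^2+A^2} \geq C_1\kappa\de^2$ yields $\de^2(C_1\kappa\de^2)^{\gamma - \frac{8}{3}} = C_1^{\gamma-\frac{8}{3}}\kappa^{\gamma-\frac{8}{3}}\de^{2\gamma-\frac{10}{3}}$, bounded because $2\gamma - \frac{10}{3} > 0$. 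If instead $\gamma \leq \frac{5}{3}$, both summands are controlled by $(C_1\kappa\de^2)^{\gamma-1}$ and $\de^2(C_1\kappa\de^2)^{\gamma - \frac{8}{3}}$ respectively; a direct comparison of these two quantities (their ratio equals $\kappa^{5/3}\de^{4/3}$, which tends to $0$ as $\de \to 0^+$, so the second dominates) yields the stated bound $C\de^2(\kappa\de^2)^{\gamma-\frac{8}{3}}$.

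The only genuinely subtle point is thus the threshold $\gamma = \frac{5}{3}$ in item (2), which is exactly where the two summands of $g_\de(u)\vabs{u^2+A^2}^{\gamma}$ swap dominance. This also explains why the weight $g_\de$ has been engineered with the specific exponents $1$ and $\frac{8}{3}$: they are dictated by the behaviour of the perturbed invariant manifolds near the singularities $\pm iA$ predicted by Theorem~\ref{theorem:existence}, so that these algebraic relations will later translate cleanly into contraction estimates for the fixed-point argument.
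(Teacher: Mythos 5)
The paper itself does not supply a proof of this lemma; it only remarks that it ``follows the same lines as Lemma 7.1 in \cite{BFGS12}'', so there is no paper proof to compare against. Your approach is the natural direct computation from the two geometric facts about $\DuOut$ (an upper bound $C_0$ on $\vabs{u^2+A^2}$ and a lower bound of order $\kappa\de^2$), and it is essentially correct. Items~(1), (3) and (4) are fine as written; item~(4) in particular uses exactly the right two pointwise inequalities $g_\de^{-1}\leq\vabs{u^2+A^2}$ and $\de^2 g_\de^{-1}\leq\vabs{u^2+A^2}^{8/3}$.

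There is one imprecision in item~(2), in the case $\gamma=\beta_2-\beta_1\leq\frac53$. You assert that the first summand $\vabs{u^2+A^2}^{\gamma-1}$ is ``controlled by $(C_1\kappa\de^2)^{\gamma-1}$''. That is an \emph{upper} bound only when $\gamma-1\leq 0$; for $1<\gamma\leq\frac53$, where the exponent $\gamma-1$ is positive, the inequality $\vabs{u^2+A^2}\geq C_1\kappa\de^2$ gives a \emph{lower} bound for this term, not an upper one. The correct uniform upper bound in that subcase is the constant $C_0^{\gamma-1}$. The conclusion still holds, because $C_0^{\gamma-1}\leq C\,\de^2(\kappa\de^2)^{\gamma-\frac83}=C\kappa^{\gamma-\frac83}\de^{2\gamma-\frac{10}3}$ for small $\de$ and bounded $\kappa$ (both exponents on the right are nonpositive in this range of $\gamma$), so you should simply split that subcase off and bound the first summand by a constant before comparing with the second summand. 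Relatedly, your ``ratio tends to $0$'' argument silently assumes that $\kappa$ is bounded (or grows slower than $\de^{-4/5}$). This is indeed the regime used in the paper ($\kappa$ a constant, later $\kappa=\kappa_*\vabs{\log\de}$), but the defining constraint $A-\kappa\de^2>0$ alone would permit $\kappa$ as large as $A\de^{-2}$, for which the stated estimate at the threshold $\gamma=\frac53$ does not hold with a $\kappa$-independent constant; it is worth stating this restriction explicitly.
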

% \begin{proof}	
% 
% \end{proof}

Let us recall that, by Proposition~\ref{proposition:existenceInfty}, 
the invariance equation~\eqref{eq:invariantEquationOuter}
has a unique solution $\zuOut$ 
in the domain $\DuInfty$ satisfying the asymptotic condition~\eqref{eq:asymptoticConditionsOuter}.
Our objective is to extend analytically $\zuOut$  to the outer domain 
$\DuOut$. Notice that, since $\rhoInfty<\rhoOuter$,   $\DuInfty \cap \DuOut 
\neq \emptyset$ (see definitions \eqref{def:dominisInfty} and 
\eqref{def:dominisComecocos} of $\DuInfty$ and $\DuOut$).

As explained in Section \ref{subsection:proofH-existenceInfinite}, equation \eqref{eq:invariantEquationOuter} is equivalent to  $\LL \zuOut = \RRR^{\out}[\zuOut]$ with $\LL\phiA=(\partial_u-\AAA^{\out})\phiA$ and $\RRR^{\out}$ given in \eqref{def:operatorRRROuter}.
%
%We proceed by defining the following linear operator
%\begin{equation}\label{def:operatorLouter}
%	\LL \varphi = (\partial_u-\AAA^{\out})\varphi .
%\end{equation}
%
%The invariance equation~\eqref{eq:invariantEquationOuter} is equivalent to $\LL \zuOut = \RRR^{\out}[\zuOut]$ (see \eqref{def:operatorL}). 
%
In the following lemma we introduce a right-inverse operator of $\LL$ defined on $\XcalOut_{\al,\beta}$. 

\begin{lemma}\label{lemma:GlipschitzOuter}
Let us consider the operator $\GG[\varphi]=\paren{\GG_1[\phiA_1],\GG_2[\phiA_2],\GG_3[\phiA_3]}^T$, such that
\begin{equation*}
%\label{def:operatorGGouter}
\GG[\varphi](u) = \paren{
\int_{-\rhoOuter}^u \varphi_1(s) ds,
\int_{\conj{u}_1}^u e^{-\frac{i}{\de^2}(s-u) } \varphi_2(s) ds , 
\int_{u_1}^u e^{\frac{i}{\de^2}(s-u) } \varphi_3(s) ds }^T,
\end{equation*}
where $u_1$ and $\conj{u}_1$ are the vertices of the domain $\DuOut$ (see Figure~\ref{fig:dominisComecocos}).
%
%Fix $\beta_1, \beta_2>0$, then 
%$
%\GG: \XcalOut_{1,\beta_1} \times 
%\XcalOut_{0,\beta_2} \times \XcalOut_{0,\beta_2}
%\to \XcalOut_{1,\beta_1-1} \times 
%\XcalOut_{0,\beta_2} \times \XcalOut_{0,\beta_2},
%$
%is a continuous and linear operator and is a left-inverse of the operator $\LL$. 
Fix $\beta > 0$. There exists a constant $C$ such that:
\begin{enumerate}
\item If $\varphi \in \XcalOut_{1,\beta}$, then $\GG_1[\varphi] \in 
\XcalOut_{1,\beta-1}$ and
$
\normOut{\GG_1[\varphi]}_{1,\beta-1} \leq C \normOut{\varphi}_{1,\beta}$.
\item If $\varphi \in \XcalOut_{0,\beta}$, then $\GG_j[\varphi] \in 
\XcalOut_{0,\beta}$, $j=2,3$, and
$
\normOut{\GG_j[\varphi]}_{0,\beta} \leq C \de^2 \normOut{\varphi}_{0,\beta}$.
%
%\item Let $\beta_1,\beta_2>0$. If $\varphi \in \XcalOut_{1,\beta_1} \times \XcalOut_{0,\beta_2}
%\times \XcalOut_{0,\beta_2}$, 
%then
%$
%\paren{\LL \circ \GG}[\varphi] = \varphi
%$
%and
%\[
%\paren{\GG \circ \LL}[\varphi](u) =
%\varphi(u) - 
%\paren{\varphi_1(-\rhoOuter),\,
%\varphi_2(\conj{u_1})
%e^{-\frac{i}{\de^2}(\conj{u_1}-u)}, \,
%\varphi_3({u_1})
%e^{\frac{i}{\de^2}({u_1}-u)}
%}.
%\]
\end{enumerate}
\end{lemma}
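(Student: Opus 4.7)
For each $\GG_j$, I would pick an integration contour inside $\overline{\DuOut}$ adapted to the corresponding kernel and reduce to one-dimensional integrals of $|s^2+A^2|^{-\gamma}$, using throughout that $\DuOut$ stays at distance $\gtrsim \kappa\de^2$ from the singularities $\pm iA$ and has $\Re s \ge -\rhoOuter$. The starting pointwise inequality is
\[
|\phiA_j(s)| \le \normOut{\phiA_j}_{\al,\beta}\, g_\de^{\al}(s)\,|s^2+A^2|^{-\beta},
\]
with $\al=1$ for $j=1$ and $\al=0$ for $j=2,3$. The target bounds can be read off in the same coordinates.

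For $\GG_1$, given $u\in\DuOut$, I would take a path $\gamma_u\subset\overline{\DuOut}$ from $-\rhoOuter$ to $u$ that follows the outer diagonal boundary of the appropriate lobe until it is close to $u$ and then heads straight to $u$. The portion of $\gamma_u$ far from $\pm iA$ contributes $\OO(1)$, whereas near the corresponding singularity the standard estimates
\[
\int_{|u\mp iA|}^{C}\frac{d\tau}{\tau^{\beta+1}} \le \frac{C}{|u^2+A^2|^{\beta}}, \qquad \int_{|u\mp iA|}^{C}\frac{d\tau}{\tau^{\beta+8/3}} \le \frac{C}{|u^2+A^2|^{\beta+5/3}},
\]
combine to yield $|\GG_1[\phiA_1](u)| \le C\,g_\de(u)|u^2+A^2|^{-(\beta-1)}\normOut{\phiA_1}_{1,\beta}$, which is exactly the desired $\normOut{\GG_1[\phiA_1]}_{1,\beta-1} \le C\normOut{\phiA_1}_{1,\beta}$.

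For $\GG_2$ and $\GG_3$, the oscillatory kernel $e^{\mp i(s-u)/\de^2}$ has modulus $e^{\pm(\Im s-\Im u)/\de^2}$, so I would pick paths along which $\Im s$ is monotone in the favourable direction---non-decreasing from $\conj{u}_1$ to $u$ for $\GG_2$ and non-increasing from $u_1$ to $u$ for $\GG_3$---so that the kernel has modulus $\le 1$ throughout. Parameterizing the segment ending at $u$ as $s = u - t e^{i\theta}$ with $\sin\theta$ bounded below by a positive constant, the kernel becomes $e^{-t\sin\theta/\de^2}$, which localizes the integral to $t \lesssim \de^2$; on that scale $|s^2+A^2|$ is comparable to $|u^2+A^2|$, and one obtains
\[
|\GG_j[\phiA_j](u)| \le C\normOut{\phiA_j}_{0,\beta}\int_0^{\infty} \frac{e^{-t\sin\theta/\de^2}}{|u^2+A^2|^\beta}\,dt \le \frac{C\de^2}{|u^2+A^2|^\beta}\normOut{\phiA_j}_{0,\beta},
\]
which is the target estimate. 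The portion of the path far from $u$ carries an exponentially small factor and is absorbed in the same bound. The main obstacle is geometric: the contours must remain in $\overline{\DuOut}$ at distance $\gtrsim \kappa\de^2$ from $\pm iA$ while preserving the monotonicity of $\Im s$ required by the oscillatory estimates. This forces a case-split on the position of $u$ (upper versus lower lobe, near a vertex $u_1$ or $\conj{u}_1$, near $-\rhoOuter$, or deep in the interior), and in each case one has to check that the constants in the bounds are uniform in $\de$ and $\kappa$.
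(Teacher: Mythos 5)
The paper does not give a proof of this lemma; it defers to Lemma~7.3 of the reference it cites. Your proposal reconstructs the standard argument used there and in the related literature, and it is sound. The two ingredients are exactly right: for $\GG_1$ you choose a contour from $-\rhoOuter$ to $u$ along which $|s^2+A^2|$ is essentially monotone, so the integral of $g_\de(s)|s^2+A^2|^{-\beta}$ is dominated by its endpoint behaviour, and the two terms in $g_\de$ integrate to $|u^2+A^2|^{-\beta}$ and $\de^2|u^2+A^2|^{-(\beta+5/3)}$ respectively (both requiring $\beta>0$, which is assumed); for $\GG_2,\GG_3$ the monotonicity of $\Im s$ along the contour keeps the oscillatory kernel bounded by $1$, and parameterizing the final segment with $\sin\theta$ bounded away from zero localizes the integral to $|s-u|=\OO(\de^2)$, producing the $\de^2$ gain. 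Two points that your write-up glosses over but that you implicitly have in hand: (i) the comparability $|s^2+A^2|\asymp|u^2+A^2|$ on the localized range uses $|u\mp iA|\geq\kappa\de^2$ and requires $\kappa$ bounded below, which is the role of $\kappaOuter$ in the surrounding propositions; (ii) the contribution from the part of the path at distance $\gtrsim\kappa\de^2$ from $u$ must be shown small by combining the exponential factor $e^{-c\kappa\sin\theta}$ with the (at worst polynomial in $\kappa^{-1}$) growth of the weight, which again uses $\kappa$ large. You acknowledge the case split on the position of $u$, and that is where the remaining work lies, but the plan as stated would carry it through.
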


% \begin{proof}	
The proof of this lemma follows the same lines as the proof of Lemma 7.3 in 
\cite{BFGS12}.
% \end{proof}

%\begin{remark}\label{remark:Ginverse}
%	Given $h \in \XOutTotal$, if $\zOut \in \XOutTotal$ is a solution of $\LL[\zOut]=h$ such that it satisfies the asymptotic condition~\eqref{eq:asymptoticConditionsOuter}, then it satisifes $\zOut=\GG[h]$.
%\end{remark}

Consider $u_1$ and $\conj{u_1}$ as in Figure \ref{fig:dominisComecocos} and the function
\begin{equation*}
%\label{def:initialConditionOuter}
	F^0(u) = \paren{\wuOut(-\rhoOuter),\,
		\xuOut(\conj{u}_1)
		e^{-\frac{i}{\de^2}(\conj{u}_1-u)}, \,
		\yuOut({u_1})
		e^{\frac{i}{\de^2}({u}_1-u)}
	}^T.
\end{equation*}
Notice that, since $0<\rhoInfty<\rhoOuter$, we have
$\claus{-\rhoOuter, u_1,\conj{u}_1} \in \DuInfty$.
Therefore, by Proposition~\ref{proposition:existenceInfty}, 
$\zuOut$ is already defined at these points.
%
%and, accordingly, $F^0(u)$ does not depend on $\zuOut$.
%
We define the fixed point operator
\begin{equation}\label{eq:operatorFFouter}
\FF[\phiA]= F^0 + \GG \circ \RRR^{\out}[\phiA],
\end{equation}
where the operator $\RRR^{\out}$ is given in~\eqref{def:operatorRRROuter}.
Since $\LL(F^0)=0$, 
by Lemma~\ref{lemma:GlipschitzOuter}, a solution $\zuOut=\FF[\zuOut]$ satisfies $\LL \zuOut = \RRR^{\out}[\zuOut]$ and by construction is the real-analytic continuation of the function $\zuOut$ obtained in Proposition \ref{proposition:existenceInfty}.

We rewrite Proposition~\ref{proposition:existenceComecocos} in terms of the operator $\FF$ defined in the Banach space
\[
\XcalOutTotal = \XcalOut_{1,0} \times \XcalOut_{0,\frac{4}{3}} \times \XcalOut_{0,\frac{4}{3}},
\] 
endowed with the norm
\[
\normOutTotal{\varphi}= 
\de \normOut{\varphi_1}_{1,0} + 
\normOut{\varphi_2}_{0,\frac{4}{3}} + 
\normOut{\varphi_3}_{0,\frac{4}{3}}.
\]

\begin{proposition}\label{proposition:existenceComecocosBanach}
There exist $\de_0, \kappaOuter>0$ such that, for $\de \in (0,\de_0)$ and $\kappa\geq \kappaOuter$, the fixed point equation $\zuOut = {\FF}[\zuOut]$ has a unique solution $\zuOut \in \XcalOutTotal$. 
Moreover, there exists a real constant $\cttOutOutA>0$  independent of $\de$ and $\kappa$ such that
$
\normOutTotal{\zuOut} \leq \cttOutOutA \de^3.
$
\end{proposition}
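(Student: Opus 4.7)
The strategy is a standard Banach fixed-point argument for $\FF = F^0 + \GG\circ\RRR^{\out}$ on a closed ball of radius $\cttOutOutA\de^3$ in $\XcalOutTotal$, following the scheme already used in the proof of Proposition~\ref{proposition:existenceInftyBanach}, but with the outer weighted norms $\normOut{\cdot}_{\al,\beta}$ instead of the exponentially weighted norms of the infinity domain. Because $\LL(F^0)=0$ and $\GG$ is a right inverse of $\LL$ on these Banach spaces (Lemma~\ref{lemma:GlipschitzOuter}), any fixed point is automatically a real-analytic solution of $\LL\zuOut=\RRR^{\out}[\zuOut]$ agreeing with the function from Proposition~\ref{proposition:existenceInfty} on the overlap $\DuOut\cap\DuInfty\neq\emptyset$, so the proof is reduced to showing that $\FF$ is well defined and contractive on $B(\cttOutOutA\de^3)\subset\XcalOutTotal$.

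The first step is to estimate the inhomogeneous term $F^0$. Its components are $\wuOut(-\rhoOuter)$, $\xuOut(\conj{u}_1)\h e^{-\frac{i}{\de^2}(\conj{u}_1-u)}$ and $\yuOut(u_1)\h e^{\frac{i}{\de^2}(u_1-u)}$. The values at $-\rhoOuter,\conj{u}_1,u_1\in\DuInfty$ are controlled by Proposition~\ref{proposition:existenceInfty} and are of size $\OO(\de^3)$ (after absorbing the $e^{\pm\vap u_1}$ factors, which are $\OO(1)$ since $\Re u_1$ is bounded). The crucial point is that the exponentials $e^{\mp\frac{i}{\de^2}(u_1-u)}$ must remain uniformly bounded for $u\in\DuOut$; this is exactly where the condition $\kappa\geq\kappaOuter$ enters, since the upper edge of $\DuOut$ sits at height $A-\kappa\de^2$ below the singularity $u=iA$ so that $\Im(u_1-u)\leq 0$ and the exponentials decay (or are $O(1)$). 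Converting to the weighted norms using $|u^2+A^2|^{4/3}\leq C$ on the bounded domain $\DuOut$ yields $\normOutTotal{F^0}\leq C\de^3$.

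The second step is the analogue of Lemma~\ref{lemma:computationsRRRInf} in the outer spaces: for $\phiA\in B(\varrho\de^3)$, one has
$$\normOut{\RRR^{\out}_1[\phiA]}_{1,1}\leq C\de^2,\qquad \normOut{\RRR^{\out}_j[\phiA]}_{0,\frac{4}{3}+1}\leq C\de,\quad j=2,3,$$
together with the corresponding bounds for the partial derivatives $\partial_w,\partial_x,\partial_y\RRR^{\out}_i[\phiA]$. These estimates are obtained by expanding $H_1^{\out}=H\circ\phi_{\equi}\circ\phi_{\out}-H^{\out}_0$ around the separatrix: Theorem~\ref{theorem:singularities} gives $\la_h-\pi\sim(u-iA)^{2/3}$ and $\La_h\sim(u-iA)^{-1/3}$, so each derivative of $H_1^{\out}$ contributes a specific power of $|u^2+A^2|^{-1}$ that matches the weights $\beta$ used in the definition of $\XcalOutTotal$. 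Combining these bounds with Lemmas~\ref{lemma:sumNormsOuter} and \ref{lemma:GlipschitzOuter} yields, for $\phiA,\wt\phiA\in B(\cttOutOutA\de^3)$,
$$\normOutTotal{\GG\circ\RRR^{\out}[0]}\leq C\de^3,\qquad \normOutTotal{\GG\circ\RRR^{\out}[\phiA]-\GG\circ\RRR^{\out}[\wt\phiA]}\leq \frac{C}{\kappa}\normOutTotal{\phiA-\wt\phiA},$$
where the factor $\kappa^{-1}$ arises from the extra $|u^2+A^2|^{-1}\leq(\kappa\de^2)^{-1}\de^2$ available in the outer domain. Choosing $\kappaOuter$ large enough ensures contractivity with ratio $\leq \tfrac{1}{2}$, and choosing $\cttOutOutA$ large enough relative to the constants in $\normOutTotal{F^0}+\normOutTotal{\GG\circ\RRR^{\out}[0]}$ ensures $\FF$ maps the ball into itself. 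The Banach fixed-point theorem then produces the unique $\zuOut\in B(\cttOutOutA\de^3)$.

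The main obstacle is the derivation of the technical estimates on $\RRR^{\out}$ and its partial derivatives in the outer weighted norms. Unlike the infinity case, here one must track carefully how the $(u-iA)^{-1/3}$ and $(u-iA)^{2/3}$ singularities of the separatrix propagate through $H_1^{\out}$ and the division by $1+g^{\out}$ in \eqref{def:operatorRRROuter}; the apparently delicate bookkeeping is made tractable by introducing the two-parameter family of weights $g_\de^\al(u)|u^2+A^2|^{-\beta}$, where the auxiliary function $g_\de$ precisely encodes the two concurrent powers of $|u^2+A^2|$ that appear in $\wuOut$ (one from the $\OO(\de^2)$-scaling, one from the $\OO(\de^4)$-scaling near the singularity, reflecting the $w/\La_h$ structure of $\phi_{\out}$). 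Verifying that $1+\partial_w H_1^{\out}(u,\phiA)$ is bounded away from zero on $\DuOut$ — needed for the operator $\RRR^{\out}$ to be defined — is postponed to Appendix~\ref{appendix:proofH-technical} as per Remark~\ref{remark:derivadaH1w}.
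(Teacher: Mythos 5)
Your overall plan matches the paper's, and the treatment of $F^0$, the choice of the two-parameter weights, and the reduction to Lemmas~\ref{lemma:sumNormsOuter} and \ref{lemma:GlipschitzOuter} are all correct. But there is a genuine gap in the step where you assert the contraction estimate
\[
\normOutTotal{\GG\circ\RRR^{\out}[\phiA]-\GG\circ\RRR^{\out}[\wt\phiA]}\leq \frac{C}{\kappa}\normOutTotal{\phiA-\wt\phiA}.
\]
This is false, and the paper says so explicitly after Lemma~\ref{lemma:FpropertiesComecococs}: the Lipschitz constant of $\FF_1$ is \emph{not} small in $\kappa$. Working through the bounds you would actually get
\[
\de\normOut{\FF_1[\phiA]-\FF_1[\phiB]}_{1,0}
\leq \frac{C\de}{\kappa^2}\normOut{\phiA_1-\phiB_1}_{1,0}
+ C\left(\normOut{\phiA_2-\phiB_2}_{0,\frac43}+\normOut{\phiA_3-\phiB_3}_{0,\frac43}\right),
\]
with the coupling from $(\phiA_2,\phiA_3)$ into $\FF_1$ of size $\OO(1)$, not $\OO(1/\kappa)$. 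The reason is structural: $\GG_1$ gains no factor of $\de^2$ (unlike $\GG_2,\GG_3$ in Lemma~\ref{lemma:GlipschitzOuter}), while $\partial_x\RRR_1^{\out}$ and $\partial_y\RRR_1^{\out}$ are only $\OO(\de)$, so after transferring weights via Lemma~\ref{lemma:sumNormsOuter} they contribute a constant of size $C/\de$ which exactly cancels the $\de$ in the norm weight. The ``extra $|u^2+A^2|^{-1}\le(\kappa\de^2)^{-1}$'' you invoke is already spent obtaining the convergence of the series for $H_1^{\out}$ and the $\kappa^{-2}$ in the diagonal terms; it does not appear in the off-diagonal $(2,3)\to 1$ coupling.

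The missing idea is the paper's Gauss--Seidel reformulation: since $\FF_2$ and $\FF_3$ do have Lipschitz constant $\OO(1/\kappa^2)$, one replaces $\FF$ by
\[
\wt{\FF}[(\wOut,\xOut,\yOut)]
= \bigl(\FF_1[\wOut,\FF_2[\zOut],\FF_3[\zOut]],\ \FF_2[\zOut],\ \FF_3[\zOut]\bigr)^T,
\]
which has the same fixed points, and whose first component now sees only the differences $\FF_2[\phiA]-\FF_2[\phiB]$, $\FF_3[\phiA]-\FF_3[\phiB]$, each already of size $\OO(1/\kappa^2)$; this is what makes $\wt\FF$ contractive. (An equivalent repair is to deflate the weight on the first component of $\normOutTotal{\cdot}$ by a small fixed constant and then take $\kappa$ large, but either way the direct estimate you wrote must be replaced.) Without one of these two fixes your fixed-point step does not close.
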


We prove this proposition through a 
fixed point argument.
First, we state a technical lemma, whose proof is postponed until 
Appendix~\ref{subsubsection:proofComputationsRRROut}. Fix $\varrho>0$ and define
\begin{equation*}
	%\label{def:ballOuter}
\ballOuterDef= \claus{\varphi \in \XcalOutTotal \st \normOutTotal{\varphi} \leq \varrho}.
\end{equation*}

\begin{lemma}\label{lemma:computationsRRROut}
Fix $\varrho>0$ and let $\RRR^{\out}$ be the operator defined in~\eqref{def:operatorRRROuter}.
For $\de>0$ small enough and $\kappa>0$ big enough, 
there exists a constant $C>0$
such that, for $\varphi \in \ballOuter$,
\begin{align*}
\normOut{\RRR^{\out}_1[\phiA]}_{1,1} \leq C \de^2, \qquad
\normOutSmall{\RRR^{\out}_j[\phiA]}_{0,\frac{4}{3}} \leq C \de, \qquad
j= 2,3,
\end{align*}
and,
\begin{align*}
&\normOut{\partial_w {\RRR^{\out}_1} [\phiA]}_{1,\frac13} 
\leq C \de^2,  &
&\normOut{\partial_x {\RRR^{\out}_1} [\phiA]}_{0,\frac73} 
\leq C\de, &
&\normOut{\partial_y {\RRR^{\out}_1} [\phiA]}_{0,\frac{7}{3}} 
\leq C{\de}, \\
&\,\normOutSmall{\partial_w {\RRR^{\out}_2} [\phiA]}_{0,\frac{2}{3}} 
\leq C\de, &
&\,\normOutSmall{\partial_x {\RRR^{\out}_2} [\phiA]}_{1,-\frac23} 
\leq C, &
&\,\normOutSmall{\partial_y{\RRR^{\out}_2} [\phiA]}_{0,2} 
\leq C\de^2, \\
&\,\normOutSmall{\partial_w {\RRR^{\out}_3} [\phiA]}_{0,\frac{2}{3}} 
\leq C\de, &
&\,\normOutSmall{\partial_x {\RRR^{\out}_3} [\phiA]}_{0,2} 
\leq C\de^2, &
&\,\normOutSmall{\partial_y {\RRR^{\out}_3} [\phiA]}_{1,-\frac23} 
\leq C.
\end{align*}

%\begin{align*}
%\normOut{\partial_w {\RRR^{\out}_1} [\phiA]}_{0,1} 
%&\leq \frac{C}{\kappa^2}, &
%%
%\normOut{\partial_x {\RRR^{\out}_1} [\phiA]}_{1,-\frac{1}{3}} 
%&\leq \frac{C}{\de}, &
%%
%\normOut{\partial_y {\RRR^{\out}_1} [\phiA]}_{1,-\frac{1}{3}} 
%&\leq \frac{C}{\de}, \\
%%
%\normOutSmall{\partial_w {\RRR^{\out}_j} [\phiA]}_{-1,\frac{4}{3}} 
%&\leq \frac{C}{\kappa^2 \de}, &
%%
%\normOutSmall{\partial_x {\RRR^{\out}_j} [\phiA]}_{0,0} 
%&\leq \frac{C}{\kappa^2 \de^2}, &
%%
%\normOutSmall{\partial_y{\RRR^{\out}_j} [\phiA]}_{0,0} 
%&\leq \frac{C}{\kappa^2 \de^2}.
%\end{align*}
\end{lemma}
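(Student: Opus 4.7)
The plan is to adapt the strategy of Lemma~\ref{lemma:computationsRRRInf} (proven in Appendix~\ref{subsubsection:ProofComputationsRRRInf}) to the weighted Banach scale $\XcalOut_{\alpha,\beta}$ on the outer domain $\DuOut$. First I would represent $H_1^{\out}=H\circ(\phi_{\equi}\circ\phi_{\out})-H_0^{\out}$ explicitly in the separatrix variables $(u,w,x,y)$. Using the convergent series expansion of $H_1^{\Poi}$ in powers of $(L-1,\eta,\xi)$ supplied by Lemma~4.1 of \cite{articleInner}, combined with the scaling $(L,\eta,\xi)=(1+\delta^2\La,\delta x,\delta y)$ from \eqref{def:changeScaling} and the separatrix change $\La=\La_h(u)-w/(3\La_h(u))$ from \eqref{def:changeOuter}, one writes $H_1^{\out}$ as a sum of monomials of the form $\delta^{e_k}\,a_k(\la_h(u))\,\La_h(u)^{m_k}\,w^{n_k}\,x^{p_k}\,y^{q_k}$ with explicit exponents. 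By Theorem~\ref{theorem:singularities} and the geometry of $\DuOut$ (which avoids $u=0$ and keeps $|u\mp iA|\geq \kappa\delta^2$), $|\La_h(u)|$ is comparable to $|u^2+A^2|^{-1/3}$ uniformly, so each separatrix factor lies in an appropriate $\XcalOut_{0,\beta}$ space and the whole expansion converges for $\phi\in\ballOuter$.

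Next I would compute $f^{\out}_j, g^{\out}$ and their $w,x,y$ partial derivatives from this expansion and substitute $\phi\in \ballOuter$. From $\normOutTotal{\phi}\leq \varrho\de^3$ one has $|\phi_1(u)|\lesssim \de^2 g_{\delta}(u)$ and $|\phi_{2,3}(u)|\lesssim \de^3|u^2+A^2|^{-4/3}$, and the product/composition rules of Lemma~\ref{lemma:sumNormsOuter} then translate each monomial bound into a weighted estimate. The $\delta$-orders are transparent: every occurrence of $x$ or $y$ carries a factor $\de$ and every $(L-1)$-factor carries $\de^2$ from the scaling, which yields the claimed $C\de$ and $C\de^2$ bounds. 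Finally the denominator $1+g^{\out}$ is treated by showing $|g^{\out}(u,\phi(u))|\leq \tfrac12$ for $\de$ small and $\kappa$ large (cf.\ Remark~\ref{remark:derivadaH1w} and \eqref{def:Fitag}); then $1/(1+g^{\out})$ is uniformly bounded and the estimates for $\RRR^{\out}_j=(f^{\out}_j-g^{\out}(\AAA^{\out}\phi)_j)/(1+g^{\out})$ and its partials follow by the chain rule.

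The main obstacle is the careful bookkeeping of weights across the nine derivative estimates. Two subtleties are worth flagging: partials in $w$ remove a factor of $\phi_1$, of size $\de^2 g_{\delta}(u)$, while partials in $x,y$ remove a factor $\de^3|u^2+A^2|^{-4/3}$, which accounts for the different exponent shifts in the statement; and the contributions coming from $g^{\out}(\AAA^{\out}\phi)_j$ carry the additional $1/\de^2$ from $\AAA^{\out}$ and act only on the second and third components, which is precisely what forces the mixed weights $\XcalOut_{1,-2/3}$ for $\partial_x\RRR^{\out}_2$ and $\partial_y\RRR^{\out}_3$ (the extra $g_{\delta}$-factor comes from the potential $\phi_1$-contribution inside $\AAA^{\out}\phi$ after differentiating). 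Tracking these two effects consistently is the bulk of the work; no new analytic ingredient beyond the bounds on the separatrix given by Theorem~\ref{theorem:singularities} and the algebraic properties of the norms in Lemma~\ref{lemma:sumNormsOuter} is required.
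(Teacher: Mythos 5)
Your proposal is correct and follows essentially the same route as the paper's Appendix~\ref{subsubsection:proofComputationsRRROut}: expand $H_1^{\out}$ via the Poincar\'e-variable series, track the $\de$-orders coming from the scaling together with the $\vabs{u^2+A^2}^\beta$ weights coming from the separatrix asymptotics of Theorem~\ref{theorem:singularities}, and control the denominator $1+g^{\out}$ via $\normOut{g^{\out}(\cdot,\phiA)}_{0,0}\leq C\kappa^{-2}\ll1$. That said, the two points you assert rather than argue are precisely the substance of the paper's proof. First, one needs the decomposition $H_1^{\out}=M_P+M_S+M_R$ of \eqref{def:expressionMJOuter}--\eqref{def:expressionMROuter}: the planet term $M_P$ carries the near-collision singularity as $u\to\pm iA$, and only the subtraction against $1/\sqrt{2+2\cos\la}$ (built into the definition of $H_1$ through $-V(\la)$, see \eqref{def:hamiltonianScalingH1} and \eqref{def:potentialV}) yields the extra $\de^4$ gain in \eqref{proof:DApotential}; without that cancellation the zeroth-order contribution $A[\de^4-1]^{-1/2}\sim\vabs{u^2+A^2}^{-2/3}$ has no smallness in $\de$ and the stated bounds fail. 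Second, convergence of the binomial expansion \eqref{def:SerieP} of $\PP[\de^4-1]$ on $\DuOut$ is not automatic: one must check $\vabs{\wtB/\wtA}\leq C\kappa^{-2}\ll 1$ as in the proof of Lemma~\ref{lemma:boundsMPout}, and this is the only place in the whole lemma where taking $\kappa$ large is genuinely needed. With those two ingredients your weight bookkeeping via Lemma~\ref{lemma:sumNormsOuter} goes through exactly as outlined; just note that, since the first row of $\AAA^{\out}$ is zero, the $g_\de$-weight in $\normOutSmall{\partial_x\RRR^{\out}_2[\phiA]}_{1,-2/3}\leq C$ is not a ``$\phiA_1$-contribution inside $\AAA^{\out}\phiA$'' but is inherited directly from the estimate $\normOut{g^{\out}(\cdot,\phiA)}_{1,-2/3}\leq C\de^2$ after dividing by the $\de^2$ coming from $\AAA^{\out}$.
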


The next lemma gives properties of the operator $\FF$.
%
%The first one deals with the first iteration, namely $\FF[0]$, and the second lemma is about the Lipschitz constant of $\FF$.

\begin{lemma}\label{lemma:FpropertiesComecococs}
Fix $\varrho>0$ and let $\FF$ be the operator defined in~\eqref{eq:operatorFFouter}.
Then, for $\de>0$ small enough and $\kappa>0$ big enough,  there exist 
constants $\cttOutOutB, \cttOutOutC>0$ independent of $\de$ and $\kappa$ such 
that
\[
\normOutTotal{\FF[0]} \leq \cttOutOutB \de^3.
\]
Moreover,
% there exists $\cttOutOutC>0$ independent of $\de$ and $\kappa$ such 
% that, 
for $\phiA, \phiB \in \ballOuter$,
\begin{align*}
\de \normOut{{\FF_1}[\phiA]-{\FF_1}[\phiB]}_{1,0}
&\leq \cttOutOutC \paren{ 
\frac{\de}{\kappa^2} 
\normOut{\phiA_1 - \phiB_1}_{1,0} 
+ \normOut{\phiA_2 - \phiB_2}_{0,\frac{4}{3}}
+ \normOut{\phiA_3 - \phiB_3}_{0,\frac{4}{3}} }, \\
%%%%%
\normOut{{\FF_j}[\phiA]-{\FF_j}[\phiB]}_{0,\frac{4}{3}} 
&\leq \frac{\cttOutOutC}{\kappa^2} \normOutTotal{\phiA - \phiB},
\qquad \text{for} \quad j=2,3.
\end{align*}

\end{lemma}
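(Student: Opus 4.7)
The plan is to reduce both claims to the estimates already provided by Lemma \ref{lemma:computationsRRROut} on $\RRR^{\out}$, the mapping properties of $\GG$ in Lemma \ref{lemma:GlipschitzOuter}, and the arithmetic of the weighted norms in Lemma \ref{lemma:sumNormsOuter}. The proof is essentially book-keeping of norm indices.

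For the bound on $\FF[0]$, split $\FF[0] = F^0 + \GG\circ\RRR^{\out}[0]$. By Lemma \ref{lemma:computationsRRROut}, $\normOut{\RRR^{\out}_1[0]}_{1,1}\leq C\de^2$ and $\normOut{\RRR^{\out}_j[0]}_{0,\frac{4}{3}}\leq C\de$ for $j=2,3$; applying Lemma \ref{lemma:GlipschitzOuter} then gives $\normOut{\GG_1\RRR^{\out}_1[0]}_{1,0}\leq C\de^2$ and $\normOut{\GG_j\RRR^{\out}_j[0]}_{0,\frac{4}{3}}\leq C\de^3$, so that $\normOutTotal{\GG\circ\RRR^{\out}[0]}\leq C\de^3$. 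The initial term $F^0$ only involves the boundary values $\wuOut(-\rhoOuter)$, $\xuOut(\conj{u}_1)$, $\yuOut(u_1)$, whose arguments lie in $\DuInfty$ because $\rhoOuter>\rhoInfty$; Proposition \ref{proposition:existenceInfty} then yields $|\wuOut(-\rhoOuter)|\leq C\de^2$ and $|\xuOut(\conj{u}_1)|,|\yuOut(u_1)|\leq C\de^3$. Since $\DuOut$ is bounded and its geometry forces $\Im u\geq \Im \conj{u}_1$ and $\Im u\leq \Im u_1$ throughout $\DuOut$, the exponentials in the definition of $F^0$ are bounded by $1$, hence $\normOutTotal{F^0}\leq C\de^3$.

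For the Lipschitz estimates I apply the mean value theorem, writing, for $\zeta_\sigma:=\sigma\phiA+(1-\sigma)\phiB\in\ballOuter$,
\[
\RRR^{\out}_j[\phiA]-\RRR^{\out}_j[\phiB] = \int_0^1 \paren{\partial_w\RRR^{\out}_j[\zeta_\sigma]\,(\phiA_1-\phiB_1) + \partial_x\RRR^{\out}_j[\zeta_\sigma]\,(\phiA_2-\phiB_2) + \partial_y\RRR^{\out}_j[\zeta_\sigma]\,(\phiA_3-\phiB_3)}\,d\sigma.
\]
One then combines the partial-derivative estimates of Lemma \ref{lemma:computationsRRROut} with the product rule in Lemma \ref{lemma:sumNormsOuter}(3) to bound each of the nine resulting terms, uses Lemma \ref{lemma:sumNormsOuter}(1)--(2) to reposition them in $\XcalOut_{1,1}$ (for $j=1$) or $\XcalOut_{0,\frac{4}{3}}$ (for $j=2,3$), and finally applies $\GG$ via Lemma \ref{lemma:GlipschitzOuter}. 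As a model calculation, for $j=2$ and the $(\phiA_2-\phiB_2)$ contribution, $\partial_x\RRR^{\out}_2[\zeta_\sigma]\in \XcalOut_{1,-\frac{2}{3}}$ with norm $\leq C$, so the product lies in $\XcalOut_{1,\frac{2}{3}}$; translating to $\XcalOut_{0,\frac{4}{3}}$ by Lemma \ref{lemma:sumNormsOuter}(2) with $\beta_2-\beta_1=\frac{2}{3}\leq \frac{5}{3}$ costs a factor $C\de^2(\kappa\de^2)^{-2}=C\kappa^{-2}\de^{-2}$, while $\GG_2$ supplies an extra $C\de^2$, yielding the desired $\kappa^{-2}$ coefficient.

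The main technical obstacle is confirming that each of the nine entries indeed collapses to the announced $\kappa^{-2}$- or $\de\kappa^{-2}$-controlled right-hand side. The most delicate case is the $(j,k)=(1,1)$ entry for $\FF_1$: here the quadratic dependence of $\RRR^{\out}_1$ on $w$, recorded as $\normOut{\partial_w\RRR^{\out}_1[\cdot]}_{1,\frac{1}{3}}\leq C\de^2$, combined with the conversion from $\XcalOut_{2,\frac{1}{3}}$ down to $\XcalOut_{1,1}$ (which by Lemma \ref{lemma:sumNormsOuter}(2) supplies $C\kappa^{-2}\de^{-2}$) and the prefactor $\de$ built into $\normOutTotal{\cdot}$, produces exactly the stated $\de/\kappa^2$ coefficient rather than a worse $1/\kappa^2$. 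Once these elementary checks are tabulated, summing the nine contributions and regrouping according to the definition of $\normOutTotal{\cdot}$ yields both announced inequalities.
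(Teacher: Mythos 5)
Your proof is correct and takes essentially the same approach as the paper: decomposing $\FF[0]$ into the boundary term $F^0$ and $\GG\circ\RRR^{\out}[0]$, and obtaining the Lipschitz bounds via the mean value theorem combined with the derivative estimates of Lemma~\ref{lemma:computationsRRROut} and the index arithmetic of Lemma~\ref{lemma:sumNormsOuter}. The only cosmetic difference is the order in which the weight indices are converted (you multiply first and then apply Lemma~\ref{lemma:sumNormsOuter}(2), whereas the paper reindexes the derivatives of $\RRR^{\out}$ before taking products), which yields identical bounds.
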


\begin{proof}
First, we obtain the estimates for $\FF[0]$. By 
Proposition~\ref{proposition:existenceInfty}, we have that
\begin{align*}
	\vabs{\wuOut(-\rhoOuter)} \leq C \de^2,
	\quad
	\vabs{\xuOut(\conj{u_1})} \leq C \de^3,
	\quad
	\vabs{\yuOut({u_1})} \leq C \de^3,
\end{align*}
and, as a result, 
$\normOutTotalSmall{F^0} \leq C \de^3$.
%\begin{align*}
%	\normOutTotal{F^0} &\leq
%	%
%	C\de \normOut{F^0_1}_{0,0}+
%	C\normOut{F^0_2}_{0,0} +
%	C\normOut{F^0_3}_{0,0} \leq C \de^3.
%\end{align*}
Then, applying Lemmas~\ref{lemma:GlipschitzOuter} and~\ref{lemma:computationsRRROut}, we obtain
\begin{align*}
\normOutTotal{\FF[0]} \leq
\normOutTotal{F^0} +
C \de \normOut{\RRR^{\out}_1[0]}_{1,1} +
C \de^2 \textstyle \sum_{j=2}^3 \normOutSmall{\RRR^{\out}_j[0]}_{0,\frac43} 
%+C \de^2 \normOut{\RRR^{\out}_3[0]}_{0,\frac43} 
\leq C \de^3.
\end{align*}

For the second statement, since $\FF=F^0+\GG \circ \RRR^{\out}$ and $\GG$ is linear, we need to compute estimates for $\RRR^{\out}[\phiA]-\RRR^{\out}[\phiB]$.
Then, by the mean value theorem, 
\begin{align*}
	\RRR_j^{\out}[\phiA]-\RRR_j^{\out}[\phiB] 
	= \boxClaus{\int_0^1 D{\RRR_j^{\out}}[s {\phiA} + (1-s) \phiB] d s} (\phiA - \phiB),
	\qquad j=1,2,3.
\end{align*}	
In addition, by Lemmas \ref{lemma:sumNormsOuter} and \ref{lemma:computationsRRROut}, for $j=2,3$, we have the estimates
\begin{equation*}%\label{proof:boundsRRRoutLipschitz}
\begin{aligned}	
\normOut{\partial_w {\RRR^{\out}_1} [\phiA]}_{0,1} 
&\leq \frac{C}{\kappa^2}, &
\normOut{\partial_x {\RRR^{\out}_1} [\phiA]}_{1,-\frac{1}{3}} 
&\leq \frac{C}{\de}, &
\normOut{\partial_y {\RRR^{\out}_1} [\phiA]}_{1,-\frac{1}{3}} 
&\leq \frac{C}{\de}, \\
\normOutSmall{\partial_w {\RRR^{\out}_j} [\phiA]}_{-1,\frac{4}{3}} 
&\leq \frac{C}{\kappa^2 \de}, &
\normOutSmall{\partial_x {\RRR^{\out}_j} [\phiA]}_{0,0} 
&\leq \frac{C}{\kappa^2 \de^2}, &
\normOutSmall{\partial_y{\RRR^{\out}_j} [\phiA]}_{0,0} 
&\leq \frac{C}{\kappa^2 \de^2}.
\end{aligned}
\end{equation*}
We estimate each component separately.
For $j=1$, we have that
\begin{align*}	
\MoveEqLeft[5] \de \lVert \RRR_1^{\out}[\phiA]-	\RRR_1^{\out}[\phiB] \rVert^{\mathrm{out}}_{1,1}
\leq
\sup_{\zeta \in \ballOuter} \de \Big[
\normOut{\phiA_1-\phiB_1}_{1,0}
\normOut{\partial_w{\RRR^{\out}_1}[\zeta]}_{0,1} \\
&+
\normOut{\phiA_2-\phiB_2}_{0,\frac{4}{3}}  \normOut{\partial_x{\RRR^{\out}_1}[\zeta]}_{1,-\frac{1}{3}}  
+
\normOut{\phiA_3-\phiB_3}_{0,\frac{4}{3}} \normOut{\partial_y{\RRR^{\out}_1}[\zeta]}_{1,-\frac{1}{3}} \Big] 
\\
%%%%%%%%%
%&\leq
%\sup_{\zeta \in \ballOuter} \bigg[
%\frac{C}{\kappa^2\de}
%\normOut{\phiA_1-\phiB_1}_{1,0}
%\normOut{\partial_w{\RRR^{\out}_1}[\zeta]}_{1,\frac13}
%+\frac{C}{\de}
%\normOut{\phiA_2-\phiB_2}_{0,\frac{4}{3}}  \normOut{\partial_x{\RRR^{\out}_1}[\zeta]}_{0,\frac{7}{3}}  \\
%&+\frac{C}{\de}
%\normOut{\phiA_3-\phiB_3}_{0,\frac{4}{3}} \normOut{\partial_y{\RRR^{\out}_1}[\zeta]}_{0,\frac73} \bigg] 
%%%%%%%%%
\leq& \,
\frac{C \de}{\kappa^2} 
\normOut{{\phiA}_1-{\phiB}_1}_{1,0} 
+		
C \normOut{{\phiA}_2 -{\phiB}_2}_{0,\frac{4}{3}}
+
C \normOut{{\phiA}_3 -{\phiB}_3}_{0,\frac{4}{3}}.
\end{align*}
Analogously, for $j=2,3$, we obtain
\begin{align*}
\MoveEqLeft[7]
\normOutSmall{\RRR_j^{\out}[\phiA]-\RRR_j^{\out}[\phiB]}_{0,\frac{4}{3}}
\leq
\sup_{\zeta \in \ballOuter} \Big[
\normOut{\phiA_1-\phiB_1}_{1,0}
\normOutSmall{\partial_w{\RRR^{\out}_j}[\zeta]}_{-1,\frac{4}{3}} \\
&+
\normOut{\phiA_2-\phiB_2}_{0,\frac{4}{3}} \normOutSmall{\partial_x{\RRR^{\out}_j}[\zeta]}_{0,0}
+
\normOut{\phiA_3-\phiB_3}_{0,\frac{4}{3}}
\normOutSmall{\partial_y{\RRR^{\out}_j}[\zeta]}_{0,0} \Big] \\
%%%%%%%
%&\leq
%\sup_{\zeta \in \ballOuter} \bigg[
%\frac{C}{\kappa^2 \de^2}
%\normOut{\phiA_1-\phiB_1}_{1,0}
%\normOutSmall{\partial_w{\RRR^{\out}_2}[\zeta]}_{0,\frac{2}{3}} 
%+
%\frac{C}{\kappa^2\de^2}
%\normOut{\phiA_2-\phiB_2}_{0,\frac{4}{3}} \normOutSmall{\partial_x{\RRR^{\out}_2}[\zeta]}_{1,-\frac13} \\
%&+
%\frac{C}{\kappa^2\de^4}
%\normOut{\phiA_3-\phiB_3}_{0,\frac{4}{3}}
%\normOutSmall{\partial_y{\RRR^{\out}_2}[\zeta]}_{0,2} \bigg]
%%%%%
\leq& \,
\frac{C}{\kappa^2 \de^2}
\normOutTotal{{\phiA} -{\phiB}},
\end{align*}
and, using Lemma~\ref{lemma:GlipschitzOuter}, we obtain the estimates for the 
second statement. 
\end{proof}

Lemma~\ref{lemma:FpropertiesComecococs} shows that, by assuming $\kappa$ big enough, operators $\FF_2$ and $\FF_3$ have Lipschitz constant less than $1$ with the norm in $\XcalOutTotal$.
However, we are not able to control the Lipschitz constant of $\FF_1$.
To overcome this problem, we apply a Gauss-Seidel argument to define a new operator
\begin{equation*} %\label{def:operatorFGaussSeidelOuter}
\wt{\FF}[\zOut]
= \wt{\FF}[(\wOut,\xOut,\yOut)]
= \begin{pmatrix} 
\FF_1[\wOut,\FF_2[\zOut],\FF_3[\zOut]] \\ 
\FF_2[\zOut] \\
\FF_3[\zOut] \end{pmatrix},
\end{equation*}
which turns out to be contractive in a suitable ball and has the same fixed points as $\FF$.

\begin{proof}[End of the proof of Proposition~\ref{proposition:existenceComecocosBanach}]
We look for a fixed point of $\wt{\FF}$.
First, we obtain an estimate for $\normOutTotalSmall{\wt{\FF}[0]}$. 
We rewrite it as
\begin{align*}%\label{proof:operadorJJ0}
\wt{\FF}[0] = \FF[0] + 
\Big(\FF_1[0,\FF_2[0],\FF_3[0]] - \FF_1[0],
0,0\Big)^T,
\end{align*}
and we notice that, by Lemma~\ref{lemma:FpropertiesComecococs}, 
$
\normOutTotalSmall{(0,\FF_2[0],\FF_3[0])} \leq
\normOutTotal{\FF[0]} \leq C \de^3.
$
Then, applying Lemma~\ref{lemma:FpropertiesComecococs}, there exists constant $ 
\cttOutOutA>0$ such that
\begin{equation}\label{proof:operadorJJ0}
\begin{split}	
\normOutTotalSmall{\wt{\FF}[0]} &\leq 
\normOutTotal{\FF[0]} + 
\normOut{\FF_1[0,\FF_2[0],\FF_3[0]] - \FF_1[0]}_{1,0} \\
&\leq 
\normOutTotal{\FF[0]} +
C \normOut{\FF_2[0]}_{0,\frac{4}{3}} + 
C \normOut{\FF_3[0]}_{0,\frac{4}{3}}
\leq \frac12  \cttOutOutA \de^3.
\end{split}
\end{equation}
%
%Let us fix constant $\varrho>0$ such that
%\[
%\normOutTotalSmall{\wt{\FF}[0]} \leq \frac{\varrho}{2} \de^3.
%\]

Now, we prove that the operator $\wt{\FF}$ is contractive in $B( \cttOutOutA 
\delta^3)$.
Indeed, by Lemma~\ref{lemma:FpropertiesComecococs}, we have that, for $\phiA, 
\phiB \in B( \cttOutOutA 
\delta^3)$,
\begin{align*}
\de \normOutSmall{\wt{\FF}_1[\phiA]-\wt{\FF}_1[\phiB]}_{1,0} &\leq C \paren{
\frac{\de}{\kappa^2} 
\normOut{\phiA_1 - \phiB_1}_{1,0}
+
\normOut{\FF_2[\phiA]-\FF_2[\phiB]}_{0,\frac{4}{3}}
+
\normOut{\FF_3[\phiA]-\FF_3[\phiB]}_{0,\frac{4}{3}}
} \\
&\leq  
\frac{C \de}{\kappa^2} 
\normOut{\phiA_1 - \phiB_1}_{1,0} + 
\frac{2 C}{\kappa^2} 
\normOutTotal{\phiA-\phiB}
\leq 
\frac{C}{\kappa^2} 
\normOutTotal{\phiA-\phiB}, \\
%%%%
\normOutSmall{\wt{\FF}_j[\phiA]-\wt{\FF}_j[\phiB]}_{0,\frac{4}{3}}
&= 
\normOut{\FF_j[\phiA]-\FF_j[\phiB]}_{0,\frac{4}{3}}
\leq
\frac{C}{\kappa^2} 
\normOutTotal{\phiA-\phiB}, 
\qquad \text{for } j=2,3.
\end{align*}
Then, for $\kappa>0$ big enough, we have that
$
	%\label{proof:GaussSeidelLipschitzOuter}
\normOutTotalSmall{\wt{\FF}[\varphi]-\wt{\FF}[\phiB]} \leq \frac{1}{2} \normOutTotal{\phiA-\phiB}.
$
Together with \eqref{proof:operadorJJ0}, this implies that 
$\wt{\FF}:B( \cttOutOutA 
\delta^3) \to B( \cttOutOutA 
\delta^3)$ is well defined and contractive.
Therefore, $\wt{\FF}$ has a fixed point $\zuOut \in B( \cttOutOutA 
\delta^3)$.
\end{proof}

\subsection{Switching to the time-parametrization}
\label{subsection:proofH-changeuOut}

In this section, by means of a fixed point argument, we prove Proposition 
\ref{proposition:changeuOut}. That is, we obtain a change of 
variables $\uOut$ satisfying \eqref{eq:equationuOutB}, that is
\begin{equation}\label{eq:InvU}
\partial_v \uOut = R[\uOut]\quad \text{ where }\quad 
  R[\uOut]= \partial_w H_1^{\out}
	\paren{v + \uOut(v), \zuOut(v+\uOut(v))}.
\end{equation}
To this end, we consider the Banach space
\begin{equation}\label{def:Yout}
\YcalOuter = \bigg\{\phiA: \DOuterTilde \to \complexs \st \phiA \text{ real-analytic, }
\normSup{\phiA}:=\sup_{v \in \DOuterTilde} \vabs{\uOut(v)}<\infty \bigg\}.
\end{equation}
%where $\normSup{\cdot}$ is the usual supremum norm in $\DOuterTilde$ (see \eqref{def:DOuterTilde}).
%

% First, we state a technical lemma, whose proof is postponed until Section \ref{subsubsection:ProofcomputationsRRRtransitionOuter}.
% 
% \textcolor{red}{Una altra opci\'o:}

First, we state a technical lemma. Its proof is a  direct consequence of the 
proof of Lemma \ref{lemma:computationsRRROut} (see also Remark \ref{rmk:R} 
in Appendix~\ref{subsubsection:proofComputationsRRROut}).
%
%Fix $\varrho>0$ and define
%\[
%\ballOuterTildeDef = \claus{\varphi \in \YcalOuter \st
%	\normSup{\varphi} \leq \varrho \de^2}.
%\]

\begin{lemma}\label{lemma:computationsRRRtransitionOuter}
Fix $\varrho>0$.
% and let $R$ be as defined in \eqref{def:operatorRRRtransition}.
%
For $\de>0$ small enough and $\varphi \in \YcalOuter$ such that 
$\normSup{\varphi} \leq \varrho \de^2$, 
there exists a constant $C>0$  such that
% \begin{align*}
		$\normSup{R[\varphi]}\leq C \de^2$ and  
% 		\qquad
		$\normSup{D R[\varphi]} \leq C \de^2$. 
% \end{align*}
\end{lemma}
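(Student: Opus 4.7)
Recall that $R[\varphi](v)=g^{\out}(v+\varphi(v),\zuOut(v+\varphi(v)))$ with $g^{\out}=\partial_w H_1^{\out}$, and that $R:\YcalOuter\to\YcalOuter$ with $\YcalOuter$ endowed with the sup norm. Since the dependence on $\varphi$ is through the single point $v+\varphi(v)$ in the analytic function $G(u):=g^{\out}(u,\zuOut(u))$, the Fr\'echet differential of $R$ acts as multiplication:
\begin{equation*}
DR[\varphi]\cdot h\,(v)=G'(v+\varphi(v))\,h(v),\qquad h\in\YcalOuter,
\end{equation*}
so that $\normSup{DR[\varphi]}=\sup_{v\in\DOuterTilde}|G'(v+\varphi(v))|$. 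Thus the lemma reduces to uniform bounds $|G|\leq C\de^2$ and $|G'|\leq C\de^2$ on a suitable neighborhood of $\DOuterTilde$.

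\textbf{Step 1 (containment).} The first task is geometric: checking that for $\kappaOuterTilde$ larger than $\kappaOuter+\varrho$ and $\de$ small enough, the shifted point $v+\varphi(v)$ lies in $\DuOutKappa$ for every $v\in\DOuterTilde$ and every $\varphi$ with $\normSup{\varphi}\leq\varrho\de^2$. This follows by comparing the defining inequalities of $\DOuterTilde$ and $\DuOutKappa$: the ``outer'' boundary of $\DOuterTilde$ (the one closest to $\pm iA$) is pushed inside the corresponding boundary of $\DuOutKappa$ by $(\kappaOuterTilde-\kappaOuter)\de^2$, while the ``inner/lateral'' boundaries are pushed by an $\OO(1)$ amount (since $\dOuter<\dOuterA$ and $\dOuterB<1/2$). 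A further observation I will exploit is that the closure of $\DOuterTilde$ is \emph{uniformly} bounded away from $\pm iA$ by an $\OO(1)$ distance independent of $\de$: indeed the constraint $|\Im v|<\dOuterB A+\tan\betaOutB\Re v$ together with $|\Im v|<A-\kappaOuterTilde\de^2-\tan\betaOutA\Re v$ places the ``tip'' of $\DOuterTilde$ at a finite distance (depending on $\dOuterB,\betaOutA,\betaOutB,A$) from $iA$.

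\textbf{Step 2 (bound on $R[\varphi]$).} With $v+\varphi(v)\in\DuOutKappa$ and bounded away from the singularities, I invoke the pointwise estimates produced in the proof of Lemma \ref{lemma:computationsRRROut} for the function $g^{\out}(u,z)$ at $z=\zuOut(u)$: the same arguments that yield $\normOut{\RRR^{\out}_1[\zuOut]}_{1,1}\leq C\de^2$ produce a pointwise bound of the shape $|g^{\out}(u,\zuOut(u))|\leq C\de^2(|u^2+A^2|^{-\alpha}+\de^2|u^2+A^2|^{-\beta})$ for suitable exponents $\alpha,\beta$. On $\DOuterTilde$ we have $|u^2+A^2|\geq c>0$ uniformly in $\de$, so the weights collapse to $\OO(1)$ and $|G(u)|\leq C\de^2$. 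Composing with the shift yields $\normSup{R[\varphi]}\leq C\de^2$.

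\textbf{Step 3 (bound on $DR[\varphi]$).} The same conclusion for $G'$ follows from a Cauchy estimate on an enlarged domain $\DOuterTilde^+\subset\DuOutKappa$, obtained by thickening $\DOuterTilde$ by a small constant $r>0$ in the ``safe'' directions (away from the outer boundary shared with $\DuOutKappa$) and by $c\de^2$ close to the tip. Because $G$ is $\OO(\de^2)$ uniformly on $\DOuterTilde^+$ by Step 2 applied there, Cauchy's inequality gives $|G'(u)|\leq C\de^2/r=C\de^2$ away from the tip. Near the tip the radius shrinks to $c\de^2$, but one can instead differentiate $G$ directly by chain rule, $G'=\partial_u g^{\out}+D_z g^{\out}\cdot(\zuOut)'$: the factor $g^{\out}$ carries the explicit small parameter coming from $H_1^{\out}=H_1\circ\phi_\equi\circ\phi_\out$, so each partial derivative of $g^{\out}$ is again $\OO(\de^2)$ times a weight which is $\OO(1)$ on $\DOuterTilde$, and Cauchy estimates for $(\zuOut)'$ on the larger domain $\DuOutKappa$ together with Proposition \ref{proposition:existenceComecocos} give $|(\zuOut)'|$ under control. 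Combining, $|G'(v+\varphi(v))|\leq C\de^2$.

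\textbf{Main obstacle.} The delicate point is the derivative bound at the tip of $\DOuterTilde$, where the distance to $\partial\DuOutKappa$ in the imaginary direction drops to $\OO(\de^2)$. A naive Cauchy estimate loses the factor $\de^2$; one must combine Cauchy in the ``long'' real direction (where the clearance is $\OO(1)$) with the explicit structure of $g^{\out}$ to close the bound. This is precisely why the statement says the result is a \emph{direct consequence of the proof} of Lemma~\ref{lemma:computationsRRROut} rather than of its statement: the pointwise estimates for $\partial_w H_1^{\out}$ and its $(u,w,x,y)$-derivatives established there transfer to $\DOuterTilde$ with all weights reduced to $\OO(1)$, and then the chain rule applies.
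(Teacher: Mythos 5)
Your reduction of the lemma to pointwise bounds on $G(u)=g^{\out}(u,\zuOut(u))$ and $G'(u)$, and your Steps 1 and 2, are correct and in line with what the paper intends: $\DOuterTilde$ stays at $\OO(1)$ distance from $\pm iA$ and from $u=0$, so the weighted estimate $\normOut{g^{\out}(\cdot,\zuOut)}_{1,-\frac23}\le C\de^2$ from the proof of Lemma~\ref{lemma:computationsRRROut} collapses to $|G|\le C\de^2$, and the containment $v+\varphi(v)\in\DuOutKappa$ is elementary.

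The gap is in Step 3. First, your claim that ``each partial derivative of $g^{\out}$ is again $\OO(\de^2)$'' is false for the $w$-derivative: by \eqref{eq:expressionHamiltonianInfty} the Hamiltonian $H_1^{\out}$ contains the term $-\frac{w^2}{6\La_h^2(u)}$, so $\partial_w g^{\out}=\partial_w^2H_1^{\out}$ contains $-\frac{1}{3\La_h^2(u)}$, and the paper only proves $\normOut{\partial_w g^{\out}(\cdot,\zuOut)}_{0,-\frac23}\le C$, i.e.\ $\OO(1)$ on $\DOuterTilde$. The chain-rule term $\partial_w g^{\out}\cdot\partial_u\wuOut$ therefore forces you to show $\partial_u\wuOut=\OO(\de^2)$, and this is exactly what your Cauchy estimates cannot deliver: the whole upper edge of $\DOuterTilde$ (not just a corner) lies on the line $|\Im v|=A-\kappaOuterTilde\de^2-\tan\betaOutA\Re v$, at distance $(\kappaOuterTilde-\kappaOuter)\de^2\cos\betaOutA=\OO(\de^2)$ from $\partial\DuOutKappa$, so the inscribed radius is $\OO(\de^2)$ and Cauchy gives only $|\partial_u\wuOut|\le C\de^2/(c\de^2)=\OO(1)$; elongating the contour in the real direction does not help, since the derivative integral is dominated by the nearest portion of the contour. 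As written, your bound for $G'$ near that edge is $\OO(1)$, which ruins the claimed Lipschitz constant. The repair is to avoid Cauchy estimates for $\partial_u\zuOut$ altogether and use the invariance equation \eqref{eq:invariantEquationOuter}: $\partial_u\wuOut=\RRR^{\out}_1[\zuOut]$ and $\partial_u\xuOut=\frac{i}{\de^2}\xuOut+\RRR^{\out}_2[\zuOut]$ (similarly for $y$), so that Lemma~\ref{lemma:computationsRRROut} with all weights $\OO(1)$ on $\DOuterTilde$ gives $|\partial_u\wuOut|\le C\de^2$ and $|\partial_u\xuOut|,|\partial_u\yuOut|\le C\de$; combined with $|\partial_u g^{\out}|\le C\de^2$ (from the $\partial_{uw}M_P$, $\partial_{uw}M_S$, $\partial_{uw}M_R$ bounds of Lemmas~\ref{lemma:boundsMPout} and~\ref{lemma:boundsMSout}) and $|\partial_x g^{\out}|,|\partial_y g^{\out}|\le C\de^3$, every term of $G'$ is $\OO(\de^2)$.
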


%\begin{proof}
%Let us notice that, by \eqref{def:hamiltonianOuterSplit} and applying the corresponding change of variables, we have that
%\begin{align*}
%\Dx{H_1^{\out}}{w} =
%-\frac{\de^2}{3\La_h}
%\Dx{H^{\Poi}_1}{L} \circ \phi_{\sca} \circ \phi_{\equi} \circ \phi_{\out}
%-
%\frac1{3\de^2\La_h} F'_{\pend}(\de^2\La_h(u)-\frac{\de^2 \phi_2(u)}{3\La_h(u)} + \de^4\LtresLa)
%\\
%\Dxx{H_1^{\out}}{w} =
%\frac{\de^4}{9\La_h^2}
%\Dxx{H_1^{\Poi}}{L} \circ \phi_{\sca} \circ \phi_{\equi} \circ \phi_{\out}.
%\end{align*}
%Then, since $(\la_h(u),\La_h(u))$ is bounded for $u \in \DOuterTilde$ (see Theorem \ref{theorem:singularities}),
%we obtain
%\begin{align*}
%\vabs{R[\phiA](u)} &\leq C \de^2 \vabs{\partial_{L} H_1^{\Poi}
%\circ \phi_{\sca} \circ \phi_{\equi} \circ \phi_{\out} (u,\phiA(u))}
%, \\
%\quad
%\vabs{D R[\phiA](u)} &\leq C \de^4 \vabs{\partial^2_{\La} H_1^{\Poi} \circ \phi_{\sca}
%\circ \phi_{\equi} \circ \phi_{\out} (u,\phiA(u))}.
%\end{align*}
%Since $\DOuterTilde$ is at distance of order $1$ of the singularities $u=\pm iA$ of the separatrix, then
%$phi_{\sca} \circ \phi_{\equi} \circ \phi_{\out} (u,\phiA(u))$ is far from collision 
%
%Taking into account the definition of $H_1$ in \eqref{def:hamiltonianScalingH1} we have that
%\begin{align}
%\Dx{H_1}{\La}(\la,\La,x,y;\de)	
%=
%\de^2
%\Dx{H_1^{\Poi}}{\La}(\la,1+\de^2\La,\de x, \de y;\de^4) 
%+
%\frac1{\de^2}F_{\pend}(\de^2\La),
%\end{align}
%such that $F_{\pend}(s)=\OO(s^3)$.
%\end{proof}

Let us define the operators
\begin{equation}\label{def:operatorGGtransitionFlow}
G[\varphi](v) = 
\int_{\rhoOuterTilde}^v \varphi(s) ds,
\qquad \text{and} \qquad
F = G \circ R,
\end{equation}
where $\rhoOuterTilde \in \reals$ is the rightmost vertex of the domain $\DOuterTilde$ (see Figure~\ref{fig:dominisTransition}).
Then, a solution $\uOut = F[\uOut]$  satisfies  equation 
\eqref{eq:InvU} and the initial condition $\uOut(\rhoOuterTilde)=0$.

% We prove Proposition \ref{proposition:changeuOut} by means of a fixed point argument for the operator $F$.

\begin{proof}[Proof of Proposition \ref{proposition:changeuOut}]

The  operator $G$ in \eqref{def:operatorGGtransitionFlow} satisfies that, for 
$\varphi \in \YcalOuter$, 
\begin{equation}\label{eq:GlipschitzTransitionFlow}
\normSup{G[\phiA]}\leq C \normSup{\phiA}.
\end{equation}
Then, by Lemma
\ref{lemma:computationsRRRtransitionOuter}, 	
there exists $\cttOuterD>0$ independent of $\de$ such that
\begin{align}\label{proof:firstIterationTransitionOuter}
	\normSup{F[0]} \leq C \normSup{R[0]} 	\leq \frac12 \cttOuterD \de^2.
\end{align}
Moreover, for $\phiA,\phiB \in B( \cttOuterD \de^2)
% , Now, we compute the 
% Lipschitz constant of $F$ in $B( \cttOuterD \de^2)
= \claus{\varphi \in 
\YcalOuter \st \normSup{\varphi} \leq \cttOuterD \de^2}$,
by  the mean value theorem and Lemma 
\ref{lemma:computationsRRRtransitionOuter},  
\begin{align*}
	\normSup{R[\phiA]-R[\phiB]} = 
	\normSupBig{\int_0^1 DR[s\phiA+(1-s)\phiB]ds} \normSup{\phiA-\phiB}
	\leq C \de^2 
	\normSup{\phiA-\phiB}.
\end{align*}
Then, by Lemma \ref{lemma:computationsRRRtransitionOuter}, 
\eqref{eq:GlipschitzTransitionFlow},  
\eqref{proof:firstIterationTransitionOuter}
and taking $\de$ small enough, 
% we obtain
% \begin{equation}\label{proof:contractiveTransitionOuter}
% 	\begin{split}	
% 		\normSup{F[\phiA]-F[\phiB]}
% 		\leq \frac{1}{2}
% 		\normSup{\phiA-\phiB}.
% 	\end{split}
% \end{equation}	
% Therefore, by \eqref{proof:firstIterationTransitionOuter} and
% \eqref{proof:contractiveTransitionOuter},
$F$ is well defined and contractive in $B( \cttOuterD \de^2)$ and, as a result, 
has a fixed point $\uOut \in B( \cttOuterD \de^2)$.

It only remains to check that 
$v+\uOut(v) \in \DuOutKappa$ for $v \in \DOuterTilde$.
Indeed, since $\normSup{\uOut}  \leq  \cttOuterD\de^2$ and ${\DOuterTilde} 
\subset \DuOut$, taking $\de$ small enough the statement is proved.

\end{proof}	

%% THIRD STATEMENT %%%
	
%Now we consider the third statement.
%%
%Let $\Gu=(\la_h+\lah,\La_h+\Lah,\xh,\yh)$.
%%
%Then, applying the changes of coordinates $\phi_{\equi}$ and $\phi_{\out}$ (see \eqref{def:changeEqui} and \eqref{def:changeOuter}) in equation \eqref{eq:uOutParametrizationA}, we obtain
%\begin{align*}
%\lah(v) &= \la_h(v+\uOut(v))-\la_h(v) , 
%\\
%\Lah(v) &=
%\La_h(v+\uOut(v)) - \La_h(v)
%-\frac{\wuOut(v+\uOut(v))}{3 \La_h(v+\uOut(v))}
%+ \de^2 \LtresLa(\de),
%\\
%\xh(v) &= \xuOut(v+\uOut(v)) + \de^3 \Ltresx(\de) 
%\\
%\yh(v) &= \yuOut(v+\uOut(v)) + \de^3 \Ltresy(\de).
%\end{align*}
%Then, since Proposition \ref{proposition:existenceComecocos} implies that $\normSup{\wuOut}\leq C\de^2$ and $\normSup{\xuOut},\normSup{\yuOut}\leq C\de^3$, 
%one can easily obtain the estimates of the third statement.

\subsection{Extending the time-parametrization}
\label{subsection:proofH-existenceFlow}

In this section, we  extend analytically the parametrization $\Gu$ given in Corollary \ref{corollary:changeuOut} from the transition domain $\DuOut$ to the flow domain  $\DFlow$ (see \eqref{def:dominiFlow}).

Since $\Gu$ satisfies the equations given by $H$ in \eqref{def:hamiltonianScaling}, $\Gh=\Gu-\G_h$ (see \eqref{eq:splitParametrizationGu}) satisfies
\begin{align*}
%\label{eq:diffEquationFlow}
\left\{
\begin{aligned}	
\partial_v \lah &= -3\Lah 
+ \partial_{\La} H_1 (\G_h + \Gh;\de), \\
\partial_v \Lah &= 
-V'(\la_h+\lah)
+V'(\la_h)
- \partial_{\la} H_1 (\G_h + \Gh;\de), \\
\partial_v \xh &= 
i \frac{\xh}{\de^2}
+ i\partial_{y} H_1 (\G_h + \Gh;\de), \\
\partial_v \yh &= 
-i \frac{\yh}{\de^2}
- i\partial_{x} H_1 (\G_h + \Gh;\de),
\end{aligned}
\right.
\end{align*} 
%Along this section, we denote the components of all the functions and operators by a numerical sub-index $f=(f_1,f_2,f_3, f_4)$, unless stated otherwise. 
%
which can be rewritten as
% Then $\Gh$ satisfies \eqref{eq:diffEquationFlow},
%  if and only if 
 $\LL^{\flow} \, \Gh = \RRR^{\flow}[\Gh]$,
where
% Let us define the operators,
\begin{equation}\label{def:operatorLLFlow}
\LL^{\flow} \phiA = 
\paren{\partial_v - \AAA^{\flow}(v)} \phiA,
\qquad
\AAA^{\flow}(v) = \begin{pmatrix}
	0 & -3 & 0 & 0 \\
	-V''(\la_h(v)) & 0 & 0 & 0 \\
	0 & 0 & \frac{i}{\de^2} & 0  \\
	0 & 0 & 0 & -\frac{i}{\de^2} 
\end{pmatrix},
\end{equation}
and
\begin{equation}\label{def:operatorRFlow}
\RRR^{\flow}[\phiA](v) = 
\begin{pmatrix}
	\partial_{\La} H_1 (\G_h(v)+\phiA(v);\de) \\ 
	T[\phiA_1](v)-\partial_{\la} H_1 (\G_h(v)+\phiA(v);\de) 
	\\
	i\partial_{y} H_1 (\G_h(v)+\phiA(v);\de) 
	\\
	-i\partial_{x} H_1 (\G_h(v)+\phiA(v);\de) \\
\end{pmatrix},	
\end{equation}
with
$T[\phiA_1] = -V'(\la_h+\phiA_1) +V'(\la_h)
+ V''(\la_h)\phiA_1 $.
%
%\begin{align}\label{def:operatorGFlow}
%T[\phiA_1] = 
%-V'(\la_h+\phiA_1)
%+V'(\la_h)
%+ V''(\la_h)\phiA_1.
%\end{align}
%
%

We look for $\Gh$ through fixed point argument in the Banach space $\XcalFlowTotal = \paren{\XcalFlow}^4
$, where 
%
%Let us consider the norm:
%\begin{align}
%\normFlow{\phiA} = \sup_{v \in \DFlow} \vabs{\phiA(v)},
%\end{align}
%%Let us consider the Banach space,
\begin{align*}
\XcalFlow = \bigg\{\phiA: \DFlow \to \complexs \st
 \phiA \text{ real-analytic, }
 \normFlow{\phiA}:=\sup_{v\in\DFlow}\vabs{\phiA(v)} < \infty \bigg\},
\end{align*}
endowed with the norm
\[
\normFlowTotal{\phiA} = \de \normFlow{\phiA_1}
+ \de \normFlow{\phiA_2}
+ \normFlow{\phiA_3} + \normFlow{\phiA_4}.
\]
%
% First, we consider .
%
A fundamental matrix of the linear equation $\dot{\xi} = \AAA^{\flow}(v) \xi$ is
\begin{align*}
	%\label{eq:fundamentalMatrixFlow}
\MoveEqLeft 	
\Phi(v) = \begin{pmatrix}
	3\La_h(v) & 3f_h(v) & 0 & 0  \\
	-\dot{\La}_h(v) & -\dot{f}_h(v) & 0 & 0 \\
	0 & 0 & e^{\frac{i}{\de^2} v} & 0 \\
	0 & 0 & 0 & e^{-\frac{i}{\de^2} v}
\end{pmatrix}
\, \text{with } \,
f_h(v) = \La_h(v) \int_{v_0}^{v} \frac{1}{\La_h^2(s)} ds.
\end{align*}
Note that $f_h(v)$ is analytic at $v=0$.
%Moreover, for $v \in \DFlow$, the inverse $\Phi^{-1}(v)$ is well defined and
%\begin{align*}
%	\normFlow{\Phi_{j,l}} \leq C, \qquad
%	\normFlow{(\Phi)^{-1}_{j,l}} \leq C, \qquad
%	\text{for } j,l \in \claus{1,2}.
%\end{align*}

To look for a right inverse of operator $\LL^{\flow}$ in \eqref{def:operatorLLFlow}, 
let us consider the linear operator
\begin{align*}
\GG^{\flow}[\phiA](v) = \paren{
	\int_{v_0}^v \phiA_1(s) ds,
	\int_{v_0}^v \phiA_2(s) ds,
	\int_{\conj{v}_1}^v \phiA_3(s) ds,
	\int_{{v}_1}^v \phiA_4(s) ds
}^T,
\end{align*}
where $v_0$, $v_1$ and $\conj{v}_1$ are the vertexs of the domain $\DFlow$ 
(see Figure \ref{fig:dominisTransition}).
Then, the linear operator
% \begin{align*}%\label{def:operatorGGhFlow}
$\GGh[\phiA] =  \Phi \GG[\Phi^{-1}\phiA]$
% \end{align*}
is a right inverse of the operator $\LL^{\flow}$,
and, 
for $\phiA \in \XcalFlowTotal$, satisfies 
\begin{align}\label{eq:operatorGGhFlow}
\normFlow{\GGh_1[\phiA]} + \normFlow{\GGh_2[\phiA]} \leq C \paren{
\normFlow{\phiA_1} + \normFlow{\phiA_2}},
\quad
\normFlow{\GGh_j[\phiA]} \leq C\de^2\normFlow{\phiA_j} \,
\text{for } j=3,4.	
\end{align}

Next, we state a technical lemma providing estimates for $\RRR^{\flow}$.
%
% Its proof is postponed until Section~\ref{subsubsection:ProofComputationsRFlow}.
% 
% \textcolor{red}{O tamb\'e podem dir:}
Its proof is a direct consequence of the definition of the operator in \eqref{def:operatorRFlow} and Corollary \ref{corollary:seriesH1Poi}, which gives estimates for $H_1^{\Poi}$ in \eqref{def:hamiltonianPoincare} (see also the change of coordinates \eqref{def:changeScaling} which relates  $H_1^{\Poi}$ and $H_1$).
%
%First, for a fixed $\varrho>0$, we define
%\begin{align*}
%	\ballFlowDef = \claus{\phiA \in \XcalFlowTotal \st \normFlowTotal{\phiA} \leq \varrho}.
%\end{align*}

\begin{lemma}\label{lemma:computationsRFlow}
Fix $\varrho>0$ and consider $\phiA \in \XcalFlowTotal$ with $\normFlowTotal{\phiA} \leq \varrho \de^3$. Then, for $\de>0$ small enough , there exists a constant $C>0$  such that  the  operator $\RRR^{\flow}$  in \eqref{def:operatorRFlow} satisfies
	\begin{align*}
		\normFlow{\RRR^{\flow}_1[\phiA]},\normFlow{\RRR^{\flow}_2[\phiA]} &\leq C \de^2,
		 \qquad
		\normFlow{\RRR^{\flow}_3[\phiA]},\normFlow{\RRR^{\flow}_4[\phiA]} \leq C \de,\\
		\normFlow{D_j \RRR^{\flow}_l[\phiA]} &\leq C \de, 
		\qquad
		j,l \in \claus{1,2,3,4}.
	\end{align*}
\end{lemma}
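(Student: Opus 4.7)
The plan is to unfold each component of $\RRR^{\flow}[\phiA]$ using the definition \eqref{def:hamiltonianScalingH1} of $H_1$, invoke the convergent series expansion of $H_1^{\Poi}$ in powers of $(L-1,\eta,\xi)$ supplied by Corollary \ref{corollary:seriesH1Poi}, and then track the powers of $\de$ produced by the singular scaling \eqref{def:changeScaling}. First I would observe that, under the hypothesis $\normFlowTotal{\phiA}\leq\varrho\de^3$, the triple $(L-1,\eta,\xi)=(\de^2(\La_h+\phiA_2),\de(\phiA_3),\de(\phiA_4))$ evaluated on $\DFlow$ is of size $O(\de^2)$ (using the bounds on $\La_h$ on $\DFlow$ that follow from Theorem~\ref{theorem:singularities} and Proposition~\ref{proposition:existenceFlow}). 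Hence the argument at which $H_1^{\Poi}$ and its derivatives are evaluated lies in the strip of validity of Corollary~\ref{corollary:seriesH1Poi}, and all partial derivatives of $H_1^{\Poi}$ are uniformly bounded there.

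Next I would compute the partial derivatives explicitly:
\begin{align*}
\partial_\La H_1 &= \de^2\,\partial_L H_1^{\Poi} + \de^{-2}F_{\pend}'(\de^2\La), &
\partial_\la H_1 - V'(\la) &= \partial_\la H_1^{\Poi} - V'(\la),\\
\partial_x H_1 &= \de\,\partial_\eta H_1^{\Poi}, &
\partial_y H_1 &= \de\,\partial_\xi H_1^{\Poi},
\end{align*}
and read off the advertised estimates. For $\RRR^{\flow}_1=\partial_\La H_1$: since $F_{\pend}(z)=O(z^3)$ by \eqref{def:Fpend} we have $F_{\pend}'(\de^2\La)=O(\de^4)$, giving $\de^{-2}F_{\pend}'(\de^2\La)=O(\de^2)$, while the first summand is visibly $O(\de^2)$. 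For $\RRR^{\flow}_2=T[\phiA_1]-\partial_\la H_1$: the identity $V(\la)=H_1^{\Poi}(\la,1,0,0;0)$ from \eqref{def:potentialV} yields $V'(\la)=\partial_\la H_1^{\Poi}|_{(1,0,0;0)}$, so the difference $\partial_\la H_1^{\Poi}-V'(\la)$ vanishes at $(L,\eta,\xi,\de)=(1,0,0,0)$ and is therefore $O(\de^2)$ by the series expansion; the correction $T[\phiA_1]$ is $O(\phiA_1^2)=O(\de^4)$ by Taylor's theorem applied to $V$, hence subdominant. For $\RRR^{\flow}_3,\RRR^{\flow}_4$ the explicit $\de$-prefactor already provides the advertised $O(\de)$ bound.

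Finally, for the derivative estimates $\normFlow{D_j\RRR^{\flow}_l[\phiA]}\leq C\de$, I would differentiate the four formulas above once more with respect to any of $\la,\La,x,y$. Each application of $\partial_\La$ to a summand either preserves a $\de^2$-prefactor or produces $F_{\pend}''(\de^2\La)=O(\de^2)$ (since $F_{\pend}'(z)=O(z^2)$), and each application of $\partial_x$ or $\partial_y$ produces an additional $\de$. The worst case across the $16$ cross-derivatives is therefore uniform $O(\de)$, and all second partials of $H_1^{\Poi}$ stay bounded thanks to the analyticity furnished by Corollary~\ref{corollary:seriesH1Poi}. The only real obstacle is the careful bookkeeping of $\de$-powers across all cases; no new analytic input beyond the series expansion, the vanishing property of $V$, and the cubic vanishing of $F_{\pend}$ at $z=0$ is needed.
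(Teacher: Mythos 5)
Your argument is correct and follows the route the paper intends (the paper presents this lemma as a direct consequence of the definition of $\RRR^{\flow}$, Lemma~\ref{corollary:seriesH1Poi}, and the scaling \eqref{def:changeScaling}), so the substance matches. Two small cleanups: the second display should read $\partial_\la H_1 = \partial_\la H_1^{\Poi} - V'(\la)$ (the left-hand side has a stray $-V'(\la)$), and the parenthetical appeal to Proposition~\ref{proposition:existenceFlow} is circular, since that proposition is proved using this lemma; what you actually need — that $\la_h$ is bounded away from $\pm\pi$ and $\La_h$ is bounded on $\DFlow$, so the argument of $H_1^{\Poi}$ stays inside the strip of validity of Lemma~\ref{corollary:seriesH1Poi} — follows from Theorem~\ref{theorem:singularities} together with the fact that $\DFlow$ keeps a $\de$-independent distance from $\pm iA$.
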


Denote by $\mathbf{e}_j$, $j=1,2,3,4$, the canonical basis in $\mathbb{R}^4$. Noticing that, by Corollary~\ref{corollary:changeuOut}, the function $\Gh=(\lah,\Lah,\xh,\yh)$ is already defined at $\claus{v_0,v_1,\conj{v}_1} \in \DOuterTilde$, we can consider the function
% \begin{equation}\label{eq:initialConditionFlow}
\[
\begin{split}
	F^0(v) = \Phi(v)  \left[
	\Phi^{-1}(v_0)\left(
	\lah(v_0)\mathbf{e}_1+ \Lah(v_0)\mathbf{e}_2\right)
	+\xh(\conj{v}_1) \Phi^{-1}(\conj{v}_1) \mathbf{e}_3
	+ \yh({v_1})\Phi^{-1}({v_1}) 
	\mathbf{e}_4
	\right].
\end{split}
\]
% \end{equation}
Then, since $\GGh(F^0)=0$, 
 it only remains to check that
% \begin{equation}\label{def:operatorFFflow}
$\FF = F^0 + \GGh \circ \RRR^{\flow}$
% \end{equation}
is contractive in a suitable ball of $\XcalFlowTotal$.

%The solution $\Gh = \FF[\Gh]$	satisfies  equation \eqref{eq:diffEquationFlow} and it is an analytic continuation of the solution obtained in Proposition~\ref{proposition:changeuOut}.
%
%Thus, we can rewrite Proposition \ref{proposition:existenceFlow} in terms of the operator $\FF$ defined on the Banach space $\XcalFlowTotal$.
%
%\begin{proposition}\label{proposition:existenceFlowBanach}
%There exists $\de_0>0$ such that for $\de \in (0,\de_0)$ and $\kappa>0$ the fixed point equation $\Gh = \FF[\Gh]$ has a unique solution $\Gh \in \XcalFlowTotal$.
%
%Moreover, there exists a real constant $\cttFlowA>0$ independent of $\de$ such that
%\begin{align*}
%\normFlowTotal{\Gh} \leq \cttFlowA \de^3.
%\end{align*}
%\end{proposition}

%We prove Proposition~\ref{proposition:existenceFlowBanach} through a fixed point argument.

\begin{proof}[End of proof of Proposition~\ref{proposition:existenceFlow}]
First, we obtain a suitable estimate for $\FF[0]$. 
Applying  Propositions \ref{proposition:existenceComecocos} and  \ref{proposition:changeuOut} and using  \eqref{eq:equationuOutA} we obtain that,
for $v \in \DOuterTilde$,
\begin{align*}
	\vabsSmall{\lah(v)} \leq C \de^2, \qquad
	\vabsSmall{\Lah(v)} \leq C \de^2, \qquad
	\vabsSmall{\xh(v)} \leq C \de^3, \qquad
	\vabsSmall{\yh(v)} \leq C \de^3.
\end{align*}
Therefore, since $\claus{v_0,v_1,\conj{v}_1} \in \DOuterTilde$,
\begin{align*}
%\label{proof:initialConditionBoundFlow}
	\normFlowTotal{F^0} &\leq
	C\de \vabsSmall{\lah(v_0)}+
	C\de \vabsSmall{\Lah(v_0)}+
	C \vabs{\xh(\conj{v}_1)}+
	C \vabs{\yh({v_1})} \leq C \de^3,
\end{align*}
%
%We are looking for fixed points of $\varphi=\FF[\varphi]$.
%
%Then, we obtain an estimate for $\FF[0]$. 
%
and, applying \eqref{eq:operatorGGhFlow} and Lemma \ref{lemma:computationsRFlow}, there exists $\cttOuterF>0$ independent of $\de$ such that 
\begin{align}\label{proof:operadorFF0Flow}
	\normFlowTotal{\FF[0]} \leq 
	\normFlowTotal{F^0} +
	\normFlowTotal{\GG \circ \RRR^{\flow}[0]} \leq \frac12 \cttOuterF \de^3.
\end{align}
%
%Moreover, we fix constant $\varrho>0$ such that $\normSup{\FF[0]}\leq \frac{1}{2}\varrho\de^3$.
Let us define $B(\cttOuterF \de^3)= \claus{\phiA \in \XcalFlowTotal \st \normFlowTotal{\phiA} \leq \cttOuterF \de^3}$.
%
%Then, we compute the Lipschitz constant of $\FF$ in $\ballFlow$.
%
By the mean value theorem and Lemma~\ref{lemma:computationsRFlow}, for $\phiA, \phiB \in B(\cttOuterF \de^3)$ and $j=1,..,4$, we obtain
%
%\begin{align*}
%\RRR^{\flow}[\phiA]-\RRR^{\flow}[\phiB] 
%= \boxClaus{\int_0^1 D\RRR^{\flow}[s {\phiA} + (1-s) \phiB] d s} (\phiA - \phiB),
%\end{align*}
%and applying Lemma~\ref{lemma:computationsRFlow}, for $j=1,..,4$, we obtain
\begin{align*}
\normFlow{\RRR^{\flow}_j[\phiA]-\RRR^{\flow}_j[\phiB]}&\leq
\sum_{l=1}^4 \boxClaus{
\sup_{\zeta \in B(\cttOuterF \de^3)} \claus{
\normFlow{D_l \RRR^{\flow}_j[\zeta]} }
\normFlow{\phiA_l-\phiB_l}
} 
%&\leq 
%C \de \normFlow{\phiA_1-\phiB_1} +
%C \de \normFlow{\phiA_2-\phiB_2} +
%C \normFlow{\phiA_3-\phiB_3} +
%C \de \normFlow{\phiA_4-\phiB_4} +
\leq C  \normFlowTotal{\phiA-\phiB}.
\end{align*}
%and applying Lemma~\ref{lemma:computationsRFlow}, we obtain
%\begin{align*}
%\normFlow{R_1[\phiA]-R_1[\phiB]}
%&\leq C \de \normFlowTotal{\phiA-\phiB}, &
%%
%\normFlow{R_2[\phiA]-R_2[\phiB]}
%&\leq C \de \normFlowTotal{\phiA-\phiB}, \\
%%
%\normFlow{R_3[\phiA]-R_3[\phiB]}
%&\leq C \normFlowTotal{\phiA-\phiB}, &
%%
%\normFlow{R_4[\phiA]-R_4[\phiB]}
%&\leq C \normFlowTotal{\phiA-\phiB}. 
%\end{align*}
Then, by \eqref{eq:operatorGGhFlow} and taking $\de$ small enough,  
\begin{equation}\label{proof:operadorFFLipschitzFlow}
\begin{split}
\normFlowTotal{\FF[\phiA]-\FF[\phiB]}
\leq& \, 
C\de \boxClaus{ \sum_{j=1}^2 \normFlow{\RRR^{\flow}_j[\phiA]-\RRR^{\flow}_j[\phiB]}}
%\\ &+
+ C\de^2 \boxClaus{ \sum_{l=3}^4 \normFlow{\RRR^{\flow}_l[\phiA]-\RRR^{\flow}_l[\phiB]}}
\\
\leq& \, 
C \de\normFlowTotal{\phiA-\phiB} \leq
\frac{1}{2}\normFlowTotal{\phiA-\phiB}.
\end{split}
\end{equation}

Therefore, by \eqref{proof:operadorFF0Flow} and
\eqref{proof:operadorFFLipschitzFlow},
$\FF$ is well defined and contractive in $B(\cttOuterF \de^3)$ and, as a result, has a fixed point $\Gh \in B(\cttOuterF \de^3)$.
\end{proof}

\subsection{Back to a graph parametrization}
\label{subsection:proofH-changevOut}

Now we prove Proposition \ref{proposition:changevOut} by obtaining the change of variables $\vOut: \DBoomerangTildeProp \to \complexs$  as a solution of equation \eqref{eq:equationsvOutA}. This equation is equivalent to  $\vOut=\NNN[\vOut]$  with
%
%Let us take into account equation \eqref{eq:equationsvOutA} and that $\dot{\la}_h = -3 \La_h$.
%
\begin{equation*}
\NNN[\phiA](u) =
\frac{1}{3\La_h(u)}
\left[
\lah(u+\phiA(u)) + 
\la_h(u+\phiA(u)) -
\la_h(u)
+3\La_h(u) \phiA(u)
\right].
\end{equation*}
We obtain $\vOut$ by means of a fixed point argument in the Banach space
\begin{align*}
\YcalBoom = \bigg\{\phiA: \DBoomerangTildeProp \to \complexs \st \phiA \text{ real-analytic, }
\normSup{\phiA} := \sup_{u\in\DBoomerangTildeProp} \vabs{\phiA(u)}<\infty\bigg\}.
\end{align*}
%
%Thus, in the next proposition, we prove the existence of the change of coordinates $\vOut$ in terms of a fixed point equation of the operator $\NNN$.

%\begin{proposition}\label{proposition:changevOutBanach}
%There exists $\de_0>0$ small enough, such that for $\de \in (0,\de_0)$ 
%%
%the fixed point equation $\vOut=\NNN[\vOut]$ has a unique solution $\vOut \in \YcalBoom$.
%%
%Moreover, there exists a constant $\cttBoomerangTilde>0$ independent of $\de$ such that
%\begin{align*}
%\normSup{\vOut} \leq \cttBoomerangTilde \de^2.
%\end{align*}
%\end{proposition}

\begin{proof}[Proof of Proposition \ref{proposition:changevOut}]
Let us first notice  that, by Theorem \ref{theorem:singularities},
\begin{align}\label{proof:boundsHomoclinicVchange}
C^{-1} \leq \normSup{\La_h}  \leq C.
%\qquad
%\sup_{v \in \DFlow}|\dot{\La}_h(v)| \leq C.
\end{align}

% Next, we obtain an estimate for $\NNN[0]$. 
%
Since $\dBoomerangTilde<\dFlow$ and
$\kappaBoomerangTilde>\kappaFlow$, we have that
$
{\DBoomerangTildeProp} \subset \DFlow,
$
(see \eqref{def:domainBoomerangTilde} and \eqref{def:dominiFlow}).	
Then,  applying Proposition \ref{proposition:existenceFlow},
 there exists $b_6>0$ independent of $\de$ such that
\begin{align*}
%\label{proof:firstIterationTransitionBoomerang}
\normSup{\NNN[0]} 
%= \normSupBig{\frac{\lah(u)}{3 \La_h(u)}}
\leq \frac13 \normSup{(\La_h)^{-1}} \normSup{\lah} \leq \frac12 b_6 \de^2.
\end{align*} 	
Next, we compute the Lipschitz constant of $\NNN$ in $B(b_6\delta^2) = \{\phiA \in \YcalBoom \st \normSup{\phiA}\leq b_6 \de^2\big\}$.
By the mean value theorem, for $\phiA,\phiB \in B(b_6\delta^2)$ and $\phiA_s=(1-s)\phiA+s\phiB$, we have that
\begin{align*}
\normSup{\NNN[\phiA]-\NNN[\phiB]} 
\leq&
% \sup_{s \in [0,1]}
\sup_{u \in \DBoomerangTildeProp}
\vabs{\int_0^1 D\NNN[\phiA_s](u) ds} 
\normSup{\phiA-\phiB}.	
\end{align*}
For $u \in \DBoomerangTildeProp$ and $\de$ small enough, we have that $u+\phiA_s(u) \in \DFlow$.
Therefore, by Proposition \ref{proposition:existenceFlow}, \eqref{proof:boundsHomoclinicVchange}
and recalling that $\dot{\la}_h=-3\La_h$,
\begin{align*}
\vabs{D\NNN[\phiA_s](u)}
&\leq
\frac{1}{3\vabs{\La_h(u)}}
\claus{
|\partial_v \lah(u + \phiA_s(u))|
+
\vabs{\La_h(u + \phiA_s(u))-\La_h(u)}
} 
%\\
%&\leq
%C \sup_{v \in \DFlow} 
%|\partial_v \lah(v)| +
%C 
%\sup_{v \in \DFlow} |\dot{\La}_h(v)| \,
%\normSup{\phiA_s} 
%
\leq C\de^2,
\end{align*}
and, taking $\de$ small enough,
$\normSup{\NNN[\phiA]-\NNN[\phiB]}\leq \frac{1}{2}  \normSup{\phiA - \phiB}$.
%
%\begin{align}\label{proof:contractiveTransitionBoomerang}
%\normSup{\NNN[\phiA]-\NNN[\phiB]} 
%\leq C \de^2  \normSup{\phiA - \phiB}
%\leq \frac{1}{2}  \normSup{\phiA - \phiB}.
%\end{align}
%
Therefore, the operator $\NNN$ is well defined and contractive in $B(b_6\delta^2)$ and, as a result, has a fixed point $\vOut \in B(b_6\delta^2)$.

Besides, since 
$
{\DBoomerangTildeProp} \subset \DFlow,
$
we obtain that $u+\vOut(u) \in \DFlow$ for $u \in \DBoomerangTildeProp$ and $\de$ small enough.

%
%\begin{equation}\label{proof:changevOutStatement2}
%u+\vOut(u) \in \DFlow, 
%\quad \text{for} \quad
%v \in \DOuterTilde.
%\end{equation}
\end{proof}

\section{Complex matching estimates}
\label{section:proofG-matching}

This section is devoted to prove Theorem \ref{theorem:matching} which provides estimates for $\ZusMchU= \ZusMchO - \ZusInn$
in the matching domains $\DuMchInn$ and $\DsMchInn$, given in \eqref{def:matchingDomainInner}.
We only prove the theorem for $\ZuMchU$, being the proof for $\ZsMchU$ analogous.

%Let us recall that $\ZdInn$ is given in Theorem \ref{theorem:innerComputations} and 
%\begin{equation}\label{def:ZdMchOProof}
%\ZdMchO(U) = \paren{
%\frac{\de^{\frac43}}{2 \al_+^2}, \,
%\frac{1}{\de^{\frac13} \sqrt{2}\al_+}, \,
%\frac{1}{\de^{\frac13} \sqrt{2}\al_+}
%}\zdOut(\de^2 U + iA), 
%\end{equation}
%where $\zdOut$ is given in Proposition \ref{proposition:existenceComecocos}.
%
%Moreover, by \eqref{eq:domainsMatchingOuterSubset} and \eqref{eq:domainsMatchingInnerSubset}, $\ZdMchO$ and $\ZdInn$ are well defined in the matching domains 
%$\DdMchInn$ for $\kappa \geq \claus{\kappaOuter, \kappaInner}$.
%
%
%Along this section, we denote the components of all the functions and operators by a numerical sub-index $f=(f_1,f_2,f_3)$, unless stated otherwise, and we denote by $C$ any positive constant independent of $\de$ and $\kappa$.

\subsection{Preliminaries and set up}

Proposition \eqref{proposition:innerDerivation} shows that the Hamiltonian $H^{\out}$ expressed in inner coordinates, that is $H^{\inn}$ as given in \eqref{def:hamiltonianInnerComplete},
is of the form $H^{\inn}=W+XY+ \KK+H_1^{\inn}$.
Then, the equation associated to $H^{\inn}$ can be written as
\begin{equation}\label{eq:systemEDOsInnerComplete}
	\left\{ \begin{array}{l}
		\dot{U} = 1 + g^{\Inner}(U,Z) + g^{\mch}(U,Z),\\
		\dot{Z} = \AAA^{\Inn} Z + f^{\Inner}(U,Z) +  f^{\mch}(U,Z),
	\end{array} \right.
\end{equation}
where $\AAA^{\Inner}$ is given in \eqref{def:matrixAAA} and
\begin{equation}\label{def:fgInnfgMch}
\begin{aligned}	
f^{\Inn} &= \paren{-\partial_U \KK, 
	i \partial_Y \KK, -i\partial_X \KK}^T,
 & \quad
g^{\inn} &= \partial_{W} \KK, 
\\
f^{\mch} &= \paren{-\partial_U H_1^{\inn}, 
	i \partial_Y H_1^{\inn}, -i\partial_X H_1^{\inn} }^T,
 & \quad
g^{\mch} &= \partial_{W} H_1^{\inn}.
\end{aligned}
\end{equation}	
Notice that, since $(u,\zuOut(u))=\phi_{\Inner}(U,\ZuMchO(U))$ (see \eqref{def:ZdMchO}),
$(U,\ZuMchO(U))$ is an  invariant graph of equation  \eqref{eq:systemEDOsInnerComplete}.
Therefore, $\ZuMchO$ satisfies the invariance equation
\begin{equation*}%\label{eq:invariantEquationInnMchO}
\begin{split}	
%\Dx{\ZuInn}{U} &= 
%\AAA^{\Inner} \ZuInn 
%+ \RRR^{\Inner}[\ZuInn], \\	
\partial_U {\ZuMchO} &= 
	\AAA^{\Inner} \ZuMchO 
	+ \RRR^{\Inner}[\ZuMchO]
	+ \RRR^{\mch}[\ZuMchO],
\end{split}
\end{equation*}
with $\RRR^{\Inner}$ as defined in \eqref{def:operatorRRRInner} and
\begin{equation}\label{def:operatorRRRmch}
	\RRR^{\mch}[\phiA] = 
	\frac{\AAA^{\Inner} \phiA
		+ f^{\Inn}(U, \phiA) 
		+ f^{\mch}(U, \phiA)  }
	{1+ g^{\Inn}(U, \phiA) + g^{\mch}(U, \phiA) } 
	- \AAA^{\Inner} \phiA 
	- \RRR^{\Inner}[\phiA].
\end{equation}
Similarly $\ZuInn$ satisfies the invariance equation 
$
\partial_U \ZuInn =
\AAA^{\Inner} \ZuInn 
+ \RRR^{\Inner}[\ZuInn]
$ (see  Theorem \ref{theorem:innerComputations}) and, 
therefore, the difference $\ZuMchU = \ZuMchO - \ZuInn$ must be a solution of
\begin{equation}\label{eq:invariantEquationMchU1}
	\partial_U {\ZuMchU}  =
	\AAA^{\inn} \ZuMchU +
	\BB(U) \ZuMchU + \RRR^{\mch}[\ZuMchO],
\end{equation}
with
\begin{equation}\label{def:operatorBBmch}
	\BB(U) = \int_{0}^{1} D_{Z}\RRR^{\Inn}
	[(1-s)\ZuInn + s \ZuMchO](U) ds.
\end{equation}
The key point is that, since the existence of both $\ZuInn$ and $\ZuMchO$ is already been proven,
we can think of   $\BB(U)$ and
$\RRR^{\mch}[\ZuMchO](U)$ as known functions. Therefore,  equation~\eqref{eq:invariantEquationMchU1} can be understood as  a non homogeneous linear equation with independent term $\RRR^{\mch}[\ZuMchO](U)$.
Moreover, defining the linear operator 
%\begin{equation}\label{def:operatorLLmch}
	$\LL^{\inn} \phiA = (\partial_U-\AAA^{\inn})\phiA$,
%\end{equation}
equation~\eqref{eq:invariantEquationMchU1} is equivalent to
\begin{equation}\label{eq:invariantEquationMchU2}
\LL^{\inn}{\ZuMchU}(U) =
 \BB(U)\ZuMchU(U) 
+\RRR^{\mch}[\ZuMchO](U).
\end{equation}
%Let us consider the linear operator $\LL\phiA= (\partial_U - \AAA^{\inn})\phiA$.
%
We prove Theorem \ref{theorem:matching} by solving this equation (with suitable initial conditions).
% showing that the linear operator $\LL^{\inn}-\BB(U)$ 
To this end, we define  the  Banach space 
$
\XcalMchTotal = \XcalMch_{\frac{4}{3}} \times \XcalMch_1 \times \XcalMch_1
$
with
% %
% Given $\al \in \reals$, we consider 
%\begin{equation*}
%	\normMch{\phiA}_{\al} = \sup_{U \in \DuMchInn} \vabs{U^{\al}\phiA(U)},
%\end{equation*}
% the Banach space
\begin{equation*}
	\XcalMch_{\al}= 
	\Bigg\{ \phiA: \DuMchInn \to \complexs  \st  
	\phiA \text{ real-analytic, } 
	\normMch{\phiA}_{\al}=\sup_{U \in \DuMchInn} \vabs{U^{\al}\phiA(U)} < \infty \Bigg\},
\end{equation*}
% and the product space 
endowed with the product norm
$
\normMchTotal{\phiA} = \normMch{\phiA_1}_{\frac{4}{3}} + \normMch{\phiA_2}_{1} + \normMch{\phiA_3}_{1}.
$

Next lemma gives some properties of these Banach spaces.
%
%We use this lemma throughout the section  without mentioning it.
%
%We will work with Lemma 4.3 in~\cite{Bal06}.
%
\begin{lemma}\label{lemma:sumNormsMch}
Let $\g \in [\frac{3}{5},1)$ and $\al, \beta \in \reals$. The following statements hold:
\begin{enumerate}
%\item If $\al < \beta$, then $\XcalMch_{\al} \subset \XcalMch_{\beta}$ and
%$
%\normMch{\varphi}_{\beta} \leq
%C \de^{2(\al-\beta)(1-\g)}
%\normMch{\varphi}_{\al}.
%$
\item If $\varphi \in \XcalMch_{\al}$, then $\varphi \in \XcalMch_{\beta}$ for any $\beta \in \reals$.
Moreover,
\begin{align*}
	\begin{cases}
		\normMch{\varphi}_{\beta} \leq 
		C \kappa^{\beta-\al} 
		\normMch{\varphi}_{\al}, 
		&\quad \text{for } \al  > \beta, \\
		\normMch{\varphi}_{\beta} \leq 
		C \de^{2(\al-\beta)(1-\g)} 
		\normMch{\varphi}_{\al},
		&\quad \text{for } \al  < \beta.
	\end{cases}	
\end{align*}
\item If $\varphi \in \XcalMch_{\al}$ and 
$\zeta \in \XcalMch_{\beta}$, then
$\varphi \zeta \in \XcalMch_{\al+\beta}$ and
	$
	\normMch{\varphi \zeta}_{\al+\beta} \leq \normMch{\varphi}_{\al} \normMch{\zeta}_{\beta}.
	$
\end{enumerate}
%Analogous statements hold for $\XcalInnU$ and $\XcalInnS$.
\end{lemma}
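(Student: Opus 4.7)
The plan is to derive both statements directly from the definition of $\normMch{\cdot}_{\al}$ together with the two-sided bound on $|U|$ valid on the matching domain $\DuMchInn$. Recall that by the construction of $\DuMchInn$ (combining the definition in \eqref{def:matchingDomainInner}, the geometry of the outer matching triangle, and the relation \eqref{def:U12}), every $U \in \DuMchInn$ satisfies
\[
\kappa\cos\betaMchA \;\leq\; |U| \;\leq\; \frac{C}{\de^{2(1-\g)}},
\]
for some $C>0$ independent of $\de$ and $\kappa$; this is precisely the estimate already recorded in the paragraph following \eqref{def:U12}.

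For statement (1), I would write $|U^{\beta}\varphi(U)| = |U|^{\beta-\al}\,|U^{\al}\varphi(U)|$ and estimate the factor $|U|^{\beta-\al}$ on $\DuMchInn$ using the above inequality. If $\al>\beta$ the exponent $\beta-\al$ is negative, so the worst case is $|U|$ small, giving $|U|^{\beta-\al}\leq (\kappa\cos\betaMchA)^{\beta-\al}\leq C\kappa^{\beta-\al}$, and taking suprema yields the first bound. If $\al<\beta$ the exponent is positive, so the worst case is $|U|$ large, giving $|U|^{\beta-\al}\leq (C/\de^{2(1-\g)})^{\beta-\al}=C\,\de^{2(\al-\beta)(1-\g)}$, which gives the second bound. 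Since real-analyticity is preserved, we also conclude $\varphi\in\XcalMch_{\beta}$.

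For statement (2), I would simply factor $|U^{\al+\beta}\varphi(U)\zeta(U)| = |U^{\al}\varphi(U)|\cdot|U^{\beta}\zeta(U)|$, take sup over $U\in\DuMchInn$ on each factor separately, and obtain $\normMch{\varphi\zeta}_{\al+\beta}\leq\normMch{\varphi}_{\al}\normMch{\zeta}_{\beta}$; the real-analyticity of the product is immediate.

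There is no genuine obstacle here: the statement is a bookkeeping lemma for the weighted norms, and the only input beyond the definitions is the explicit two-sided bound on $|U|$ on $\DuMchInn$, which is already available. The role of the hypothesis $\g\in[\tfrac35,1)$ is only implicit, through the fact that it ensures $\DuMchInn$ is nontrivial and bounded away from the origin with the stated estimates; it is not used in any further way in this proof.
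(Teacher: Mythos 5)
Your proof is correct and matches the paper's approach exactly: the paper states that the lemma is ``a direct consequence of'' the two-sided bound $\kappa\cos\betaMchA\leq|U|\leq C/\de^{2(1-\g)}$ on $\DuMchInn$, and you fill in precisely the straightforward computation that this remark implies.
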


This lemma is a direct consequence of the fact that, as explained in Section \ref{subsection:matching}, $U$ satisfies
\begin{equation}\label{eq:boundsDomainMchInner}
 	\kappa \cos \betaMchA  \leq 
	\vabs{U} \leq 
	\frac{C}{\de^{2(1-\g)}}.
\end{equation}

Now, we present the main result of this section, which implies Theorem~\ref{theorem:matching}.

\begin{proposition}\label{theorem:matchingProof}
%Let us consider constant $\kappaOuter$ obtained in Proposition~\ref{proposition:existenceComecocos}and  constant $\kappaInner$ obtained in Theorem \ref{theorem:innerComputations}.
%
There exist $\g^*\in[\frac35,1)$,
$\kappaMch\geq\max\claus{\kappaOuter,\kappaInner}$,
 $\de_0>0$ and $\cttMchA>0$  such that, 
for  $\g \in (\g^*,1)$,
 $\kappa\geq \kappaMch$
and $\de \in (0,\de_0)$, $\ZuMchU$ satisfies
% \begin{equation*}
	$\normMchTotal{\ZuMchU} \leq \cttMchA \, \de^{\frac{2}{3}(1-\g)}$.
% \end{equation*}
\end{proposition}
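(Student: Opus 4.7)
The plan is to recast the linear non-homogeneous invariance equation \eqref{eq:invariantEquationMchU2} for $\ZuMchU$ as a fixed point problem in $\XcalMchTotal$ and solve it via the contraction mapping theorem. I would construct a right inverse $\GG^{\mch}$ of $\LL^{\inn} = \partial_U - \AAA^{\inn}$ componentwise: the hyperbolic component $\WuMchU$ (eigenvalue $0$) is recovered by integration from one of the ``outer'' vertices $U_2$ or $U_3$ of the triangular domain $\DuMchInn$, while the oscillatory components $\XuMchU, \YuMchU$ (eigenvalues $\pm i$) are recovered by integration from a vertex chosen so that the factor $e^{\pm i(U-U_*)}$ stays bounded along an admissible path inside $\DuMchInn$. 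The equation then becomes
\[
\ZuMchU = F^0 + \GG^{\mch}\bigl[\BB\,\ZuMchU + \RRR^{\mch}[\ZuMchO]\bigr],
\]
where $F^0(U) = e^{(U-U_*)\AAA^{\inn}}\,\ZuMchU(U_*)$ carries the initial-value contributions at the chosen vertices.

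Three estimates drive the contraction argument. \emph{Boundary contribution.} At the outer vertices one has $|U_*| \sim \delta^{-2(1-\gamma)}$, and Proposition~\ref{proposition:existenceComecocos} (translated to inner coordinates by \eqref{def:ZdMchO}) together with Theorem~\ref{theorem:innerComputations} give
\[
|\WuMchO|,\, |\WuInn| \leq \frac{C}{|U_*|^{8/3}} + \frac{C\delta^{4/3}}{|U_*|}, \qquad |\XuMchO|,\, |\XuInn|,\, |\YuMchO|,\, |\YuInn| \leq \frac{C}{|U_*|^{4/3}},
\]
so the weighted quantities $|U_*|^{4/3}|\WuMchU(U_*)|$ and $|U_*|\,|\XuMchU(U_*)|$ are each bounded by $C\delta^{2(1-\gamma)/3}$, matching the target. \emph{Source term.} Since $\RRR^{\mch}$ vanishes when $H_1^{\inn}=0$, it is driven by derivatives of $H_1^{\inn}$ entering through $f^{\mch}, g^{\mch}$ in \eqref{def:fgInnfgMch}; Cauchy estimates applied to \eqref{eq:boundsH1Inn} yield bounds of the form $C\delta^{4/3}|U|^{\gamma_1}$ on the matching region, and integrating by $\GG^{\mch}$ over a domain of diameter $\sim\delta^{-2(1-\gamma)}$ produces a contribution of order $\delta^{2(1-\gamma)/3}$ in $\XcalMchTotal$. \emph{Linear perturbation.} Theorem~\ref{theorem:innerComputations} combined with the outer bounds controls $\BB(U) = \int_0^1 D_Z\RRR^{\inn}[\cdots]\,ds$ and gives $\normMchTotal{\GG^{\mch}[\BB\phi]} \leq C\kappa^{-1}\normMchTotal{\phi}$, which supplies the contraction factor.

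The main obstacle I anticipate is the careful tracking of $|U|$- and $\delta$-powers across the three weighted norms, most delicately for the non-oscillatory $\WuMchU$ component, where no exponential damping assists the integral and the decay must be produced entirely from the $|U|$-decay of the integrand. The threshold $\gamma^{*} \in [3/5,1)$ appears precisely in balancing the source contribution on the $W$-component: too small a $\gamma$ enlarges the matching domain (via \eqref{eq:boundsDomainMchInner}) and amplifies the integral, while too large a $\gamma$ degrades the relative size of the outer bounds once translated into inner coordinates. Choosing $\gamma^{*}$ so that these two effects balance, and $\kappaMch$ so that the Lipschitz constant $C\kappa^{-1}$ from the linear perturbation is less than $1/2$, the Banach fixed point theorem applied in the ball $B(\cttMchA\delta^{2(1-\gamma)/3}) \subset \XcalMchTotal$ yields a unique $\ZuMchU$ satisfying the claimed bound.
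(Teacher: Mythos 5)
Your plan matches the paper's proof essentially step for step: recast \eqref{eq:invariantEquationMchU2} as a linear integral equation via the right inverse $\GG^{\inn}$ integrated from the outer vertices $U_2,U_3$, show the linear perturbation operator $\TTT[\phiA]=\GG^{\inn}[\BB\,\phiA]$ has norm $O(\kappa^{-1})$ so that $\Id-\TTT$ is invertible, and bound the boundary term $C^{\mch}e^{\AAA^{\inn}U}$ and the source term $\GG^{\inn}\circ\RRR^{\mch}[\ZuMchO]$ by $C\de^{\frac23(1-\g)}$ using the outer bound \eqref{eq:boundsZMchO}, Theorem~\ref{theorem:innerComputations}, and Cauchy estimates on $H_1^{\inn}$ from \eqref{eq:boundsH1Inn}. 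The only presentational difference is that you phrase the inversion as a Banach fixed point in a ball while the paper bounds $(\Id-\TTT)^{-1}$ directly; for this linear affine equation the two are the same estimate.
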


%From now on, we will consider that $\kappa$ is big enough and $\de$ is small enough.

\subsection{An integral equation formulation}
\label{subsection:linearOperatorsMch}
% We prove Proposition \ref{theorem:matchingProof} by analyzing $\ZuMchU$ as the solution of a suitable integral equation.

% To obtain a  right-inverse of the operator $\LL^{\inn}-\BB(U)$ (see \eqref{eq:invariantEquationMchU2})
To prove Proposition \ref{theorem:matchingProof}, we first introduce a right-inverse of  $\LL^{\inn}=\partial_U-\AAA^{\inn}$.

\begin{lemma}\label{lemma:operadorEEmch}
The operator $\GG^{\inn}[\phiA]=\paren{\GG^{\inn}_1[\phiA_1],\GG^{\inn}_2[\phiA_2],\GG^{\inn}_3[\phiA_3]}^T$
defined as
\begin{equation}\label{def:operatorEEmch}
	\GG^{\inn}[\phiA](U) = \paren{
		\int_{U_3}^U \phiA_1(S) dS, \,
		\int^U_{U_3} e^{-i(S-U)} \phiA_2(S) dS, \,
		\int^U_{U_2} e^{i(S-U)}\phiA_3(S) dS}^T,
\end{equation}
where $U_2$ and $U_3$ are introduced in~\eqref{def:U12}, is a right inverse of $\LL^{\inn}$. 

Moreover, there exists a constant $C>0$ such that:
%Fix $\eta>1$, $\nu>0$.
%Then, $\GG^{\inn}: \XcalMch_{\eta} \times \XcalMch_{\nu} \times \XcalMch_{\nu} \to \XcalMch_{\eta-1} \times \XcalMch_{\nu} \times \XcalMch_{\nu}$ is a well defined continuous and linear operator. Moreover, it satisfies:
\begin{enumerate}
\item Let $\al>1$. If $\phiA \in \XcalMch_{\al}$, then $\GG^{\inn}_1[\phiA] \in \XcalMch_{\al-1}$ and
$
\normMch{\GG^{\inn}_1[\phiA]}_{\al-1} \leq C \normMch{\phiA}_{\al}.
$
\item Let $\al>0$, $j=2,3$. 
If $\phiA \in \XcalMch_{\al}$, 
then $\GG^{\inn}_j[\phiA] \in \XcalMch_{\al}$ and
$
\normMchSmall{\GG^{\inn}_j[\phiA]}_{\al} \leq C  \normMch{\phiA}_{\al}.
$
\end{enumerate}
\end{lemma}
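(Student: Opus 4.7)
The plan is first to verify that $\GG^{\inn}$ is a right inverse of $\LL^{\inn}$ by direct differentiation. Since $\LL^{\inn}$ is diagonal with entries $\partial_U$, $\partial_U - i$, $\partial_U + i$, the claim reduces to a component-wise check. For $j=1$, $\partial_U \GG^{\inn}_1[\varphi_1] = \varphi_1$ is the fundamental theorem of calculus. For $j=2,3$, writing $\GG^{\inn}_j[\varphi_j](U) = e^{\pm iU}\int^U_{U_*} e^{\mp iS}\varphi_j(S)\,dS$ (with $U_* = U_3$ if $j=2$ and $U_* = U_2$ if $j=3$), the product rule yields $(\partial_U \mp i)\GG^{\inn}_j[\varphi_j] = \varphi_j$, matching the $j$-th diagonal entry of $\LL^{\inn}$.

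For item 1, given $\varphi\in\XcalMch_{\alpha}$ with $\alpha>1$, I would use path-independence of the integral (valid since $\DuMchInn$ is simply connected and $\varphi$ is analytic) to choose a convenient integration path from $U_3$ to $U$ and then bound
\[
|\GG^{\inn}_1[\varphi](U)| \leq \normMch{\varphi}_\alpha \int |S|^{-\alpha}\,|dS|.
\]
Exploiting that the matching triangle has angular aperture bounded away from $0$ and $\pi$ and that $|U_3| \geq c|U|$ uniformly, this integral is bounded by $C|U|^{1-\alpha}$ uniformly in $\delta,\kappa$, yielding $\normMch{\GG^{\inn}_1[\varphi]}_{\alpha-1}\leq C\normMch{\varphi}_\alpha$.

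For item 2, a bare-hands estimate based only on $|\varphi(S)|\leq\normMch{\varphi}_\alpha |S|^{-\alpha}$ with $\alpha>0$ would produce terms diverging as $\delta\to 0$; the oscillation of $e^{\mp i(S-U)}$ must be exploited. I would again use path-independence and take a straight segment from $U_3$ (resp. $U_2$) to $U$ for $\GG^{\inn}_2$ (resp. $\GG^{\inn}_3$). The choice of starting vertex is crucial: by the geometry of the triangle, $U_3$ lies in the lower half so $\Im S \leq \Im U$ along the segment and hence $|e^{-i(S-U)}| \leq 1$; symmetrically, $U_2$ lies in the upper half, giving $|e^{i(S-U)}| \leq 1$ along the segment from $U_2$ to $U$. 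One integration by parts then gives
\[
\GG^{\inn}_2[\varphi](U) = i\,\varphi(U) - i\,e^{-i(U_3-U)}\varphi(U_3) - i\int^U_{U_3} e^{-i(S-U)}\varphi'(S)\,dS.
\]
The first term provides the desired decay $|U|^{-\alpha}\normMch{\varphi}_\alpha$; the second is controlled by $|U_3|^{-\alpha} \leq C|U|^{-\alpha}$ together with $|e^{-i(U_3-U)}|\leq 1$; and the remaining integral is estimated via Cauchy's inequality to produce $|\varphi'(S)|\leq C\normMch{\varphi}_\alpha |S|^{-\alpha-1}$, so that $\int |S|^{-\alpha-1}|dS|$ converges (since $\alpha+1>1$) to the desired bound. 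The case $j=3$ is entirely analogous.

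The main obstacle is the Cauchy estimate for $\varphi'$: it requires the distance from each point $S$ on the integration path to $\partial\DuMchInn$ to be comparable to $|S|$, uniformly in $\delta,\kappa$. This is guaranteed by the strict ordering $0 < \betaMchA < \betaBow < \betaMchB < \pi/2$ introduced in Section~\ref{subsection:matching}, which forces $\DuMchInn$ to sit strictly inside a slightly larger triangular domain with angles bounded away from $0$ and $\pi$, so that an interior Cauchy estimate holds with a constant depending only on $\betaMchA$ and $\betaMchB$.
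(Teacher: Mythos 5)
Your verification that $\GG^{\inn}$ is a right inverse of $\LL^{\inn}$ is correct, and your argument for item 1 (straight-segment path, direct bound on $\int |S|^{-\alpha}|dS|$, using $\alpha>1$) is sound. For item 2, the geometric observation is also right: $U_3$ has minimal imaginary part and $U_2$ maximal, so the kernels $e^{\mp i(S-U)}$ stay bounded by $1$ along the segments from the chosen vertices.

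The gap is in the final step for item 2. The Cauchy estimate $|\varphi'(S)|\leq C\normMch{\varphi}_\alpha|S|^{-\alpha-1}$ requires a disk around $S$ of radius comparable to $|S|$ on which $|\varphi|$ is controlled, and the only domain on which the norm $\normMch{\varphi}_\alpha$ gives any control is $\DuMchInn$ itself. An element of $\XcalMch_{\alpha}$ is merely analytic on the open triangle $\DuMchInn$ with $\sup_{\DuMchInn}|U^\alpha\varphi|<\infty$; for instance $\varphi(U)=|U_0|^{-\alpha}\sqrt{(U-U_0)/\de^2}$ with $U_0\in\partial\DuMchInn$ is bounded on the triangle but has $|\varphi'(S)|\to\infty$ as $S\to U_0$. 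Since the endpoint $U$ of your integration path ranges over all of $\DuMchInn$, the path can run arbitrarily close to $\partial\DuMchInn$, and the claimed pointwise bound on $\varphi'$ is false there. Your last paragraph tries to fix this by invoking a "slightly larger triangular domain," but $\DuMchInn$ \emph{is} the triangular domain, and the Banach space does not see anything outside it; the strict ordering $0<\betaMchA<\betaOutA<\betaMchB<\frac{\pi}{2}$ shapes the triangle, it does not enlarge it.

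The clean way to close the estimate (and what a proof along the lines of Lemma 20 of \cite{BCS13} does) is to upgrade your bound $|e^{\mp i(S-U)}|\leq 1$ to quantitative exponential decay along the segment, rather than integrating by parts. Because the apex angles are constrained by $0<\betaMchA<\betaMchB<\frac{\pi}{2}$, the direction of any segment from $U_3$ (resp. $U_2$) to a point $U$ of the triangle makes an angle with the real axis bounded away from $0$ and $\pi$, so that $\Im(S-U)\leq -\nu_0|S-U|$ along the path for some $\nu_0>0$ depending only on $\betaMchA,\betaMchB$. Then
\[
|\GG^{\inn}_2[\varphi](U)|\leq \normMch{\varphi}_\alpha\int_0^{|U_3-U|} e^{-\nu_0 r}|S(r)|^{-\alpha}\,dr,
\]
and splitting at $r=|U|/2$ (where $|S(r)|\geq |U|/2$ on the near part and the exponential factor supplies superpolynomial decay on the far part) gives $C\normMch{\varphi}_\alpha|U|^{-\alpha}$ for any $\alpha>0$, with $C$ independent of $\delta,\kappa$. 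This removes any need to control $\varphi'$.
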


The proof of this lemma  follows the same lines as the proof of Lemma 20 in \cite{BCS13}.
Using the operator $\GG^{\inn}$,  equation~\eqref{eq:invariantEquationMchU2} is equivalent to 
\begin{equation*}
%\label{eq:invariantEquationMchU3}
	\ZMchU(U) = C^{\mch} e^{\AAA^{\inn} U} + 
	\GG^{\inn} \left[ \BB \cdot \ZMchU \right] (U) + 
	\paren{\GG^{\inn} \circ \RRR^{\mch}[Z]} (U),
\end{equation*}
where $C^{\mch}=(C^{\mch}_W,C^{\mch}_X, C^{\mch}_Y)^T$ is defined as
\begin{equation*}
%\label{def:constantMch}
	C_W^{\mch} =\WMchU(U_3), \qquad
	C_X^{\mch} =e^{-i U_3}\XMchU(U_3), \qquad
	C_Y^{\mch} =e^{i U_2}\YMchU(U_2). 
\end{equation*}
Then, defining the operator
$
\TTT[\phiA](U) = 
\GG^{\inn} \left[ \BB \cdot \phiA \right](U),
$
this equation  is equivalent to \begin{equation}\label{eq:invariantEquationMchU4}
	(\Id - \TTT)\ZuMchU = C^{\mch} e^{\AAA^{\inn} U} +  
	\paren{\GG^{\inn} \circ \RRR^{\mch}[\ZuMchO]}
\end{equation}
and  therefore, to estimate $\ZuMchU$, we need to prove that $\Id-\TTT$ is  invertible  in $\XcalMchUTotal$.

% Defining the operator
% $
% \TTT[\phiA](U) = 
% \GG^{\inn} \left[ \BB \cdot \phiA \right](U),
% $
% equation~\eqref{eq:invariantEquationMchU3} is equivalent to \begin{equation}\label{eq:invariantEquationMchU4}
% 	(\Id - \TTT)\ZuMchU = C^{\mch} e^{\AAA^{\inn} U} +  
% 	\paren{\GG^{\inn} \circ \RRR^{\mch}[\ZuMchO]},
% \end{equation}
% and we proceed by proving that $\Id-\TTT$ is an invertible operator in $\XcalMchUTotal$.

\begin{lemma}\label{lemma:operatorTTmch}
Let us consider operators $\BB$ and $\GG^{\inn}$ as given in \eqref{def:operatorBBmch} and \eqref{def:operatorEEmch}.
Then, for $\g \in [\frac35,1)$, $\kappa>0$ big enough and $\de>0$ small enough, for $\phiA \in \XcalMchTotal$,
\begin{align*}
\normMchTotal{\TTT[\phiA]} 
= \normMchTotal{\GG^{\inn}[\BB\cdot \phiA]}
\leq \frac{1}{2} \normMchTotal{\phiA}
\end{align*}
and therefore
\begin{align*}
\normMchTotal{(\Id-\TTT)^{-1}[\phiA]} \leq 2 \normMchTotal{\phiA}.
\end{align*}
\end{lemma}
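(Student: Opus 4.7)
The plan is to establish the contractivity of $\TTT = \GG^{\inn} \circ (\BB \cdot\, )$ on $\XcalMchTotal$ by first deriving pointwise decay estimates for the entries of the matrix $\BB(U)$ defined in \eqref{def:operatorBBmch}, and then exploiting the smoothing properties of $\GG^{\inn}$ given by Lemma~\ref{lemma:operadorEEmch}. The key observation is that although $\BB$ is a ``known'' function (in the sense of equation~\eqref{eq:invariantEquationMchU1}), its size is controlled by the smallness of $\ZuInn$ and $\ZuMchO$, together with the explicit $U^{-k}$ decay built into $\KK$ (see Proposition~\ref{proposition:innerDerivation}).

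First, I would compute $D_Z \RRR^{\inn}[Z](U)$ using the expression~\eqref{def:operatorRRRInner} together with the explicit form of $f^{\inn} = (-\partial_U \KK, i\partial_Y\KK, -i\partial_X\KK)^T$ and $g^{\inn} = \partial_W \KK$. Differentiating $\KK$ twice in $(W,X,Y)$ produces terms with denominators $U^{\alpha}$ with $\alpha \geq \tfrac{2}{3}$ (coming from the $U^{-2/3}, U^{-4/3}, U^{-2}$ prefactors in $\KK$ and $\JJ$), and corrections of order $(W, X, Y)/U^{\text{something}}$. Plugging in $Z = (1-s)\ZuInn + s\ZuMchO$ and using the bounds $|U^{8/3}W^{\unstable,\diamond}| \leq C$, $|U^{4/3}X^{\unstable,\diamond}|,|U^{4/3}Y^{\unstable,\diamond}| \leq C$ from Theorem~\ref{theorem:innerComputations} and Theorem~\ref{theorem:matching} (the $\diamond$ symbol denoting either the inner or scaled outer graph), one obtains bounds of the form
\[
|U^{2}\BB_{i,j}(U)| \leq C, \qquad i,j = 1,2,3,
\]
for $U \in \DuMchInn$, with $C$ independent of $\kappa$ and $\de$.

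Next I would apply this to estimate each component of $\BB \cdot \phi$. By Lemma~\ref{lemma:sumNormsMch}(2), if $\phi \in \XcalMchTotal$ then $(\BB \cdot \phi)_1 \in \XcalMch_{\frac{4}{3}+2}$ with $\|(\BB \cdot \phi)_1\|^{\mathrm{mch}}_{\frac{4}{3}+2} \leq C\normMchTotal{\phi}$, and analogously $(\BB \cdot \phi)_{2,3} \in \XcalMch_{1+2}$ with $\|(\BB \cdot \phi)_{2,3}\|^{\mathrm{mch}}_{3} \leq C\normMchTotal{\phi}$. Applying Lemma~\ref{lemma:operadorEEmch}, the first component of $\TTT[\phi] = \GG^{\inn}[\BB \cdot \phi]$ lies in $\XcalMch_{\frac{4}{3}+1}$ and the last two components lie in $\XcalMch_{3}$. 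To land back in $\XcalMchTotal = \XcalMch_{\frac{4}{3}} \times \XcalMch_1 \times \XcalMch_1$, I use Lemma~\ref{lemma:sumNormsMch}(1) in the regime $\alpha > \beta$, which costs $\kappa^{\beta-\alpha}$. This yields factors $\kappa^{-1}$ (for the first component) and $\kappa^{-2}$ (for the second and third components), and combining everything gives
\[
\normMchTotal{\TTT[\phi]} \leq \frac{C}{\kappa}\normMchTotal{\phi}.
\]
Taking $\kappa \geq 2C$ gives the factor $\tfrac{1}{2}$.

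The invertibility bound on $(\Id - \TTT)^{-1}$ then follows immediately from the Neumann series $(\Id - \TTT)^{-1} = \sum_{k \geq 0} \TTT^k$, whose operator norm is bounded by $\sum_{k \geq 0} 2^{-k} = 2$. The main technical obstacle is the first step: carefully tracking the powers of $U$ arising from differentiating $\KK$ (and the denominators $1 + g^{\inn} + g^{\mch}$, which must be shown to be bounded away from $0$ using the smallness of the involved functions) so that the entries of $\BB(U)$ genuinely decay at least like $|U|^{-2}$, which is the decay needed to gain the full factor of $\kappa^{-1}$ after applying $\GG^{\inn}$ and re-embedding. If any entry of $\BB$ decays only like $|U|^{-\alpha}$ with $\alpha \leq 1$, the argument breaks, but the explicit form of $\KK$ and $\JJ$ together with the decay of $\ZuInn$ and $\ZuMchO$ guarantees enough power-gain.
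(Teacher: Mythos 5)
Your overall strategy is the same as the paper's: estimate the entries of $\BB$ entry-wise, apply the smoothing operator $\GG^{\inn}$, and re-embed in $\XcalMchTotal$ using Lemma~\ref{lemma:sumNormsMch}(1) to gain a power of $\kappa$. However, the key intermediate claim you make --- that $|U^{2}\BB_{i,j}(U)|\leq C$ uniformly in $i,j$ --- is false, and your own sanity check (``if any entry of $\BB$ decays only like $|U|^{-\alpha}$ with $\alpha\leq 1$, the argument breaks'') misleads you about why.

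The entries $\BB_{2,1}$ and $\BB_{3,1}$ come from $\partial_W\RRR^{\inn}_{2,3}$, and by Lemma~\ref{lemma:technicalMatching} these only satisfy $\normMch{\partial_W\RRR^{\inn}_j[\cdot]}_{\frac23}\leq C$, i.e.\ decay like $|U|^{-2/3}$, not $|U|^{-2}$. This is not a defect: it is compensated by the fact that $\BB_{j,1}$ multiplies $\phiA_1\in\XcalMch_{\frac43}$, so the product $\BB_{j,1}\phiA_1$ still has weight $\frac23+\frac43=2$, and after $\GG^{\inn}_j$ (which preserves weight for the oscillatory components) one can downgrade from weight $2$ to weight $1$ at cost $\kappa^{-1}$. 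The paper (see \eqref{eq:boundsOperadorBBmch} and the estimates that follow) tracks the six distinct entry weights ($3$, $\tfrac73$, $\tfrac73$, $\tfrac23$, $2$, $2$) precisely for this reason. Your uniform bound is not achievable, and if one tried to prove it for $\BB_{j,1}$ it would fail, so as written the argument has a gap. Relatedly, your assertion that ``$(\BB\cdot\phi)_1\in\XcalMch_{\frac43+2}$'' is also incorrect weight arithmetic even under your own hypothesis: the three terms $\BB_{1,1}\phiA_1$, $\BB_{1,2}\phiA_2$, $\BB_{1,3}\phiA_3$ have \emph{different} weights (because $\phiA_1$ carries $\tfrac43$ but $\phiA_{2,3}$ carry only $1$), and a sum lives only in the space with the smallest index. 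Both issues are fixed by doing what the paper does: bound each entry of $\BB$ with its correct individual weight via Lemma~\ref{lemma:technicalMatching}, downgrade each product $\BB_{i,l}\phiA_l$ to the single weight required by the target space, and observe that each downgrade costs at least $\kappa^{-1}$.
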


To prove this lemma, we use the following estimates, whose proof is a direct result of Lemma 5.5 in \cite{articleInner}.

\begin{lemma}\label{lemma:technicalMatching}
Fix $\varrho>0$ and take $\kappa>0$ big enough. Then, there exists a constant $C$ (depending on $\varrho$ but independent of $\kappa$) such that, for $\phiA \in \XcalMchTotal$ with $\normMchTotal{\phiA}\leq\varrho$, the functions $g^{\inn}$ and $f^{\inn}$ in \eqref{def:operatorRRRInner} and  the operator  $\RRR^{\inn}$ in \eqref{def:fgInnfgMch}  satisfy
\begin{align*}
\normMch{g^{\inn}(\cdot,\phiA)}_2  \leq C,
\qquad
\normMch{f_1^{\inn}(\cdot,\phiA)}_{\frac{11}3} \leq C,
\qquad
\normMch{f_j^{\inn}(\cdot,\phiA)}_{\frac43}
\leq C,
\quad j=2,3
\end{align*}
and
\begin{align*}
\normMch{\partial_W \RRR^{\inn}_1[\phiA]}_3 &\leq C, &
\normMch{\partial_X \RRR^{\inn}_1[\phiA] }_{\frac73} &\leq C, &
\normMch{\partial_Y \RRR^{\inn}_1[\phiA] }_{\frac73} &\leq C, 
\\
\normMch{\partial_W \RRR^{\inn}_j[\phiA]}_{\frac23} &\leq C, &
\normMch{\partial_X \RRR^{\inn}_j[\phiA] }_{2} &\leq C, &
\normMch{\partial_Y \RRR^{\inn}_j[\phiA] }_{2} &\leq C,
\quad j=2,3.
\end{align*}
\end{lemma}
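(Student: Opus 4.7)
The plan is to reduce the lemma to the pointwise bounds established in Lemma~5.5 of the companion paper \cite{articleInner}. That earlier result provides inequalities of the form $|g^{\inn}(U,W,X,Y)| \leq C|U|^{-2}$, $|f_1^{\inn}(U,W,X,Y)| \leq C|U|^{-11/3}$, $|f_j^{\inn}(U,W,X,Y)| \leq C|U|^{-4/3}$ for $j = 2, 3$, together with the analogous pointwise sizes for every first-order partial derivative of $\RRR^{\inn}$ in the variables $W, X, Y$, with exponents matching exactly the indices $\alpha$ appearing on the left-hand sides of the target estimates. These bounds hold uniformly on a polydisc of the form $\{|U| \geq \kappa_0\} \times \{|W| \leq r_1,\ |X|, |Y| \leq r_2\}$ with fixed radii $r_1, r_2$ independent of $\kappa$; they are obtained directly from the explicit formulas for $\KK$ and $\JJ$ in Proposition~\ref{proposition:innerDerivation} by expanding $1/\sqrt{1+\JJ}$ in powers of $\JJ$, which is legitimate because $\JJ$ is small for large $|U|$ and small $(W,X,Y)$.

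First I would verify that, for any $\phiA \in \XcalMchTotal$ with $\normMchTotal{\phiA} \leq \varrho$, the point $(U, \phiA(U))$ lies inside this polydisc for every $U \in \DuMchInn$. The norm together with the lower bound $|U| \geq \kappa \cos \betaMchA$ on the matching domain (cf.\ \eqref{eq:boundsDomainMchInner}) yields $|\phiA_1(U)| \leq \varrho/(\kappa\cos\betaMchA)^{4/3}$ and $|\phiA_{2,3}(U)| \leq \varrho/(\kappa\cos\betaMchA)$, so enlarging $\kappa$ beyond a threshold depending only on $\varrho, r_1, r_2$ places $\phiA(U)$ inside the polydisc uniformly in $U$. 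Each pointwise inequality then transfers directly to the claimed weighted sup-norm: multiplying through by $|U|^\alpha$ with the exponent indicated and taking the supremum over $\DuMchInn$ gives, for example, $\normMch{g^{\inn}(\cdot,\phiA)}_2 \leq C$ from $|g^{\inn}(U,\phiA(U))| \leq C|U|^{-2}$, and the remaining estimates on $f^{\inn}$ and on the partial derivatives of $\RRR^{\inn}$ follow in exactly the same way.

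The only mild subtlety is in handling the derivatives of $\RRR^{\inn}$, which by \eqref{def:operatorRRRInner} is the quotient $\RRR^{\inn}[\phiA] = (f^{\inn}(U,\phiA) - g^{\inn}(U,\phiA)\AAA^{\inn}\phiA)/(1 + g^{\inn}(U,\phiA))$. The bound $|g^{\inn}(U,\phiA)| \leq C|U|^{-2} \leq C/(\kappa \cos\betaMchA)^2$ forces $|1 + g^{\inn}(U,\phiA)| \geq 1/2$ once $\kappa$ is large enough, so the quotient is smooth in a neighborhood of the evaluation point; differentiating via the quotient rule and applying the pointwise bounds on $g^{\inn}$, $f^{\inn}$, $\AAA^{\inn}\phiA$ and their partials factor-by-factor, combined with the multiplicative structure recorded in part~2 of Lemma~\ref{lemma:sumNormsMch}, produces all the listed estimates on $\partial_W \RRR^{\inn}_j$, $\partial_X \RRR^{\inn}_j$, $\partial_Y \RRR^{\inn}_j$ for $j = 1, 2, 3$. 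The main obstacle, if any, is purely bookkeeping: one must check that each of the nine target exponents comes out right after the quotient-rule expansion, but this is mechanical since the companion paper has already tabulated all the necessary factor bounds.
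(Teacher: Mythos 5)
Your overall strategy -- reduce everything to the estimates of Lemma~5.5 of the prequel \cite{articleInner} -- is exactly what the paper does (its ``proof'' is the single sentence citing that lemma), and your domain-membership check and the observation that $1+g^{\inn}\geq \tfrac12$ for $\kappa$ large are both fine. The problem is the form in which you claim the prequel's bounds hold: pointwise inequalities such as $|g^{\inn}(U,W,X,Y)|\leq C|U|^{-2}$ ``uniformly on a polydisc $\{|W|\leq r_1,\ |X|,|Y|\leq r_2\}$ with fixed radii independent of $\kappa$.'' No such bound exists. From Proposition~\ref{proposition:innerDerivation}, $g^{\inn}=\partial_W\KK=-\tfrac32 U^{2/3}W+\cdots$, so for any fixed nonzero $W$ the function \emph{grows} like $|U|^{2/3}$; the weight $2$ in $\normMch{g^{\inn}(\cdot,\phiA)}_2\leq C$ is only achieved because the argument $W=\phiA_1(U)$ itself decays in $U$ (like $|U|^{-8/3}$ for the functions the lemma is actually applied to, namely convex combinations of $\ZuInn$ and $\ZuMchO$, cf.\ Theorem~\ref{theorem:innerComputations} and \eqref{eq:boundsZMchO}). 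The same failure occurs for $f_1^{\inn}=-\partial_U\KK$ (the term $-\tfrac12 U^{-1/3}W^2$ gives only $O(|U|^{-1/3})$ on your polydisc, not $O(|U|^{-11/3})$), for $\partial_W\RRR_1^{\inn}$ (the term $-U^{-1/3}W$), and even the weight $11/3$ for $f_1^{\inn}$ needs $X,Y$ to decay, because $\JJ$ contains $-\tfrac{4i(X-Y)}{3U^{2/3}}$. So the exponent bookkeeping you describe as ``mechanical'' cannot come out right in your framework: the exponents in the conclusion are not intrinsic decay rates of $g^{\inn},f^{\inn},D\RRR^{\inn}$ in $U$; they are sums $a+b\alpha_W+c\alpha_X+d\alpha_Y$ obtained by feeding the decay exponents of the components of $\phiA$ into each monomial $U^{-a}W^bX^cY^d$ of the expansions of $\KK$ and $\JJ$, via the product rule of Lemma~\ref{lemma:sumNormsMch}(2). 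That is the missing step, and it is the entire content of the lemma.

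A secondary point worth noting: under the literal hypothesis $\normMchTotal{\phiA}\leq\varrho$ one only gets $|\phiA_1(U)|\leq\varrho|U|^{-4/3}$, which still yields $|U^{2/3}\phiA_1(U)|\leq\varrho|U|^{-2/3}$ rather than $O(|U|^{-2})$; the sharpest stated weights really use the $|U|^{-8/3}$ decay of the first component of the specific functions involved. So if you rebuild the argument, track the three components with their individual weights (as in the prequel's inner spaces) rather than through a single uniform smallness radius.
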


\begin{proof}[Proof of Lemma \ref{lemma:operatorTTmch}]
% First, we analyze estimates for operator $\BB$.
%	
Let $\ZuMchO$ be as given in \eqref{def:ZdMchO}. Then, by Proposition \ref{proposition:existenceComecocos},
estimates \eqref{eq:boundsDomainMchInner} and taking $\g \in [\frac35,1)$, we have that, for $U \in \DuMchInn$,
\begin{align}\label{eq:boundsZMchO}
	\vabs{\WuMchO(U)} \leq \frac{C}{\vabs{U}^{\frac{8}{3}}}
	+ \frac{C \de^{\frac{4}{3}}}{\vabs{U}}
	\leq
	\frac{C}{\vabs{U}^{\frac{8}{3}}}, \qquad
	\normMch{\XuMchO}_{\frac43} \leq C, \qquad
	\normMch{\YuMchO}_{\frac43} \leq C.
\end{align}		
Then, using also Theorem \ref{theorem:innerComputations}, we obtain that
$(1-s)\ZuInn+s\ZuMchO \in \XcalMchTotal$ for $s \in [0,1]$ and $\g \in [\frac35,1)$ and 
$
\normMchTotal{(1-s)\ZuInn+s\ZuMchO}\leq C.
$
As a result, using the definition of $\BB$ in \eqref{def:operatorBBmch} and Lemma \ref{lemma:technicalMatching},
\begin{equation}\label{eq:boundsOperadorBBmch}
\begin{aligned}
\normMch{\BB_{1,1}}_3 &\leq C,  &
\normMch{\BB_{1,2}}_{\frac73} &\leq C, &
\normMch{\BB_{1,3}}_{\frac73} &\leq C, \\
\normMch{\BB_{j,1}}_{\frac23} &\leq C, &
\normMch{\BB_{j,2}}_2 &\leq C, &
\normMch{\BB_{j,3}}_2 &\leq C, 
\quad \text{for } j=2,3.
\end{aligned}	
\end{equation}
% This allows us to  compute estimates for 
% $
% \TTT[\phiA] = \GG^{\inn} \left[\BB  \phiA \right]
% $ for $\phiA \in \XcalMchTotal$.
%
Therefore, by Lemmas~\ref{lemma:operadorEEmch} and \ref{lemma:sumNormsMch} and \eqref{eq:boundsOperadorBBmch}, we obtain
\begin{align*}
	\normMch{\TTT_1[\phiA]}_{\frac{4}{3}} &\leq
	C \normMch{\pi_1 \paren{\BB \phiA} }_{\frac{7}{3}}\\
% \end{align*}
% 
% \begin{align*}
% 	\normMch{\TTT_1[\phiA]}_{\frac{4}{3}}
	&\leq C \boxClaus{\normMch{\BB_{1,1}}_{1} \normMch{\phiA_1}_{\frac{4}{3}}+
		\normMch{\BB_{1,2}}_{\frac{4}{3}} \normMch{\phiA_2}_{1}+
		\normMch{\BB_{1,3}}_{\frac{4}{3}} \normMch{\phiA_3}_{1}} \\
	&\leq 
	\frac{C}{\kappa^2} \normMch{\phiA_1}_{\frac{4}{3}} +
	\frac{C}{\kappa} \normMch{\phiA_2}_{1} +
	\frac{C}{\kappa} \normMch{\phiA_3}_{1}
	%&\leq \sup_{U \in \DuMchInn}\claus{\frac{C}{\vabs{U}} }
	%\normMchTotal{\psi}
	\leq \frac{C}{\kappa} \normMchTotal{\phiA}.
\end{align*}
Proceeding analogously, for $j=2,3$, we have
\begin{align*}
	\normMch{\TTT_j[\phiA]}_{1}
	&\leq C \boxClaus{\normMch{\BB_{j,1}}_{-\frac{1}{3}} 
		\normMch{\phiA_1}_{\frac{4}{3}}+
		\sum_{l=2}^3
		\normMch{\BB_{j,l}}_{0} \normMch{\phiA_l}_{1}}
	%	\normMch{\BB_{j,l}}_{0} \normMch{\phiA_l}_{1}} 
	%\\
	%&\leq 
	%\frac{C}{\kappa} \normMch{\phiA_1}_{\frac{4}{3}} +
	%\frac{C}{\kappa^2} \normMch{\phiA_2}_{1} +
	%\frac{C}{\kappa^2} \normMch{\phiA_3}_{1}
	%
	\leq \frac{C}{\kappa} \normMchTotal{\phiA}.
\end{align*}
Taking $\kappa>0$ big enough, we obtain the statement of the lemma.
%have that
%$
%\normMchTotal{\TTT[\phiA]} \leq \frac{1}{2} \normMchTotal{\phiA}.
%$
%Therefore, $(\Id - \TTT)$ is invertible in $\XcalMchTotal$. 
%
\end{proof}

\subsection{End of the proof of Proposition \ref{theorem:matchingProof}}

To complete the proof of Proposition \ref{theorem:matchingProof}, we study the right-hand side of equation \eqref{eq:invariantEquationMchU4}.
%
%\begin{lemma}\label{lemma:independentTermsMchA}
%%
%Let $C^{\mch}$ be the constant defined in~\eqref{def:constantMch}. 
%%
%Then, there exist $\g^* \in (\frac34,1)$ such that, for $\g \in (\g^*,1)$ and $\de>0$ small enough, there exists a constant $C$ independent of $\de$ and $\kappa$ such that
%\begin{equation*}
%	\normMchTotalSmall{C^{\mch} e^{\AAA^{\inn} U}}\leq
%	C \de^{\frac{2}{3}(1-\g)}.
%\end{equation*}
%\end{lemma}
%%
%\begin{proof}
 
First, we deal with  the term $C^{\mch} e^{\AAA^{\inn} U}$.
Recall that  $U_2$ and $U_3$ in \eqref{def:U12} satisfy
\begin{equation*}%\label{eq:boundsDomainMchInner12}
	\frac{C^{-1}}{\de^{2(1-\g)}} \leq
	\vabs{U_j} \leq
	\frac{C}{\de^{2(1-\g)}},
	\hh
	\text{ for } j=2,3.
\end{equation*}
%
%Next, we consider estimates for $C_W^{\mch}$.
%
Then, taking into account that $\WuMchU= \WuMchO-\WuInn$,  \eqref{eq:boundsZMchO} and Theorem \ref{theorem:innerComputations} imply 
\begin{align*}
|C^{\mch}_W| = \vabs{\WuMchU(U_3)} \leq \vabs{\WuMchO(U_3)}+\vabs{\WuInn(U_3)}
%\leq \vabs{\WuMchO(U_3)} + \vabs{\WuInn(U_2)}
\leq \frac{C}{\vabs{U_3}^{\frac{8}{3}}}
\leq 
C \de^{\frac{16}{3}(1-\g)}
\end{align*}
and, as a result, by Lemma \ref{lemma:sumNormsMch},
$	
\normMchSmall{C^{\mch}_W}_{\frac{4}{3}} 
\leq 
C \de^{\frac{8}{3}(1-\g)}.
$
Analogously, for $U \in \DuMchInn$,
\begin{align*}
|C^{\mch}_X e^{iU}| 
&= |e^{i(U-U_3)} \XuMchU(U_3)| 
\leq \frac{C e^{-\Im(U-U_3)}}{\vabs{U_3}^{\frac{4}{3}}} 
\leq C \de^{\frac{8}{3}(1-\g)}
\end{align*}
and then
$
\normMchSmall{C^{\mch}_X e^{iU}}_1
\leq C \de^{\frac{2}{3}(1-\g)}.
$
An analogous result holds for $C^{\mch}_Y e^{-iU}$. Therefore,
\begin{equation}\label{eq:independentTermsMchA}
	\normMchTotalSmall{C^{\mch} e^{\AAA^{\inn} U}}\leq
	C \de^{\frac{2}{3}(1-\g)}.
\end{equation}

%\begin{lemma}\label{lemma:independentTermsMchB}
%%
%Let $\RRR^{\mch}$ and $\GG^{\inn}$ be the operators given in~\eqref{def:operatorRRRmch} and~\eqref{def:operatorEEmch}, respectively. 
%%
%Then, there exist $\g^* \in (\frac34,1)$ such that for $\g \in (\g^*,1)$, $\kappa>0$ big enough and $\de>0$ small enough, there exists a constant $C$ independent of $\de$ and $\kappa$ such that
%\begin{equation*}
%\normMchTotalSmall{\GG^{\inn} \circ \RRR^{\mch}[\ZuMchO]}\leq
%C \de^{\frac{2}{3}(1-\g)}.
%\end{equation*}
%\end{lemma}

Now, we estimate the norm of  $\GG^{\inn} \circ \RRR^{\mch}[\ZuMchO]$. 
The operator $\RRR^{\mch}$ in~\eqref{def:operatorRRRmch} can be rewritten as
\begin{align*}
\RRR^{\mch}[\ZuMchO] = \frac{f^{\mch}(1+g^{\Inn})-
	g^{\mch}(\AAA^{\Inn}\ZuMchO+f^{\Inn})}{(1+g^{\Inn})(1+g^{\Inn}+g^{\mch})}.
\end{align*}
%with $f^{\inn}, g^{\inn}, f^{\mch}$ and $g^{\mch}$ as given in \eqref{def:fgInnfgMch}.
%
Then by \eqref{eq:boundsZMchO}, Lemmas \ref{lemma:sumNormsMch} and \ref{lemma:technicalMatching} and  taking $\kappa$ big enough, we obtain
\begin{equation}\label{proof:boundsfgInn}
\begin{aligned}
	\normMch{g^{\inn}(\cdot,\ZuMchO)}_0 &\leq \frac{C}{\kappa^2} \leq \frac12, &\quad 
	\normMch{i\XuMchO + f_2^{\inn}(\cdot,\ZuMchO)}_0 
	&\leq C, \\
	\normMch{f_1^{\inn}(\cdot,\ZuMchO)}_0 &\leq C, 
	&\quad
	\normMch{-i\YuMchO + f_3^{\inn}(\cdot,\ZuMchO)}_0 &\leq C.
\end{aligned}
\end{equation}
%
%Then, taking $\kappa$ big enough, 
%\begin{align}\label{proof:RRRmchOnlyMch}
%|\RRR^{\mch}_i[\ZuMchO](U)| \leq C \,
%\frac{|f_i^{\mch}(U,\ZuMchO)| +
%|g^{\mch}(U,\ZuMchO)|}{|1+g^{\mch}(U,\ZuMchO)|},
%\quad \text{for } i=1,2,3.
%\end{align}
%
To analyze 
$f^{\mch}$ and
$g^{\mch}$ (see \eqref{def:fgInnfgMch}) we rely on the estimates for $H_1^{\inn}$  in \eqref{eq:boundsH1Inn} and its derivatives, which can be easily  obtained by Cauchy estimates.
% there exists a constant $m>0$, such that
Indeed, they can be applied  since  $U \in \DuMchInn$ and, by \eqref{eq:boundsZMchO}, 
% $(\WuMchO(U),\XuMchO(U),\YuMchO(U))$ satisfy
%
\[
\vabs{\WuMchO(U)},\vabs{\XuMchO(U)},\vabs{\YuMchO(U)}\leq C.
\]
% 
% Then, by \eqref{eq:boundsDomainMchInner} and since $\ZuMchO \in \XcalMchTotal$, one has that
% \[
% \vabs{\WuMchO(U),\XuMchO(U),\YuMchO(U)}\leq C.
% \]
%
Then, there exists $m>0$ such that
% $f^{\mch} = \paren{-\partial_U H_1^{\inn}, 
% 	i \partial_Y H_1^{\inn}, -i\partial_X H_1^{\inn} }$,
% $g^{\mch} = \partial_{W} H_1^{\inn}$ (see \eqref{def:fgInnfgMch}) and $H_1^{\inn}$ is analytic, applying the estimates in \eqref{eq:boundsH1Inn} there exists a constant $m>0$, such that
\begin{align}\label{proof:boundsMchA}
|g^{\mch}(U,\ZuMchO)| \leq C 
\de^{\frac43-2 m(1-\g)},
\quad
|f_j^{\mch}(U,\ZuMchO)|\leq C \de^{\frac43-2 m(1-\g)},
\text{ for } j=1,2,3.
\end{align} 
We note that,  for $\g \in (\g^*_0,1)$ with $\g^*_0=\max\{\frac{3}{5},\frac{3m-2}{3m}\}$, we have that ${\frac43}-2m(1-\g)>0$.
Then, for $\g \in (\g^*_0,1)$, $\de$ small enough and $\kappa$ big enough, using \eqref{proof:boundsfgInn} and
\eqref{proof:boundsMchA} we obtain
\begin{align*}%\label{proof:boundsRmch}
|\RRR^{\mch}_j[\ZuMchO](U)| \leq C \de^{\frac43-2 m(1-\g)}, \qquad
\text{ for }  j=1,2,3. 
\end{align*}
Then, by Lemmas \ref{lemma:sumNormsMch} and \ref{lemma:operadorEEmch}, 
\begin{align*}
\normMchTotalSmall{\GG^{\inn} \circ \RRR^{\mch}[\ZuMchO]} &=
\normMchSmall{\GG^{\inn}_1 \circ \RRR_1^{\mch}[\ZuMchO]}_{\frac{4}{3}} 
+
\textstyle\sum_{j=2}^3
\normMchSmall{\GG^{\inn}_j \circ \RRR_j^{\mch}[\ZuMchO]}_{1} \\
&\leq 
C \normMchSmall{\RRR_1^{\mch}[\ZuMchO]}_{\frac{7}{3}} +
\textstyle
\sum_{j=2}^3
C \normMchSmall{\RRR_j^{\mch}[\ZuMchO]}_{1} 
\leq
C \de^{\frac43-2\paren{m+\frac73}(1-\g)}.
\end{align*}
If we take  $\g^*=\max\claus{\frac35, \g^*_0, \g^*_1}$
with $\g^*_1=\frac{3m+5}{3m+7}$,
and $\g \in (\g^*,1)$, 
\begin{align}\label{eq:independentTermsMchB}
\normMchTotalSmall{\GG^{\inn} \circ \RRR^{\mch}[\ZuMchO]}
\leq
C \de^{\frac23(1-\g)}.
\end{align}
To complete the proof of Proposition~\ref{theorem:matchingProof}, we consider
equation~\eqref{eq:invariantEquationMchU4}.  By Lemma \ref{lemma:operatorTTmch}, $(\Id - \TTT)$ is invertible in $\XcalMchTotal$ and moreover
%
% and
% since,  one has that
\begin{equation*}
\begin{split}
\normMchTotal{\ZuMchU}& = 
\normMchTotal{ (\Id - \TTT)^{-1}
\paren{C^{\mch} e^{\AAA^{\inn} U} + \GG^{\inn} \circ \RRR^{\mch}[\ZuMchO]}}\\
&\leq 2 \normMchTotal{ C^{\mch} e^{\AAA^{\inn} U} + \GG^{\inn} \circ \RRR^{\mch}[\ZuMchO]}.
% C \de^{\frac{2}{3}(1-\g)}.
\end{split}
\end{equation*}
Then, it is enough to apply   \eqref{eq:independentTermsMchA} and \eqref{eq:independentTermsMchB}.

\qed

%% APPENDIX

\appendix

\section{Estimates for the invariant manifolds}
\label{appendix:proofH-technical}
In this appendix we prove the technical Lemmas \ref{lemma:computationsRRRInf} and
\ref{lemma:computationsRRROut}.
% ,
% \ref{lemma:computationsRRRtransitionOuter}
% and
% \ref{lemma:computationsRFlow} \textcolor{red}{els ultims dos lemmes potser es treuen}.
%
All these results involve, in some sense, estimates for the first and second derivatives of the Hamiltonian $H_1^{\out}$ in \eqref{def:hamiltonianOuterSplit}.
However, to obtain estimates for $H_1^{\out}$, we  first obtain some properties of $H_1^{\Poi}$ (see \eqref{def:hamiltonianPoincare01}), which can be written as
\begin{equation}\label{def:hamiltonianPoincare1}
\begin{split}
H_1^{\Poi} = 
\frac1{\mu} \PP[0] - \frac{1-\mu}{\mu}{\PP[\mu]}
	- {\PP[\mu-1]},
\end{split}
\end{equation}
where
\begin{equation}\label{def:functionD}
\PP[\zeta](\la,L,\eta,\xi) = 
\paren{\norm{q-(\zeta,0)}^{-1}}
\circ \phi_{\Poi}.
\end{equation}
%The function $P[\zeta]$ does not have an explicit expression.
%
In \cite{articleInner}, we computed the series expansion of $\PP[\zeta]$ in powers of $(\eta,\xi)$. In particular, $\PP[\zeta]$ can be written as 
\begin{equation}\label{def:functionD2}
 \PP[\zeta](\la,L,\eta,\xi) =\frac{1}{\sqrt{A[\zeta](\la)+	B[\zeta](\la,L,\eta,\xi)}}
\end{equation}
where $A$ and $B$ are of the form 
\begin{align}
A[\zeta](\la) =& \, 
1- 2\zeta \cos \la  +\zeta^2, \label{def:DA} \\
%%%
\begin{split}\label{def:DB}
	B[\zeta](\la,L,\eta,\xi) =& \,
	4 (L-1)(1-\zeta\cos\la) +
	\frac{\eta}{\sqrt2}
	\paren{3 \zeta - 2e^{-i\la}  - \zeta e^{-2i\la}} \\
	&+ \frac{\xi}{\sqrt2}
	\paren{3 \zeta - 2 e^{i\la} - \zeta e^{2 i\la}}
	+ R[\zeta](\la,L,\eta,\xi),
\end{split} 
\end{align}
and, for fixed $\varrho>0$,  $R$ is  analytic and satisfies that
\begin{equation}\label{eq:boundD2mes}
	\vabs{R[\zeta](\la,L,\eta,\xi)} \leq 
	K(\varrho) \vabs{(L-1,\eta,\xi)}^2,
\end{equation}
%\begin{equation}\label{eq:boundRPoi}
%	\vabs{R[\zeta](\la,L,\eta,\xi)} \leq 
%	\vabs{\frac{B[\zeta]}{A[\zeta]}},
%\end{equation}
for $\vabs{\Im \la}\leq \varrho$, $\vabs{(L-1,\eta, \xi)} \ll 1$ and $\zeta\in [-1,1]$.

Then, wherever $|A[\zeta](\la)|>|B[\zeta](\la,L,\eta,\xi)|$, $\PP[\zeta](\la,L,\eta,\xi)$ can be written as 
% %
% The next result is a direct consequence of Lemma 4.1 in \cite{articleInner}.
% 
% \begin{lemma}\label{lemma:seriesH1Poi}
% Fix $\zeta \in [-1,1]$ and define .
% \begin{align}
% A[\zeta](\la) =& \, 
% 1- 2\zeta \cos \la  +\zeta^2, \label{def:DA} \\
% %%%
% \begin{split}\label{def:DB}
% 	B[\zeta](\la,L,\eta,\xi) =& \,
% 	4 (L-1)(1-\zeta\cos\la) +
% 	\frac{\eta}{\sqrt2}
% 	\paren{3 \zeta - 2e^{-i\la}  - \zeta e^{-2i\la}} \\
% 	%
% 	&+ \frac{\xi}{\sqrt2}
% 	\paren{3 \zeta - 2 e^{i\la} - \zeta e^{2 i\la}}
% 	+ R[\zeta](\la,L,\eta,\xi).
% \end{split} 
% \end{align}
% 
% % 
% Then, if 
% 
% for $\vabs{(L-1,\eta, \xi)} \ll 1$, $D[\zeta]$ can be written as
\begin{equation}\label{def:SerieP}
	\PP[\zeta](\la,L,\eta,\xi) =  
	\frac1{\sqrt{A[\zeta]}} +
	\sum_{n=1}^{+\infty}
	\binom{-\frac12}{n}
	\frac{\paren{B[\zeta]}^n }{ \paren{A[\zeta]}^{n+\frac12}}.
\end{equation}
%\begin{align*}
%P[\zeta] =  
%\frac1{\sqrt{A[\zeta]}} +
% \frac{B_1[\zeta]}{2(A[\zeta])^{\frac32}}
%+ B_{\geq 2}[\zeta],
% %\end{align*}
% where 
% %
% In addition, for a fixed $\varrho\geq 0$ such that $\vabs{\Im \la}\leq \varrho$
% %
% the function $R[\zeta]$ is analytic and satisfies that
% \begin{equation}\label{eq:boundD2mes}
% 	\vabs{R[\zeta](\la,L,\eta,\xi)} \leq 
% 	C \vabs{(L-1,\eta,\xi)}^2,
% \end{equation}
% %\begin{equation}\label{eq:boundRPoi}
% %	\vabs{R[\zeta](\la,L,\eta,\xi)} \leq 
% %	\vabs{\frac{B[\zeta]}{A[\zeta]}},
% %\end{equation}
% with $C=C(\varrho)$ a positive constant independent of $\zeta$.
% % \end{lemma}

\begin{remark}\label{remark:collisions}
The Hamiltonian $H^{\Poi} = H_0^{\Poi}	+ \mu H_1^{\Poi}$
 (see \eqref{def:hamiltonianPoincare} and \eqref{def:hamiltonianPoincare1}) is analytic away from the collisions with the primaries, that is zeroes of the denominators of  $\PP[\mu]$ and $\PP[\mu-1]$.
% %
% (which correspond with collision with the primaries $S$ and $P$, respectively).
%
% Notice that, since $A[0]=1$, the function $D[0]$ is analytic.
%
For $0<\mu \ll 1$,  one has
\begin{align*}
A[\mu] = 1 + \OO(\mu), \qquad 
A[\mu-1] = 2 +2 \cos\la + \OO(\mu).
\end{align*}
Therefore, in the regime that we consider, collisions with the primary $S$ are not possible but collisions with $P$ may take place at $\la \sim \pi$. 
%
%As a consequence, in the following, it is necessary to be careful when computing $P[\mu-1]$ near $\la=\pi$.
\end{remark}

We now obtain  estimates for $H_1^{\Poi}$ in domains ``far'' from $\la=\pi$. 
%(which corresponds with $L_3$ and is far from collision).

\begin{lemma}\label{corollary:seriesH1Poi}
Fix $\la_0 \in (0,\pi)$ and $\mu_0\in (0,\frac12)$ and consider the Hamiltonian $H_1^{\Poi}$ and the potential $V$ introduced in \eqref{def:hamiltonianPoincare1} and 
\eqref{def:potentialV}, respectively.	
Then, for 
for $\vabs{\la}<\la_0$, $\vabs{(L-1,\eta,\xi)}\ll 1$ and $\mu\in(0,\mu_0)$,
the Hamiltonian $H_1^{\Poi}$ can be written as
\begin{align*}
	H_1^{\Poi}(\la,L,\eta,\xi;\mu)-V(\la) 
	=&
	D_0(\mu,\la) +
	D_{1}(\mu,\la) \big((L-1),\eta,\xi \big) +
	D_2(\la,L,\eta,\xi;\mu),
\end{align*}
such that, for $j=1,2,3$,
\begin{align*}
	&\vabs{D_{0}(\mu,\la)} \leq K \mu,
	\qquad
	\vabs{(D_{1}(\mu,\la))_j} \leq K, 
	\qquad
	\vabs{D_2(\la,L,\eta,\xi;\mu)} \leq 
	K \vabs{(L-1,\eta,\xi)}^2,
\end{align*}
with $K$ a positive constant independent of $\la$ and $\mu$.
\end{lemma}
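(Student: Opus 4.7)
The strategy is to Taylor expand $H_1^{\Poi}$ around $(L,\eta,\xi)=(1,0,0)$ using the representation \eqref{def:hamiltonianPoincare1} together with the series \eqref{def:SerieP} for each $\PP[\zeta]$ with $\zeta\in\{0,\mu,\mu-1\}$, and then to identify $V(\la)$ as the zeroth order term at $\mu=0$. Throughout, $\la$ is restricted to $|\la|\leq \la_0<\pi$, so collisions with either primary are avoided (see Remark~\ref{remark:collisions}) and the denominators $A[\zeta](\la)$ introduced in \eqref{def:DA} stay bounded away from zero uniformly in $\mu\in(0,\mu_0)$: indeed $A[0]=1$, $A[\mu]=1+O(\mu)$, and $A[\mu-1]=2+2\cos\la+O(\mu)\geq 2+2\cos\la_0-O(\mu_0)>0$.

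The first step is to check convergence of the series \eqref{def:SerieP} in our regime. Since $A[\zeta](\la)$ is uniformly bounded below and $B[\zeta]$ vanishes at $(L,\eta,\xi)=(1,0,0)$ (cf.\ \eqref{def:DB}), for $|(L-1,\eta,\xi)|$ small enough (depending only on $\la_0$ and $\mu_0$) we have $|B[\zeta]|\leq \tfrac12 |A[\zeta]|$ and the geometric-type bound $|\binom{-1/2}{n}B^n A^{-n-1/2}|\leq C\, 2^{-n}$, giving absolute and uniform convergence. Hence each $\PP[\zeta]$ is real-analytic in $(L-1,\eta,\xi)$ on this domain, with bounds uniform in $\mu\in(0,\mu_0)$.

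Next, I split the series at order $n=0,1$ and $n\geq 2$. The $n=0$ term $A[\zeta]^{-1/2}$ is independent of $(L-1,\eta,\xi)$. The $n=1$ term $-\tfrac12 B[\zeta]A[\zeta]^{-3/2}$ contributes a linear part in $(L-1,\eta,\xi)$ (coming from the explicit linear terms of $B[\zeta]$ in \eqref{def:DB}) plus a contribution from $R[\zeta]$ which, by \eqref{eq:boundD2mes}, is $O(|(L-1,\eta,\xi)|^2)$. The tail $n\geq 2$ is $O(|B|^2)=O(|(L-1,\eta,\xi)|^2)$ by the same geometric bound. Combining these across $\zeta=0,\mu,\mu-1$ via \eqref{def:hamiltonianPoincare1}, I obtain
\[
H_1^{\Poi}(\la,L,\eta,\xi;\mu) = H_1^{\Poi}(\la,1,0,0;\mu) + D_1(\mu,\la)\bigl((L-1),\eta,\xi\bigr) + D_2(\la,L,\eta,\xi;\mu),
\]
where $D_1(\mu,\la)$ collects the linear parts and is bounded uniformly in $\mu\in(0,\mu_0)$ and $|\la|\leq\la_0$ by the boundedness of $A[\zeta]^{-3/2}$ and of the coefficients of $B[\zeta]$ in $(L-1,\eta,\xi)$, while $D_2$ satisfies the desired quadratic bound by the $n\geq 2$ tail bound together with \eqref{eq:boundD2mes}.

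Finally, setting $D_0(\mu,\la)=H_1^{\Poi}(\la,1,0,0;\mu)-V(\la)$, it remains to show $|D_0(\mu,\la)|\leq K\mu$. Since $V(\la)=H_1^{\Poi}(\la,1,0,0;0)$ by \eqref{def:potentialV}, this amounts to proving that $\mu\mapsto H_1^{\Poi}(\la,1,0,0;\mu)$ is $C^1$ at $\mu=0$ with a derivative bounded uniformly in $|\la|\leq\la_0$. At $(L,\eta,\xi)=(1,0,0)$ one has $B[\zeta]=0$ and hence $\PP[\zeta](\la,1,0,0)=A[\zeta](\la)^{-1/2}$, so
\[
H_1^{\Poi}(\la,1,0,0;\mu) = \frac{1-(1-\mu)A[\mu](\la)^{-1/2}}{\mu} - A[\mu-1](\la)^{-1/2}.
\]
The second term is analytic in $\mu$ at $\mu=0$, and for the first term the apparent singularity cancels because $A[\mu](\la)^{-1/2}=1+\mu\cos\la+O(\mu^2)$ gives $1-(1-\mu)A[\mu]^{-1/2}=\mu(1-\cos\la)+O(\mu^2)$. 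Hence the ratio is analytic in $\mu$ at $0$, with value $1-\cos\la$ there; the $\mu$-derivative is continuous and uniformly bounded for $|\la|\leq\la_0$, which yields $|D_0(\mu,\la)|\leq K\mu$ by the mean value theorem. The only mildly delicate point is tracking these expansions so that all implicit constants are uniform in both $\mu$ and $\la$ on $|\la|\leq\la_0$; this is ensured by the uniform lower bound on $A[\mu-1](\la)$ away from $\la=\pm\pi$.
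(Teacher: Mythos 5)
Your overall strategy---expand each $\PP[\zeta]$ via \eqref{def:SerieP}, establish uniform convergence for $|\la|<\la_0$ and $|(L-1,\eta,\xi)|\ll 1$, split at $n=0$, $n=1$, $n\ge 2$, and identify $D_0$ by evaluating at $(L,\eta,\xi)=(1,0,0)$---is the right one, and your treatment of $D_0$, including the cancellation of the apparent $1/\mu$ singularity, is correct. However, the argument you give for $D_1$ and $D_2$ has a gap: you justify their $\mu$-uniform bounds by ``the boundedness of $A[\zeta]^{-3/2}$ and of the coefficients of $B[\zeta]$,'' which only controls the Taylor coefficients of each individual $\PP[\zeta]$. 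In the combination \eqref{def:hamiltonianPoincare1}, $\PP[0]$ and $\PP[\mu]$ come with weights $\tfrac{1}{\mu}$ and $-\tfrac{1-\mu}{\mu}$, so a termwise estimate only yields linear and quadratic coefficients of order $\tfrac{1}{\mu}$, not $\OO(1)$.

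Exactly as you observed for $D_0$, the uniform boundedness of $D_1$ and $D_2$ rests on a cancellation between the $\zeta=0$ and $\zeta=\mu$ contributions: the corresponding Taylor coefficients agree to $\OO(\mu)$ because $\zeta\mapsto A[\zeta](\la)$ and $\zeta\mapsto B[\zeta](\la,L,\eta,\xi)$ are analytic with derivatives bounded uniformly on the domain at hand. A clean way to make this explicit is to rewrite
\[
\frac{1}{\mu}\,\PP[0]-\frac{1-\mu}{\mu}\,\PP[\mu]
\;=\; -\int_0^1 \partial_\zeta\PP[s\mu]\,ds \;+\; \PP[\mu],
\]
both summands now being analytic in $(L-1,\eta,\xi)$ with coefficients bounded uniformly in $\mu\in(0,\mu_0)$ and $|\la|<\la_0$. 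Running your $n=0$, $n=1$, $n\ge 2$ split on each of these two terms together with $-\PP[\mu-1]$ then removes the spurious $1/\mu$ and yields the stated bounds on $D_1$ and $D_2$ with constants independent of $\mu$ and $\la$.
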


\subsection{Estimates in the infinity domain}
\label{subsubsection:ProofComputationsRRRInf}

To prove Lemma~\ref{lemma:computationsRRRInf}, we need to obtain estimates for $\RRR^{\out}$ and its derivatives.
Let us recall that, by its definition in~\eqref{def:operatorRRROuter}, for $z=(w,x,y)$ we have
\begin{align}\label{eq:expressionRRRout}
	\RRR^{\out}[z] = \paren{
		\frac{f_1^{\out}(\cdot,z)}{1+g^{\out}(\cdot,z)},
		\frac{{f_2}^{\out}(\cdot,z) - \frac{i x}{\de^2} \, g^{\out}(\cdot,z)}{1+g^{\out}(\cdot,z)},
		\frac{{f_3}^{\out}(\cdot,z) + \frac{i y}{\de^2} \, g^{\out}(\cdot,z)}{1+g^{\out}(\cdot,z)}
	}^T,
\end{align} 
where $g^{\out} = \partial_w H_1^{\out}$ and $f^{\out}=\paren{-\partial_u H_1^{\out}, i\partial_y H_1^{\out}, -i\partial_x H_1^{\out} }^T$.

Therefore, we need to obtain first estimates for the first and second derivatives of $H_1^{\out}$, introduced in~\eqref{def:hamiltonianOuterSplit}, that is
\begin{equation}\label{proof:H1outForM1}
H_1^{\out} = H \circ (\phi_{\equi} \circ \phi_{\out}) - \paren{w + \frac{xy}{\de^2}},
\end{equation}
where $H=H_0+H_1$ with $H_0= H_{\pend}+ H_{\osc}$ (see \eqref{def:hamiltonianScaling},  \eqref{def:hamiltonianScalingH0}).
%and the changes of coordinates $\phi_{\equi}$ and $\phi_{\out}$ are given in \eqref{def:changeEqui} and \eqref{def:changeOuter}, respectively.
%

Since $(\la_h,\La_h)$  is a solution of the Hamiltonian  $H_{\pend}$ and belongs to the energy level $H_{\pend}=-\frac12$,
\begin{align*}
H_0 \circ \phi_{\out}
=
H_{\pend}\paren{\la_h(u),\La_h(u)-\frac{w}{3 \La_h(u)}}
+
H_{\osc}(x,y;\de)
=
-\frac12 
+ w
- \frac{w^2}{6\La_h^2(u)}
+ \frac{x y}{\de^2}.
\end{align*}
Therefore, by \eqref{proof:H1outForM1}, the Hamiltonian $H_1^{\out}$ can be expressed (up to a constant) as
\begin{align}\label{eq:expressionHamiltonianInfty}
H_1^{\out} = 
M \circ \phi_{\out}
- \frac{w^2}{6\La_h^2(u)} ,
\end{align}
where 
% $M = H \circ \phi_{\equi}- H_0$.
\begin{align*}
% \label{def:functionM1}
M(\la,\La,x,y;\de) = 
(H \circ \phi_{\equi})(\la,\La,x,y;\de) - 
H_0(\la,\La,x,y).
\end{align*}
In the following lemma we give properties of $M$.

\begin{lemma}\label{lemma:expressionHamiltonianInfty}
Fix constants $\varrho>0$ and $\la_0 \in (0,\pi)$.
% and consider
%
Then, there exists $\de_0>0$ such that, 
for $\de \in (0,\de_0)$,
$\vabs{\la}<\la_0$, 
$\vabs{\La}<\varrho$
and 
$\vabs{(x,y)}<\varrho\de$
,
the function $M$ satisfies
\begin{align*}
\vabs{ \partial_{\la} M} &\leq 
C\de^2 \vabs{(\la,\La)} +	C\de \vabs{(x,y)}, &
\vabs{ \partial_{x} M} &\leq 
C\de \vabs{(\la,\La,x,y)}, \\
\vabs{ \partial_{\La} M} &\leq
C\de^2 \vabs{(\la,\La)} +	C\de \vabs{(x,y)}, &
\vabs{ \partial_{y} M} &\leq 
C\de \vabs{(\la,\La,x,y)},
\end{align*}
and
\begin{align*}
\vabs{ \partial^2_{\la} M},
\vabs{ \partial_{\la\La} M},
\vabs{ \partial^2_{\La} M}
&\leq C \de^2, 
\qquad
\vabs{\partial_{i j} M} \leq C \de, 
\quad \text{for } i,j \in \claus{\la,\La,x,y}.
\end{align*}
%for a positive constant $C$ independent of $\de$.
\end{lemma}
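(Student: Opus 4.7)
The plan is to exploit the fact that $\mathfrak{L}(\delta)$ is a critical point of $H$ together with Taylor's theorem, which reduces everything to estimates on the second derivatives of $H_1$ in a neighborhood of $\mathfrak{L}(\delta)$. First I would observe that at $v=(\lambda,\Lambda,x,y)=0$, since $\mathfrak{L}(\delta)$ is an equilibrium of $H$ we have $DH(\mathfrak{L}(\delta))=0$, and since $V'(0)=0$ one also has $DH_0(0)=0$. Therefore $DM(0)=DH(\mathfrak{L})-DH_0(0)=0$, so all four first derivatives of $M$ vanish at the origin.

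The second derivatives behave even more nicely. Since $\phi_{\equi}$ is an affine translation that leaves $\lambda$ fixed and every nonzero entry of $D^2 H_0$ is either a constant ($-3$ and $1/\delta^2$) or depends only on $\lambda$ ($V''(\lambda)$), one has $D^2(H_0\circ\phi_{\equi})=D^2 H_0$, and hence $D^2 M(v)=(D^2 H_1)(\phi_{\equi}(v))$. Thus bounding $D^2 M$ reduces to bounding $D^2 H_1$ on the translated domain. From \eqref{def:hamiltonianScalingH1} and the chain rule the relevant derivatives are $\partial_{\lambda}^2 H_1=\partial_{\lambda}^2(H_1^{\Poi}-V)$, $\partial_{\lambda\Lambda}H_1=\delta^2\partial_{\lambda L}H_1^{\Poi}$, $\partial_{\lambda,x}H_1=\delta\,\partial_{\lambda\eta}H_1^{\Poi}$, $\partial_{\Lambda}^2H_1=\delta^4\partial_L^2 H_1^{\Poi}+F_{\pend}''(\delta^2\Lambda)$, $\partial_{\Lambda,x}H_1=\delta^3\partial_{L\eta}H_1^{\Poi}$, $\partial_{x,y}H_1=\delta^2\partial_{\eta\xi}H_1^{\Poi}$, and analogously for the remaining pairs. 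Substituting the decomposition $H_1^{\Poi}-V=D_0+D_1\cdot(L-1,\eta,\xi)+D_2$ from Lemma \ref{corollary:seriesH1Poi}, using Cauchy estimates to transfer the pointwise bounds on $D_0,D_1,D_2$ to their derivatives, and noting that on $\phi_{\equi}(v)$ one has $|(L-1,\eta,\xi)|\leq C\delta^2$ (because $|\Lambda|<\varrho$, $|(x,y)|<\varrho\delta$ and $\mathfrak{L}(\delta)=\mathcal{O}(\delta^2)$ in the last three coordinates), the claimed bounds $|\partial_{\lambda}^2 M|,|\partial_{\lambda\Lambda}M|,|\partial_{\Lambda}^2 M|\leq C\delta^2$ and $|\partial_{ij}M|\leq C\delta$ for the remaining pairs follow. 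The quadratic structure of $D_2$ is what keeps $F_{\pend}''(\delta^2\Lambda)$ of size $\delta^2$ and the $\delta^4\partial_L^2$ term harmless.

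For the first derivatives I would simply use $DM(0)=0$ and the fundamental theorem of calculus in the form $\partial_i M(v)=\int_0^1 \sum_j v_j\,\partial_{ij}M(sv)\,ds$. Since the segment $sv$, $s\in[0,1]$, stays inside the domain of validity of the second-derivative bounds from the previous step, one directly obtains $|\partial_{\lambda}M|\leq |\lambda|\,C\delta^2+|\Lambda|\,C\delta^2+|x|\,C\delta+|y|\,C\delta$, which is exactly $C\delta^2|(\lambda,\Lambda)|+C\delta|(x,y)|$; the same argument gives the bounds for $\partial_{\Lambda}M$, $\partial_x M$ and $\partial_y M$. The main obstacle is the third step, namely the careful bookkeeping of the $\delta$-powers coming from the scaling $\phi_{\sca}$ versus the smallness contributed by the vanishing of $D_2$ (quadratic) and $F_{\pend}$ (cubic) at zero; once that accounting is done correctly the rest is routine.
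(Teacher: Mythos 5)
Your proposal is correct and follows essentially the same route as the paper: you reduce $D^2M$ to $D^2H_1$ evaluated at the shifted argument (the $H_0$-part contributes nothing to second derivatives since $\phi_{\equi}$ is a translation fixing $\la$ and the $H_0$-Hessian depends only on $\la$), bound those second derivatives via Lemma~\ref{corollary:seriesH1Poi}, Cauchy estimates and the cubic vanishing of $F_{\pend}$, and then recover the first-derivative bounds from $DM(0)=0$ (critical point of $H\circ\phi_{\equiv}$ minus critical point of $H_0$) together with the mean value theorem. This is exactly the paper's argument; the only cosmetic difference is that the paper writes out $H_0\circ\phi_{\equi}-H_0$ explicitly to see it is affine, whereas you argue directly that $D^2(H_0\circ\phi_{\equi})=D^2H_0$.
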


\begin{proof}
%Let us analyze now $M_1$ to obtain more accurate estimates of its derivatives.
%
Applying  $\phi_{\equi}$ (see \eqref{def:changeEqui}) to the Hamiltonian $H=H_0 + H_1$,  we have that
\begin{equation}\label{proof:M1expressionLtres}
\begin{split}	
M &= 
\paren{H_0 \circ \phi_{\equi} - H_0}
+ H_1 \circ \phi_{\equi} 
\\
&= 	 \de( x \Ltresy + y\Ltresx) 
+ 3\de^2 \La \LtresLa  
+ \de^4 \paren{-\frac{3}{2}\LtresLa^2 +\Ltresx \Ltresy}
+ H_1 \circ \phi_{\equi}.
\end{split}
\end{equation}
Then,  
%taking into account that $\norm{(\LtresLa(\de),\Ltresx(\de),\Ltresy(\de))} \leq C$, we have that, 
%\begin{equation}\label{proof:estimatesM1B}
%\begin{split}	
%\vabs{\partial_{\la} M_1} &= \vabs{\partial_{\la} H_1 (\la,\La + \de^2\LtresLa, x+ \de \Ltresx, y + \de \Ltresy)},
%\\
%\vabs{\partial_{\La} M_1} &\leq C\de^2 + \vabs{\partial_{\La} H_1 (\la,\La + \de^2\LtresLa, x+ \de \Ltresx, y + \de \Ltresy)},
%\\ 
%\vabs{\partial_{x} M_1} &\leq C\de + \vabs{\partial_{x} H_1 (\la,\La + \de^2\LtresLa, x+ \de \Ltresx, y + \de \Ltresy)}, 
%\\
%\vabs{\partial_{\la} M_1} &\leq C \de + \vabs{\partial_{y} H_1 (\la,\La + \de^2\LtresLa, x+ \de \Ltresx, y + \de \Ltresy)},
%\end{split}
%\end{equation}
%and
\begin{align}\label{proof:estimatesDerivativesM1B}
\vabs{\partial_{i j} M} \leq 
\vabs{\partial_{i j} H_1(\la,\La + \de^2\LtresLa, x+ \de \Ltresx, y + \de \Ltresy;\de)}
, 
\quad \text{for } i,j \in \claus{\la,\La,x,y}.
\end{align}
Since $\vabs{\La}<\varrho$
and $\vabs{(x,y)}<\varrho\de$,
then
$\vabs{\La+\de^2\LtresLa}<2\varrho$
and ${\vabs{(x+\de^3 \Ltresx,y+\de^3\Ltresy)}<2\varrho\de}$,
for $\de$ small.
%
% Therefore, we analyze the derivatives of $H_1$ under these assumptions.
By the definition of $H_1$
in \eqref{def:hamiltonianScalingH1} we have that,
\begin{align*}	
H_1(\la,\La,x,y;\de) &=
H_1^{\Poi}
\paren{\la,1+\de^2\La,\de x, \de y;\de^4} 
- V\paren{\la} 
+ \frac{1}{\de^4} F_{\pend}(\de^2\La),
%%
%H_1(\la,\La,0,0;\de) &=
%\paren{H_1^{\Poi}-V} 
%\paren{\la,1+\de^2\La,0,0;\de^4} 
%+ \frac{1}{\de^4} F_{\pend}(\de^2\La)
%= \OO(\de^2),
\end{align*}
where $H_1^{\Poi}$ is given in \eqref{def:hamiltonianPoincare01} (see also \eqref{def:changeScaling}),
$V$ is given \eqref{def:potentialV}
and $F_{\pend}$ is given \eqref{def:Fpend} and satisfies $F_{\pend}(s)=\OO(s^3)$. 
Since $\vabs{(\de^2\La,\de x, \de y)} < 2\varrho \de^2 \ll 1$, we  apply  Lemma \ref{corollary:seriesH1Poi} (recall that $\de=\mu^{\frac14}$) and Cauchy estimates to 
% the derivatives
% %
% Then, differentiating the previous expression for $H_1$ we 
obtain 
%\begin{equation}\label{proof:estimatesM1C}
%\begin{aligned}	
%\vabs{\partial_{\la} H_1(\la,\La,0,0;\de)} 
%&\leq C \de^2, 
% & \quad
%\vabs{\partial_{\la} H_1} 
%&\leq C \de, 
% & \quad
%\vabs{\partial_{x} H_1} 
%&\leq C \de,
%\\
%\vabs{\partial_{\La} H_1(\la,\La,0,0;\de)} 
%&\leq C \de^2, 
% & \quad
%\vabs{\partial_{\La} H_1} 
%&\leq C \de^2, 
% & \quad
%\vabs{\partial_{y} H_1} 
%&\leq C \de,
%\end{aligned}
%\end{equation}	
%and
\begin{align}\label{proof:estimatesDerivativesM1C}
	\vabs{ \partial^2_{\la} H_1}
	\vabs{ \partial_{\la\La} H_1},
	\vabs{ \partial^2_{\La} H_1}
	&\leq C \de^2, 
	\qquad
	\vabs{\partial_{i j} H_1} \leq C \de, 
	\quad \text{for } i,j \in \claus{\la,\La,x,y}.
\end{align}
%\begin{equation}
%\begin{aligned}	
%\vabs{\partial_{\la\La} H_1} 
%&\leq C \de^2, 
%& \quad
%\vabs{\partial^2_{\La} H_1} 
%&\leq C \de^4, 
%& \quad
%\vabs{\partial_{\la x} H_1} 
%&\leq C \de,
%&\quad
%\vabs{\partial_{\la y} H_1} 
%&\leq C \de, 
%\\
%\vabs{\partial^2_{\La x} H_1} 
%&\leq C \de^3, 
%& \quad
%\vabs{\partial_{\La y} H_1} 
%&\leq C \de^3, 
%& \quad
%\vabs{\partial^2_{x} H_1} 
%&\leq C \de^2,
%& \quad
%\vabs{\partial^2_{y} H_1} 
%&\leq C \de^2.
%\end{aligned}
%\end{equation}	
%
Then,   \eqref{proof:estimatesDerivativesM1B} and
\eqref{proof:estimatesDerivativesM1C} give the estimates for the second derivatives of $M$.

For the first derivatives of $M$, let us take into account that, by Theorem~\ref{theorem:singularities},  $0$ is a critical point of both Hamiltonians $(H \circ \phi_{\equi})$ and $H_0$ and,
therefore, also of $M= (H \circ \phi_{\equi})- H_0$.
This fact and the estimates of the second derivatives, together with the mean value theorem, gives the estimates for the first derivatives of $M$.
%
%\begin{align*}
%	\Dx{M_1}{\la}({0};\de) =
%	\Dx{M_1}{\La}({0};\de) =
%	\Dx{M_1}{x}({0};\de) =
%	\Dx{M_1}{y}({0};\de) = 0.
%\end{align*}
%In particular, 
%\begin{align*}% \label{proof:estimatesM1A}
%	\vabs{ \partial_{\la} M_1},
%	\vabs{ \partial_{\La} M_1},
%	\vabs{ \partial_{x} M_1},
%	\vabs{ \partial_{y} M_1} &\leq 
%	C \paren{
%		\vabs{\la}+ \vabs{\La}+\vabs{x}+\vabs{y}}.
%\end{align*}	
%Therefore, joining this result with the estimates in \eqref{proof:estimatesM1B} and \eqref{proof:estimatesM1C}, we obtain the statement for the first derivatives of $M_1$. 
\end{proof}

\begin{proof}[End of the proof of Lemma~\ref{lemma:computationsRRRInf}]

Let us consider $\varphi=(\varphi_w,\varphi_x,\varphi_y)^T \in \XcalInftyTotal$ such that
$\normInftyTotal{\phiA} \leq \varrho \de^3$.
We estimate the first and second derivatives of $H_1^{\out}$ evaluated at $(u,\phiA(u))$ (recall \eqref{eq:expressionRRRout}), given by
% By \eqref{eq:expressionHamiltonianInfty}, we have that
\begin{align}\label{proof:expressionH1sepInfty}
H_1^{\out}(u,\phiA(u);\de) = M\paren{\la_h(u),\La_h(u)-\frac{\phiA_w(u)}{3\La_h(u)},
\phiA_x(u),\phiA_y(u);\de}
- \frac{\phiA^2_w(u)}{6\La_h^2(u)}. 
\end{align}	
First, let us define
\begin{align*}
{\varphi}_{\la}(u) = \la_h(u), \qquad
{\varphi}_{\La}(u) = \La_h(u)-\frac{\varphi_w(u)}{3\La_h(u)}
\quad \text{and} \quad
\Phi = (\phiA_{\la},\phiA_{\La},\phiA_x,\phiA_y).
\end{align*}
Since $\normInftyTotal{\phiA} \leq \varrho \de^3$ and $\la_h, \La_h \in \XcalInfty_{\vap}$ (see \eqref{eq:separatrixBanachSpace}), 
\begin{equation}\label{proof:estimatesCoordinatesEqui}
%\normInfty{\la_h}_{\vap},
%\normInfty{\La_h}_{\vap} \leq C,
%\quad	
\normInfty{\varphi_{w}}_{2\vap} \leq C \de^2, \qquad
\normInfty{\varphi_{x}}_{\vap},
\normInfty{\varphi_{y}}_{\vap} \leq C \de^3, \qquad
\normInfty{\varphi_{\la}}_{\vap},
\normInfty{\varphi_{\La}}_{\vap} \leq C.
\end{equation}
Moreover since, by Theorem \ref{theorem:singularities}, $\la_h(u)\neq\pi$ for  $u \in \DuInfty$, we have that
\[
\vabs{\phiA_{\la}(u)} = \vabs{\la_h(u)} < \pi,
\quad
\vabs{\phiA_{\La}(u)}
\leq C e^{-\vap \rhoInfty}
\leq C,
\quad
\vabs{(\phiA_{x}(u), \phiA_y(u))}
\leq C\de^3 e^{-\vap \rhoInfty}
\leq C\de^3
\]
and, therefore, we can apply  Lemma~\ref{lemma:expressionHamiltonianInfty} to \eqref{proof:expressionH1sepInfty}.
In the following computations, we use generously Lemma \ref{lemma:sumNormsOutInf} without mentioning it.

\begin{enumerate}
\item First, we consider $g^{\out}=\partial_w H_1^{\out}$.
By \eqref{proof:expressionH1sepInfty}, we have that
\begin{align*}
g^{\out}(u,\varphi(u)) 
&=-\frac{\partial_{\La} M \circ \Phi(u)}{3\La_h(u)} 
-\frac{\varphi_{w}(u)}{3\La_h^2(u)}.
\end{align*}
Notice that, by Theorem \ref{theorem:singularities}, $\vabs{\La_h(u)}\geq C$ for $u \in \DuInfty$. Then, $\normInftySmall{\La_h^{-1}}_{-\vap} \leq C$.
Therefore, by Lemma \ref{lemma:expressionHamiltonianInfty} and estimates~\eqref{proof:estimatesCoordinatesEqui}, we have that
\begin{equation}\label{proof:boundsInftyg}
\begin{split}	
\normInfty{g^{\out}(\cdot,\varphi)}_0 &\leq
C \de \boxClaus{\de\normInfty{\varphi_{\la}}_{\vap} +
\de\normInfty{\varphi_{\La}}_{\vap}  +
\normInfty{\varphi_{x}}_{\vap} +
\normInfty{\varphi_{y}}_{\vap}} 
+ C \normInfty{\varphi_{w}}_{2\vap} \\
&\leq C \de^2.
\end{split}
\end{equation}
To compute its derivative with respect to $w$, by \eqref{proof:expressionH1sepInfty}, we have that
\begin{align*}
\partial_w g^{\out} (u,\varphi(u)) 
&=
\frac{\partial^2_{\La} M \circ \Phi(u)}{9\La^2_h(u)} 
-\frac{1}{3\La^2_h(u)},
\end{align*}
and, by Lemma \ref{lemma:expressionHamiltonianInfty} and estimates~\eqref{proof:estimatesCoordinatesEqui}, 
%
% \begin{equation}\label{proof:boundsInftyDerivative1g}
$\normInfty{\partial_w g^{\out}(\cdot,\varphi)}_{-2\vap} \leq C$.
% \end{equation}
Following a similar procedure, we obtain
% \begin{equation}\label{proof:boundsInftyDerivative2g}
$\normInfty{\partial_x g^{\out}(\cdot,\varphi)}_{-\vap} \leq C \de$
% , \qquad
and $\normInfty{\partial_y g^{\out}(\cdot,\varphi)}_{-\vap} \leq C \de$.
% \end{equation}

\item Now, we obtain estimates for $f_1^{\out}=-\partial_u H_1^{\out}$.
By \eqref{proof:expressionH1sepInfty}, we have that
\begin{align*}
f_1^{\out}(u,\varphi(u)) =&
- \lad_h(u)  \partial_{\la}{M} \circ \Phi(u)
-\frac{\Lad_h(u)}{3\La^3_h(u)} \varphi^2_{w}(u) \\
&- \paren{\Lad_h(u) + \frac{\Lad_h(u)}{3 \La_h^2(u)} \varphi_{w}(u)} 
\partial_{\La}{M} \circ \Phi(u).
\end{align*}
Then, since $\lad_h, \Lad_h \in \XcalInfty_{\vap}$,  by Lemma \ref{lemma:expressionHamiltonianInfty} and estimates~\eqref{proof:estimatesCoordinatesEqui}, we have that
% \begin{equation}\label{proof:boundsInftyDerivative1f1}
$\normInfty{f_1^{\out}(\cdot,\varphi)}_{2\vap} \leq C\de^2$.
% \end{equation}
To compute its derivative with respect to  $x$, by \eqref{proof:expressionH1sepInfty}, 
\begin{align*}
\partial_x f_1^{\out}(u,\varphi(u)) =&
- \lad_h(u) \partial_{x \la} M \circ \Phi(u) 
- \paren{\Lad_h(u) + \frac{\Lad_h(u)}{3 \La_h^2(u)} \varphi_{w}(u)} 
\partial_{x \La} M \circ \Phi(u)
\end{align*}
and, therefore,
%
% \begin{equation}\label{proof:boundsInftyDerivative2f1}
$\normInfty{\partial_x f_1^{\out}(\cdot,\varphi)}_{\vap} \leq C\de$.
% \end{equation}
Similarly one can obtain
% \begin{equation}
$\normInfty{\partial_w f_1^{\out}(\cdot,\varphi)}_{0} \leq C\de^2$ and
% \qquad
$\normInfty{\partial_y f_1^{\out}(\cdot,\varphi)}_{\vap} \leq C\de$.
% \end{equation}
\item Analogously to the previous estimates, 
we can obtain bounds for $f^{\out}_2= i \partial_y H_1^{\out}$ and $f^{\out}_3=-i \partial_x H_1^{\out}$.
Then, for $j=2,3$, it can be seen that
% \begin{equation}\label{proof:boundsInftyRestA}
% \begin{aligned}
$\normInftySmall{f_j^{\out}(\cdot,\varphi)}_{\vap} \leq C\de$, 
% \end{aligned}
% \end{equation}
and differentiating we obtain
% \begin{equation}\label{proof:boundsInftyRestB}
% \begin{aligned}
$\normInftySmall{\partial_w f_j^{\out}(\cdot,\varphi)}_{-\vap} \leq C\de$,
$\normInftySmall{\partial_x f_j^{\out}(\cdot,\varphi)}_{0} \leq C\de$ and
$\normInftySmall{\partial_y f_j^{\out}(\cdot,\varphi)}_{0} \leq C\de$.
% \end{aligned}
% \end{equation}
%We can compute estimates as well for $\wt{f}_j^{\out}$ with $j=2,3$, (see~\eqref{eq:expressionf1f2f3}), 
%\begin{equation}\label{proof:boundsInftyRestTilde}
%\begin{aligned}
%\normInftySmall{\wt{f}_j^{\out}(\cdot,\varphi)}_{\vap}, \,
%%
%\normInftySmall{\partial_w \wt{f}_j^{\out}(\cdot,\varphi)}_{-\vap}, \,
%%
%\normInftySmall{\partial_x \wt{f}_j^{\out}(\cdot,\varphi)}_{0}, \,
%%
%\normInftySmall{\partial_y \wt{f}_j^{\out}(\cdot,\varphi)}_{0} \leq C\de.
%\end{aligned}
%\end{equation}
%%
\end{enumerate}
Then, by the definition of $\RRR^{\out}$ in~\eqref{eq:expressionRRRout} and the just obtained  estimates, we complete the proof of the lemma.
\end{proof}

\subsection{Estimates in the outer domain}
\label{subsubsection:proofComputationsRRROut}

% Let us recall that, by the expression in~\eqref{eq:expressionRRRout}, 
To obtain estimates of $\RRR^{\out}$, we write
% we need to estimate the first and second derivatives of $H_1^{\out}$.
%(see~\eqref{def:hamiltonianOuterSplit}).
% We present a suitable expression of 
$H_1^{\out}$ in~\eqref{def:hamiltonianOuterSplit} (up to a constant) as
\begin{equation*}
H_1^{\out} = 
H_1 \circ \phi_{\equi} \circ \phi_{\out}
- \frac{w^2}{6\La_h^2(u)} 
+
\de( x \Ltresy + y\Ltresx) 
+ 3\de^2 \LtresLa  \paren{\La_h(u)-\frac{w}{3\La_h(u)}},
\end{equation*} 
(see \eqref{eq:expressionHamiltonianInfty} and \eqref{proof:M1expressionLtres}).
Then, by the definition of $H_1$ in \eqref{def:hamiltonianScalingH1}, we obtain
\begin{align*}
H_1^{\out} =& \,
(H^{\Poi}_1-V) 
\circ \phi_{\sca} \circ \phi_{\equi} \circ \phi_{\out}
+ \frac{1}{\de^4} 
F_{\pend}\paren{\de^2\La_h(u) - 
	\frac{\de^2  w}{3 \La_h(u)} + 
	\de^4\LtresLa} \\
&- \frac{w^2}{6\La_h^2(u)} 
+
\de( x \Ltresy + y\Ltresx) 
+ 3\de^2 \LtresLa  \paren{\La_h(u)-\frac{w}{3\La_h(u)}},
\end{align*}
where $H_1^{\Poi}$ is given in \eqref{def:hamiltonianPoincare1}, 
the potential $V$ in \eqref{def:potentialV} and $F_{\pend}$ in \eqref{def:Fpend}.
The changes of coordinates $\phi_{\sca}$, $\phi_{\equi}$ and $\phi_{\out}$ are given in \eqref{def:changeScaling}, \eqref{def:changeEqui} and \eqref{def:changeOuter}, respectively.

Considering $z=(w,x,y)$, we denote the composition of  change of coordinates as
\begin{equation}\label{def:changeTotalOuter}
	(\la, L, \eta, \xi) =
	\Theta(u,z) = (\phi_{\sca} \circ \phi_{\equi} \circ \phi_{\out})(u,z).
\end{equation}
% 
% In order to simplify the computations, we split $H_1^{\out}$ in three terms.
% %
%
Then, since $\mu=\de^4$, the Hamiltonian $H_1^{\out}$ can be split (up to a constant) as
\begin{equation}\label{proof:H1outExpressionOuter}
	H_1^{\out} = M_P + M_{S} + M_R,
\end{equation}
where
\begin{align}
M_P(u,z;\de) =& 
-\paren{\PP[\de^4-1]
	- \frac{1}{\sqrt{2+2\cos \la}}
}\circ \Theta(u,z), 
\label{def:expressionMJOuter}
\\
%
%M_{S}(u,z;\de) =& \,
%\frac{1}{\sqrt{D[\de^4]\circ\Theta(u,z)}} - 1 , 
%\label{def:expressionMSOuter}
%\\
%
M_{S}(u,z;\de) =& \,
\paren{ 
	\frac{1}{\de^4} \PP[0] -
	\frac{1-\de^4}{\de^4} \PP[\de^4] 
	- 1 + \cos \la} \circ\Theta(u,z), 
\label{def:expressionMSOuter} \\
%%%%
\begin{split}\label{def:expressionMROuter}
	M_R(u,z;\de) =& 
	- \frac{w^2}{6\La_h^2(u)} 
	+ \de^2 \LtresLa \paren{3 \La_h(u)
		- \frac{w}{\La_h(u)} } 
	+ \de (x \Ltresy + y \Ltresx ), \\
	&+ \frac{1}{\de^4} 
	F_{\pend}\paren{\de^2\La_h(u) - 
		\frac{\de^2  w}{3 \La_h(u)} + 
		\de^4\LtresLa},
	%	- \frac{3}{2}\de^4 \LtresLa^2
	%	+ \de^4 \Ltresx \Ltresy
	%	-\frac{1}{2} ,
\end{split}	
\end{align}
and  $\PP$ is the function given in~\eqref{def:functionD}.
%
% Therefore, we need to obtain estimates for the first and second derivatives of $M_P$, $M_S$ and $M_R$.
% 
% \begin{remark}
% Notice that, by Theorem \ref{theorem:singularities}, $\la=\la_h(u) \sim \pi$ when $u \sim \pm iA$.
% %
% Therefore, by Remark \ref{remark:collisions}, the domain $\DuOut$ contains points close to collision with the primary $P$, which  are given by the singularities of function $\PP[\de^4-1]$.
% %
% As a result, the function $M_P$ will give the leading term of the estimates for $H_1^{\out}$ and its derivatives (except for $\partial_w^2 H_1^{\out}$).
% %
% This indicates that we need to obtain precise estimates for $M_P$, whereas the estimates for $M_S$ and $M_R$ are rather straightforward. 
% \end{remark}

To obtain estimates for the  derivatives of $M_P$, $M_S$ and $M_R$,  we first analyze  the change of coordinates $\Tht$ in~\eqref{def:changeTotalOuter}. It can be expressed as
\begin{equation}\label{proof:ThtExpression}
\Tht(u,z)=
\Big(\pi + \Tht_{\la}(u), 1 + \Tht_{L}(u,w),
\Tht_{\eta}(x),\Tht_{\xi}(y) \Big),
\end{equation}
where
\begin{equation*}
\begin{aligned}	
\Tht_{\la}(u) &= \la_h(u)-\pi,  &
\Tht_{\eta}(x) &= \de x + \de^4 \Ltresx(\de), \\
\Tht_L(u,w) &= \de^2 \La_h(u) 
- \frac{\de^2 w}{3 \La_h(u)} + \de^4 \LtresLa(\de), \quad&
\Tht_{\xi}(x) &= \de y + \de^4 \Ltresy(\de).
\end{aligned}
\end{equation*}
Next lemma, which is a direct consequence of Theorem \ref{theorem:singularities}, gives estimates for this change of coordinates.

\begin{lemma}\label{lemma:changeTotalOuter}
Fix  $\varrho>0$ and $\de>0$ small enough. Then, for
$\phiA \in \ballOuter \subset \XcalOutTotal$,
% one has that
\begin{align*}
\normOut{\Tht_{\la}}_{0,-\frac{2}{3}}&\leq C, &
%%%
\normOut{\Tht_{L}(\cdot,\phiA)}_{0,\frac{1}{3}} 
&\leq C \de^2, &
%%%
\normOut{\Tht_{\eta}(\cdot,\phiA)}_{0,\frac{4}{3}} 
&\leq C \de^4, \\
%%%%%%%%%%
\normOut{\Tht_{\la}^{-1}}_{0,\frac{2}{3}}&\leq C, &
%%%
\normOut{1+\Tht_{L}(\cdot,\phiA)}_{0,0} 
&\leq C, &
%%%%
\normOut{\Tht_{\xi}(\cdot,\phiA)}_{0,\frac{4}{3}} 
&\leq C \de^4.
\end{align*}
Moreover, its derivatives satisfy
\begin{align*}
\normOut{\partial_u \Tht_{\la}}_{0,\frac{1}{3}} &\leq C, 
& 
\normOut{\partial_u \Tht_{L}(\cdot,\phiA)}_{0,\frac{4}{3}} &\leq C \de^2, 
&
\normOut{\partial_w \Tht_{L}(\cdot,\phiA)}_{0,-\frac{1}{3}} &\leq C \de^2,
\\
\normOut{\partial_{uw} \Tht_{L}(\cdot,\phiA)}_{0,\frac{2}{3}} &\leq C\de^2,
& 
\partial_x \Tht_{\eta},
\partial_y \Tht_{\xi} &\equiv \de, 
& 
\partial^2_{w} \Tht_L, 
\partial^2_{x} \Tht_{\eta}, 
\partial^2_y \Tht_{\xi} &\equiv 0. 
\end{align*}
\end{lemma}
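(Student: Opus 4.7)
The proof of Lemma \ref{lemma:changeTotalOuter} is a systematic verification based on the explicit formulas for the components of $\Theta$ in \eqref{proof:ThtExpression}, combined with three ingredients: (i) the singular behaviour of $\sigma = (\lambda_h,\Lambda_h)$ given in Theorem \ref{theorem:singularities}, (ii) the algebraic properties of the outer norms collected in Lemma \ref{lemma:sumNormsOuter}, and (iii) the hypothesis $\normOutTotal{\phiA} \leq \varrho\de^3$, which yields $\normOut{\phiA_1}_{1,0} \leq \varrho\de^2$ and $\normOut{\phiA_2}_{0,\frac43}, \normOut{\phiA_3}_{0,\frac43} \leq \varrho\de^3$.

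First I would establish building-block estimates on $\lambda_h,\Lambda_h$ in the outer domain $\DuOut$. By Theorem \ref{theorem:singularities} the only singularities are $u = \pm iA$, where
\[
\lambda_h(u)-\pi = \OO\bigl((u\mp iA)^{\frac23}\bigr), \qquad \Lambda_h(u) = \OO\bigl((u\mp iA)^{-\frac13}\bigr),
\]
and both functions are analytic and (for large $\kappa$) nonvanishing on the closure of $\DuOut$ away from $\pm iA$. Combining the local expansions with the factorisation $|u^2+A^2|=|u-iA||u+iA|$, one obtains
\[
\normOut{\lambda_h-\pi}_{0,-\frac23},\ \normOut{(\lambda_h-\pi)^{-1}}_{0,\frac23},\ \normOut{\Lambda_h}_{0,\frac13},\ \normOut{\Lambda_h^{-1}}_{0,-\frac13} \leq C.
\]
Hamilton's equations $\dot\lambda_h = -3\Lambda_h$ and $\dot\Lambda_h = -V'(\lambda_h)/3$ together with the asymptotic structure of $V'$ at $\lambda=\pi$ then yield the analogous estimates for $\partial_u\lambda_h$ and $\partial_u\Lambda_h$; by Cauchy estimates on a subdomain these transfer immediately to the $\partial_{u\cdot}$ bounds needed for $\Theta_L$.

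Second, I would plug into the formulas for $\Theta_\lambda,\Theta_L,\Theta_\eta,\Theta_\xi$. The bounds on $\Theta_\lambda$ and $\Theta_\lambda^{-1}$, and the trivial identities $\partial_x\Theta_\eta = \partial_y\Theta_\xi = \de$, $\partial_w^2\Theta_L = \partial_x^2\Theta_\eta = \partial_y^2\Theta_\xi = 0$, are then immediate. The substantive estimate is the one for $\Theta_L(\cdot,\phiA)$, which splits into three terms: the leading term $\de^2\Lambda_h(u)$ contributes exactly $C\de^2$ in $\normOut{\cdot}_{0,\frac13}$; the constant $\de^4\LtresLa(\de)$ is of lower order; and the coupling term $-\de^2\phiA_1/(3\Lambda_h(u))$ is estimated by first transferring $\phiA_1 \in \XcalOut_{1,0}$ to $\XcalOut_{0,\frac23}$ via Lemma \ref{lemma:sumNormsOuter}(2) (with $\beta_2 = \frac23 \leq \frac53$), which gives $\normOut{\phiA_1}_{0,\frac23} \leq C\de^2(\kappa\de^2)^{-2}\normOut{\phiA_1}_{1,0} \leq C\varrho\kappa^{-2}$, and then multiplying by $\Lambda_h^{-1} \in \XcalOut_{0,-\frac13}$ via Lemma \ref{lemma:sumNormsOuter}(3). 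The $\Theta_\eta,\Theta_\xi$ bounds are analogous using $\phiA_{2,3} \in \XcalOut_{0,\frac43}$, and $\normOut{1+\Theta_L}_{0,0}\leq C$ follows by absorbing $\Theta_L$ using the factor $|u^2+A^2|^{-\frac13}$ and smallness of $\de$. The derivative bounds $\partial_u\Theta_L$, $\partial_w\Theta_L$, $\partial_{uw}\Theta_L$ are obtained either directly (the two latter are independent of $\phiA_{2,3}$) or through Cauchy estimates.

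The proof is essentially bookkeeping: no conceptual obstacle appears, only the need to track the exponents $(\alpha,\beta)$ and the powers of $\de$ and $\kappa$ at every step. The main point of care is the application of Lemma \ref{lemma:sumNormsOuter}(2) to the coupling term $\phiA_1/\Lambda_h$, where one must check that the $(\kappa\de^2)^{\beta_2-\frac{8}{3}}$ penalty is exactly compensated by the $\de^2$ in front and by the smallness $\normOut{\phiA_1}_{1,0}\leq \varrho\de^2$, so that the resulting constant is independent of $\kappa$ and the claimed $\OO(\de^2)$ bound holds. All the other entries of the lemma reduce to straightforward combinations of the building blocks above.
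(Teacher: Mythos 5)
Your proposal is correct and fills in the bookkeeping the paper leaves implicit: the paper simply remarks that the lemma is ``a direct consequence of Theorem \ref{theorem:singularities}'', and what you wrote is precisely that direct consequence, using the local expansions of $\la_h-\pi\sim 3\al_{\pm}(u\mp iA)^{2/3}$ and $\La_h\sim -\tfrac{2\al_{\pm}}{3}(u\mp iA)^{-1/3}$ together with the norm algebra of Lemma \ref{lemma:sumNormsOuter}. Your handling of the one nontrivial term, $-\de^2\phiA_1/(3\La_h)$, via Lemma \ref{lemma:sumNormsOuter}(2) with $\beta_2-\beta_1=\tfrac23$ and then the product rule, correctly yields the $\kappa$-independent bound $C\de^2$. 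One inconsequential slip: Hamilton's equations for $H_{\pend}=-\tfrac32\La^2+V(\la)$ give $\dot\La_h=-V'(\la_h)$, not $-V'(\la_h)/3$ (the factor of $3$ sits only in $\dot\la_h=-3\La_h$); also the appeal to Cauchy estimates is unnecessary, since all derivatives of $\Tht_L$ appearing in the statement are given explicitly by differentiating the formula in \eqref{proof:ThtExpression} and using the asymptotics of $\dot\La_h\sim(u\mp iA)^{-4/3}$ from Theorem \ref{theorem:singularities}.
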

%\begin{align*}
%\normOut{\partial_u \Tht_{\la}(\cdot,\phiA)}_{0,\frac{1}{3}} &\leq C, 
%& 
%\normOut{\partial^2_u \Tht_{\la}(\cdot,\phiA)}_{0,\frac{4}{3}} &\leq C, 
%& 
%\normOut{\partial_u \Tht_{L}(\cdot,\phiA)}_{0,\frac{4}{3}} &\leq C \de^2, 
%\\
%\normOut{\partial_w \Tht_{L}(\cdot,\phiA)}_{0,-\frac{1}{3}} &\leq C \de^2,
%& 
%\normOut{\partial^2_u \Tht_{L}(\cdot,\phiA)}_{0,\frac{7}{3}} &\leq C \de^2, 
%&
%\normOut{\partial_{uw} \Tht_{L}(\cdot,\phiA)}_{0,\frac{2}{3}} &\leq C\de^2, 
%\\
%\partial^2_{w} \Tht_L &\equiv 0 
%& 
%\partial_x \Tht_{\eta},
%\partial_y \Tht_{\xi} &\equiv \de, 
%& 
%\partial^2_{x} \Tht_{\eta}, \partial^2_y \Tht_{\xi} &\equiv 0.
%\end{align*}

In the next lemma we obtain estimates for the  derivatives of $M_P$.
% given in \eqref{def:expressionMJOuter}.

\begin{lemma}\label{lemma:boundsMPout}
Fix $\varrho>0$,  $\de>0$ small enough and
$\kappa>0$ big enough. Then, for 
$\phiA \in \ballOuter$ and $*=x,y$, 
\begin{equation*}%\label{proof:estimatesMP}
\begin{aligned}	
\normOut{\partial_u M_P(\cdot,\phiA)
}_{1,1}
&\leq C \de^2, & 
\normOut{\partial_w M_P(\cdot,\phiA)
}_{1,-\frac{2}{3}}
&\leq C \de^{2},  & 
\normOut{\partial_* M_P(\cdot,\phiA)
}_{0,\frac{4}{3}}
&\leq C \de, \\
%%%%%%
\normOut{\partial_{u w} M_P(\cdot,\phiA)
}_{1,\frac{1}{3}}
&\leq C \de^{2},  &
\normOut{\partial_{u *} M_P(\cdot,\phiA)
}_{0,\frac{7}{3}}
&\leq C \de,  &
\normOut{\partial^2_{w} M_P(\cdot,\phiA)
}_{0,\frac{4}{3}}
&\leq C \de^4, \\
\normOut{\partial_{w *} M_P(\cdot,\phiA)
}_{0,\frac{5}{3}}
&\leq C \de^3, &
\normOut{\partial^2_{*} M_P(\cdot,\phiA)
}_{0,2}
&\leq C \de^2,  &
\normOut{\partial_{xy} M_P(\cdot,\phiA)
}_{0,2}
&\leq C \de^2.
\end{aligned}
\end{equation*}
\end{lemma}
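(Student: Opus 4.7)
My plan is to exploit the explicit form $\PP[\zeta]=(A[\zeta]+B[\zeta])^{-1/2}$ from~\eqref{def:functionD2}, together with the algebraic identity
\[
A[\de^4-1](\la) = 1-2(\de^4-1)\cos\la+(\de^4-1)^2 = (2+2\cos\la)(1-\de^4)+\de^8,
\]
to rewrite $M_P$ as
\[
M_P \;=\; \frac{(E_0-E)\circ\Tht}{\bigl(\sqrt{E_0\,E}\,(\sqrt{E_0}+\sqrt{E})\bigr)\circ\Tht},
\qquad E_0 = 2+2\cos\la,\quad E = A[\de^4-1]+B[\de^4-1],
\]
whose numerator $E_0-E = \de^4 E_0-\de^8 - B[\de^4-1]$ makes explicit the cancellation of the leading collisional singularity at $\la=\pi$ (i.e.\ at $u=\pm iA$) and displays the extra $\de^4$ factor responsible for the $\de^2$ scale of all the bounds in the statement.

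Next, I would estimate each factor in the appropriate $\XcalOut_{\al,\beta}$-norm using Lemma~\ref{lemma:changeTotalOuter} and combine them multiplicatively via Lemma~\ref{lemma:sumNormsOuter}. The crucial input is the two-sided estimate
\[
C^{-1}|u^2+A^2|^{4/3} \;\leq\; |E_0\circ\Tht(u)| \;=\; |2-2\cos\Tht_\la(u)| \;\leq\; C|u^2+A^2|^{4/3},\qquad u\in\DuOut,
\]
which follows from Theorem~\ref{theorem:singularities} ($\la_h(u)-\pi\sim(u\mp iA)^{2/3}$) together with the bounds on $\Tht_\la$ and $\Tht_\la^{-1}$ in Lemma~\ref{lemma:changeTotalOuter}. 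Since the outer domain enforces $|u^2+A^2|\geq C\kappa\de^2$, the correction $\de^8$ is dominated by $E_0\circ\Tht$; similarly, a linear expansion of $B[\de^4-1]\circ\Tht$ using~\eqref{def:DB} and Lemma~\ref{lemma:changeTotalOuter} (which gives $\Tht_L = O(\de^2)$ and $\Tht_\eta,\Tht_\xi = O(\de^4)$ in the relevant weighted spaces) shows that $B[\de^4-1]\circ\Tht$ is also subdominant. Consequently $E\circ\Tht$ has the same order as $E_0\circ\Tht$, the denominator sits in $\XcalOut_{0,-2}$, and the numerator is of size $\de^2$ in a suitable $\XcalOut_{0,\beta}$-class.

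All nine derivative bounds then follow by differentiating the factorisation via the chain and product rules: each $\partial_u$ produces a factor $\partial_u\Tht_\la\in\XcalOut_{0,1/3}$, each $\partial_w$ a factor $\partial_w\Tht_L\in\XcalOut_{0,-1/3}$ of size $\de^2$, and each $\partial_x,\partial_y$ a factor $\equiv\de$, while $\partial_w^2\Tht_L$, $\partial_x^2\Tht_\eta$, $\partial_y^2\Tht_\xi$ vanish; Lemma~\ref{lemma:sumNormsOuter} is invoked repeatedly to interconvert $(\al,\beta)$-indices. The main obstacle is the simultaneous book-keeping of (i) the powers of $\de$, (ii) the order of blow-up $|u^2+A^2|^\beta$ at $u=\pm iA$, and (iii) the correct $(\al,\beta)$-index in $\XcalOut$ for each intermediate object. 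In particular, the subtraction of $(2+2\cos\la)^{-1/2}$ in the definition of $M_P$ must be performed symbolically through the factorisation above; applying the triangle inequality to $\PP[\de^4-1]$ and $(2+2\cos\la)^{-1/2}$ separately would cost the $\de^4$ gain, since both summands are individually singular of order $|u^2+A^2|^{-2/3}$ near $u=\pm iA$ and only the algebraic cancellation makes the expected $\de^2$ prefactor visible.
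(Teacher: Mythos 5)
Your approach is correct and would establish the lemma, but it departs from the paper's route at the first step. The paper does not rationalize the difference of square roots into a single quotient; instead it uses the binomial expansion \eqref{def:SerieP} to write $\PP[\de^4-1]\circ\Tht = \wtA^{-1/2} + \sum_{n\geq 1}\binom{-1/2}{n}\wtB^n\wtA^{-n-1/2}$, splits $M_P$ into the $\phiA$-independent piece $\bigl(A[\de^4-1]^{-1/2}-(2+2\cos\la)^{-1/2}\bigr)\circ\Tht$ plus the tail of the series, and estimates the two contributions separately: the first via the identity $A[\de^4-1]=(2+2\cos\la)(1-\de^4)+\de^8$ (precisely the cancellation you also exploit), the second term by term from the $\XcalOut_{\al,\beta}$-bounds on $\wtB$ and $\wtA^{-1}$. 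Your factorization
\[
M_P \;=\; -\,\frac{(E_0-E)\circ\Tht}{\bigl(\sqrt{E_0E}\,(\sqrt{E_0}+\sqrt{E})\bigr)\circ\Tht}
\]
(note the overall sign, which is immaterial for the estimates) collapses that split into a single rational expression whose numerator $\de^4 E_0-\de^8-B[\de^4-1]$ simultaneously records the $\de^4$ algebraic gain from $A$ and the $\de^2$ smallness of $B\circ\Tht$, and it neatly sidesteps the convergence discussion for the binomial tail. The trade-off is the derivative bookkeeping: the paper differentiates a sum of monomials $\wtB^n\wtA^{-n-1/2}$, each handled cleanly by Lemmas~\ref{lemma:changeTotalOuter} and~\ref{lemma:sumNormsOuter}, while your quotient produces, after the chain and product rule, terms involving derivatives of $\sqrt{E\circ\Tht}$, $\sqrt{E_0\circ\Tht}$ and of their sum. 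These are controllable by the same two ingredients --- the two-sided estimate $|E_0\circ\Tht|\asymp|u^2+A^2|^{4/3}$ and the smallness $|(E-E_0)\circ\Tht|\ll|E_0\circ\Tht|$ on $\DuOut$ for $\kappa$ large --- but you should state explicitly that $\sqrt{E\circ\Tht}$ is single-valued and comparable to $\sqrt{E_0\circ\Tht}$ on the (simply connected) domain before differentiating; this is exactly the role played in the paper by the convergence check $|\wtB/\wtA|\leq C\kappa^{-2}\ll 1$.
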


\begin{proof}

We consider $\phiA \in \ballOuter \subset \XcalOutTotal$
and 
% By the definition of $M_P$, we need
we estimate the derivatives of $\PP[\de^4-1] \circ \Tht(u,\phiA(u))$. We
% To apply Lemma \ref{lemma:seriesH1Poi}, 
first we obtain bounds for $A[\de^4-1]$ and $B[\de^4-1]$ (see \eqref{def:DA} and \eqref{def:DB}).
To simplify the notation, we define
\begin{align}\label{proof:defABP}
\wt{A}(u) = A[\de^4-1](\pi +\Tht_{\la}(u)), \qquad
\wt{B}(u,z) = B[\de^4-1] \circ \Tht(u,z).
\end{align}

%
%To simplify notation, we consider the auxiliary function
%\[
%\Phi(u)=
%\Theta(u,\varphi(u)), 
%\quad \text{ with } \quad
%\Phi=\paren{\pi+\Phi_{\la},1+\Phi_L, \Phi_{\eta}, \Phi_{\xi}}.
%\] 
%
%with components
%$\Phi=\paren{\Phi_{\la},\Phi_L, \Phi_{\eta}, \Phi_{\xi}}$
%

%Therefore, $M_P$ can be expressed as
%\begin{align}\label{proof:expressionMP}
%M_P(u,z) = -\frac{1}{\sqrt{A(u)+B(u,z)}} + \frac{1}{\sqrt{A(u)}}
%\end{align}
%where
%\begin{equation}\label{proof:defAB}
%A(u) = 2 - 2\cos \Tht_{\la}(u), 
%\qquad
%B = B_0 + B_1 + B_2, 
%\end{equation}
%and
%\begin{equation}\label{proof:defBs}
%\begin{aligned}
%B_0(u,z) &= 
%\paren{D_0[\de^4-1] - (2+2\cos \la)} 
%\circ \Tht(u,z), 
%\\
%%
%B_1(u,z) &= 
%D_1[\de^4-1] \circ \Tht(u,z), 
%\qquad
%B_2(u,z) = 
%D_{\geq 2}[\de^4-1] \circ \Tht(u,z),
%\end{aligned}
%\end{equation}
%where the change $\Tht$ is given in \eqref{proof:ThtExpression}.
%%
%As a result, to estimate the derivatives of $M_P$ we need to estimate first the derivatives of $A$, $B_0$, $B_1$ and $B_2$.

In the following computations we use extensively the results in Lemma~\ref{lemma:sumNormsOuter} without mentioning them.

\begin{enumerate}
\item Estimates of $\wtA(u)$:
Defining $\lah=\la-\pi$, by \eqref{def:DA},
\begin{align*}
A[\de^4-1](\lah+\pi) = 2(1-\cos\lah) -2\de^4(1-\cos\lah) + \de^8.
\end{align*}
Then, applying Lemma~\ref{lemma:changeTotalOuter}, 
\begin{equation*}\label{proof:estimateTrigoOuter}
\begin{aligned}
\normOut{\sin \Tht_{\la}}_{0,-\frac{2}{3}} 
\leq 
C \normOut{\Tht_{\la}}_{0,-\frac{2}{3}}
\leq C, 	\qquad
\normOut{(1-\cos \Tht_{\la})^{-1}
}_{0,\frac{4}{3}} 
\leq
C \normOut{\Tht_{\la}^{-2}
}_{0,\frac{4}{3}} 
\leq C
\end{aligned}	
\end{equation*}
and, as a result,
\begin{equation}\label{proof:DA}
\begin{split}	
\normOutSmall{\wtA^{-1}}_{0,\frac{4}{3}} 
&\leq 
C \normOut{(1-\cos \Tht_{\la})^{-1}
}_{0,\frac{4}{3}} 
\leq C,
\\
\normOutSmall{\partial_u \wtA}_{0,-\frac{1}{3}}
&\leq C
\normOut{\sin \Tht_{\la}}_{0,-\frac23}
\normOut{\partial_u \Tht_{\la}}_{0,\frac13}
\leq C.
\end{split}
\end{equation}
%We use the estimates in \eqref{proof:estimateTrigoOuter} without mentioning them throughout the rest of the proof.

\item Estimates of $\wtB(u,\phiA(u))$:
Considering the auxiliary variables
$(\lah,\Lh)=(\la-\pi,L-1)$,
we have that
\begin{equation}\label{proof:D0expansion}
\begin{split}	
B[\de^4-1](\pi+\lah,1+\Lh,\eta,\xi)
= \,& 
4 \Lh (1-\cos \lah + \de^4 \cos \lah) \\ 
&+
\frac{\eta}{\sqrt{2}} 
(-3 +2 e^{-i\lah}+e^{-2i\lah} +\de^4(3+ e^{-2i\lah})) \\
&+
\frac{\xi}{\sqrt{2}} 
(-3 +2 e^{i\lah}+e^{2i\lah} +\de^4(3+ e^{2i\lah})) \\
&+ R[\de^4-1](\pi+\lah,1+\Lh,\eta,\xi).
\end{split}
\end{equation}
Then, by the estimates in \eqref{eq:boundD2mes} and Lemma~\ref{lemma:changeTotalOuter},
\begin{equation}\label{proof:DB}
\begin{aligned}
\normOutSmall{\wtB(\cdot,\phiA)}_{1,-2} 
\leq \,&
C \normOut{\Tht_{L}(\cdot,\phiA)
\Tht_{\la}^2}_{0,-1}
+ \frac{C}{\de^2}
\normOut{\Tht_{\eta}(\cdot,\phiA) \Tht_{\la}}_{0,\frac23} \\
&+ \frac{C}{\de^2}
\normOut{\Tht_{\xi}(\cdot,\phiA) \Tht_{\la}}_{0,\frac23}
+ 
\frac{C}{\de^2}
\normOut{(\Tht_{L},\Tht_{\eta},\Tht_{\xi})^2}_{0,\frac23} \leq C \de^2.
\end{aligned}
\end{equation}

Now, we look for estimates of the first derivatives of $\wtB(u,\phiA(u))$.
By its definition in \eqref{proof:defABP} and the expression of $\Tht$ in \eqref{proof:ThtExpression}, we have that
\begin{equation}\label{proof:boundsBtildePoincare}
\begin{aligned}	
\partial_u \wtB =& \,
\boxClaus{ \partial_{\la} B[\de^4-1] \circ \Tht}
\partial_u  \Tht_{\la}
+
\boxClaus{\partial_{L} B[\de^4-1] \circ \Tht}
\partial_u  \Tht_{L}, \\
\partial_w \wtB =& \,
\boxClaus{
\partial_{L} B[\de^4-1] \circ \Tht
}
\partial_w  \Tht_{L}, \\
\partial_x \wtB =& \,
\boxClaus{
	\partial_{\eta} B[\de^4-1] \circ \Tht
}
\partial_{x}  \Tht_{\eta}, \qquad
\partial_y \wtB = \,
\boxClaus{
	\partial_{\xi} B[\de^4-1] \circ \Tht
}
\partial_y \Tht_{\xi}.
\end{aligned}
\end{equation}
Differentiating \eqref{proof:D0expansion} and applying Lemma~\ref{lemma:changeTotalOuter}, 
\begin{align*}
\normOut{\partial_{\la} B[\de^4-1] \circ \Tht(\cdot,\phiA)}_{1,-\frac43}
\leq& \, 
C \normOut{\Tht_{L}(\cdot,\phiA) \Tht_{\la}}_{-\frac13} +
\frac{C}{\de^2}
\normOut{\Tht_{\eta}(\cdot,\phiA)}_{0,\frac43}
\\
&+ \frac{C}{\de^2}
\normOut{\Tht_{\xi}(\cdot,\phiA)}_{0,\frac43}
+ C\de^2
\leq C \de^2, 
\\
%%%%%%%%
\normOut{\partial_{L} B[\de^4-1] \circ \Tht(\cdot,\phiA)}_{1,-\frac73}
\leq&
C \normOut{\Tht_{\la}^2}_{0,-\frac43} +
\frac{C}{\de^2} \normOut{\Tht_{L}(\cdot,\phiA)}_{0,\frac13}
+ \frac{C}{\kappa} 
\leq C,
\\
%%%%%%%%%
\normOut{\partial_{*} B[\de^4-1] \circ \Tht(\cdot,\phiA)}_{0,-\frac23}
\leq&
C \normOut{\Tht_{\la}}_{0,-\frac23}
+
\frac{C}{\kappa}
\leq C, \quad \text{for } *=\eta,\xi.
\end{align*}
Then, using also \eqref{proof:boundsBtildePoincare} and taking $*=x,y$,
\begin{align}\label{proof:DBfirst}
\normOutSmall{\partial_u \wtB(\cdot,\phiA)
}_{1,-1}
&\leq C \de^2, & 
\normOutSmall{\partial_w \wtB(\cdot,\phiA)
}_{1,-\frac83}
&\leq C \de^2, & 
\normOutSmall{\partial_* \wtB(\cdot,\phiA)
}_{0,-\frac23}
&\leq C \de.	
\end{align}
Analogously, for the second derivatives, one can obtain the estimates
\begin{equation}\label{proof:DBsecond}
\begin{aligned}	
\normOutSmall{\partial_{u w} \wtB(\cdot,\phiA) }_{1,-\frac53}
&\leq C \de^2, & \hspace{-1mm}
\normOutSmall{\partial^2_{w} \wtB(\cdot,\phiA)
}_{0,\frac23}
&\leq C \de^4, & \hspace{-1mm}
\normOutSmall{\partial_{u *} \wtB(\cdot,\phiA)
}_{0,\frac13}
&\leq C \de, \\
\normOutSmall{\partial_{w *} \wtB(\cdot,\phiA)
}_{0,-\frac13}
&\leq C \de^3,& \hspace{-1mm}
\normOutSmall{\partial^2_{*} \wtB(\cdot,\phiA)
}_{0,0}
&\leq C \de^2, & \hspace{-1mm}
\normOutSmall{\partial_{xy} \wtB(\cdot,\phiA)
}_{0,0}
&\leq C \de^2.
\end{aligned}
\end{equation}
\end{enumerate}
Now, we are ready to obtain estimates for $M_P(u,\phiA(u))$  by using the series expansion \eqref{def:SerieP}.
% applying the results in Lemma \ref{lemma:seriesH1Poi}.
%
First, we check that it is convergent.
Indeed, by \eqref{proof:DA} and \eqref{proof:DB}, for $u \in \DuOut$ and taking $\kappa$ big enough we have that
\begin{align*}
\vabs{\frac{\wtB(u,\phiA(u))}{\wtA(u)}}
&\leq 		
\normOutSmall{\wtB(\cdot,\phiA)}_{0,-\frac{4}{3}}
\normOutSmall{\wtA^{-1}}_{0,\frac{4}{3}} 
\leq 
\frac{C}{\kappa^2\de^2}
\normOutSmall{\wtB(\cdot,\phiA)}_{1,-2}
\leq \frac{C}{\kappa^2} \ll 1.
\end{align*}
Therefore, by \eqref{def:functionD2} and  \eqref{def:expressionMJOuter},
\begin{align}\label{proof:expressionMP}
\vabs{M_P(u,\phiA(u))} \leq 
\vabs{\frac1{\textstyle\sqrt{A[\de^4-1](\la_h(u))}} - \frac1{\sqrt{2+2\cos \la_h(u)}}  }
+
C \frac{|\wtB(u,\phiA(u))|}{|\wtA(u)|^{\frac32}}.
\end{align}
% and analogous estimates holds for the first and second derivatives.
Then, to estimate $M_P$ and its derivatives, it only remains to analyze the $u$-derivative of  its first term.
Indeed, by the definition of $A[\de^4-1]$ in \eqref{def:DA}.
%we have that
%\begin{align*}
%\frac1{\textstyle\sqrt{A[\de^4-1]}} - \frac1{\sqrt{2+2\cos \la}} 
%= 
%\frac{\de^4(2+2\cos\la)-\de^8}{\sqrt{2+2\cos\la}
%\sqrt{A[\de^4-1]} \paren{\sqrt{2+2\cos \la} +
%\sqrt{A[\de^4-1]} }},
%\end{align*}
% and differentiating, we obtain that
\begin{align}\label{proof:DApotential}
\normOut{\partial_u \paren{\frac1{\textstyle\sqrt{A[\de^4-1](\la_h(u))}} - \frac1{\sqrt{2+2\cos \la_h(u)}} } }_{0,\frac43} \leq C \de^4.
\end{align}
Therefore, applying estimates \eqref{proof:DA}, \eqref{proof:DB}, \eqref{proof:DBfirst}, 
\eqref{proof:DBsecond} and  \eqref{proof:DApotential}, to the derivatives of $M_P$ and using \eqref{proof:expressionMP}, we obtain the statement of the lemma.
\end{proof}
%\begin{equation}\label{proof:estimatesMP}
%\begin{aligned}	
%\normOut{\partial_u M_P(\cdot,\phiA)
%}_{1,1}
%&\leq C \de^2, \hspace{-2.5mm} & 
%%
%\normOut{\partial_w M_P(\cdot,\phiA)
%}_{1,-\frac{2}{3}}
%&\leq C \de^{2}, \hspace{-2.5mm} & 
%%
%\normOut{\partial_* M_P(\cdot,\phiA)
%}_{0,\frac{4}{3}}
%&\leq C \de, \\
%%%%%%%
%\normOut{\partial_{u w} M_P(\cdot,\phiA)
%}_{1,\frac{1}{3}}
%&\leq C \de^{2}, \hspace{-2.5mm} &
%%
%\normOut{\partial_{u *} M_P(\cdot,\phiA)
%}_{0,\frac{7}{3}}
%&\leq C \de, \hspace{-2.5mm} &
%%contenidos...
%\normOut{\partial^2_{w} M_P(\cdot,\phiA)
%}_{0,\frac{8}{3}}
%&\leq C \de^4, \\
%%
%\normOut{\partial_{w *} M_P(\cdot,\phiA)
%}_{0,\frac{5}{3}}
%&\leq C \de^3,\hspace{-2.5mm}  &
%%
%\normOut{\partial^2_{*} M_P(\cdot,\phiA)
%}_{0,2}
%&\leq C \de^2, \hspace{-2.5mm}  &
%%
%\normOut{\partial_{xy} M_P(\cdot,\phiA)
%}_{0,2}
%&\leq C \de^2.
%\end{aligned}
%\end{equation}

Analogously to Lemma \ref{lemma:boundsMPout}, we obtain estimates for the first and second derivatives of $M_S$ and $M_R$ (see \eqref{def:expressionMSOuter} and \eqref{def:expressionMROuter}).

\begin{lemma}\label{lemma:boundsMSout}
Fix $\varrho>0$, 
$\de>0$ small enough
and $\kappa>0$ big enough. Then, for 
$\phiA \in \ballOuter$  and $*=x,y$, we have  
\begin{equation*}%\label{proof:estimatesMS}
\begin{aligned}	
\normOut{\partial_u M_S(\cdot,\phiA)
}_{0,\frac{4}{3}}
&\leq C \de^2, & 
\normOut{\partial_w M_S(\cdot,\phiA)
}_{0,-\frac{1}{3}}
&\leq C \de^{2}, & 
\normOut{\partial_* M_S(\cdot,\phiA)
}_{0,0}
&\leq C \de, \\
%%%%%%
\normOut{\partial_{u w} M_S(\cdot,\phiA)
}_{0,\frac{2}{3}}
&\leq C \de^{2},  &
\normOut{\partial_{u *} M_S(\cdot,\phiA)
}_{0,\frac{1}{3}}
&\leq C \de,  &
\normOut{\partial^2_{w} M_S(\cdot,\phiA)
}_{0,-\frac{2}{3}}
&\leq C \de^4, \\
\normOut{\partial_{w *} M_S(\cdot,\phiA)
}_{0,-\frac{1}{3}}
&\leq C \de^3,  &
\normOut{\partial^2_{*} M_S(\cdot,\phiA)
}_{0,0}
&\leq C \de^2,  &
\normOut{\partial_{xy} M_S(\cdot,\phiA)
}_{0,0}
&\leq C \de^2.
\end{aligned}
\end{equation*}
% \end{lemma}
% 
% \begin{lemma}\label{lemma:boundsMRout}
% Fix $\varrho>0$, $\de>0$ small enough and
% $\kappa>0$ big enough,
% $\phiA \in \ballOuter$ (see \eqref{def:ballOuter}) and $*=x,y$, we have 
%
and
\begin{equation*}%\label{proof:estimatesMR}
\begin{aligned}	
\normOut{\partial_u M_R(\cdot,\phiA)
}_{1,1}
&\leq C \de^2,  & 
\normOut{\partial_w M_R(\cdot,\phiA)
}_{1,-\frac{2}{3}}
&\leq C \de^{2},  & 
\normOut{\partial_* M_R(\cdot,\phiA)
}_{0,0}
&\leq C \de, \\
%%%%%%
\normOut{\partial_{u w} M_R(\cdot,\phiA)
}_{1,\frac{1}{3}}
&\leq C \de^{2},  &
\partial_{u *} M_R(\cdot,\phiA)
&\equiv 0,  &
\normOut{\partial^2_{w} M_R(\cdot,\phiA)
}_{0,-\frac{2}{3}}
&\leq C, \\
\partial_{w *} M_R(\cdot,\phiA)
&\equiv 0,  &
\partial^2_{*} M_R(\cdot,\phiA)
&\equiv 0,   &
\partial_{xy} M_R(\cdot,\phiA)
&\equiv 0.
\end{aligned}
\end{equation*}
\end{lemma}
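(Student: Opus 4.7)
The plan is to mirror the proof of Lemma \ref{lemma:boundsMPout}, treating $M_R$ and $M_S$ separately since each has a different structure and a different reason for its bounds.

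For $M_R$, as defined in \eqref{def:expressionMROuter}, the estimates reduce to elementary bookkeeping. The first three summands are at most quadratic in $(w,x,y)$ with coefficients built from $\La_h$, $\La_h^{-1}$, $\dot\La_h$ and the uniformly bounded quantities $\LtresLa(\de), \Ltresx(\de), \Ltresy(\de)$ of Proposition \ref{proposition:HamiltonianScaling}. Theorem \ref{theorem:singularities} places $\La_h$, $\La_h^{-1}$, $\dot\La_h$ in the appropriate $\XcalOut_{\al,\beta}$-spaces and, with $\kappa>0$ large, $\La_h$ is uniformly bounded below on $\DuOut$. The last summand $\de^{-4} F_{\pend}(\de^2\La_h - \de^2 w/(3\La_h) + \de^4\LtresLa)$ is where the $\de^2$-factor comes from: using $F_{\pend}(s) = \OO(s^3)$ together with the fact that its argument lies in $\XcalOut_{0,\frac13}$ with norm $\OO(\de^2)$ for $\phiA \in \ballOuter$, this summand and each of its $u$-/$w$-derivatives carry a $\de^2$ prefactor. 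The many ``$\equiv 0$'' entries in the statement reflect the linear dependence of $M_R$ on $(x,y)$.

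For $M_S$ in \eqref{def:expressionMSOuter}, the key is a cancellation argument exploiting the absence of collision with the star. I would begin by rewriting
\begin{equation*}
\frac{1}{\de^4}\PP[0] - \frac{1-\de^4}{\de^4}\PP[\de^4] = \PP[\de^4] + \frac{1}{\de^4}\paren{\PP[0]-\PP[\de^4]} = \PP[\de^4] - \int_0^1 \partial_\zeta \PP[s\de^4]\,ds,
\end{equation*}
so that $M_S = \bigl(\PP[\de^4] - \int_0^1\partial_\zeta\PP[s\de^4]\,ds - 1 + \cos\la\bigr)\circ\Theta$. Because $A[\zeta] = 1-2\zeta\cos\la + \zeta^2$ stays uniformly close to $1$ for $\zeta\in[0,\de^4]$, the series expansion \eqref{def:SerieP} converges absolutely and can be differentiated term-by-term, uniformly for $\phiA \in \ballOuter$. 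Using the explicit leading terms of $A[\zeta]$ and $B[\zeta]$ from \eqref{def:DA}--\eqref{def:DB}, a direct computation at $(L,\eta,\xi) = (1,0,0)$ shows that the bracketed expression is $\OO(\de^4)$ and that its $\la$-derivative is $\OO(\de^4)$, while its $L$-, $\eta$- and $\xi$-derivatives remain $\OO(1)$ with bounded dependence on $\la$. The chain rule through $\Theta$ then distributes the required $\de$-powers: $\partial_w$ picks up $\partial_w\Theta_L = \OO(\de^2)$, each of $\partial_x, \partial_y$ picks up $\de$, and $\partial_u$ combines the small $\partial_\la$-contribution with $\dot\la_h$ plus the $\de^2$-weighted $\partial_L$-contribution. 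The estimates of the statement then follow by composing these with Lemma \ref{lemma:changeTotalOuter} (bounds on $\Theta$) and Lemma \ref{lemma:sumNormsOuter} (algebra of $\XcalOut_{\al,\beta}$-norms), and analogously for all second derivatives.

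The main obstacle I anticipate is the algebraic bookkeeping in $M_S$: identifying precisely which Taylor coefficients of the bracketed expression (in $\de^4$ and in $(L-1,\eta,\xi)$) vanish and which survive, then tracking $\XcalOut_{\al,\beta}$-weights through each $u$-, $w$-, $x$-, $y$-differentiation. This parallels the treatment of $\wtA$ and $\wtB$ in the proof of Lemma \ref{lemma:boundsMPout}, with the notable simplification that no denominator vanishes in the outer domain, so $\kappa$ being large is only needed to keep us inside $\DuOut$.
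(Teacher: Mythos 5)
Your proposal is correct and fills in a proof that the paper omits: after Lemma \ref{lemma:boundsMPout}, the paper merely states Lemma \ref{lemma:boundsMSout} with the remark ``analogously to Lemma \ref{lemma:boundsMPout}'' and gives no argument. Your two-pronged treatment matches the intended route, and the computation you sketch does close the gap.

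A few specifics worth recording. For $M_R$ the decisive observation is exactly yours: the first three summands of \eqref{def:expressionMROuter} are polynomial (linear in $(x,y)$ and at most quadratic in $w$) with coefficients built from $\La_h^{\pm 1}$, $\dot\La_h$, $\LtresLa$, $\Ltresx$, $\Ltresy$, which explains every ``$\equiv 0$'' entry and puts the $w$-derivatives in the expected $\XcalOut_{\al,\beta}$; the last summand contributes the $\de^2$-prefactors because $F_{\pend}(s)=\OO(s^3)$ while the argument is $\OO(\de^2)$ in $\XcalOut_{0,\frac13}$. For $M_S$, the identity
$\frac{1}{\de^4}\PP[0]-\frac{1-\de^4}{\de^4}\PP[\de^4]=\PP[\de^4]-\int_0^1\partial_\zeta\PP[s\de^4]\,ds$
is a clean way to make manifest that the apparent $\de^{-4}$ singularity is spurious (an equivalent Taylor expansion in $\de^4$ works too). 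Your computation at $(L,\eta,\xi)=(1,0,0)$ correctly yields that the bracketed quantity and its $\la$-derivative are $\OO(\de^4)$ while the $L$-, $\eta$-, $\xi$-derivatives are $\OO(1)$, and the chain rule through $\Tht$ then recovers the listed exponents. The structural reason the $M_S$-bounds carry $\al=0$ rather than the $\al=1$ seen for $M_P$ is, as you note, that $A[s\de^4]$ stays uniformly near $1$ (even as $\la_h\to\pi$, i.e.\ $u\to\pm iA$), whereas $A[\de^4-1]\sim 2(1+\cos\Tht_\la)$ degenerates there. One small inaccuracy in your final sentence: $\kappa$ large is not only needed to stay inside $\DuOut$; it also controls $|\Tht_L-1|\lesssim\de^{4/3}\kappa^{-1/3}$ near the vertex, hence the convergence of the series \eqref{def:SerieP}. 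For $M_S$ this is a softer requirement than for $M_P$ (where one really needs $|B/A|\lesssim\kappa^{-1}$), but it is not vacuous. This does not affect the correctness of the argument.
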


\begin{proof}[End of the proof of Lemma~\ref{lemma:computationsRRROut}]

We start by estimating the first and second derivatives of $H_1^{\out}(u,\phiA(u);\de)$ in suitable norms.
Recall that by \eqref{proof:H1outExpressionOuter}, $H_1^{\out}=M_P+M_S+M_R$. 
Therefore, taking $\varphi \in \ballInfty \subset \XcalOutTotal$ and  applying  Lemmas \ref{lemma:boundsMPout} and \ref{lemma:boundsMSout}:

\begin{enumerate}
\item For $g^{\out}=\partial_w H_1^{\out}$ one has
% \begin{equation}\label{proof:estimategOut}
\[
\begin{split}	
\normOut{g^{\out}(\cdot,\phiA)}_{1,-\frac23} \leq&
\normOut{\partial_w M_P(\cdot,\phiA)}_{1,-\frac23} 
+ C
\normOut{\partial_w M_S(\cdot,\phiA)}_{0,-\frac13} +
\normOut{\partial_w M_R(\cdot,\phiA)}_{1,-\frac23} \\ \leq& C \de^2
\end{split}
% \end{equation}
\]
and, in particular, for $\kappa$ big enough
\begin{equation}\label{def:Fitag}
\normOut{g^{\out}(\cdot,\phiA)}_{0,0} \leq C \kappa^{-2} \ll 1.	
\end{equation}
Analogously, 
% \begin{align}\label{proof:estimategOutDerivatives}
$\normOut{\partial_w g^{\out}(\cdot,\phiA)}_{0,-\frac23}
\leq C$ and  
$\normOut{\partial_* g^{\out}(\cdot,\phiA)}_{0,\frac53}
\leq C \de^3$, 
 for $*=x,y$.
% \end{align}

\item For $f_1^{\out}=-\partial_u H_1^{\out}$, one has that
% \begin{equation}\label{proof:estimatef1Out}
\[
% \begin{split}
\normOut{f^{\out}_1(\cdot,\phiA)}_{1,1} 
\leq
\normOut{\partial_u M_P(\cdot,\phiA)}_{1,1}
+
C \normOut{\partial_u M_S(\cdot,\phiA)}_{0,\frac43}+
\normOut{\partial_u M_R(\cdot,\phiA)}_{1,1}
\leq
C \de^2,
% \end{split}
% \end{equation} 
\] 
% \begin{align}\label{proof:estimatef1OutDCerivatives}
$\normOut{\partial_w f_1^{\out}(\cdot,\phiA)}_{1,\frac13}
\leq
C \de^2$ and 
% \qquad
$\normOut{\partial_* f_1^{\out}(\cdot,\phiA)}_{0,\frac73}
\leq 
C \de,
\quad $
for $*=x,y$.
% \end{align}

\item For $f_2^{\out} = i \partial_y H_1^{\out}$ and $f_3^{\out} = -i \partial_x H_1^{\out}$,
we can obtain the estimates
\begin{equation}\label{proof:estimatef2f3Out}
\begin{split}	
\normOut{f_2(\cdot,\phiA)}_{0,\frac43} 
\leq&
\normOut{\partial_y M_P(\cdot,\phiA)}_{0,\frac43} 
+ C
\normOut{\partial_y M_S(\cdot,\phiA)
+ 
\partial_y M_R(\cdot,\phiA)}_{0,0}
\leq 
C \de, \\
\normOut{f_3(\cdot,\phiA)}_{0,\frac43} 
\leq&
\normOut{\partial_x M_P(\cdot,\phiA)}_{0,\frac43} 
+ C
\normOut{\partial_x M_S(\cdot,\phiA)
+	
\partial_x M_R(\cdot,\phiA)}_{0,0}
\leq 
C \de.
\end{split}
\end{equation}
Analogously, we have that
% \begin{align}\label{proof:estimatef2f3OutDerivatives}
$\normOutSmall{\partial_w f_j^{\out}(\cdot,\phiA)}_{0,\frac53}
\leq
C \de^3$ and 
% \qquad
$\normOutSmall{\partial_* f_j^{\out}(\cdot,\phiA)}_{0,2}
\leq 
C \de^2$,
for $j=2,3$ and $*=x,y$.
% \end{align}

\end{enumerate}

Joining these estimates  and taking $\kappa$ big enough, we complete the proof of the lemma. 
%
%First, we analyze the denominator. 
%%
%Taking $\kappa$ big enough, we have
%\begin{align*}
%\normOut{g(\cdot,\phiA)}_{0,0}
%\leq
%\leq \frac{C}{\kappa^2} \leq \frac{1}{2},
%%
%\quad \text{and} \quad
%\normOut{1 + g(\cdot,\phiA)}_{0,0} \geq \frac{1}{2}.
%\end{align*}
\end{proof}

\begin{remark}\label{rmk:R} 
Note that that $\DOuterTilde \subset \DuOut$ and  $\YcalOuter \subset 
\XcalOut_{0,0}$ (see \eqref{def:Yout} and \eqref{def:XcalOut}). Then, the proof 
of Lemma~\ref{lemma:computationsRRRtransitionOuter} is a direct consequence of 
the estimates  for $g^{\out}$ and its derivatives in Item 1 above and the 
fact that, by \eqref{eq:systemEDOsOuter} and \eqref{eq:InvU},
\[
R[\uOut](v)= \partial_w H_1^{\out}
	\paren{v + \uOut(v), \zuOut(v+\uOut(v))}= g^{\out}\paren{v + \uOut(v), 
\zuOut(v+\uOut(v))}.
\]
\end{remark}

\section{Estimates for the difference}
\label{appendix:proofF-technical}

In this section we prove
Lemmas \ref{lemma:boundsBspl} and \ref{lemma:boundsBonsBeta}.

\subsection{Proof of Lemma \ref{lemma:boundsBspl}}
\label{subappendix:proofH-technicalFirst}

First, we prove the estimates for the operator $\Ups$ given in \eqref{def:operatorPPdifference}.
%
%\begin{lemma}\label{lemma:estimatesPP}
%%
%%Let the operator $\Ups$ given in \eqref{def:operatorPPdifference}.
%%
%Let the constants $\kappaBoomerang, \de_0$ be as given in Theorem \ref{theorem:existence}.
%%	
%Then, there exists a constant $C>0$ such that, for $\kappa\geq\kappaBoomerang$, $\de \in(0,\de_0)$ and $u \in \DBoomerang$,
%%
%\begin{align*}%\label{proof:boundsPPdifference}
% 	\vabs{\Ups_1(u)-1}\leq \frac{C}{\kappa^2}, \qquad
% 	\vabs{\Ups_j(u)} \leq  \frac{C\de}{\vabs{u^2+A^2}^{\frac{4}{3}}}, \qquad	\text{for } j=2,3.
%\end{align*}
% 
%\end{lemma}
%
For $\sigma \in [0,1]$, we define
$z_{\sigma}=\sigma\zuOut + (1-\sigma) \zsOut$ 
with $z_{\sigma}=(w_{\sigma},x_{\sigma},y_{\sigma})^T$.
%
%\begin{equation*}
%	z_{\tau}=\tau\zuOut + (1-\tau) \zsOut, 
%	\quad \text{with} \quad
%	z_{\tau}=(w_{\tau},x_{\tau},y_{\tau}).
%\end{equation*}
%
Then, by Theorem \ref{theorem:existence}, for $u \in \DBoomerang$, we have that
\begin{equation}\label{proof:ztauBounds}
	\vabs{w_{\sigma}(u)}\leq \frac{C\de^2}{\vabs{u^2+A^2}} +
	\frac{C\de^4}{\vabs{u^2+A^2}^{\frac83}}, \qquad
	\vabs{x_{\sigma}(u)},\vabs{y_{\sigma}(u)}\leq \frac{C\de^3}{\vabs{u^2+A^2}^{\frac43}}.
\end{equation}		
Recalling that $H^{\out} = w + \frac{xy}{\de^2} + H_1^{\out}$ (see \eqref{def:hamiltonianOuter}), one has
\begin{equation*}
	\begin{split}
		\vabs{\Ups_1(u)-1}& \leq
		\sup_{\sigma \in[0,1]} \vabs{\partial_w H_1^{\out}(u,z_{\sigma}(u))},\\
		\vabs{\Ups_2(u)}&\leq 
		\frac{\vabs{y_{\sigma}(u)}}{\de^2}
		+
		\sup_{\sigma \in[0,1]} \vabs{
			\partial_x H_1^{\out}(u,z_{\sigma}(u))},
		\\ 
		\vabs{\Ups_3(u)} 
		&\leq 
		\frac{\vabs{x_{\sigma}(u)}}{\de^2}
		+
		\sup_{\sigma \in[0,1]} \vabs{
			\partial_y H_1^{\out}(u,z_{\sigma}(u))}.
	\end{split}
\end{equation*}
Then, by \eqref{proof:ztauBounds} and applying 
 \eqref{def:Fitag} and \eqref{proof:estimatef2f3Out} in the proof of
Lemma \ref{lemma:computationsRRROut} we obtain the estimates for $\Ups_1, \Ups_2$ and $\Ups_3$.

% \begin{proof}[Proof of Lemma \ref{lemma:boundsBspl}]
We also need estimates for the matrix $\wt{\BB}^{\spl}$ given in \eqref{def:Bspl1}, which  satisfies 
\begin{align*}
|\wt{\BB}^{\spl}_{i,j}(u)|
\leq 
\sup_{\sigma \in[0,1]}
\vabs{\paren{D_z \RRR^{\out}
[\zOut_{\sigma}](u)}_{i,j}},
\end{align*}
for $z_{\sigma}= \sigma\zuOut + (1-\sigma) \zsOut$.
Then, by \eqref{proof:ztauBounds} and applying Lemma \ref{lemma:computationsRRROut}, for $u \in \DBoomerang$,
% we obtain
%\begin{align}	
%%&\vabs{\wt{\BB}^{\spl}_{1,1}(u)} \leq 
%%\frac{C \de^2}{\vabs{u^2 + A^2}^{\frac{5}{3}}} +
%%\frac{C \de^4}{\vabs{u^2 + A^2}^{3}}, \hh
%%\vabs{\wt{\BB}^{\spl}_{1,2}(u)} \leq 
%%\frac{C\de}{\vabs{u^2 + A^2}^{\frac{7}{3}}}, \hh
%%\vabs{\wt{\BB}^{\spl}_{1,3}(u)} \leq 
%%\frac{C\de}{\vabs{u^2 + A^2}^{\frac{7}{3}}}, \\
%&\vabs{\wt{\BB}^{\spl}_{2,1}(u)} \leq 
%\frac{C\de}{\vabs{u^2 + A^2}^{\frac{2}{3}}}, \,
%\vabs{\wt{\BB}^{\spl}_{2,2}(u)} \leq 
%\frac{C}{\vabs{u^2 + A^2}^{\frac{2}{3}}} +
%\frac{C \de^2}{\vabs{u^2 + A^2}^{2}}, \,
%\vabs{\wt{\BB}^{\spl}_{2,3}(u)} \leq 
%\frac{C\de^2}{\vabs{u^2 + A^2}^{2}}, \\
%&\vabs{\wt{\BB}^{\spl}_{3,1}(u)} \leq 
%\frac{C\de}{\vabs{u^2 + A^2}^{\frac{2}{3}}}, \,
%\vabs{\wt{\BB}^{\spl}_{3,2}(u)} \leq 
%\frac{C\de^2}{\vabs{u^2 + A^2}^{2}}, \,
%\vabs{\wt{\BB}^{\spl}_{3,3}(u)} \leq 
%\frac{C}{\vabs{u^2 + A^2}^{\frac{2}{3}}} +
%\frac{C \de^2}{\vabs{u^2 + A^2}^{2}}.
%\end{align}
\begin{equation} \label{proof:boundsBsplTilde}	
\begin{aligned}
\vabs{\wt{\BB}^{\spl}_{2,1}(u)} &\leq 
\frac{C\de}{\vabs{u^2 + A^2}^{\frac{2}{3}}},
& 
\vabs{\wt{\BB}^{\spl}_{3,1}(u)} &\leq 
\frac{C\de}{\vabs{u^2 + A^2}^{\frac{2}{3}}}, 
\\
\vabs{\wt{\BB}^{\spl}_{2,2}(u)} &\leq 
\frac{C}{\vabs{u^2 + A^2}^{\frac13}} +
\frac{C \de^2}{\vabs{u^2 + A^2}^{2}}, 
&
\vabs{\wt{\BB}^{\spl}_{3,2}(u)} &\leq 
\frac{C\de^2}{\vabs{u^2 + A^2}^{2}}, 
\\
\vabs{\wt{\BB}^{\spl}_{2,3}(u)} &\leq 
\frac{C\de^2}{\vabs{u^2 + A^2}^{2}}, 
&
\vabs{\wt{\BB}^{\spl}_{3,3}(u)} &\leq 
\frac{C}{\vabs{u^2 + A^2}^{\frac13}} +
\frac{C \de^2}{\vabs{u^2 + A^2}^{2}}.
\end{aligned}
\end{equation}
Then, by \eqref{proof:boundsUpsilon} and taking $\kappa$ big enough,
% we obtain 
\begin{align*}
	\vabs{{\BB}^{\spl}_{1,1}(u)} &\leq  
	\frac{\vabs{\Ups_2(u)}}{\vabs{\Ups_1(u)}} \vabs{\wt{\BB}^{\spl}_{2,1}(u)} 
	\leq 
	\frac{C \de^2}{\vabs{u^2 + A^2}^{2}}, \\
	\vabs{{\BB}^{\spl}_{1,2}(u)} &\leq 
	\vabs{\wt{\BB}^{\spl}_{2,3}(u)} +
	\frac{\vabs{\Ups_3(u)}}{\vabs{\Ups_1(u)}} \vabs{\wt{\BB}^{\spl}_{2,1}(u)} 
	\leq 
	\frac{C \de^2}{\vabs{u^2 + A^2}^{2}},	
\end{align*}
and analogous estimates hold for ${\BB}^{\spl}_{2,1}$ and ${\BB}^{\spl}_{2,2}$.

%\textcolor{red}{ARREGLAR:}
%
Finally, we compute estimates for $\bb_y(u)$  (see \eqref{def:mxmyalxaly}) and  $u \in \DBoomerang$. The estimates for  $\bb_x(u)$ can be computed analogously.
%
% We only compute estimates for $\bb_y(u)$ and $\Im(u) \geq 0$, the remaining cases can be proven analogously. 
%
Let us consider the integration path $\rho_t = u_* + (u-u_*)t $, for $t\in[0,1]$. 
Then
\begin{align*}
\bb_y(u) = \exp \paren{ 
\int_0^{1}  \wt{\BB}^{\spl}_{2,2}\paren{\rho_t}
(u-u_*) dt}.
\end{align*} 
Using the bounds in \eqref{proof:boundsBsplTilde}, we have that
\begin{align*}
\vabs{\log \bb_y(u)} &\leq
C \vabs{u-u_*} \vabs{\int^{1}_0  
 \frac{1}{\vabs{\rho^2_t + A^2}^{\frac{1}{3}}} +
\frac{\de^2}{\vabs{\rho^2_t + A^2}^{2}} dt} 
\leq C,
%
%&\leq C \vabs{u-u_0}  \vabs{ \int^{1}_0 \frac{1}{\vabs{\rho(t)-iA}^{\frac{2}{3}}} +
%\frac{\de^2}{\vabs{\rho(t)-iA}^{2}}  dt }\\
%%
%&\leq C \boxClaus{\paren{u_0^2+A^2}^{\frac{1}{6}} - \vabs{u-iA}^{\frac{1}{3}}}
%+ C \boxClaus{ \frac{\de^2}{\vabs{u-iA}} -\frac{\de^2}{\sqrt{u_0^2+A^2} } }\\
%%
%&\leq C \paren{1 - \kappa^{\frac{1}{3}}\de^{\frac{2}{3}}} 
%+ C \paren{\frac{1}{\kappa}-\de^2 } \leq C .
\end{align*}
which implies $C^{-1} \leq \vabs{\bb_y(u)} \leq C$.
% , for $u \in \DBoomerang$.
%	Then,
%	\begin{align*}
%		C^{-1} \leq 
%		\exp\paren{-\vabs{\int_{\rho} \beta_y }} \leq 
%		\vabs{\al_y(u)} \leq
%		\exp\paren{+\vabs{\int_{\rho} \beta_y}} \leq C.
%	\end{align*}

%\begin{lemma}\label{lemma:alxalyEstimates}
%Consider functions $\al_x$ and $\al_y$ as defined on~\eqref{def:mxmyalxaly}. Then, for $it\in E$ and $\de$ small enough, we have
%\begin{align*}
%C^{-1} \leq \vabs{\al_x(it)} \leq C, \hh
%C^{-1} \leq \vabs{\al_y(it)} \leq C.
%\end{align*}
%\end{lemma}
%
% \end{proof}

\subsection{Proof of Lemma \ref{lemma:boundsBonsBeta}}
\label{subappendix:proofH-technicalSecond}

We only give an expression for $\bb_y(u_+)$. The result for $\bb_x(u_-)$ is analogous. 
First, we analyze $\wt{\BB}^{\spl}_{3,3}$.

\begin{lemma}\label{lemma:proofConstantA}
For $\de>0$ small enough, $\kappa>0$ large enough and $u \in \DBoomerang$, the function $\wt{\BB}^{\spl}_{3,3}$  defined in \eqref{def:Bspl1} is of the form 
\begin{align*}
	\wt{\BB}_{3,3}^{\spl}(u) = 
	-\frac{4i}3 \La_h(u)
	+ \de^2 m(u;\de),
\end{align*}
for some function $m$ satisfying
\begin{align*}
	\vabs{m(u;\de)}
	\leq \frac{C}{\vabs{u^2+A^2}^{2}}.
\end{align*}
\end{lemma}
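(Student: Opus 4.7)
The plan is to isolate the dominant part of $\wt{\BB}_{3,3}^{\spl}(u) = \int_0^1 \partial_y \RRR_3^{\out}[z_\sigma](u)\,d\sigma$, where $z_\sigma = \sigma\zuOut+(1-\sigma)\zsOut$, by successive Taylor expansions that reduce the calculation to the value of $\partial_w H_1^{\out}$ on the unperturbed separatrix. First I would differentiate the quotient $\RRR_3^{\out}[z] = (f_3^{\out} + i y\, g^{\out}/\de^2)/(1+g^{\out})$ from \eqref{eq:expressionRRRout} and write
\[
\partial_y \RRR_3^{\out}[z] \;=\; \frac{i\, g^{\out}(u,z)}{\de^2\bigl(1+g^{\out}(u,z)\bigr)}\;+\;\RRR^{\mathrm{err}}(u,z),
\]
where $\RRR^{\mathrm{err}}$ collects all contributions proportional to $\partial_{xy}H_1^{\out}$, $y\,\partial_{wy}H_1^{\out}/\de^2$, and $\partial_x H_1^{\out}\cdot \partial_{wy}H_1^{\out}$. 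Using the sharp estimates on the mixed derivatives of $H_1^{\out}=M_P+M_S+M_R$ from Lemmas \ref{lemma:boundsMPout} and \ref{lemma:boundsMSout} together with the pointwise bounds on $z_\sigma$ from Theorem \ref{theorem:existence}, each of these contributions fits inside the allowed tail $C\de^2/\vabs{u^2+A^2}^2$.

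Next I would Taylor-expand the leading quotient around $z=0$, so that
\[
\frac{g^{\out}(u,z_\sigma)}{1+g^{\out}(u,z_\sigma)} \;=\; g^{\out}(u,0) \;+\; \bigl(|z_\sigma|\cdot\normSup{Dg^{\out}}\bigr)\text{-terms}.
\]
Combining Theorem \ref{theorem:existence} with the bounds on the derivatives of $g^{\out}$ from Lemma \ref{lemma:computationsRRROut} bounds the remainder by $C\de^4/\vabs{u^2+A^2}^{7/3}$, so that after multiplication by $i/\de^2$ it too lies in the error budget. Thus it remains to evaluate $g^{\out}(u,0)=\partial_w H_1^{\out}(u,0)$. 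Using \eqref{eq:expressionHamiltonianInfty} and $(\partial_w \phi_{\out})_{\La} = -1/(3\La_h(u))$ one obtains
\[
g^{\out}(u,0) \;=\; -\frac{1}{3\La_h(u)}\,\partial_{\La} M(\la_h(u),\La_h(u),0,0).
\]

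The quantity $\partial_{\La} M(\la_h,\La_h,0,0)$ is unfolded through $M=H\circ\phi_{\equi}-H_0$ and the definition of $H_1$ in \eqref{def:hamiltonianScalingH1}. Three $O(\de^2)$ pieces arise: the translation contribution $-3\de^2\LtresLa$, the pendulum contribution $\de^{-2}F_{\pend}'(\de^2\La_h+O(\de^4))=6\de^2\La_h^2+O(\de^4)$ (since $F_{\pend}(s)=2s^3+O(s^4)$), and the Keplerian contribution $\de^2\partial_L H_1^{\Poi}(\la_h,1,0,0;0)$, which is read off from the series expansion used in Lemma \ref{corollary:seriesH1Poi}. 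Invoking the energy relation $\La_h^2=(2V(\la_h)+1)/3$ for the unperturbed separatrix to simplify $\partial_L H_1^{\Poi}(\la_h,1,0,0;0)$, the three pieces combine into
\[
\partial_{\La} M(\la_h,\La_h,0,0) \;=\; 4\de^2\La_h^2(u) \;+\; \de^2 r(u;\de),
\]
with $|r(u;\de)| \leq C/\vabs{u^2+A^2}^{4/3}$. Multiplying by $-i/(3\de^2\La_h(u))$ produces the announced main term $-\tfrac{4i}{3}\La_h(u)$ and an error whose size, after using $\La_h(u)^{-1}\lesssim \vabs{u-iA}^{1/3}$ from Theorem \ref{theorem:singularities}, is comfortably bounded by $C/\vabs{u^2+A^2}^2$ as required.

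The main obstacle will be the careful bookkeeping needed to extract the coefficient $4/3$: the relevant cancellations between $-3\LtresLa$, the separatrix value of $\partial_L H_1^{\Poi}(\la_h,1,0,0;0)$, and the $6\La_h^2$ coming from $F_{\pend}$ rely crucially on the identity $\La_h^2=(2V(\la_h)+1)/3$, and must be combined with the explicit form of the series expansion \eqref{def:SerieP} at $(\la_h,1,0,0)$. A secondary technical point is that every remainder has to be bounded uniformly by $C/\vabs{u^2+A^2}^2$ over the whole boomerang domain $\DBoomerang$, where $\vabs{u^2+A^2}$ can shrink to $O(\kappa\de^2)$; this forces one to use throughout the sharpest norms $\normOut{\cdot}_{\al,\beta}$ of Section \ref{subsection:proofH-existenceBounded} and the growth information on $\La_h$ near $\pm iA$ provided by Theorem \ref{theorem:singularities}.
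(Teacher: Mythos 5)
Your approach has a genuine gap: the Taylor expansion of $g^{\out}(u,z_\sigma)$ around $z=0$ cannot discard the $w$-dependence, and your remainder bound $C\de^4/\vabs{u^2+A^2}^{7/3}$ is incorrect. From the proof of Lemma~\ref{lemma:computationsRRROut} one has $\vabs{\partial_w g^{\out}(u,0)} \leq C\vabs{u^2+A^2}^{2/3}$ (the dominant contribution coming from $\partial_w^2 M_R = -\tfrac{1}{3\La_h^2}+\OO(\de^2)$), and by Theorem~\ref{theorem:existence} the dominant piece of $w_\sigma$ is $\OO(\de^2/\vabs{u^2+A^2})$. The product $\partial_w g^{\out}\cdot w_\sigma$ is therefore $\OO(\de^2/\vabs{u^2+A^2}^{1/3})$; after multiplying by the $i/\de^2$ in front of $g^{\out}$, this is $\OO(1)$ away from the singularities, which does not fit inside the allowed error $\de^2 m$ with $\vabs{m}\leq C/\vabs{u^2+A^2}^2$ (that budget is $\OO(\de^2)$ where $\vabs{u^2+A^2}=\OO(1)$).

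What is missing is the energy-conservation argument that the paper uses. Since $H^{\out}(u,\zuOut(u);\de) = H^{\out}(u,\zsOut(u);\de) = \de^4K(\de)$, one can write
\begin{equation*}
\wdOut(u) = -M_P(u,0,0,0;\de) - M_S(u,0,0,0;\de) - \de^2\La_h(u)\big(3\LtresLa+2\La_h^2(u)\big) + \OO\!\paren{\frac{\de^4}{\vabs{u^2+A^2}^{8/3}}}.
\end{equation*}
Substituting this back into the $-\tfrac{w_\sigma}{3\La_h^2}$ term coming from $\partial_w M_R$ is what (i) converts the coefficient $-2i\La_h$ one gets from $\partial_w M_R(u,0,0,0)$ into $-\tfrac{4i}{3}\La_h$ (the extra $+\tfrac{2i}{3}\La_h$ comes precisely from the $-2\de^2\La_h^3$ in the substitution), (ii) cancels the $\OO(1)$ term $-i\LtresLa/\La_h$, and (iii) produces the \emph{combinations} $\partial_w M_{P,S}(u,0,0,0;\de)+\tfrac{M_{P,S}(u,0,0,0;\de)}{3\La_h^2(u)}$, which are $\OO(\de^4/\vabs{u^2+A^2}^2)$ by a separate expansion of $M_{P,S}$, whereas $\partial_w M_{P,S}$ alone is only $\OO(\de^2)$ on the real part of the boomerang domain.

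Your alternative route---using the \emph{unperturbed} energy relation $\La_h^2=(2V(\la_h)+1)/3$ to simplify $\partial_L H_1^{\Poi}(\la_h,1,0,0;0)$---cannot substitute for this. Even granting your decomposition $\partial_\La M(\la_h,\La_h,0,0)=4\de^2\La_h^2+\de^2 r$ with $\vabs{r}\leq C/\vabs{u^2+A^2}^{4/3}$, the residual in $\wt{\BB}^{\spl}_{3,3}$ is $-\tfrac{ir}{3\La_h}$, which is $\OO(1)$ where $\vabs{u^2+A^2}\sim 1$ and therefore does not satisfy the required $\de^2/\vabs{u^2+A^2}^2$ bound. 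The extra decay must come from the perturbed energy level, not from an unperturbed algebraic identity.
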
	

\begin{proof}
Let us define $z_{\tau}= \tau\zuOut + (1-\tau) \zsOut$ and recall that, for $u \in \DBoomerang$,
\begin{align}\label{proof:B33tildeDef}
\wt{\BB}_{3,3}(u) = 
\int_0^1 \partial_y \RRR_3^{\out}[z_{\tau}](u) d\tau.
\end{align}
Then, by the expression of $\RRR_3^{\out}$ in \eqref{eq:expressionRRRout}, the estimates 
in the proof of Lemma \ref{lemma:computationsRRROut} (see Appendix \ref{subsubsection:proofComputationsRRROut}) and Theorem \ref{theorem:existence}, we have that
% obtain that,
\[
\partial_y \RRR_3^{\out}[z_{\tau}](u) 
=
\frac{i}{\de^2} g^{\out}(u,z_{\tau}(u)) +
\de^2 \wt{m}(u;\de),
\]
where
$
\vabs{\wt{m}(u;\de)}
\leq \frac{C}{\vabs{u^2+A^2}^{2}}.
$
In the following, to simplify notation, we denote by $\wt{m}(u;\de)$ any function satisfying the previous estimate.
Since $g^{\out}=\partial_w H_1^{\out}$, by \eqref{proof:H1outExpressionOuter} one has
\[
g^{\out}(u,z_{\tau}(u)) = \partial_w M_P(u,z_{\tau}(u);\de)+
\partial_w M_S(u,z_{\tau}(u);\de)+
\partial_w M_R(u,z_{\tau}(u);\de),
\]
with $M_P$, $M_S$ and $M_R$ as given in \eqref{def:expressionMJOuter},
\eqref{def:expressionMSOuter} and
\eqref{def:expressionMROuter}, respectively.
Then, taking into account that $F_{\pend}(s)=2z^3+\OO(z^4)$ (see \eqref{def:Fpend}) and following the proofs of Lemmas \ref{lemma:boundsMPout} and \ref{lemma:boundsMSout}, it is a tedious but an easy computation to see that,
\begin{equation*}
\begin{split}	
	g^{\out}(u,z_{\tau}(u)) =& \,
	\partial_w M_P(u,0,0,0;\de) + 
	\partial_w M_S(u,0,0,0;\de)  \\
	&- \frac{w_{\tau}(u)}{3 \La_h^2(u)}
	-\frac{\de^2 \LtresLa(\de)}{\La_h(u)}
	- 2 \de^2\La_h(u) 
	+
	\de^4\wt{m}(u;\de),
\end{split}
\end{equation*}
and, by \eqref{proof:B33tildeDef},
\begin{equation}\label{proof:B33tildeDefB}
\begin{split}
\wt{\BB}_{3,3}(u) =& \, \frac{i}{\de^2} \boxClaus{\partial_w M_P(u,0,0,0;\de) + 
\partial_w M_S(u,0,0,0;\de) } \\
&- i\frac{\wuOut(u)+\wsOut(u)}{6 \de^2\La_h^2(u)}
- i\frac{\LtresLa(\de)}{\La_h(u)}
- 2i \La_h(u) 
+
\de^2\wt{m}(u;\de).
\end{split}
\end{equation}

Next, we study the terms $\wusOut(u)$.
Since $H^{\out}=w + \frac{xy}{\de^2} + M_P+M_S+M_R$ (see
\eqref{def:hamiltonianOuter} and \eqref{proof:H1outExpressionOuter}), one can see that
\[
H^{\out}(u,\zuOut(u);\de) 
= 
H^{\out}(u,\zsOut(u);\de) 
= 
\lim_{\Re u \to \pm \infty} H^{\out}(u,0,0,0;\de)
=
\de^4 K(\de),
\]
with $\vabs{K(\de)}\leq C$, for $\de$ small enough.
Then, by Theorem \ref{theorem:existence}, for $\diamond=\unstable,\stable$,
\begin{align*}
\vabs{\wdOut(u) + M_P(u,\zdOut(u);\de)+
M_S(u,\zdOut(u);\de)+ M_R(u,\zdOut(u);\de)}
\leq 
\frac{C\de^4}{\vabs{u^2+A^2}^{\frac83}}.
\end{align*}
Again, following the proofs of Lemmas \ref{lemma:boundsMPout} and \ref{lemma:boundsMSout}, one obtains
\begin{align*}
\vabs{\wdOut(u) + M_P(u,0,0,0;\de) 
+ M_S(u,0,0,0;\de)
+\de^2\La_h(u)(3\LtresLa+2\La_h^2(u))}
\leq \frac{C\de^4}{\vabs{u^2+A^2}^{\frac83}},
\end{align*}
and, by \eqref{proof:B33tildeDefB},
\begin{equation*}%\label{proof:B33tildeDefC}
\begin{split}
\wt{\BB}_{3,3}(u) 
=& \, 
-  \frac{4i}3 \La_h(u)
+
\frac{i}{\de^2} \boxClaus{\partial_w M_P(u,0,0,0;\de) 
+ \frac{M_P(u,0,0,0;\de)}{3\La_h^2(u)}
 }
\\
&+ \frac{i}{\de^2} \boxClaus{\partial_w M_S(u,0,0,0;\de) 
+ \frac{M_S(u,0,0,0;\de)}{3\La_h^2(u)} }
+\de^2\wt{m}(u;\de).
\end{split}
\end{equation*}
Therefore, it only remains to check that
\begin{align*}
\vabs{\partial_w M_{P,S}(u,0,0,0;\de) 
+ \frac{M_{P,S}(u,0,0,0;\de)}{3\La_h^2(u)}}
\leq \frac{C\de^4}{\vabs{u^2+A^2}^2}.
\end{align*}

Indeed, by \eqref{def:SerieP} and the definition \eqref{def:expressionMJOuter} of $M_P$, one has 
\begin{align*}
M_P(u,w,0,0;\de) = \MM_P\paren{u,\de^2\La_h(u)-\frac{\de^2 w}{3\La_h(u)}+\de^4\LtresLa(\de)},	
\end{align*}
where $\MM_P(u,\La)$ is an analytic function for $u \in \DBoomerang$ and $\vabs{\La} \ll 1$. 
Moreover, following the proof of Lemma  \ref{lemma:boundsMPout}, there exist $a_0$ and $a_1$ such that 
\begin{align*}
\vabs{\MM_P(u,\La)-a_0(u;\de)- a_1(u;\de) \La} \leq 
\frac{C\La^2}{\vabs{u^2+A^2}^2},
\end{align*}
with
\[
\vabs{a_0(u;\de)} \leq \frac{C\de^4}{\vabs{u^2+A^2}^{\frac23}},
\qquad
\vabs{a_1(u;\de)} \leq \frac{C}{\vabs{u^2+A^2}^{\frac23}}.
\]
Therefore,
\begin{align*}
\vabs{\partial_w M_{P}(u,0,0,0;\de) 
+ \frac{M_{P}(u,0,0,0;\de)}{3\La_h^2(u)}}
\leq& \,
\frac{\vabs{a_0(u)}}{3\La_h^2(u)}
+ \frac{\de^4\LtresLa(\de)\vabs{a_1(u)}}{3\La_h^2(u)}
+ \frac{C\de^4}{\vabs{u^2+A^2}^{2}} 
\\
\leq& \,
\frac{C\de^4}{\vabs{u^2+A^2}^{2}} .
\end{align*}
An analogous estimate holds for $M_S$.
\end{proof}

\begin{proof}[End of the proof of Lemma \ref{lemma:boundsBonsBeta}]
By Lemma \ref{lemma:proofConstantA} and recalling that  $u_+=iA-\kappa \de^2$,
% (see \eqref{def:uMesuMenysDiff}),
\begin{equation}
	\label{proof:constantDifferenceZ}
\begin{split}	
\log B_y(u_+) 
=&
\int_{u_*}^{u_+} \wt{\BB}_{3,3}^{\spl}(u) du
= 
-\frac{4i}3  \int_{u^*}^{i A} \La_h(u) du 
\\ &+
\frac{4i}3 \int^{i A}_{u_+} \La_h(u) du 
+
\de^2
\int_{u^*}^{u_+} {m}(u;\de).
\end{split}
\end{equation}
Then, by Theorem \ref{theorem:singularities} and taking into account that $\kappa=\kappa_* \vabs{\log \de}$ (see Lemma \ref{lemma:IconstantsDiff}), we obtain
\begin{align*}
\vabs{\log B_y(u_+)  + \frac{4 i}3
\int_{u^*}^{i A} \La_h(u) du}
\leq 
\frac{C}{\kappa} 
+
C \kappa^{\frac23}\de^{\frac43}
+
\frac{C\de^2}{\vabs{u_* - iA}}
\leq 
\frac{C}{\vabs{\log \de}}.
\end{align*}
Finally, recalling that $\dot{\la}_h=-3\La_h$, applying the change of coordinates $\la=\la_h(u)$
and using that $\la_h(iA) = \pi$, we have that
\begin{align*}
\frac{4i}3 \int_{u^*}^{i A} \La_h(u) du 
=
-\frac{4i}9 \int_{\la_h(u_*)}^{\pi} d\la
=
-\frac{4i}9 \paren{\pi-\la_h(u_*)}.
\end{align*}
Joining the last statements with \eqref{proof:constantDifferenceZ}, we obtain the statement of the lemma.
\end{proof}

\section*{Acknowledgments}
\addcontentsline{toc}{section}{Acknowledgments}
I. Baldom\'a has been partly supported by the Spanish MINECO--FEDER Grant PGC2018 -- 098676 -- B -- 100 (AEI/FEDER/UE) and the Catalan grant 2017SGR1049. 

M. Giralt and M. Guardia have received funding from the European Research Council (ERC) under the European Union's Horizon 2020 research and innovation programme (grant agreement No 757802). 

M. Guardia is also supported by the Catalan Institution for Research and Advanced Studies via an ICREA Academia Prize 2019.

%% BIBLIOGRAPHY

\addcontentsline{toc}{section}{Bibliography}

\begingroup
%\setstretch{0.8}
\footnotesize
\setlength\bibitemsep{2pt}
\setlength\biblabelsep{4pt}
\printbibliography
\endgroup

%\bibliographystyle{alpha}
%\bibliography{biblio}

\end{document}